\newcommand{\pdim}{M}
\newcommand{\qdim}{N}
\newcommand{\dimsum}{D}
\newcommand{\block}{I}
\newcommand{\UR}{\mathrm{UR}}
\newcommand{\bfalpha}{{\boldsymbol{\alpha}}}
\newcommand{\bfbeta}{{\boldsymbol{\beta}}}
\newcommand{\bfgamma}{{\boldsymbol{\gamma}}}
\newcommand{\bfdelta}{{\boldsymbol{\delta}}}
\renewcommand{\emptyset}{\diameter}
\newcommand{\Hmu}{H}
\newcommand{\para}{P}
\newcommand{\HBQ}{\Hmu}
\newcommand{\Tr}{\mathrm{Tr}}
\newcommand{\subsp}{L}
\newcommand{\Matrix}{\bfdelta}
\renewcommand{\subset}{\subseteq}
\renewcommand{\supset}{\supseteq}
\newcommand{\bfa}{\mathbf a}
\newcommand{\Prob}{\mathrm{Prob}}
\newcommand{\id}{1}
\newcommand{\pp}{\mathbf{p}}
\newcommand{\qq}{\mathbf{q}}
\newcommand{\KK}{\mathcal{K}}
\newcommand{\altb}{d}
\font\sn = cmssi8 scaled \magstep0
\font\si = cmti8 scaled \magstep0
\long\def\comdavid#1{\ifdraft{{\color{blue}\si #1 }}\else\ignorespaces\fi}
\long\def\combarak#1{\ifdraft{\color{red}\sn #1 }\else\ignorespaces\fi}
\newcommand{\tendsto}[1]{\xrightarrow[#1]{}}
\newcommand\dee{\mathrm{d}}
\newif\ifdraft\drafttrue
\newcommand\name[1]{\label{#1}{\ifdraft{\sn [#1]}\else\ignorespaces\fi}}
\newcommand\eq[2]{{\ifdraft{\ \tt [#1]}\else\ignorespaces\fi}\begin{equation}\label{#1}{#2}\end{equation}}
\newcommand {\equ}[1]{\eqref{#1}}
\newcommand{\MM}{{\mathcal{M}}}
\newcommand{\HH}{{\mathcal{H}}}
\newcommand{\HHH}{{\mathbb{H}}}
\newcommand{\Q}{{\mathbb {Q}}}
\newcommand{\R}{{\mathbb{R}}}
\newcommand{\C}{{\mathbb{C}}}
\newcommand{\Z}{{\mathbb{Z}}}
\newcommand{\BB}{{\mathcal{B}}}
\newcommand{\Gr}{{\mathrm{Gr}}}
\newcommand{\B}{{B}}
\newcommand{\E}{{\mathbb{E}}}
\newcommand{\N}{{\mathbb{N}}}
\newcommand{\Hh}{{\mathbb{H}}}
\newcommand{\given}{{|}}
\newcommand{\cl}{\overline}
\newcommand{\Ad}{{\operatorname{Ad}}}
\newcommand{\GL}{\operatorname{GL}}
\newcommand{\Stab}{\operatorname{Stab}}
\newcommand{\wbar}{\overline}
\newcommand{\SL}{\operatorname{SL}}
\newcommand{\SO}{\operatorname{SO}}
\newcommand{\PGL}{\operatorname{PGL}}
\newcommand{\GO}{\operatorname{O}}
\newcommand{\PO}{\operatorname{PO}}
\newcommand{\zero}{\mathrm{o}}
\newcommand{\Lie}{\operatorname{Lie}}
\newcommand {\ignore}[1] {}
\newcommand{\spa}{{\rm span}}
\newcommand{\dist}{{\rm dist}}
\newcommand{\LL}{{\mathcal L}}
\newcommand{\df}{{\, \stackrel{\mathrm{def}}{=}\, }}
\newcommand{\FF}{{\mathcal{F}}}
\newcommand{\supp}{{\rm supp}}
\newcommand{\sm}{\smallsetminus}
\newcommand{\vre}{\varepsilon}
\newcommand{\Hom}{\operatorname{\Hom}}
\newcommand{\BA}{{\mathrm{BA}}}
\newcommand{\DI}{{\mathrm{DI}}}
\newcommand{\VWA}{{\mathrm{VWA}}}
\newcommand{\Isom}{{\mathrm{Isom}}}
\newtheorem{thm}{Theorem}[section]
\newtheorem{lem}[thm]{Lemma}
\newtheorem{prop}[thm]{Proposition}
\newtheorem{cor}[thm]{Corollary}
\theoremstyle{definition}
\newtheorem{remark}[thm]{Remark}
\newtheorem{example}[thm]{Example}
\newtheorem{dfn}[thm]{Definition}
\begin{document}
\title[Random walks and Diophantine approximation]{Random walks on homogeneous spaces and Diophantine approximation on fractals}

\author{David Simmons}
\address{University of York, Department of Mathematics, Heslington, York YO10 5DD, UK}
\email{David.Simmons@york.ac.uk}
\urladdr{\url{https://sites.google.com/site/davidsimmonsmath/}}

\author{Barak Weiss}
\address{Tel Aviv University, Tel Aviv Israel}
\email{barakw@post.tau.ac.il}


\begin{abstract}
We extend results of Y. Benoist and J.-F. Quint concerning random
walks on homogeneous spaces of simple Lie groups to the case where the
measure defining the random walk generates a semigroup which is not
necessarily Zariski dense, but satisfies some expansion properties for
the adjoint action. Using these dynamical results,
we study Diophantine properties of typical points on some self-similar
fractals in $\R^d$. As examples, we show that for any self-similar
fractal $\KK \subset \R^d$ satisfying the open set condition (for
instance any translate or dilate of Cantor's middle thirds set or of a
Koch snowflake), almost every point with respect to the natural
measure on $\KK$ is not badly approximable. Furthermore, almost every
point on the fractal is of generic type, which means (in the
one-dimensional case) that its continued fraction expansion contains
all finite words with the frequencies predicted by the Gauss
measure. We prove analogous results for matrix approximation, and for
the case of fractals defined by M\"obius transformations. 
\end{abstract}
\maketitle

\section{Overview}
\label{sectionintro}
The purpose of this paper is twofold: to prove new results about
random walks on homogeneous spaces, and to apply these results, as
well as previously known results, to questions about the Diophantine
properties of typical points on various fractals. In this section we
state and discuss illustrative special cases of our results,
postponing the most general statements, and postponing as well the
definitions of the terms appearing in the theorems. 

\begin{thm}\name{thm: illustrative measures}
Let $t \geq 2$ and $d \geq 1$ be integers, let $G = \SL_{d+1}(\R), \,
\Lambda = \SL_{d+1}(\Z)$, and $X = G/\Lambda$, and let $m$ be the
$G$-invariant probability measure on $X$ derived from Haar measure on
$G$. For each $i=1,\ldots,t$, fix $c_i >1$, $\mathbf{y}_i \in \R^d$,
and $O_i \in \SO_d(\R)$, and let 
\[
h_i
= \left[\begin{matrix}
c_i O_i & \mathbf{y}_i \\
0 & c_i^{-d}
\end{matrix} \right]
\in G
\ \ \ \ \
(i=1, \ldots, t).
\]
Assume that $\mathbf y_1 = 0$ and that the vectors $\mathbf
y_2,\ldots,\mathbf y_t$ span $\R^d$. Fix $p_1, \ldots, p_t > 0$ with
$p_1 + \cdots + p_t = 1$, and let $\mu = \sum_{i=1}^t p_i \delta_i$
(where $\delta_i$ is the Dirac mass on $E \df \{1,\ldots,t\}$ centered
at $i$). Then for any $x\in X$ and for $\mu^{\otimes \N}$-a.e. $(i_1,
i_2, \ldots) \in E^\N$, the sequence 
\[
\{h_{i_n}\cdots h_{i_1}x : n \in \N\}
\]
is equidistributed in $X$ with respect to $m$; i.e. the sampling
measures $\frac{1}{N} \sum_{n=0}^{N-1}\delta_{h_{i_n} \cdots
  h_{i_1}x}$ converge to $m$ as $N \to \infty$ in the weak-*
topology. 
\end{thm}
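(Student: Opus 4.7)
The plan is to deduce this theorem from the paper's main random-walk equidistribution theorem, which extends Benoist--Quint to semigroups that are not Zariski dense but have appropriate expansion in the adjoint action.

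First, I would identify the algebraic hull. Each $h_i$ lies in the closed Lie subgroup
\[
L = \bigl\{\begin{pmatrix} cO & v \\ 0 & c^{-d} \end{pmatrix} : c > 0,\ O \in \SO_d(\R),\ v \in \R^d\bigr\} \subseteq G,
\]
so the semigroup generated by $h_1,\ldots,h_t$ is contained in $L$. Since $L$ has a nontrivial solvable radical, it is far from Zariski dense in $G$, and the classical Benoist--Quint theorems do not apply directly.

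Second, I would verify the adjoint-expansion hypothesis. A direct block-matrix computation shows that $\Ad(h_i)$ maps the complement $\mathfrak n^-$ of $\Lie(P)$ in $\mathfrak g=\mathfrak{sl}_{d+1}(\R)$ (where $P$ is the parabolic fixing $\R^d\times\{0\}$) into itself modulo $\Lie(P)$, acting by a rotation composed with scaling by $c_i^{-(d+1)}$. Since every $c_i > 1$, typical random products $h_{i_n}\cdots h_{i_1}$ contract the direction $\mathfrak n^-$ exponentially fast, with Lyapunov exponent $-(d+1)\sum_i p_i \log c_i < 0$. This supplies the adjoint-contraction hypothesis of the paper's main theorem.

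Third, the non-concentration input comes from $\mathbf{y}_1=0$ together with the spanning assumption on $\mathbf{y}_2,\ldots,\mathbf{y}_t$: the $h_i$ together generate a subgroup of $L$ whose translational part is all of $\R^d$, so any $\mu$-stationary probability on $X$ cannot be supported on a proper $L$-orbit closure. Combined with the adjoint contraction, this forces any stationary measure to be $G$-invariant, hence the Haar measure $m$. With uniqueness of the $\mu$-stationary measure established, the pointwise equidistribution statement for every $x$ follows immediately from the general equidistribution theorem.

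The main obstacle, I expect, is not this verification of hypotheses but the proof of the underlying measure classification: extending Benoist--Quint to a setting where the semigroup lies in a proper algebraic subgroup requires carefully separating the contracting, neutral, and unipotent parts of $\Ad(L)$, and exploiting the translations supplied by the $\mathbf{y}_i$ to promote $\Ad(L)$-invariance of any stationary measure to full $G$-invariance.
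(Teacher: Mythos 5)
Your proposal follows essentially the same route as the paper: the $h_i$ are placed in the group $P=AKU$ of upper block form matrices, the hypotheses of the main random-walk theorem (Theorem \ref{thm: new main}) are verified from $c_i>1$ (expansion of $W=V^+$, equivalently your dual statement that $\Ad(h_i)$ contracts $V/\Lie(P)$ by $c_i^{-(d+1)}$) together with $\mathbf y_1=0$ and the spanning of the $\mathbf y_i$ (which, via conjugation by powers of $h_1$, force the closure of $\Gamma$ to contain all of $U$), and the conclusion is read off from the general equidistribution statement. This is exactly the paper's argument (Theorem \ref{thm: assumptions satisfied} plus Lemma \ref{lem: verify iv}), and you correctly locate the real work in the underlying stationary-measure classification rather than in this verification.
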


Theorem \ref{thm: illustrative measures} is modeled on groundbreaking
work of Yves Benoist and Jean-Fran\c cois Quint. In
\cite{BenoistQuint7}, they obtained the same conclusion under the
assumption that the Zariski closure $\Hmu$ of the group generated by
$\supp(\mu)$ coincides with $G$, whereas in Theorem \ref{thm:
  illustrative measures} $\Hmu$ is not semisimple and could be
solvable Following their strategy,
and using many of their results, we first show that $m$ is the unique
behavior of almost every random path, starting at an arbitrary initial
point $x$. Theorem \ref{thm: illustrative measures} is a special case
of one of our main results on random walks on homogeneous spaces,
namely Theorem \ref{thm: new main}. In contrast to the work of
Benoist--Quint as well as earlier work in this domain, the hypotheses
of these theorems involve expansion properties for the adjoint action
of elements of $\supp(\mu)$. These properties cannot be detected
solely from algebraic properties of the group $\Hmu$. 

We use these results to study a question which has attracted
considerable attention recently: understanding the Diophantine
properties of a typical point on a fractal. Regarding this, we have
the following: 

\begin{thm}\name{thm: illustrative application}
Let $\KK \subset \R^d$ be the limit set of an irreducible finite
system of contracting similarity maps satisfying the open set
condition, let $s = \dim_H(\KK)$, and let $\mu_{\KK}$ denote the
restriction to $\KK$ of $s$-dimensional Hausdorff measure. Then
$\mu_{\KK}$-a.e. $\alpha\in\KK$ is not badly approximable, and is
moreover of generic type. 
\end{thm}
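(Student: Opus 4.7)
The plan is to translate the Diophantine statement, via the Dani correspondence, into an equidistribution statement on $X = \SL_{d+1}(\R)/\SL_{d+1}(\Z)$, and then to derive that equidistribution from Theorem~\ref{thm: illustrative measures} applied to a random walk whose defining matrices $h_i$ encode the IFS contractions generating $\KK$. Setting $a_t = \diag(e^t I_d, e^{-dt})$ and $u(\alpha) = \left(\begin{smallmatrix} I_d & \alpha \\ 0 & 1 \end{smallmatrix}\right)$, the Dani correspondence characterizes $\alpha \in \BA$ as the boundedness in $X$ of the trajectory $\{a_t u(\alpha) \Lambda : t \ge 0\}$, while ``$\alpha$ is of generic type'' translates to Haar-equidistribution of the same trajectory (in dimension one this is the classical dictionary between continued fractions, the Gauss measure, and the geodesic flow on $\SL_2(\R)/\SL_2(\Z)$). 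Since a Haar-equidistributed trajectory cannot be bounded, the entire theorem reduces to showing that for $\mu_\KK$-a.e.\ $\alpha$ the Dani orbit equidistributes to $m$.

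To set up the random walk, write each IFS contraction as $\phi_i(x) = r_i O_i^{-1}(x - \xi_i)$ with $r_i \in (0,1)$ and $O_i \in \SO_d$, and set $c_i \df r_i^{-1/(d+1)} > 1$. Define $h_i \df \left(\begin{smallmatrix} c_i O_i & y_i \\ 0 & c_i^{-d} \end{smallmatrix}\right)$ with $y_i \df -c_i^{-d}\xi_i$ (the exact multiple being dictated by the tracking computation below). Translating ambient coordinates so that $\phi_1$ fixes $0$ makes $\xi_1 = 0$, hence $y_1 = 0$, and the irreducibility of the IFS then translates into the spanning condition $\R^d = \spa\{y_2, \ldots, y_t\}$ required by Theorem~\ref{thm: illustrative measures} (if a slight weakening is needed, one appeals to the more general Theorem~\ref{thm: new main}). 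Classical theory of self-similar sets under OSC identifies $\mu_\KK$ as the pushforward of the Bernoulli measure $\mu^{\otimes\N}$ on $E^\N$ with weights $p_i = r_i^s$ (which sum to $1$ because $s = \dim_H(\KK)$ is the similarity dimension) under the coding map $\pi : (i_n) \mapsto \lim_{n\to\infty} \phi_{i_1}\circ\cdots\circ\phi_{i_n}(0) \in \KK$.

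The bridge between the two pictures is a tracking lemma. Setting $T_n \df \sum_{j \le n}\log c_{i_j}$, an inductive matrix computation yields the Iwasawa-style factorization $h_{i_n}\cdots h_{i_1} = a_{T_n} u(w_n) k_n$, where $k_n = \diag(O_{i_n}\cdots O_{i_1},1)$ and $w_{n+1} = O_{i_{n+1}} w_n + e^{-(d+1)T_n} c_{i_{n+1}}^{-1} y_{i_{n+1}}$. The correction decays exponentially ($e^{-(d+1)T_n} = \prod_{j \le n} r_{i_j}$), so $k_n^{-1} w_n$ is Cauchy and, matching the telescoping sum term-by-term with the IFS iteration, its limit equals $\pi((i_n))$ precisely for the choice $y_i = -c_i^{-d}\xi_i$, with remainder $|w_n - k_n\pi((i_n))| = O(e^{-(d+1)T_n})$. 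Since $k_n$ commutes with $a_{T_n}$, this decay implies that $h_{i_n}\cdots h_{i_1}\Lambda$ and $a_{T_n} u(\pi((i_n)))\Lambda$ are conjugated by a group element of uniformly bounded norm, so they lie in a common compact subset of $X$.

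Theorem~\ref{thm: illustrative measures} now gives, for any basepoint and $\mu^{\otimes\N}$-a.e.\ $(i_n)$, equidistribution of $\{h_{i_n}\cdots h_{i_1}\Lambda\}_n$ to $m$. Via the tracking lemma this transfers to equidistribution of $\{a_{T_n} u(\alpha)\Lambda\}_n$ along the discrete times $T_n$, where $\alpha = \pi((i_n))$; since the gaps $T_{n+1}-T_n \le \max_i \log c_i$ are uniformly bounded, a standard averaging argument then upgrades this to continuous-time equidistribution of the full Dani orbit $\{a_t u(\alpha)\Lambda\}_{t \ge 0}$. Translating back via the Dani correspondence, $\mu_\KK$-a.e.\ $\alpha \in \KK$ is of generic type and, a fortiori, not badly approximable. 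The main technical hurdle is the tracking lemma---in particular, keeping account of the accumulated rotation $k_n$ that twists the symbolic/fractal correspondence and absorbing the residual compact drift by $k_n$ when passing from discrete-time to continuous-time equidistribution---together with matching the irreducibility of the IFS to the spanning hypothesis on the $y_i$'s.
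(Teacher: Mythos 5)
Your skeleton matches the paper's: Dani correspondence, encoding the IFS by the matrices $h_i$ acting as $\phi_i^{-1}$, Hutchinson's identification of $\mu_{\KK}$ with a Bernoulli measure with weights $r_i^s$, and the exact factorization $h_{i_n}\cdots h_{i_1}=a_{T_n}u(w_n)k_n$ (which appears in the paper as the identity $g_n=u_{-\pi_+(T^nb)}a_{t_n}k_nu_{\pi_+(b)}$ in the proofs of Lemma \ref{lemmaBAequivalent} and Theorem \ref{theoremgtequi}); your algebra there is correct. The genuine gap is the transfer step. Your factorization gives $h_{i_n}\cdots h_{i_1}\Lambda=u(\epsilon_n)k_n\cdot a_{T_n}u(\alpha)\Lambda$ with the correction $u(\epsilon_n)k_n$ ranging in a compact set but \emph{not converging}. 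A bounded correction transfers boundedness of orbits (enough for the $\BA$ statement), but it does not transfer equidistribution: if $x_n=\gamma_n y_n$ with $(\gamma_n)$ bounded and correlated with $(y_n)$, equidistribution of $(y_n)$ says nothing about $(x_n)$ (already on $\R/\Z$, perturbing an equidistributed sequence by a bounded amount that depends on the point destroys equidistribution). Here $\epsilon_n$ and $k_n$ are determined by the same symbolic sequence that drives the walk, so what you actually need is an asymptotic independence statement: joint equidistribution of the triple $\big(h_{i_n}\cdots h_{i_1}\Lambda,\,k_n,\,\pi(T^n(i))\big)$ with respect to the \emph{product} measure $m\otimes m_{\bar K}\otimes \pi_*\mu^{\otimes\N}$; only then does pushing forward under the continuous map $(x,k,\bfalpha)\mapsto k^{-1}u_{\bfalpha}x$ (which preserves the product measure fiberwise because $m$ is $G$-invariant) yield equidistribution of $\big(a_{T_n}u(\alpha)\Lambda\big)_n$. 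This is exactly Theorem \ref{thm: fiber bundle extension} (= Theorem \ref{theoremequibootstrap}), whose proof occupies \S\ref{sec: fiber bundle extensions} and requires a martingale argument for the shift coordinate and an ergodicity argument (Howe--Moore plus Proposition \ref{propositionrotationalpart}) for the compact extension by $k_n$. The same joint equidistribution with the letter $i_n$ is also what legitimizes your variable-step discrete-to-continuous averaging. Labelling all of this a ``technical hurdle'' omits the main new dynamical input of the argument.

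A secondary error: irreducibility of the IFS does \emph{not} imply that, after normalizing $\phi_1$ to fix the origin, the vectors $y_2,\ldots,y_t$ span $\R^d$. For $d=2$ take $\phi_1$ a contracting irrational rotation about $0$ and $\phi_2$ a contracting irrational rotation composed with a translation along a fixed line: no proper affine subspace is invariant, yet the translation parts span only a line. So Theorem \ref{thm: illustrative measures} cannot be invoked as stated; one must verify hypothesis (III) of Theorem \ref{thm: new main} (equivalently (iii) of Definition \ref{def: block form}) for the group generated by the $h_i$, which the paper does via Lemma \ref{lem: verify iv} together with the invariant-affine-subspace argument of \S\ref{subsec: fundamental}. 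You flagged this as a possible weakening, but the verification is a real step, not a formality.
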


The class of fractals appearing in Theorem \ref{thm: illustrative
  application} contains such standard examples of self-similar sets as
Cantor's middle thirds set (or any image of it under an affine map),
the Koch snowflake, the Sierpi\'nski triangle, etc. Regarding these
and more general fractals, and natural measures supported on them, it
was previously established that they give zero measure to the set of
very well approximable numbers/vectors but contain many (in the sense
of Hausdorff dimension) badly approximable points. The measure of the
set of badly approximable points in such sets was considered by
Einsiedler, Fishman, and Shapira \cite{EFS}. They showed among other
things that in case $\KK$ is Cantor's middle thirds set,
$\mu_{\KK}$-a.e. $\alpha \in \KK$ is not badly approximable. They used
the invariance of $\KK$ under the $\times 3$ map and their proof
relied on deep dynamical results of Lindenstrauss
\cite{Lindenstrauss2}. Our proof relies on the self-similar structure
of $\KK$, and improves on \cite{EFS} in several respects: by
establishing that $\alpha$ is typically of generic type, and by
extending the result to a general class of fractals in every
dimension. 

The fractals in Theorem \ref{thm: illustrative application} are limit
sets of iterated function systems (IFSes) consisting of {\em
  similarities} $\R^d \to \R^d$. By employing directly results of
Benoist and Quint we are also able to prove similar results for
fractals which are limit sets of IFSes of {\em M\"obius
  transformations}, with the difference that the usual notions of
Diophantine approximation are replaced by analogous notions for
Diophantine approximation with respect to a Kleinian group. We are
also able to treat measures supported on fractals other than the
Hausdorff measures, and to discuss additional Diophantine properties,
including the setup of matrix Diophantine approximation, Dirichlet
improvability, intrinsic approximation on spheres, and more.

The paper is divided into two parts. In the first we establish our
results for random walks on homogeneous spaces, and in the second we
apply these results to Diophantine approximation. The first part is
completely independent of the second part but relies heavily on work
of many authors, and in particular on the work of Benoist and
Quint. The second part can be read independently of the first,
provided one is willing to accept three dynamical results: Theorems
\ref{theorempart1} and \ref{theoremequibootstrap}, which are proven in
Part \ref{part1}, and prior results of Benoist and Quint, summarized
as Theorem \ref{theoremBQ}.

\medskip

{\bf Acknowledgements.} The first-named author was supported in part by the EPSRC Programme Grant EP/J018260/1. The second-named author was supported by ERC starter grant DLGAPS 279893. The authors are grateful to Yves Benoist and Jean-Fran\c cois Quint for useful discussions, and to Alex Eskin for useful comments and encouraging remarks.

\tableofcontents

\part{Random walks on homogeneous spaces}
\label{part1}
\section{Main results -- Stationary measures and random walks}
Let $\mu$ be a probability measure on a group $G$. A measure $\nu$ on
a $G$-space $X$ is called {\em $\mu$-stationary} if $\int_G g_*\nu\,
\dee \mu(g) = \nu$. Clearly, every $G$-invariant measure $\nu$ is
$\mu$-stationary for every probability measure $\mu$ on $G$. For a
general action of a group on a compact space, invariant measures need
not exist, but $\mu$-stationary measures always exist. An
understanding of all the stationary measures for an action leads to a
very detailed understanding of the action (see
e.g. \cite{Furstenberg_random_products, Furstenberg_stiffness,
  FurstenbergKifer}). This is most easily seen when there is a unique
stationary probability measure. Our main result identifies some measures $\mu$ on
$G$ for which there is a unique stationary probability measure on a homogeneous
space $X = G/\Lambda$, and describes the random paths starting from an
arbitrary point. 

We need some notation, which will be used throughout the paper. Let
$G$ be a unimodular noncompact Lie group with finitely many connected
components, let $E \subset G$ be compact, and let $\mu$ be a compactly
supported probability measure on $G$ such that $\supp(\mu) = E$. We will sometimes
think of $E$ as an abstract indexing set for elements of $G$, in which
case we will think of $\mu$ as a measure on $E$ and write $e \mapsto
g_e$ for the inclusion map from $E$ to $G$. Let $\Gamma$ and
$\Gamma^+$ denote respectively the subgroup and subsemigroup of $G$
generated by $E$. If $\Gamma_1, \Gamma_2$ are two subgroups of $G$, we
say that $\Gamma_1$ is \emph{virtually contained} in $\Gamma_2$ if $\Gamma_1
\cap \Gamma_2$ is of finite index in $\Gamma_1$. Let $\B$ denote the
infinite Cartesian power $E^\N$, 
and let $\beta$ denote the Bernoulli measure $\mu^{\otimes \N}$. For
each $b = (b_1, \ldots) \in \B$, let $b_1^n$ denote the finite word
$(b_1, \ldots, b_n)$ and write 
\begin{equation}
\label{randomwalkorder}
g_{b_1^n} = g_{b_n} \cdots g_{b_1}.
\end{equation}
Let $\mu^{*n}$ denote the measure on $G$ obtained as the
pushforward of the measure $\mu^{\otimes n} = \mu \otimes \cdots \otimes \mu$ on $E^n$
under the map $b_1^n \mapsto g_{b_1^n}$. Let
$V = \Lie(G)$ be the Lie algebra of $G$, let $\SL^\pm(V)$ be the group
of linear automorphisms of $V$ with determinant $\pm 1$, let $\Ad: G \to
\SL^\pm(V)$ be the adjoint representation, and for each $d = 1,
\ldots, \dim G -1$ let $V^{\wedge d} = \bigwedge^d V$ and let $\rho_d:
G \to \SL^\pm(V^{\wedge d})$ be the $d$-th exterior power of $\Ad$. We
say that two subspaces $V_1, V_2$ of $V^{\wedge d}$ are {\em
  complementary} if $V^{\wedge d} = V_1 + V_2$ and $V_1\cap V_2 =
\{0\}$. In \S \ref{sec: positivity}, following Oseledec, we will
define a {\em subspace of non-maximal expansion}, to be denoted by
$V_b^{<\max}$, and a {\em subspace of subexponential expansion}, to be
denoted by $V_b^{\leq 0}$. These are subspaces of $V$ and of
$V^{\wedge d}$ respectively, defined for $\beta$-a.e. $b\in\B$, and
depending measurably on $b$. 

\begin{thm}\name{thm: new main}
Let $G$, $\mu$, and $\rho_d: G \to \SL^\pm(V^{\wedge d})$ be as above,
and suppose that the identity component of $G$ is simple. Let
$\Lambda$ be a lattice in $G$, let $X = G/\Lambda$, and let $m_X$ be
the $G$-invariant probability measure on $X$ induced by Haar measure
on $G$. Suppose that $\Gamma$
acts transitively on the connected components of $X$, and that
$\Gamma$ is not virtually contained in a conjugate of
$\Lambda$.
Assume that for each $d = 1, \ldots, \dim G-1$, there is a nontrivial
proper subspace $W^{\wedge d} \subsetneqq V^{\wedge d}$ such that the following hold: 
\begin{itemize}
\item[\textup{(I)}]
For every $g \in \supp(\mu)$, $W^{\wedge d}$ is
$\rho_d(g)$-invariant. For $\beta$-a.e. $b\in\B$, if $d=1$ then
$W^{\wedge d}$ is complementary to $V_b^{<\max}$ and if $d>1$, then
$V_b^{\leq 0} \cap W^{\wedge d} = \{0\}$. 

\item[\textup{(II)}] For every $g \in \supp(\mu)$, $\Ad(g)$ acts on
  $W=W^{\wedge 1}$ as a similarity map (with respect to some fixed
  inner product on $W$), and 
\[
\int_G \log \|\Ad(g)|_W \|\, \dee \mu (g) >0.
\]

\item[\textup{(III)}] For any $d$, if a linear subspace $\subsp
  \subset V^{\wedge d}$ has a 
  finite orbit under the semigroup generated by $\supp(\mu)$, then $\subsp
  \cap W^{\wedge d} \neq \{0\}$. 
\end{itemize}
Then:
\begin{itemize}
\item[(i)]
The only $\mu$-stationary probability measure on $X$ is $m_X$.
\item[(ii)]
For any $x \in X$, for $\beta$-almost every $b \in \B$, the sequence
$
(g_{b_1^n} x)_{n\in\N}
$
is equidistributed with respect to $m_X$.
\end{itemize}
\end{thm}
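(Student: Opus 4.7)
The plan is to follow the Benoist--Quint strategy of \cite{BenoistQuint7}: first establish (i), the uniqueness of the $\mu$-stationary probability measure (so $\nu = m_X$), and then upgrade to the equidistribution statement (ii). Hypotheses (I)--(III) are to serve as a substitute for the Zariski density of $\Gamma$ used in \cite{BenoistQuint7}. Condition (II) provides a positive top Lyapunov exponent for the $\Ad$-cocycle along the deterministic subspace $W \subset V$ on which $\Gamma$ acts conformally; (I) positions $W$ transversally to the non-top Oseledec subspace $V_b^{<\max}$; and (III), read at every exterior power $d = 1, \ldots, \dim G - 1$, is the irreducibility needed to prevent stationary measures from concentrating on proper invariant subvarieties.

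For (i), I would let $\nu$ be any $\mu$-stationary probability measure on $X$ and work on the skew-product $T\colon X \times \B \to X \times \B$, $T(x, b) = (g_{b_1} x, \sigma b)$, which preserves $\nu \otimes \beta$. By (II), the exponent $\int_G \log\|\Ad(g)|_W\|\,\dee\mu(g)$ is strictly positive, and by (I) at $d = 1$ it is the top exponent of $\Ad$, with deterministic top Oseledec subspace $W$. Because $\Ad(g)|_W$ is a similarity, the Lyapunov norm on $W$ is uniformly comparable to the fixed inner product, and the local unstable leaves of $T$ take the deterministic form $\{(\exp(w) x, b) : w \in W \text{ small}\}$. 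I would then run the exponential drift argument of \cite{BenoistQuint7} on the disintegration of $\nu \otimes \beta$ along these unstable leaves: conformality of $\Ad|_W$ converts the asymptotic drift into honest translation on the $W$-direction, yielding that $\nu$ is invariant under $U \df \exp(W)$.

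The role of hypothesis (III), applied at the appropriate $d$, is to prevent the drift from collapsing onto a proper algebraic subset: such a subset would, after linearization and passage to an exterior power, produce a subspace $\subsp \subset V^{\wedge d}$ with finite $\Gamma^+$-orbit and $\subsp \cap W^{\wedge d} = \{0\}$, contradicting (III). Once $U$-invariance of $\nu$ is in hand, I would combine it with hypotheses at higher $d$ and simplicity of $G^\circ$ to propagate invariance from $U$ to all of $G^\circ$: any proper $\Ad(\Gamma)$-invariant Lie subalgebra $\mathfrak{h} \subsetneq \Lie(G^\circ)$ containing $W$ would, after passing to the $(\dim\mathfrak{h})$-th exterior power, yield a $\Gamma^+$-invariant line in $V^{\wedge d}$ avoiding $W^{\wedge d}$, violating (III); a standard Mautner-type argument then lifts $U$-invariance of $\nu$ to $G^\circ$-invariance. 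Together with transitivity of $\Gamma$ on the connected components of $X$ and the hypothesis that $\Gamma$ is not virtually contained in a conjugate of $\Lambda$ (which rules out $\nu$ being supported on a closed orbit of a proper subgroup containing the identity component of $\Lambda$), this forces $\nu = m_X$, proving (i).

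For (ii), I would argue by a Breiman-type law of large numbers: since $m_X$ is the unique $\mu$-stationary probability measure, any weak-$\ast$ limit of the empirical averages $\frac{1}{N}\sum_{n=0}^{N-1} \delta_{g_{b_1^n} x}$ is $\mu$-stationary and hence equals $m_X$, provided there is no escape of mass. Non-escape of mass for \emph{every} starting point $x \in X$ (rather than just $m_X$-a.e.\ $x$, which is automatic from Birkhoff applied to the skew-product) is the subtle point, and I would obtain it via a Margulis-function / drift condition on $X$ constructed from the expansion encoded in (II). I expect the main obstacle of the whole plan to be the exponential drift / $U$-invariance step: in \cite{BenoistQuint7} this step relies heavily on Zariski density to supply non-concentration estimates for the unstable conditional measures, and replacing that algebraic input by hypothesis (III) alone will require a delicate induction across the exterior powers $d$, in which conformality of $\Ad|_W$ is used at each level to control the geometry of the drift.
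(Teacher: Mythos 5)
Your overall strategy coincides with the paper's: both adapt the Benoist--Quint exponential drift machinery with the deterministic conformal subspace $W$ replacing their random subspace $V_b$, use (I)--(III) as a surrogate for Zariski density (with (III) preventing stationary measures from living on finite unions of proper subspaces), and deduce (ii) from (i) via Breiman's law of large numbers plus a Margulis-function recurrence estimate; the paper obtains the latter by showing that \cite[Lemma~6.3]{BenoistQuint3} survives in this setting thanks to Proposition \ref{prop: positivity}. Two steps of your sketch, however, would not go through as written. First, you set $U=\exp(W)$ and treat it as a group acting on $X$, but the exponential of a linear subspace of $\Lie(G)$ is a subgroup only if that subspace is a Lie subalgebra; this is not automatic and must be extracted from (I) and (II). The paper's Proposition \ref{prop: subalgebra} does exactly this, showing $W$ is abelian by an eigenvalue comparison (a nonzero bracket $[w_1,w_2]$ would be an eigenvector of $\Ad(g_{b_1^n})$ with eigenvalue of modulus $\|g_{b_1^n}|_W\|^2$, exceeding the spectral radius). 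Without this, the conditional measures along the $W$-direction and the very notion of $U$-invariance are undefined.

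Second, the assertion that ``a standard Mautner-type argument lifts $U$-invariance of $\nu$ to $G^\circ$-invariance'' is not correct for an arbitrary probability measure: Mautner phenomena promote invariance of vectors in unitary representations (and hence apply once the measure is already known to be Haar), not invariance of measures. The actual mechanism, both in Benoist--Quint and here, is Ratner's measure classification applied to the unipotent group $\exp(W)$, which renders the ergodic components homogeneous; one then studies the induced $\mu$-stationary measure on the space of homogeneous measures (equivalently on quotients $G/L$), and it is at this point that hypothesis (III), packaged as Proposition \ref{prop: stationary algebraic} for every exterior power $d$, forces that stationary measure to be $\delta_{m_X}$. Your instinct to use (III) across the exterior powers to exclude proper invariant subalgebras is the right one, but it has to be routed through this Ratner/stationary-measure step rather than through a Mautner argument. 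The remaining bookkeeping you mention (atomic stationary measures excluded because $\Gamma$ is not virtually contained in a conjugate of $\Lambda$, hence every $\Gamma$-orbit is infinite; disconnected $G$ handled by transitivity of $\Gamma$ on the components of $X$) matches the paper.
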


Theorem \ref{thm: new main} is modeled on results of Benoist and
Quint. Namely, conclusion (i) is obtained in
\cite[Theorem~1.1]{BenoistQuint6} and conclusion (ii) is obtained in
\cite[Theorem~1.3]{BenoistQuint7} under the assumption that the
Zariski closure $\Hmu$ of $\Gamma$ is semisimple with no compact
factors. Our proof of Theorem \ref{thm: new main} relies heavily on
arguments introduced by Benoist and Quint. 

Despite the very similar approaches, we do not assume that $\Hmu$ is
semisimple, but instead introduce assumptions \textup{(I)--(III)}. As
we will see in \S \ref{sec: positivity}, these assumptions imply that
for any $v \in V$, for almost any $b \in \B$, the random sequence of
vectors $(\Ad(g_{b_1^n})v)_{n \ge 1}$ become longer and longer (at a
rate independent of $v$) and are attracted projectively to $W \df
W^{\wedge 1}$ as $n \to \infty$. In other words, $W$ plays the role of
a ``subspace of maximal expansion'' to which all trajectories get
attracted. This crucial observation makes it possible to employ the
``exponential drift'' argument of Benoist and Quint and conclude that
any stationary measure $\nu$ is invariant under a subgroup of $W$. We
note that in our work $W$ is a {\em deterministic} subspace, whereas
the subspace which plays a similar role in the arguments of Benoist
and Quint (which they denote by $V_b$) is a random subspace depending
on $b$. 

In the main application of interest in Part \ref{part2}, the group $\Hmu$ which
will appear will not be semisimple, and assumptions
\textup{(I)--(III)} will be satisfied. In fact, \textup{(I)--(III)}
can never be satisfied when $\Hmu$ is semisimple. On the other hand,
conditions \textup{(I)--(III)} do not depend only on $\Hmu$, but also
on the decomposition of $V$ into expanding and contracting spaces for
the transformations $\Ad(g) \;  (g \in \supp(\mu))$. It is possible
(e.g. by adapting \cite[\S 3.5]{BenoistQuint5}) to construct examples
of measures $\mu$ for which the group $\Hmu$ is solvable and for which
both conclusions of Theorem \ref{thm: new main} fail. 

Alex Eskin and Elon Lindenstrauss have recently announced a
far-reaching extension of the work of Benoist and Quint, which implies
Theorem \ref{thm: new main}(i). 

We will also need a result which extends the second conclusion of
Theorem \ref{thm: new main} to certain fiber bundles over $X$. In the following theorem $\bar{\B} = E^\Z$, $\bar{\beta}$ is the Bernoulli measure
$\mu^{\otimes \Z}$ on $\bar{\B}$, and $T: \bar{\B} \to \bar{\B}$ is
the shift map.

\begin{thm}\name{thm: fiber bundle extension}
Let $G$ be a unimodular connected Lie group, let $\Lambda$ be a
lattice in $G$, let $X = G/\Lambda$, and let $m_X$ be the unique
$G$-invariant probability measure on $X$. Let $\mu$ be a compactly supported
probability measure on $G$, let $E = \supp(\mu)$, and let $\B, \beta,\, \Gamma$ be
as above. Fix $x \in X$ and suppose that for $\beta$-a.e. $b\in\B$,
the sequence $(g_{b_1^n}x)_{n \in \N}$ is equidistributed with respect
to $m_X$. Let $K$ be a compact group, let $m_K$ be Haar measure on
$K$, and let $\kappa: \Gamma \to K$ be a homomorphism 
such that the $\Gamma$-action $\gamma(x, k) = (\gamma x ,
\kappa(\gamma)k)$ on $X \times K$ is ergodic with respect to $m_X
\otimes m_K$. Let $Y$ be a locally compact metric space, $f : \bar{\B}
\to Y$ a measurable map, and $m_Y = f_* \bar{\beta}$. 

Then for any $x \in X$, for $\bar{\beta}$-a.e. $b\in \bar{\B}$, the sequence
\[
\big(g_{b_1^n} x, \kappa(g_{b_1^n}), f(T^n b) \big)_{n\in\N}
\]
is equidistributed with respect to the measure $m_X \otimes m_K \otimes m_Y $ on $X \times K \times Y$.
\end{thm}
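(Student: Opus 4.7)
The plan is to prove equidistribution in three stages: first on $X \times K$, then lift to $\bar{\B} \times X \times K$ via the natural skew product, and finally push forward to $X \times K \times Y$ through $f$.

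\emph{Stage 1.} Let $\xi_N^b := \frac{1}{N}\sum_{n=0}^{N-1}\delta_{(g_{b_1^n}x,\,\kappa(g_{b_1^n}))}$ and set $z_n := (g_{b_1^n}x, \kappa(g_{b_1^n}))$. For each $\phi \in C_c(X \times K)$, writing $\mathcal{M}\phi(z) := \int \phi(gz)\,d\mu(g)$, a telescoping identity combined with the law of large numbers for the bounded martingale differences $\phi(z_{n+1}) - \mathcal{M}\phi(z_n)$ (conditionally mean-zero given $(b_1, \dots, b_n)$) shows that $\mu \ast \xi_N^b - \xi_N^b \to 0$ weak-$\ast$ for $\bar{\beta}$-a.e.\ $b$. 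Hence every weak-$\ast$ subsequential limit $\xi^b$ is $\mu$-stationary on $X \times K$, and by the hypothesis on $x$ its $X$-marginal is $m_X$. The key rigidity claim is that the only $\mu$-stationary probability measure on $X \times K$ with $X$-marginal $m_X$ is $m_X \otimes m_K$. I would prove this by Peter--Weyl: disintegrating $\xi^b = \int \xi_y\, dm_X(y)$, for each nontrivial irreducible unitary $\pi: K \to U(V_\pi)$ the matrix-valued Fourier coefficient $F_\pi(y) := \int \pi(k)\,d\xi_y(k)$ satisfies $F_\pi(y) = \int \pi(\kappa(g))\,F_\pi(g^{-1}y)\,d\mu(g)$ by stationarity; equality in Jensen's inequality (after integrating against the $G$-invariant $m_X$) forces the equivariance $F_\pi(\gamma y) = \pi(\kappa(\gamma))\,F_\pi(y)$ for every $\gamma \in \Gamma$. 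The scalar function $(y,k) \mapsto \langle \pi(k)e,\, F_\pi(y) f\rangle$ on $X \times K$ is then $\Gamma$-invariant, hence constant by the ergodicity hypothesis; integrating in $k$ forces the constant to vanish since $\int \pi\, dm_K = 0$ for nontrivial $\pi$. Therefore $F_\pi \equiv 0$, giving $\xi_y = m_K$ for $m_X$-a.e.\ $y$ and $\xi^b = m_X \otimes m_K$.

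\emph{Stage 2.} Let $\hat{S}(b, y, k) := (Tb, g_{b_1}y, \kappa(g_{b_1})k)$ on $\bar{\B} \times X \times K$ and $\tilde{\xi}_N^b := \frac{1}{N}\sum_{n<N}\delta_{\hat{S}^n(b, x, e_K)}$. Any weak-$\ast$ subsequential limit $\tilde{\xi}^b$ is $\hat{S}$-invariant with $\bar{\B}$-marginal $\bar{\beta}$ (Birkhoff for the ergodic Bernoulli shift $T$) and $(X \times K)$-marginal $m_X \otimes m_K$ (by Stage 1). Because $m_X \otimes m_K$ is $\Gamma$-invariant, $\bar{\beta} \otimes m_X \otimes m_K$ is an $\hat{S}$-invariant lift with these marginals. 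A standard backward-martingale argument over the two-sided Bernoulli base (using triviality of the tail $\sigma$-algebra of $\bar{\beta}$ to show that the fiber disintegration $b' \mapsto \lambda_{b'}$ of any $\hat{S}$-invariant lift $\lambda = \int \lambda_{b'}\,d\bar{\beta}(b')$ of a $\Gamma$-invariant fiber measure is $\bar{\beta}$-a.s.\ constant) shows this lift is unique, so $\tilde{\xi}^b = \bar{\beta} \otimes m_X \otimes m_K$. To descend to $X \times K \times Y$, push forward under $\Pi(b, y, k) := (y, k, f(b))$; since $f$ is only measurable, apply Lusin's theorem to write $\bar{\B}$ as an exhausting union of compact sets on which $f$ is continuous, which suffices to pass weak-$\ast$ convergence through $\Pi$.

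The hardest step is the rigidity claim in Stage 1. Without unique $\mu$-stationarity on $X$ itself, it cannot follow from any abstract skew-product ergodicity argument; the proof transfers the problem to the $\Gamma$-ergodicity hypothesis on $X \times K$ via the Peter--Weyl decomposition of $L^2(K)$ and the vanishing of $\int \pi\,dm_K$ for nontrivial irreducible representations of the compact group $K$.
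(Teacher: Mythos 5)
Your Stage 1 is correct and is essentially a Fourier--analytic reformulation of the paper's argument: the paper (Proposition \ref{propositionrotationalpart} and Corollary \ref{cor: extend compact}) smooths the stationary measure by the right $K$-action, disintegrates over $X$, and applies Jensen's inequality to the $L^2$-norms of the fiber densities, whereas you apply the same Jensen/equality mechanism directly to the matrix coefficients $F_\pi(y)$ and conclude via Peter--Weyl; both routes use the ergodicity of the $\Gamma$-action on $X\times K$ in the same way, and yours avoids the smoothing step. Your final Lusin/Tietze device for the measurable map $f$ also matches the paper (Proposition \ref{prop: bootstrap}). (One small point for a careful write-up of Stage 1: on the non-compact $X$ you should pass to the one-point compactification before extracting weak-$\ast$ limits, and then use the fact that the $X$-marginal is the probability measure $m_X$ to rule out escape of mass, as the paper does.)

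Stage 2, however, has a genuine gap. You reduce the theorem to the claim that $\bar\beta\otimes m_X\otimes m_K$ is the \emph{unique} $\hat S$-invariant measure on $\bar{\B}\times X\times K$ with marginals $\bar\beta$ and $m_X\otimes m_K$, and you justify this by ``a standard backward-martingale argument \ldots\ using triviality of the tail $\sigma$-algebra of $\bar\beta$ to show that the fiber disintegration $b\mapsto\lambda_b$ \ldots\ is $\bar\beta$-a.s.\ constant.'' This does not work as stated: the disintegration of an $\hat S$-invariant lift satisfies the \emph{equivariance} $\lambda_{Tb}=\big(g_{b_1},\kappa(g_{b_1})\big)_*\lambda_b$, not $\lambda_{Tb}=\lambda_b$, so it is neither shift-invariant nor tail-measurable, and tail triviality of $\bar\beta$ gives no direct handle on it. The backward-martingale machinery you are alluding to attaches to each $\mu$-stationary measure $\nu$ a \emph{canonical} invariant lift via the limit measures $\lim_n (g_{b_0}\cdots g_{b_{-n+1}})_*\nu$ (which for the invariant measure $\nu=m_X\otimes m_K$ is indeed the product, since the martingale is constant); it does not classify \emph{all} invariant lifts, and disintegrations of general invariant lifts need not be measurable with respect to the past coordinates. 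The uniqueness you need is a relative unique ergodicity statement for an extension that is not a compact group extension (the fiber is $X\times K$ with a $\Gamma$-action, not a group translated by a cocycle), so the standard ``ergodic group extension $\Rightarrow$ relatively uniquely ergodic'' argument does not apply either; in fact your claim is strictly stronger than the theorem itself (the theorem only concerns limits of empirical measures along the specific orbit of $(b,x,e_K)$), so you have reduced the problem to something at least as hard and left it unproved.

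The paper avoids this entirely with a direct, elementary argument (Proposition \ref{propositionforwardindependent}): for a test function $\varphi(z,b)$ depending on only the first $N$ coordinates of $b$, write $\varphi=\varphi_Z+h$ with $\varphi_Z(z)=\int\varphi(z,b)\,\dee\beta(b)$; the $\varphi_Z$-part equidistributes by Stage 1, and the partial sums of $h(z_i,T^ib)$ form (up to a bounded error) a martingale with bounded increments, because $h(z_i,\cdot)$ has mean zero and the coordinates $b_{i+1},b_{i+2},\ldots$ entering $T^ib$ are independent of the walk up to time $i$. The law of large numbers for bounded-increment martingales then gives the joint equidistribution of $\big(g_{b_1^n}x,\kappa(g_{b_1^n}),T^nb\big)$ directly, with no need to classify invariant measures of the skew product. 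You should replace your Stage 2 with an argument of this type.
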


\section{Random matrix products for semigroups, and positivity}
\name{sec: positivity}
Throughout this section we keep the notation and assumptions of Theorem \ref{thm: new main}. Our goal will be to describe some consequences of hypotheses \textup{(I)--(III)}.
%
We will need more notation. For each $d = 1,\ldots, \dim G - 1$, we fix an inner product on the vector space $V^{\wedge d}$ and use it to define a metric on $V^{\wedge d}$ and an operator norm on $\GL(V^{\wedge d})$. We denote the projective space of lines in $V^{\wedge d}$ by $\mathbb{P}(V^{\wedge d})$, and the Grassmannian space of $k$-dimensional subspaces by $\Gr_k(V^{\wedge d})$. The element of $\mathbb{P}(V^{\wedge d})$ corresponding to a point $x \in V^{\wedge d} \sm \{0\}$ will be denoted by $[x]$, and the image of a nonzero subspace $W \subset V^{\wedge d}$ in $\mathbb{P}(V^{\wedge d})$ will be denoted by $[W]$. We will denote the distance between a vector $v\in V^{\wedge d}$ and a subspace $W\subset V^{\wedge d}$ by $\dist(v,W)$, and the distance between their projectivizations by $\dist([v],[W])$. In the latter case the distance can be measured with respect to any metric on $\mathbb P(V^{\wedge d})$ which induces the standard topology. This should cause at worst mild confusion.

The main results of this section are the following three
statements. The first should be compared to
\cite[Corollary~5.5]{BenoistQuint3}, the second to
\cite[Lemma~6.8]{BenoistQuint3}, and the third to
\cite[Lemma~4.1]{EskinMargulis}, where the same conclusions are
obtained under different hypotheses.

\begin{prop} \name{prop: cor 5.5}
Under assumptions \textup{(I)--(III)}, we have:
\begin{itemize}
\item[a)]
For every $\alpha>0$, there exist $c_0>0$, $q_0 \geq 1$ such that for
any $v \in V\sm \{0\}$, we have 
\[
\beta \left( \left\{b \in \B: \forall q \geq q_0, \;
\|\Ad(g_{b_1^q})v\| \geq
c_0\|\Ad(g_{b_1^q})\| \; \|v\| \right \} \right)\geq 1-\alpha.
\]
\item[b)]
For every $\alpha > 0$ and $\eta>0$, there exists $q_0 \geq 1$ such
that for any $v \in V\sm \{0\}$, we have 
\[
\beta \left(\left\{b \in \B: \forall q \geq q_0, \; \dist \left([
\Ad(g_{b_1^q}) v], 
[W] \right ) \leq \eta
\right \} \right) \geq 1-\alpha.
\]
\end{itemize}
\end{prop}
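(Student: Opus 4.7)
The plan is to adapt the arguments of Benoist--Quint (their Corollary~5.5 and Lemma~6.8), with $W := W^{\wedge 1}$ playing the role of the top Lyapunov direction: assumption (II) supplies the deterministic uniform expansion rate $\lambda_W := \int_G \log\|\Ad(g)|_W\|\, d\mu(g) > 0$ on $W$ (replacing proximality), while (III) plays the role of strong irreducibility by ruling out proper invariant subvarieties disjoint from $[W]$.

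\emph{Step 1 (spectral gap).} Write $\sigma(g) := \|\Ad(g)|_W\|$. Assumption (II) makes $\Ad(g_{b_1^q})|_W$ a similarity with factor $\sigma(g_{b_1^q}) = \prod_{i=1}^q \sigma(g_{b_i})$, so the law of large numbers gives $\frac{1}{q}\log\sigma(g_{b_1^q}) \to \lambda_W$ a.s., with the same rate on every line of $W$. Combined with the complementarity of $W$ and $V_b^{<\max}$ from (I), and the definition of $V_b^{<\max}$ as the sum of Oseledec subspaces strictly below the top Lyapunov exponent of $\Ad$, a dimension count forces $\lambda_W$ to be the top Lyapunov exponent, attained uniformly on $W$. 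Consequently $\|\Ad(g_{b_1^q})\| = \sigma(g_{b_1^q})(1+o(1))$ a.s., and the induced action $\bar{\Ad}$ on $V/W$ has top Lyapunov exponent at most $\lambda_W - \delta_0$ for some spectral gap $\delta_0 > 0$.

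\emph{Step 2 (Lyapunov function and proof of (b)).} Fix a linear complement $V = W \oplus W'$; for $v = v_W + v_{W'}$ set $\phi(v) := \|v_{W'}\|/\|v\|$, which is comparable to $\dist([v],[W])$. The block-upper-triangular form of $\Ad(g)$ gives $\phi(\Ad(g)v) = \|A(g)v_{W'}\|/\|\Ad(g)v\|$, where $A(g)$ represents $\bar{\Ad}(g)$ on $V/W \cong W'$. The core step is to establish a Lyapunov inequality
\[
\int_G \phi^{\delta'}(\Ad(g)v)\, d\mu(g) \le \rho\, \phi^{\delta'}(v)
\]
for some small $\delta' > 0$ and some $\rho < 1$, \emph{uniformly in $[v] \in \mathbb{P}(V)$}. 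Pointwise contraction follows from Step~1; the uniformity over $[v]$ is the heart of the argument and is where (III) enters. If the uniform inequality failed one could extract, by compactness on $\mathbb{P}(V)$, a $\mu$-stationary probability measure on $\mathbb{P}(V)$ supported away from $[W]$, whose support would be contained in a proper semigroup-invariant linear subvariety of $\mathbb{P}(V)$ disjoint from $[W]$, contradicting (III) in the case $d = 1$. Iterating gives $\E[\phi^{\delta'}(\Ad(g_{b_1^q})v)] \le C\rho^q$ uniformly in $v$; Markov's inequality yields the single-$q$ large-deviation bound, and the upgrade to ``for all $q \ge q_0$'' follows by a maximal-inequality argument, using that once $\Ad(g_{b_1^q})v$ lies close to $[W]$ the dynamics remain contracting in mean by Step~1, so subsequent excursions are exponentially rare.

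\emph{Step 3 (proof of (a)).} I would work with the random decomposition $v = v_W + v_{<}$ coming from $V = W \oplus V_b^{<\max}$ given by (I). Then $\|\Ad(g_{b_1^q}) v_W\| = \sigma(g_{b_1^q}) \|v_W\|$, while Step~1 gives $\|\Ad(g_{b_1^q}) v_{<}\| \le \sigma(g_{b_1^q}) \|v_{<}\| e^{-q \delta_0/2}$ for all $q \ge q_0(b)$ almost surely. Combined with $\|\Ad(g_{b_1^q})\| = \sigma(g_{b_1^q})(1+o(1))$, part (a) reduces to a uniform non-concentration statement: for every $[v] \in \mathbb{P}(V)$, the random quantity $\|v_W\|/\|v\|$ is bounded below on a set of $b$ of measure at least $1 - \alpha$, with the bound depending only on $\alpha$. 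This is dual to part (b) and can be deduced either by applying (b) to $\bigwedge^{\dim V - \dim W} V$ with the corresponding dominant subspace there, or by a direct non-concentration argument using (III). I expect the main obstacle to be Step~2: translating the abstract algebraic condition (III) into a quantitative uniform-in-$[v]$ contraction rate, equivalently classifying $\mu$-stationary probability measures on $\mathbb{P}(V)$ in the absence of strong irreducibility, is the nontrivial technical core of the proof.
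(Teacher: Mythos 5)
The genuine gap is in Step 2. The uniform Lyapunov inequality $\int\phi^{\delta'}(\Ad(g)v)\,\dee\mu^{*n}(g)\le\rho^n\,\phi^{\delta'}(v)$, uniformly over $[v]\in\mathbb{P}(V)$, is a quantitative Le Page/Guivarc'h--type statement, and the compactness argument you sketch cannot deliver it. Extracting a weak-* limit of Ces\`aro averages from a putative sequence of counterexamples produces a $\mu$-stationary measure on $\mathbb{P}(V)$ charging the complement of a neighborhood of $[W]$; such a measure is \emph{not} automatically carried by a proper invariant linear subvariety (one needs the atomicity argument of Proposition \ref{propositionV1empty}: minimal dimension of a positively charged subspace, ergodic decomposition, finiteness of the invariant family, and only then (III)), and even granting that, the contradiction refutes only the qualitative statement, never a failure of exponential moment decay. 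Worse, the inequality itself is doubtful as stated: since $\phi(\Ad(g)v)/\phi(v)$ contains the factor $\|v\|/\|\Ad(g_{b_1^n})v\|$, its $\delta'$-moment is controlled only on the event that $\|\Ad(g_{b_1^n})v\|\gtrsim\|\Ad(g_{b_1^n})\|\,\|v\|$; on the complementary event the integrand can be of size $e^{Cn\delta'}$, so one needs the measure of that event to decay exponentially in $n$, uniformly in $v$ --- a Guivarc'h-regularity/large-deviations strengthening of part (a) that is strictly harder than the proposition you are proving. (The one-step version $n=1$ avoids this but fails for a different reason: only the asymptotic exponents of $W$ and $V/W$ are ordered by (I)--(II), not the one-step norms, so $\int\log(\|A(g)\hat u\|/\sigma(g))\,\dee\mu(g)$ need not be negative.)

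None of this machinery is needed, because the proposition asks only for some $q_0=q_0(\alpha)$, with no rate. The paper's proof is soft: by Proposition \ref{propositionV1empty} and compactness of $\mathbb{P}(V)$ there is $\vre>0$ with $\beta(\{b:\dist([v],[V^{<\max}_b])\ge\vre\})\ge 1-\alpha/2$ for every $v\neq 0$ (this is exactly the ``direct non-concentration argument using (III)'' you mention in Step 3); by Propositions \ref{prop: under assumptions} and \ref{prop: tending to W}, for $\beta$-a.e.\ $b$ both conclusions hold eventually, uniformly over the compact set of $[v]$ at distance $\ge\vre$ from $[V_b^{<\max}]$, after a random time $N(b)<\infty$; an Egorov-type choice of $N_0$ with $\beta(N(b)\le N_0)\ge 1-\alpha/2$ then gives (a) and (b) simultaneously. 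Your Step 3 is essentially this argument for part (a); if you run the same two-step scheme (non-concentration of $[v]$ near the random repelling subspace, plus a.e.\ uniform-on-compacta convergence to $[W]$ from Oseledec and the invariance of $W$) you get (b) for free and can discard Step 2 entirely.
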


\begin{prop}\name{prop: stationary algebraic}
Under assumptions \textup{(I)} and \textup{(III)}, for each $d =
1,\ldots,\dim(G) - 1$, the only $\mu$-stationary probability measure on $V^{\wedge
  d}$ is the Dirac measure $\delta_0$ centered at $0$. 
\end{prop}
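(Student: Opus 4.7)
The plan is to argue by contradiction: suppose $\nu$ is a $\mu$-stationary probability measure on $V^{\wedge d}$ with $\nu \neq \delta_0$. Since $\{0\}$ is fixed by $\rho_d(G)$, both $\delta_0$ and the normalized restriction of $\nu$ to $V^{\wedge d} \setminus \{0\}$ are $\mu$-stationary, so I may assume $\nu(\{0\}) = 0$. The strategy has three steps: locate $\supp(\nu)$ inside the Oseledets ``subexponential'' subspace $V_b^{\leq 0}$ for $\beta$-a.e.\ $b$; observe that $L := \mathrm{span}(\supp(\nu))$ is $\Gamma^+$-invariant; then combine (III) and (I) to contradict the first step.

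For the location step, consider the skew product $T_{\mathrm{sk}}(b,v) = (Tb, \rho_d(g_{b_1})v)$ on $\B \times V^{\wedge d}$; a direct calculation shows that $\beta \otimes \nu$ is $T_{\mathrm{sk}}$-invariant. The Oseledets theorem, applied to the cocycle $\rho_d(g_{b_1^n})$ (which has integrable $\log$-norm because $\supp(\mu)$ is compact), yields a measurable limit $\chi(b,v) := \lim_{n\to\infty} \tfrac{1}{n}\log\|\rho_d(g_{b_1^n})v\|$, defined $(\beta\otimes\nu)$-a.e.\ and taking finitely many values (the Lyapunov exponents). The key claim is that $\chi \leq 0$ almost surely: if $A := \{\chi > 0\}$ had positive $(\beta\otimes\nu)$-measure, then $\|\rho_d(g_{b_1^n})v\|\to\infty$ on $A$, while $T_{\mathrm{sk}}$-invariance forces the distribution of $\|\rho_d(g_{b_1^n})v\|$ under $\beta\otimes\nu$ to equal for every $n$ the fixed distribution of $\|v\|$ under $\nu$; dominated convergence then gives $\nu(\{\|v\| > R\}) \geq (\beta\otimes\nu)(A)$ for every $R$, contradicting tightness of $\nu$. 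Consequently $\nu(V_b^{\leq 0}) = 1$ for $\beta$-a.e.\ $b$, and since $V_b^{\leq 0}$ is closed, $\supp(\nu) \subseteq V_b^{\leq 0}$, hence $L \subseteq V_b^{\leq 0}$.

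For the closure, the standard fact $g(\supp(\nu)) \subseteq \supp(\nu)$ for every $g \in E$ (a consequence of $\nu = \int g_* \nu\, d\mu(g)$) gives $\rho_d(g) L \subseteq L$ for every $g \in E$, so $L$ is $\Gamma^+$-invariant; in particular its $\Gamma^+$-orbit has size one, and (III) applied to $L$ produces a nonzero $w \in L \cap W^{\wedge d}$. For $d > 1$, $w$ then lies in $V_b^{\leq 0} \cap W^{\wedge d}$, directly contradicting (I). For $d = 1$, I first observe that (I) together with $W \subsetneq V$ forces the top Lyapunov exponent $\lambda_1 > 0$: otherwise all exponents would be $\leq 0$, and since their sum vanishes (as $G$ is unimodular, so $|\det\Ad(g)|=1$), they would all equal zero, making $V_b^{<\max} = \{0\}$ and contradicting the complementarity $V = W \oplus V_b^{<\max}$ with $W$ proper. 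Consequently $V_b^{\leq 0} \subseteq V_b^{<\max}$, and $w \in V_b^{<\max} \cap W$ contradicts (I).

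The principal obstacle is the sign-of-exponent step: Oseledets gives only almost-sure information about norm growth, while stationarity pins down the norm distribution, and reconciling the two demands the tightness/dominated-convergence argument above. The remaining bookkeeping --- identifying the correct $\Gamma^+$-invariant subspace and invoking (III) and (I) --- is largely formal, with only the minor twist of passing from $V_b^{\leq 0}$ to $V_b^{<\max}$ in the $d = 1$ case via the vanishing-sum observation about exponents.
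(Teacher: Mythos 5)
Your proof is correct, but it takes a genuinely different route from the paper's. The paper's argument is essentially the mirror image of yours: it first invokes Proposition \ref{propositionV1empty} (whose proof uses the ergodic decomposition of stationary measures on $\mathbb{P}(V^{\wedge d})$ together with (I) and (III)) to conclude that for $\nu$-a.e.\ $v\neq 0$ and $\beta$-a.e.\ $b$ one has $v\notin V_b^{\leq 0}$, hence $\|\rho_d(g_{b_1^n})v\|\to\infty$; it then derives a contradiction with the Poincar\'e recurrence theorem for the skew product $\hat{T}(b,v)=(Tb,\rho_d(g_{b_1})v)$ applied to the positive-measure set $\{(b,v): v\notin V_b^{\leq 0},\ \|v\|\leq t\}$. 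You run the implication in the opposite direction: recurrence/tightness of the $\hat{T}$-invariant measure $\beta\otimes\nu$ forces the fibrewise Lyapunov exponent to be $\leq 0$ almost surely, so $\supp(\nu)\subset V_b^{\leq 0}$, and then the observation that $L=\spa(\supp(\nu))$ is a subspace invariant under the semigroup lets you apply (III) directly (the finite orbit being a singleton) and contradict (I). What your route buys is self-containment: you bypass Proposition \ref{propositionV1empty} entirely, replacing its projective-space machinery with the elementary fact that the span of the support of a stationary measure is invariant --- a fact available here but not in the setting of Proposition \ref{propositionV1empty}, where the vector $v$ is arbitrary and spans no invariant subspace. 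What the paper's route buys is economy within the overall architecture: Proposition \ref{propositionV1empty} is needed elsewhere anyway (e.g.\ in the proofs of Propositions \ref{prop: cor 5.5} and \ref{prop: positivity}), so once it is in hand the present proposition is a three-line corollary. Your treatment of the $d=1$ case (deducing $\chi_1>0$ from (I), properness of $W$, and unimodularity, so that $V_b^{\leq 0}\subset V_b^{<\max}$) matches the observation the paper records just after Theorem \ref{theoremoseledets}, and your tightness/dominated-convergence argument is a correct substitute for the paper's appeal to Poincar\'e recurrence.
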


\begin{prop}\name{prop: positivity}
Under assumptions \textup{(I)--(III)}, there exist $n_0\in\N$ and
$\vre > 0$ such that for all $d$, $v\in V^{\wedge d} \sm \{0\}$, and
$n \geq n_0$, we have 
\eq{eq: loginequality2}{
\frac{1}{n} \int_G \log\frac{\|\rho_d(g)v\|}{\|v\|}\, \dee \mu^{*n}(g) > \vre.
}
\end{prop}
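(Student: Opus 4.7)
\emph{Plan.} Following \cite[Lemma 4.1]{EskinMargulis}, I would reduce the positivity of $\frac{1}{n}\int \log(\|\rho_d(g)v\|/\|v\|)\, \dee\mu^{*n}(g)$ to two ingredients: (i) a uniform linear lower bound $\frac{1}{n}\E[\log\|\rho_d(g_{b_1^n})\|] \geq \lambda_d > 0$; and (ii) Proposition \ref{prop: cor 5.5}(a) (or its $V^{\wedge d}$-analog), which makes $\|\rho_d(g)v\|$ comparable to $\|\rho_d(g)\|\|v\|$ on a high-probability event. Combining (i) and (ii), the loss from the low-probability complement is controlled by compactness of $E$.

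\emph{Positivity of $\lambda_d$.} Since $\|\rho_d(\cdot)\|$ is submultiplicative and the $g_{b_i}$ are i.i.d., Fekete's lemma applied to the subadditive sequence $n \mapsto \E[\log\|\rho_d(g_{b_1^n})\|]$ yields $\frac{1}{n}\E[\log\|\rho_d(g_{b_1^n})\|] \geq \lambda_d := \lim_n \frac{1}{n}\E[\log\|\rho_d(g_{b_1^n})\|]$ for every $n$. For $d = 1$, (II) makes $\|\Ad(g)|_W\|$ multiplicative along compositions with positive mean, so $\E[\log\|\Ad(g_{b_1^n})|_W\|] = n \int_G \log\|\Ad(g)|_W\|\, \dee\mu(g) > 0$, and the bound $\|\Ad(g_{b_1^n})\| \geq \|\Ad(g_{b_1^n})|_W\|$ gives $\lambda_1 > 0$. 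For $d > 1$, $W^{\wedge d}$ is $\rho_d$-invariant and disjoint from $V_b^{\leq 0}$ for $\beta$-a.e.\ $b$ (both from (I)), so by Oseledec applied to $\rho_d|_{W^{\wedge d}}$ every nonzero vector there has strictly positive Lyapunov exponent; hence the top Lyapunov exponent of $\rho_d|_{W^{\wedge d}}$ is positive, and $\|\rho_d(g_{b_1^n})\| \geq \|\rho_d(g_{b_1^n})|_{W^{\wedge d}}\|$ again yields $\lambda_d > 0$. Setting $\lambda_* := \min_{1 \leq d \leq \dim G - 1} \lambda_d > 0$ delivers uniformity over finitely many $d$.

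\emph{Main estimate and conclusion.} Fix $\alpha > 0$. Proposition \ref{prop: cor 5.5}(a), or its $V^{\wedge d}$-analog (obtained by the same projective-convergence argument using (I) in its $d > 1$ form), supplies $c_0 > 0,\, q_0 \geq 1$ such that for every $v \in V^{\wedge d} \setminus \{0\}$ the event $A_v := \{b : \|\rho_d(g_{b_1^n})v\| \geq c_0 \|\rho_d(g_{b_1^n})\|\|v\| \text{ for all } n \geq q_0\}$ satisfies $\beta(A_v) \geq 1 - \alpha$. Let $C := \max_{g \in E,\, d} |\log\|\rho_d(g^{\pm 1})\||$, finite by compactness of $E$ and finiteness of the range of $d$, so that $|\log\|\rho_d(g_{b_1^n})\||$ and $|\log(\|\rho_d(g_{b_1^n})v\|/\|v\|)|$ are both $\leq nC$ pointwise. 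On $A_v$, $\log(\|\rho_d(g_{b_1^n})v\|/\|v\|) \geq \log c_0 + \log\|\rho_d(g_{b_1^n})\|$; splitting the integral over $A_v$ and its complement yields
\[
\E[\log(\|\rho_d(g_{b_1^n})v\|/\|v\|)] \geq (1-\alpha)\log c_0 + n\lambda_d - 2nC\alpha.
\]
Choosing $\alpha$ with $2C\alpha \leq \lambda_*/2$ and then $n_0 \geq q_0$ so that $|\log c_0|/n \leq \lambda_*/4$ for $n \geq n_0$, division by $n$ gives the claim with $\vre := \lambda_*/4$, uniformly in $d$ and $v \neq 0$. The main obstacle is the $V^{\wedge d}$-analog of Prop \ref{prop: cor 5.5}(a) for $d > 1$: its $d = 1$ proof uses the complementarity $V = V_b^{<\max} \oplus W$, whereas (I) provides only the weaker disjointness $V_b^{\leq 0} \cap W^{\wedge d} = \{0\}$ for $d > 1$; I would bridge this gap by passing to the quotient $V^{\wedge d}/V_b^{\leq 0}$, on which $\rho_d$ has only positive Lyapunov exponents and the projective attraction argument applies.
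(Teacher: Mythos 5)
Your overall architecture — a high-probability event on which $\|\rho_d(g_{b_1^n})v\|$ grows exponentially, the trivial bound $\log\max_{g\in\supp(\mu)}\|\rho_d(g)^{-1}\|$ on the complement, and a final choice of small $\alpha$ uniform in $v$ and $d$ — is exactly the paper's. But the key lemma you route the good-event bound through, namely the $V^{\wedge d}$-analogue of Proposition \ref{prop: cor 5.5}(a) for $d>1$, is false, not merely hard to prove. For $d>1$ hypothesis (I) gives only $V_b^{\leq 0}\cap W^{\wedge d}=\{0\}$, and accordingly Proposition \ref{propositionV1empty} yields only $v\notin V_b^{\leq 0}$ for a.e.\ $b$, \emph{not} $v\notin V_b^{<\max}$. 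If $V^{\wedge d}$ has two distinct positive Lyapunov exponents $\chi_1>\chi_2>0$ (which happens already in the main example of \S\ref{subsec: main example}), a vector $v\in V_1(b)\sm V_b^{\leq 0}$ satisfies $\|\rho_d(g_{b_1^n})v\|\asymp e^{n\chi_2}\|v\|$ while $\|\rho_d(g_{b_1^n})\|\asymp e^{n\chi_1}$, so $\|\rho_d(g_{b_1^n})v\|/(\|\rho_d(g_{b_1^n})\|\,\|v\|)\to 0$ exponentially and no $c_0>0$ can work for all $n\geq q_0$. Your proposed bridge via the quotient $V^{\wedge d}/V_b^{\leq 0}$ does not rescue the operator-norm comparison either: it only shows that $\|\rho_d(g_{b_1^n})v\|$ grows at least at the \emph{smallest} positive exponent, which is strictly weaker than comparability with $\|\rho_d(g_{b_1^n})\|$.

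The good news is that this weaker bound is all you need, and inserting it turns your argument into the paper's proof. The paper never compares $\|\rho_d(g)v\|$ to $\|\rho_d(g)\|$ for $d>1$: instead it fixes $\chi>0$ strictly below the smallest positive Lyapunov exponent, uses Proposition \ref{propositionV1empty} plus compactness of $\mathbb P(V^{\wedge d})$ to produce a finite cover $\{\mathcal U_j\}$ and sets $B_j$ with $\beta(B_j)\geq 1-\alpha$ on which $\dist([v],[V_b^{\leq 0}])\geq\vre_1$ uniformly, and then invokes the uniformity of convergence in Theorem \ref{theoremoseledets} to get $\|\rho_d(g_{b_1^n})v\|\geq e^{n\chi}\|v\|$ for all $n\geq N$, all $[v]\in\mathcal U_j$, and all $b\in B_j$. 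Replacing your good-event inequality $\log(\|\rho_d(g_{b_1^n})v\|/\|v\|)\geq\log c_0+\log\|\rho_d(g_{b_1^n})\|$ by $\geq n\chi$ gives $\E[\log(\|\rho_d(g_{b_1^n})v\|/\|v\|)]\geq(1-\alpha)n\chi-n\alpha\log\max_{g}\|\rho_d(g)^{-1}\|$, and the same choice of $\alpha$ finishes the proof; the Fekete/Kingman discussion of $\lambda_d$ then becomes unnecessary. (Your $d=1$ reduction via Proposition \ref{prop: cor 5.5}(a) is fine as stated, since there (I) gives genuine complementarity to $V_b^{<\max}$.)
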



We recall the following: 

\begin{thm}[Oseledec, \cite{Oseledec}]
\label{theoremoseledets}
Let $G,\mu$ be as above, let $V$ be a vector space, and let $\rho:G\to
\GL(V)$ be an action. Then there exist $k \in \N$, numbers $\chi_1 >
\cdots > \chi_k$ (called \underline{Lyapunov exponents}), and a
measurable map which assigns to $\beta$-a.e. $b \in \B$ a descending
chain of subspaces (called \underline{Oseledec subspaces}) 
\[
V = V_0 \supsetneqq V_1(b)
\supsetneqq \cdots \supsetneqq V_{k-1}(b) \supsetneqq V_k=
\{ 0\},
\]
such that for all $i = 1,\ldots,k$ and $v\in V_{i-1}(b)\sm V_i(b)$,
\eq{eq: lyapunov def}{
\lim_{n\to\infty} \frac{\log\|\rho(g_{b_1^n} )v\|}{n} = \chi_i.
}
The convergence in \eqref{eq: lyapunov def} is uniform as $v$ ranges over any compact subset of $V_{i-1}(b)\sm V_i(b)$. Furthermore,
\eq{eq: sum zero}{
\sum_{i=1}^k d_i\chi_i =\int_G \log |\det (\rho(g))| \; \dee \mu,
}
where $d_i = \dim V_{i-1} -\dim V_{i}$, and for $\beta$-a.e. $b\in\B$, for all $i$, we have
\eq{eq: equivariance}{
V_i(T(b)) =
\rho(g_{b_1})V_i(b).
}
\end{thm}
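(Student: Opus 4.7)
The plan is to follow the classical Furstenberg--Kesten plus Raghunathan route to the Oseledec multiplicative ergodic theorem. First I would verify subadditivity of the cocycle $f_n(b) = \log\|\rho(g_{b_1^n})\|$ over the shift $T$ on $(\B,\beta)$: using the factorisation $g_{b_1^{n+m}} = g_{(T^n b)_1^m}\, g_{b_1^n}$, submultiplicativity of the operator norm gives $f_{n+m}(b) \leq f_m(T^n b) + f_n(b)$. Since $\beta$ is $T$-ergodic, Kingman's subadditive ergodic theorem yields a constant $\gamma_1$ with $\tfrac{1}{n} f_n \to \gamma_1$ almost surely. Applying the same reasoning to $\wedge^j \rho$ for $j = 1,\ldots,\dim V$ produces constants $\gamma_j$ with $\tfrac{1}{n}\log\|\wedge^j \rho(g_{b_1^n})\| \to \gamma_j$ a.s.; setting $\theta_j = \gamma_j - \gamma_{j-1}$ gives the full Lyapunov spectrum with multiplicity, and grouping coincident values yields the distinct numbers $\chi_1 > \cdots > \chi_k$ together with multiplicities $d_i$.

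The second step is to extract the filtration. Set
\[
V_i(b) = \bigl\{v \in V : \limsup_{n\to\infty}\tfrac{1}{n}\log\|\rho(g_{b_1^n})v\| \leq \chi_{i+1}\bigr\},
\]
with the convention $\chi_{k+1} = -\infty$, so that $V_k = \{0\}$ and $V_0 = V$. Each $V_i(b)$ is manifestly a linear subspace; the content of the theorem is that for $\beta$-a.e.\ $b$ it has codimension $d_1+\cdots+d_i$ in $V$, that the inclusions are strict, and that for $v \in V_{i-1}(b) \sm V_i(b)$ the $\limsup$ is in fact a limit equal to $\chi_i$. To verify this I would use the Raghunathan device: let $\rho(g_{b_1^n}) = K_n D_n L_n$ be a singular value decomposition with $D_n = \operatorname{diag}(\sigma_1(n),\ldots,\sigma_{\dim V}(n))$ and $\sigma_1(n) \geq \cdots \geq \sigma_{\dim V}(n)$, and let $U_i(n,b) \subset V$ be the preimage under $L_n$ of the span of the last $\dim V - (d_1+\cdots+d_i)$ standard basis vectors. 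Using the strict Lyapunov gaps $\chi_i > \chi_{i+1}$ and the fact that $\tfrac{1}{n}\log\sigma_j(n)$ converges to the Lyapunov exponent of index $j$, a standard Cauchy estimate on the Grassmannian shows that $(U_i(n,b))_n$ is Cauchy and converges to a subspace which one identifies with $V_i(b)$. This yields all three structural assertions; uniform convergence on compact subsets of $V_{i-1}(b)\sm V_i(b)$ then follows by combining compactness with the quantitative rate in the Cauchy estimate applied to vectors staying a definite distance from $V_i(b)$.

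The sum formula \eqref{eq: sum zero} follows from the cocycle identity $\log|\det\rho(g_{b_1^n})| = \sum_{j=1}^n \log|\det\rho(g_{b_j})|$ combined with the strong law of large numbers on the left-hand side, together with the identity $\log|\det\rho(g_{b_1^n})| = \sum_j \log\sigma_j(n) = n\sum_i d_i \chi_i + o(n)$ coming from the SVD analysis. Equivariance \eqref{eq: equivariance} is immediate from the cocycle identity $\rho(g_{b_1^{n+1}}) = \rho(g_{(Tb)_1^n})\rho(g_{b_1})$: if $v \in V_i(b)$ then $\rho(g_{b_1})v$ has the same exponential growth rate under $\rho(g_{(Tb)_1^n})$ as $v$ does under $\rho(g_{b_1^n})$, so $\rho(g_{b_1})V_i(b) \subseteq V_i(Tb)$, with equality following from the dimension count plus invertibility of $\rho(g_{b_1})$. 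The main technical obstacle is the Raghunathan convergence step itself: showing that the singular subspaces of a subadditive matrix cocycle actually stabilise, and doing so quantitatively enough to pin down the growth rate of an arbitrary vector relative to the limiting filtration. Everything else reduces to Kingman's theorem plus bookkeeping.
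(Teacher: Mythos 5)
This theorem is not proved in the paper at all: it is quoted as a classical result with a citation to Oseledec, so there is no in-text argument to compare against (the source contains only a commented-out fragment indicating the authors had in mind deriving the filtration from the symmetric form of the theorem, i.e.\ the a.s.\ convergence of $(A_n^* A_n)^{1/2n}$ to a positive operator $A$ and taking $V_i(b)$ to be sums of eigenspaces of $A$). Your proposal instead reproves the statement from scratch by the standard Kingman--Raghunathan route, and the outline is correct: subadditivity of $\log\|\rho(g_{b_1^n})\|$ and of its exterior powers gives the spectrum $\chi_1>\cdots>\chi_k$ with multiplicities $d_i$ (integrability holding because $\mu$ has compact support); the SVD/Grassmannian Cauchy estimate produces the filtration and the limit in \eqref{eq: lyapunov def}; the determinant cocycle plus the law of large numbers gives \eqref{eq: sum zero}; and the cocycle identity $g_{b_1^{n+1}} = g_{(Tb)_1^n} g_{b_1}$ together with invertibility and the a.e.\ constancy of dimensions gives \eqref{eq: equivariance}. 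What your route buys, beyond being self-contained, is that it directly yields the uniformity of \eqref{eq: lyapunov def} on compact subsets of $V_{i-1}(b)\sm V_i(b)$ --- a clause the paper's statement includes and later uses (e.g.\ in Proposition \ref{prop: positivity} and Lemma \ref{lem: furstenberg kifer}) but which is not part of every textbook formulation; the quantitative decay of the angle between $U_i(n,b)$ and $V_i(b)$, applied to vectors at a definite distance from $V_i(b)$, is exactly what is needed. The one step you defer rather than execute --- the summable Grassmannian Cauchy estimate coming from the gap $\chi_i-\chi_{i+1}>0$ --- is indeed the technical heart of Raghunathan's argument, but identifying it as such is fair for an outline, and it is standard.
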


\ignore{
from \equ{eq: often stated}. Let $\chi_1 > \cdots > \chi_k$ be the
logarithms of the eigenvalues of $A$, and let $V_i(b) = \bigoplus_{j =
  i + 1}^k E_j$, where $E_j$ is the eigenspace of $A$ corresponding to
the eigenvalue $\exp(\chi_j)$. Fix $\vre > 0$. For all $n$
sufficiently large we have 
\[
((1 - \vre) A)^{2n} \leq A_n^* A_n \leq ((1 + \vre) A)^{2n}
\]
(where the inequality $A \leq B$ means that the difference $B-A$ is positive definite)
\combarak{Please check that this is what you meant}
and thus for all $v\in V$, we have
\[
\|((1 - \vre) A)^n v\|^2 \leq \|A_n v\|^2 = \langle A_n^* A_n v,
v\rangle \leq \|((1 + \vre) A)^n v\|^2
\]
i.e.
\[
\log(1 - \vre) + \frac{\log\|A^n v\|}{n} \leq \frac{\log\|A_n v\|}{n}
\leq \log(1 + \vre) + \frac{\log\|A^n v\|}{n}\cdot
\]
If $v\in V_{i-1}(b)\sm V_i(b)$, then
\[
\frac{\log\|A^n v\|}{n} \tendsto{n\to\infty} \chi_i
\]
and this convergence is uniform over compact subsets of $V_{i-1}(b)\sm
V_i(b)$. This demonstrates \eqref{eq: lyapunov def}. Next, \eqref{eq:
  sum zero} follows from the calculation $\sum_{i = 1}^k d_i \chi_i =
\log\det(A) = \lim_{n\to\infty} (1/n)\log\det(A_n) = 0$. }

In the sequel, we will denote the subspace $V_1(b)$ from Theorem
\ref{theoremoseledets} by $V^{< \max}_b$. We will call it the {\em
  Oseledec space of non-maximal expansion}. Similarly, if $j_0 =
\max\{j = 0,\ldots,k : \chi_j > 0\}$, then we will denote the
Oseledec subspace $V_{j_0}(b)$ by $V^{\leq 0}_b$, and we will call it
the {\em Oseledec space of subexponential expansion}.

Fix $d = 1,\ldots,\dim(G) - 1$, and consider the special case of
Theorem \ref{theoremoseledets} occuring when $V = V^{\wedge d}$ and
$\rho = \rho_d$. Note that since $\rho_d(G) \subset \SL^\pm(V)$,
\eqref{eq: sum zero} implies that $\sum_{i = 1}^k d_i \chi_i = 0$. On
the other hand, since the space $W^{\wedge d}$ is proper and
invariant, assumption (I) guarantees that $\chi_1 > 0$, from
which it follows that $\chi_k < 0$ and $k \geq 2$. In particular we have $\{0\} \neq
V^{\leq 0}_b \subset V^{< \max}_b \subsetneqq V^{\wedge d}$. 

\begin{prop}\name{prop: under assumptions}
Under assumptions \textup{(I)} and \textup{(II)}, for $d=1$, for
$\beta$-a.e. $b\in\B$, for any compact set $C \subset V \sm V^{<
  \max}_b$ there exists $c>0$ such that for all $v \in C$ and all
$n\in\N$, we have 
\[
\|\Ad(g_{b_1^n}) v\|\geq c \|\Ad(g_{b_1^n})\| .
\]
\end{prop}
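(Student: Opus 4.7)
The plan is to combine the ironclad control of $\Ad(g_{b_1^n})|_W$ furnished by hypothesis \textup{(II)} with the spectral gap between $W$ and $V_b^{<\max}$ implicit in hypothesis \textup{(I)}. I would fix $b$ in the full-measure set where $V = W \oplus V_b^{<\max}$ (guaranteed by \textup{(I)} at $d=1$), the angle $\theta := \angle(W, V_b^{<\max})$ is positive, the SLLN gives $(1/n)\log r_n \to \chi_1 > 0$ for $r_n := \prod_{i=1}^n \|\Ad(g_{b_i})|_W\|$, and Oseledec applied to the restricted cocycle gives $\limsup_n (1/n)\log\|\Ad(g_{b_1^n})|_{V_b^{<\max}}\| \le \chi_2 < \chi_1$. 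Setting $A_n := \Ad(g_{b_1^n})$, hypothesis \textup{(II)} ensures $A_n|_W$ is itself a similarity, so $\|A_n v_W\| = r_n\|v_W\|$ holds as a \emph{deterministic} identity (no $o(n)$ error), while $A_n$ preserves the decomposition in the sense that $A_n W = W$ and $A_n V_b^{<\max} = V_{T^n b}^{<\max}$ by Oseledec equivariance.

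For a given compact $C \subset V \sm V_b^{<\max}$, I would decompose each $v \in C$ as $v = v_W + v_1$ along $V = W \oplus V_b^{<\max}$. Since the projection $v \mapsto v_W$ is continuous and nonvanishing on $C$, and since $\|v_W\|, \|v_1\| \le \|v\|/\sin\theta$, one has $\delta_C := \inf_{v \in C}\|v_W\| > 0$ and $M_C := \sup_{v \in C}\|v\| < \infty$. The lower bound on $\|A_n v\|$ comes from the \emph{orthogonal} projection $P:V \to W$: since $A_n v_W \in W$,
\[\|A_n v\| \ge \|P(A_n v)\| \ge \|A_n v_W\| - \|A_n v_1\| \ge r_n \delta_C - \|A_n|_{V_b^{<\max}}\| \cdot M_C/\sin\theta,\]
and the spectral gap $\chi_2 < \chi_1$ forces $\|A_n|_{V_b^{<\max}}\|/r_n \to 0$, so the error is absorbed for all $n \ge N_1(b,C)$, leaving $\|A_n v\| \ge r_n \delta_C/2$. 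For the upper bound on $\|A_n\|$, any unit $u = u_W + u_1$ satisfies $\|u_W\|, \|u_1\| \le 1/\sin\theta$, giving $\|A_n u\| \le (r_n + \|A_n|_{V_b^{<\max}}\|)/\sin\theta \le 2 r_n/\sin\theta$ for $n \ge N_2(b)$. Dividing produces $\|A_n v\|/\|A_n\| \ge \sin\theta \cdot \delta_C/4$ for all $n \ge N(b,C) := \max(N_1,N_2)$. For the finitely many remaining $n$, compactness of $C$ together with invertibility of $A_n$ guarantees a positive ratio, and the minimum of these finitely many values with $\sin\theta \cdot \delta_C/4$ supplies the required constant $c$.

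The natural obstacle to a more naive approach is that the angle $\angle(W, V_{T^n b}^{<\max})$ along the orbit may degenerate (it is a.s.\ positive for each fixed $n$ but not uniformly in $n$), which would ruin any estimate obtained by projecting along $V_{T^n b}^{<\max}$ at time $n$. My argument sidesteps this by projecting orthogonally onto the fixed subspace $W$ in the lower bound and by invoking only the source decomposition $V = W \oplus V_b^{<\max}$ for the upper bound; consequently only the single initial angle $\theta$, which is a.s.\ positive, ever enters the estimates.
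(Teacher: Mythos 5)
Your proof is correct and follows essentially the same route as the paper's: decompose $v$ along $V = W \oplus V_b^{<\max}$, use \textup{(II)} to control the $W$-component exactly via $r_n = \|\Ad(g_{b_1^n})|_W\|$, use the Oseledec spectral gap to make the $V_b^{<\max}$-component negligible, and then show $\|\Ad(g_{b_1^n})\| \asymp \|\Ad(g_{b_1^n})|_W\|$. Your version is a little more explicit than the paper's (orthogonal projection onto $W$ for the lower bound and a unit-vector decomposition for the upper bound, in place of the paper's basis argument and implicit-constant bookkeeping), but the ideas are the same.
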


\begin{proof}
We will write $A \asymp_\times B$ if $A, B$ are two quantities satisfying $c^{-1} \leq \frac{A}{B} \leq  c$ for some constant $c>1$ depending only on $G$ and $\mu$. If $c$ (the {\em implicit constant}) depends on an additional parameter $p$ we will write $A \asymp_{\times,p} B$.

Fix $v\in V \sm V_b^{<\max}$. By assumption \textup{(I)}, we can write $v = \pi_1(v) + \pi_W(v)$, where $\pi_1(v) \in V^{< \max}_b$ and $\pi_W(v) \in W \sm \{0\}$. Then by Theorem \ref{theoremoseledets}, we have
\[
\frac{\|\Ad(g_{b_1^n})
\pi_1(v)\|}{\|\Ad(g_{b_1^n}) \pi_W(v)\|} \tendsto{n \to \infty} 0
\]
and \, thus \, $\|\Ad(g_{b_1^n}) v\| \asymp_{\times, b, v} \|\Ad(g_{b_1^n})
\pi_W(v)\|$. \, Moreover, \, by \, assumption \, \textup{(II)} \, we \, have
$\|\Ad(g_{b_1^n}) \pi_W(v)\| \asymp_{\times, v}
\|\Ad(g_{b_1^n})|_W\|.$ In both cases the implicit constant can be
taken to be uniform for $v$ in a compact subset of $V \sm V^{<
  \max}_b$. Choose a basis $\{e_i\}_{i=1}^{\dim V}$ of $V$ consisting
of elements which do not belong to $V^{< \max}_b$. By the same logic,
we have $\|\Ad(g_{b_1^n}) e_i\| \asymp_{\times, b}
\|\Ad(g_{b_1^n})|_W\|$ for each $i$. Thus $\|\Ad(g_{b_1^n})v\|
\asymp_{\times,b,v} \|\Ad(g_{b_1^n})|_W\| \asymp_{\times, b}
\|\Ad(g_{b_1^n})\|$, where for each fixed $b$, the implicit constant
is uniform on compact subsets of $V \sm V^{< \max}_b$. 
\end{proof}

\begin{prop}\name{prop: tending to W}
Under assumptions \textup{(I)} and \textup{(II)}, for $d=1$, for $\beta$-a.e. $b \in \B$, for all $v \in V \sm V^{< \max}_b$, we have
\begin{equation}\label{eq: first assertion}
\frac{\dist \left(\Ad(g_{b_1^n})v, W \right) }{\|\Ad(g_{b_1^n})\|} \tendsto{n \to \infty} 0
\end{equation}
and hence
\begin{equation}\label{eq: second assertion}
\dist \left([\Ad(g_{b_1^n})v], [W] \right) \tendsto{n \to \infty} 0.
\end{equation}
For fixed $b$, the convergence is uniform for $v$ in a compact subset
of $V \sm V^{< \max}_b$. 
\end{prop}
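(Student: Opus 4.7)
The plan is to leverage the decomposition $V = V_b^{<\max} \oplus W$ given by assumption \textup{(I)} together with the $\Ad$-invariance of $W$, reducing the claim to a comparison of the growth rates of the two components of $v$ under $\Ad(g_{b_1^n})$. For fixed $b$ (in the full measure set where Oseledec's theorem applies), write each $v \in V$ as $v = \pi_1(v) + \pi_W(v)$ with $\pi_1(v) \in V_b^{<\max}$ and $\pi_W(v) \in W$; both $\pi_1$ and $\pi_W$ depend linearly on $v$ (with norms controlled by the angle between $V_b^{<\max}$ and $W$). Since $\Ad(g)$ preserves $W$ for every $g \in \supp(\mu)$, we have $\Ad(g_{b_1^n})\pi_W(v) \in W$ for all $n$, so
\[
\dist\!\left(\Ad(g_{b_1^n})v, W\right) \leq \|\Ad(g_{b_1^n})v - \Ad(g_{b_1^n})\pi_W(v)\| = \|\Ad(g_{b_1^n})\pi_1(v)\|.
\]

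The next step is to bound the right-hand side using Theorem \ref{theoremoseledets}. Since $\pi_1(v) \in V_b^{<\max} = V_1(b)$ lies in $V_{i-1}(b) \sm V_i(b)$ for some $i \geq 2$, the theorem gives $\tfrac1n \log \|\Ad(g_{b_1^n})\pi_1(v)\| \to \chi_i \leq \chi_2 < \chi_1$. More usefully (for uniformity), the operator norm $\|\Ad(g_{b_1^n})|_{V_b^{<\max}}\|$ also grows at rate at most $\chi_2$, so for every $\vre>0$ there is a constant $C_b$ with
\[
\|\Ad(g_{b_1^n})\pi_1(v)\| \leq C_b\, e^{n(\chi_2+\vre)}\,\|\pi_1(v)\|.
\]

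Now I would identify the growth rate of $\|\Ad(g_{b_1^n})\|$ with $\chi_1$. By assumption \textup{(II)}, $\Ad(g)$ acts on $W$ as a similarity, so for any nonzero $w \in W$, $\|\Ad(g_{b_1^n})w\|/\|w\| = \|\Ad(g_{b_1^n})|_W\|$, and the Birkhoff/strong-law-of-large-numbers calculation applied to $\log\|\Ad(g)|_W\|$ yields $\tfrac1n\log\|\Ad(g_{b_1^n})|_W\| \to \int_G \log\|\Ad(g)|_W\|\,\dee\mu(g)$, which is strictly positive. By complementarity (assumption \textup{(I)}), $W \cap V_b^{<\max}=\{0\}$, so any $w \in W \sm \{0\}$ has Oseledec exponent $\chi_1$; this forces the similarity rate on $W$ to equal $\chi_1$, and in particular $\|\Ad(g_{b_1^n})\| \geq \|\Ad(g_{b_1^n})|_W\|$ has growth rate exactly $\chi_1$. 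Choosing $\vre < \chi_1 - \chi_2$, the ratio
\[
\frac{\dist(\Ad(g_{b_1^n})v,W)}{\|\Ad(g_{b_1^n})\|} \leq \frac{C_b\,\|\pi_1(v)\|\,e^{n(\chi_2+\vre)}}{\|\Ad(g_{b_1^n})\|}
\]
tends to $0$ exponentially, uniformly for $v$ in any compact subset of $V \sm V_b^{<\max}$ (on which $\|\pi_1(v)\|$ is bounded), proving \eqref{eq: first assertion}.

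For \eqref{eq: second assertion}, I would combine \eqref{eq: first assertion} with Proposition \ref{prop: under assumptions}, which provides $c>0$ (uniform on compact subsets of $V \sm V_b^{<\max}$) with $\|\Ad(g_{b_1^n})v\| \geq c\,\|\Ad(g_{b_1^n})\|$. Dividing by $\|\Ad(g_{b_1^n})v\|$ instead of $\|\Ad(g_{b_1^n})\|$ still yields convergence to $0$, and then $\dist([\Ad(g_{b_1^n})v],[W]) \asymp_\times \dist(\Ad(g_{b_1^n})v,W)/\|\Ad(g_{b_1^n})v\|$ (a standard comparison between affine and projective distance) finishes the proof. The conceptually delicate point is the identification of the similarity rate on $W$ with the top Lyapunov exponent $\chi_1$, which is what welds assumptions \textup{(I)} and \textup{(II)} together and makes $W$ play the role of the deterministic ``subspace of maximal expansion'' discussed after Theorem \ref{thm: new main}; everything else is a direct consequence of Oseledec and the $\Ad$-invariance of $W$.
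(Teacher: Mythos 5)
Your proposal is correct and follows essentially the same route as the paper's proof: decompose $v$ as a $W$-component plus a $V^{<\max}_b$-component, use the $\Ad$-invariance of $W$ to bound $\dist(\Ad(g_{b_1^n})v,W)$ by the image of the $V^{<\max}_b$-part, estimate that by $O(e^{n(\chi_2+\vre)})=o(\|\Ad(g_{b_1^n})\|)$ via Oseledec, and deduce \eqref{eq: second assertion} from Proposition \ref{prop: under assumptions}. Your explicit identification of the similarity rate on $W$ with $\chi_1$ is a correct (and slightly more detailed) justification of the step the paper leaves implicit.
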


\begin{proof}
By assumption \textup{(I)}, we can choose $w\in W$ such that $v-w\in V^{<
  \max}_b$. Again by \textup{(I)}, we have $\Ad(g_{b_1^n}v)w \in W$
for all $n$. Thus for any $0<\vre < \chi_1-\chi_2$, we have 
\[
\begin{split}
\dist(\Ad(g_{b_1^n})v, W) & \leq
\|\Ad(g_{b_1^n}) v - \Ad(g_{b_1^n} )w\| = \|\Ad(g_{b_1^n})(v-w)\| \\
& =
O\left(e^{n(\chi_2 + \vre)} \right) = 
o\left(\|\Ad(g_{b_1^n})\| \right).
\end{split}
\]
This establishes \eqref{eq: first assertion}. Equation \eqref{eq:
  second assertion} and the final assertion follow from combining with
Proposition \ref{prop: under assumptions}. 
\end{proof}

\begin{prop}
\name{propositionV1empty}
Assume that \textup{(I)} and \textup{(III)} hold, and fix $d = 1,\ldots,\dim(G) - 1$ and $v\in V^{\wedge d} \sm \{0\}$. Then we have $v\notin V^{\leq 0}_b$ for $\beta$-a.e. $b \in \B$, and if $d=1$ then $v \notin V^{< \max}_b$ for $\beta$-a.e. $b \in \B$.
\end{prop}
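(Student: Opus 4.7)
The plan is to argue by contradiction: suppose some nonzero $v_0\in V^{\wedge d}$ satisfies $p(v_0)\df\beta(\{b:v_0\in V^{\leq 0}_b\})>0$, and for $d=1$ argue the stronger statement with $V^{<\max}_b$ in place of $V^{\leq 0}_b$. First, using the equivariance $V^{\leq 0}_{Tb}=\rho_d(g_{b_1})V^{\leq 0}_b$ from Theorem \ref{theoremoseledets} and the independence of $b_1$ from $Tb$ under $\beta$, one obtains the harmonicity relation $p(v)=\int_G p(\rho_d(g)v)\,\dee\mu(g)$ for every $v\neq 0$. Since $V^{\leq 0}_b$ is a closed subspace, a reverse-Fatou argument shows that $p$ is upper semicontinuous in $v$; being scale-invariant it descends to a u.s.c.\ function $\tilde p$ on the compact projective space $\mathbb{P}(V^{\wedge d})$.

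The core of the argument is a martingale/attraction step, which I describe for $d=1$ (with $V^{<\max}_b$). Set $M_n\df p(\rho(g_{b_1^n})v_0)$; by harmonicity this is a $[0,1]$-valued martingale, so $M_n\to M_\infty$ almost surely with $\mathbb{E}[M_\infty]=p(v_0)>0$. On the complementary event $\{v_0\notin V^{<\max}_b\}$, Proposition \ref{prop: tending to W} gives $[\rho(g_{b_1^n})v_0]\to[W]$ in $\mathbb{P}(V)$; since $W$ is complementary to $V^{<\max}_b$ by \textup{(I)}, every nonzero $w\in W$ satisfies $p(w)=0$, so $\tilde p\equiv 0$ on $[W]$, and u.s.c.\ forces $M_\infty=0$ on that event. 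Combining $\mathbb{E}[M_\infty]=p(v_0)$ with $M_\infty\leq 1$ then forces $M_\infty=1$ almost surely on $\{v_0\in V^{<\max}_b\}$, and by compactness of $\mathbb{P}(V)$ some subsequential projective limit $[v^*]$ of $[\rho(g_{b_1^n})v_0]$ on this event satisfies $\tilde p([v^*])=1$ by u.s.c.

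Hence $V^{**}\df\{v\in V^{\wedge d}:p(v)=1\}$ is nontrivial. It is a closed linear subspace (closure is immediate from u.s.c.; additive closure follows because $p(v_i)=1$ means $v_i\in V^{<\max}_b$ on a full-measure set, hence so is $v_1+v_2$) and is $\Gamma^+$-invariant (for $v\in V^{**}$ and $g\in E=\supp(\mu)$, harmonicity combined with $p\leq 1$ gives $p(\rho_d(g)v)=1$ for $\mu$-a.e.\ $g$, and the set of such $g$ is closed by u.s.c., so it equals all of $E$). The $\Gamma^+$-orbit of $V^{**}$ is the singleton $\{V^{**}\}$, so by \textup{(III)} we have $V^{**}\cap W^{\wedge d}\neq\{0\}$; but any nonzero $w$ in this intersection satisfies $w\notin V^{<\max}_b$ almost surely by \textup{(I)}, whence $p(w)=0$, contradicting $V^{**}\subset\{p=1\}$. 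For $d=1$ this proves the stronger statement, which implies the $V^{\leq 0}_b$ statement because $\chi_1>0$ forces $V^{\leq 0}_b\subset V^{<\max}_b$.

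The hard part will be adapting the attraction step to $d>1$, where Proposition \ref{prop: tending to W} does not directly apply. The plan is to establish the following analog: on the event $\{v_0\notin V^{\leq 0}_b\}$ every projective subsequential limit of $[\rho_d(g_{b_1^n})v_0]$ lies in a closed subset of $\mathbb{P}(V^{\wedge d})$ on which $\tilde p\equiv 0$ (e.g.\ a neighborhood of $[W^{\wedge d}]$), obtained by combining the exponential growth of $\|\rho_d(g_{b_1^n})v_0\|$ on that event with Proposition \ref{prop: stationary algebraic} and the transversality of $W^{\wedge d}$ to $V^{\leq 0}_b$ from \textup{(I)}. Once this attraction is in hand, the construction of the nontrivial $\Gamma^+$-invariant subspace $V^{**}$, the application of \textup{(III)} to produce a nonzero intersection with $W^{\wedge d}$, and the final contradiction via \textup{(I)} proceed verbatim.
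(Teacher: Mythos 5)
There is a genuine gap, and it sits exactly where you flag it: the case $d>1$. For $d=1$ your martingale argument is essentially sound (harmonicity of $p$ via \equ{eq: equivariance} and Fubini, upper semicontinuity via reverse Fatou, attraction to $[W]$ killing $M_\infty$ off the bad event, and the invariant subspace $V^{**}=\{p=1\}$ contradicting \textup{(III)} and \textup{(I)}), but note that it invokes Proposition \ref{prop: tending to W}, which requires assumption \textup{(II)}; the proposition you are proving assumes only \textup{(I)} and \textup{(III)}, and it is applied under exactly those hypotheses in Proposition \ref{prop: stationary algebraic}, so even the $d=1$ case as you have written it proves a weaker statement than needed. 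For $d>1$ the proposed attraction step does not exist: assumption \textup{(I)} gives only the transversality $V_b^{\leq 0}\cap W^{\wedge d}=\{0\}$, not complementarity, and the projective images $[\rho_d(g_{b_1^n})v_0]$ of a vector outside $V_b^{\leq 0}$ are attracted to the top Oseledec direction of $V^{\wedge d}$, which need not lie anywhere near $[W^{\wedge d}]$; so there is no closed set containing all subsequential limits on which $\tilde p\equiv 0$. Worse, your plan appeals to Proposition \ref{prop: stationary algebraic}, which in this paper is \emph{deduced from} the present proposition, so that route is circular.

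The argument actually used avoids growth and attraction estimates entirely and uses only \textup{(I)} and \textup{(III)}, uniformly in $d$: one forms the Ces\`aro averages $\nu_N=\frac{1}{N}\sum_{i<N}\mu^{*i}*\delta_{[v]}$ on $\mathbb{P}(V^{\wedge d})$, shows via \equ{eq: equivariance} that $\beta(\{b: v\in V^{\leq 0}_b\})=\int_\B \nu_N([V^{\leq 0}_b])\,\dee\beta(b)$ for every $N$, and supposes for contradiction that this does not tend to $0$. A weak-* limit of the $\nu_{N_k}$ is then a $\mu$-stationary measure charging some fixed $[V']=[V^{\leq 0}_{b_0}]$; passing to an ergodic component and taking the minimal dimension $k$ of a subspace of positive measure, the restriction to $k$-dimensional subspaces is atomic, so the ergodic component is carried by a finite $\supp(\mu)$-invariant family $\{\subsp_1,\ldots,\subsp_r\}$. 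Then \textup{(III)} forces $\subsp_i\cap W^{\wedge d}\neq\{0\}$, and \textup{(I)} forces $\dim(\subsp_i\cap V')<k$, hence $\nu'_\infty([\subsp_i\cap V'])=0$, a contradiction. Your endgame (finite invariant family, then \textup{(III)} plus \textup{(I)}) matches this, but the mechanism producing the invariant object must be the stationary-measure/ergodic-decomposition one, not the martingale-plus-attraction one, if the proposition is to hold for all $d$ under the stated hypotheses.
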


\begin{proof}
The proofs for $d=1$ and $d>1$ are identical, exchanging everywhere
$V^{\leq 0}$ for $V^{<\max}$ and $\rho_d$ for $\Ad$. For concreteness
we prove the assertion for $d=1$. Fix $v\in V \sm \{0\}$, and let
$\mu^{\ast i} * \delta_{[v]}$ denote the pushforward of $\mu^{\otimes
  i}$ under the map $b_1^i \mapsto [\Ad(g_{b_1^i})v]$, or equivalently
the pushforward of $\mu^{\ast i}\otimes \delta_{[v]}$ under the map
$(g,[v]) \mapsto [\Ad(g)v]$. For each $N\geq 1$, let 
\[
\nu_N = \frac{1}{N} \sum_{i = 0}^{N - 1} \mu^{\ast i}\ast \delta_{[v]},
\]
which is a probability measure on the compact space
$\mathbb{P}(V^{})$. By the equivariance property \equ{eq:
  equivariance}, for all $n$ and $b^n_1 \in E^n$, for
$\beta$-a.e. $b' \in \B$ we have 
\[
\Ad(g_{b^n_1} ) v \in V^{< \max}_{b'}\ \Longleftrightarrow \ v \in
V^{< \max}_{b_1^n b'}.
\]
A straightforward induction and Fubini's theorem imply that for all $i \geq 0$, we have
\[
\int_\B \delta_{[v]}([V^{< \max}_b]) \, \dee \beta(b) = \int_\B
\mu^{\ast i}*\delta_{[v]}([V^{< \max}_b]) \, \dee \beta(b),
\]
and hence, for all $N \geq 1$, we have
\eq{eq: clearly}{
\begin{split}
\beta\left( \left\{b \in \B : v \in V^{< \max}_{b} \right\}
\right) & = \int_{\B}
\delta_{[v]} ([V^{< \max}_{b}]) \, \dee \beta(b) \\
& = \int_\B \nu_N ([V^{< \max}_{b}]) \, \dee
\beta(b).
\end{split}
}

We need to show that \equ{eq: clearly} is zero. Applying the Lebesgue dominated convergence theorem to the functions $b \mapsto \nu_N([V^{< \max}_{b}]) \leq 1,$ it suffices to show that for $\beta$-a.e. $b\in\B$, we have $\nu_N([V^{< \max}_b]) \to_{N \to \infty} 0$. Suppose the contrary. Then there exist $\vre>0$ and a set $\B_0 \subset \B$ with $\beta(\B_0)>0$, such that for each $b \in \B_0$, there is a subsequence $N_k \to \infty$ with $\nu_{N_k}([V^{< \max}_b]) \geq \vre$. We can further assume that $\B_0$ is contained in the set of full $\beta$-measure which appears in assumption \textup{(I)}. Let $V' = V^{< \max}_{b_0}$ for some $b_0 \in \B_0$, let $(N_k)_{k\in\N}$ be the corresponding subsequence, and let $\nu_\infty$ be a weak-* limit point of the sequence $(\nu_{N_k})_{k\in\N}$. Then $\nu_\infty$ is $\mu$-stationary and satisfies $\nu_\infty([V'])\geq \vre$. According to the ergodic decomposition theorem for stationary measures (see e.g. \cite[\S 3]{FurstenbergKifer}), there is an ergodic component $\nu'_\infty$ of $\nu_\infty$ satisfying 
$\nu'_\infty([V']) > 0$. Let $k\leq \dim V^{}$ be the smallest number such that some $k$-dimensional subspace of $V^{}$ is given positive measure by $\nu'_\infty$. Then any two distinct $k$-dimensional subspaces of $V^{}$ intersect in a measure zero set, so $\nu'_\infty$ acts as an additive atomic measure on the set of all such subspaces. Since finite atomic stationary ergodic measures are supported on finite sets invariant under the semigroup, there exists a finite $\supp(\mu)$-invariant collection of subspaces $\{\subsp_1,\ldots,\subsp_r\}$ whose union contains the support of $\nu'_\infty$. Now by assumption \textup{(III)}, each of the subspaces $\subsp_i$ intersects $W^{}$ nontrivially. So by assumption \textup{(I)}, $\subsp_i\cap V' \subsetneqq \subsp_i$ is of dimension strictly less than $k$, and thus $\nu'_\infty([\subsp_i\cap V']) = 0$. So $\nu'_\infty([V']) = 0$, a contradiction.
\end{proof}

\begin{proof}[Proof of Proposition \ref{prop: positivity}]
Fix $\alpha > 0$ to be specified below. By Proposition \ref{propositionV1empty}, for each $v' \in V^{\wedge d} \sm \{0\}$ there exist $\vre_0 = \vre_0(v')$ and $B_0 = B_0(v') \subset B$ such that $\beta(B_0) \geq 1-\alpha$ and for all $b \in B_0$, $\dist([v'], [V^{\leq 0}_b]) \geq \vre$. Choose $\vre_1(v') \in (0, \vre_0(v'))$. Then there is a neighborhood $\mathcal{U} = \mathcal{U}_{v'}$ of $[v']$ in $\mathbb{P}(V^{\wedge d})$ such that for all $b \in B_0(v')$ and $v \in V^{\wedge d} \sm \{0\}$ with $[v]\in \mathcal{U}$, we have $\dist([v], [V^{\leq 0}_b]) \geq \vre_1(v').$ Since the projective space $\mathbb{P}(V^{\wedge d})$ is compact, there exist a finite cover $\{\mathcal{U}_1, \ldots, \mathcal{U}_k\}$ of $\mathbb{P}(V^{\wedge d})$, a finite collection $\{B_1, \ldots, B_k\}$ of subsets of $B$ such that $\beta(B_j) \geq 1-\alpha$ for all $j$, and $\vre_1>0$ such that for all $j = 1,\ldots,k$, $b \in B_j$, and $v \in V^{\wedge d} \sm \{0\}$ with $[v] \in \mathcal{U}_j$, we have $\dist([v], [V^{\leq 0}_b]) \geq \vre_1$.

Choose $\chi > 0$ strictly less than the smallest positive Lyapunov exponent of $V^{\wedge d}$. By the uniformity in Theorem \ref{theoremoseledets}, for each $j$ there exists $n_j$ such that for all $n \geq n_j$, $v \in V^{\wedge d} \sm \{0\}$ with $[v]\in\mathcal{U}_j$, and $b \in B_j$, we have
\[
\|\rho_d(g_{b_1^n}) v \| \geq e^{n\chi} \|v\|.
\]
Let $N = \max_j n_j$. For each $v \in V^{\wedge d} \sm \{0\}$ and $n \geq N$ let
\[
S = S_{n,v} = \{b_1^n \in E^n : \|\rho_d(g_{b_1^n}) v\| \geq e^{n\chi} \|v\|\}.
\]
Note that if $[v] \in \mathcal{U}_j$ and $b \in B_j$ then $b_1^n \in S_{n,v}$ for all $n\geq N$. Since $\beta(B_0(v_j)) \geq 1-\alpha$ we obtain that $\mu^{\otimes n}(S) \geq 1-\alpha.$
\ignore{
By Theorem \ref{theoremoseledets}, for $\beta$-a.e. $b\in\B$ we have $\lim_{n \to \infty} \frac{\log \|\rho_d(g_{b_1^n}) \|}{n} > \chi$, and hence there exists $q_1 \geq 1$ such that
\[
\beta(\{b\in B : \forall n\geq q_1, \; \|\rho_d(g_{b_1^n})\| \geq
e^{n\chi}\}) \geq 1 - \alpha.
\]
For $v\in V\sm \{0\}$, $n \in \N$ and $c>0$, let
\[
S_{n,v, c} = \{(g_1,\ldots,g_n)\in G^n : \|\rho_d(g_1\cdots g_n) v\| \geq
c e^{n\chi} \|v\|\}.
\]
It follows from Theorem \ref{theoremoseledets} and Proposition \ref{propositionV1empty} that there exist $c_0$ and $q_0 \geq q_1$ such that if $n \geq q_0$ then for all $v \in V\sm \{0\}$ we have $\mu^{\otimes n}(S_{n,v, c_0}) \geq 1 - 2\alpha$. 
}
Thus we find:
\begin{align*}
& \frac 1n\int_G \log\frac{\|\rho_d(g) v\|}{\|v\|} \;\dee \mu^{*n}(g)
= \frac 1n\int_{E^n} \log\frac{\|\rho_d(g_{b_1^n}) v\|}{\|v\|}
\;\dee \mu^{\otimes n}(b_1^n)\\
\geq & \frac 1n \int_S \log(e^{n\chi})\;\dee \mu^{\otimes n} + \frac{1}{n} \int_{E^n\sm S} \log\|\rho_d(g_{b_1^n})^{-1}\|^{-1}
\;\dee \mu^{\otimes n} (b_1^n)\\
\geq & \frac 1n \big[(1 - \alpha) n \chi-
\alpha \, n\log\max_{g\in \supp(\mu)} \|\rho_d(g)^{-1}\|\big]\\
= & (1 - \alpha)\chi - \alpha \log\max_{g\in \supp(\mu)}
\|\rho_d(g)^{-1}\|.
\end{align*}
To finish the proof, choose $\alpha$ small enough so that the last expression is a positive number independent of $v$.
\end{proof}

\begin{proof}[Proof of Proposition \ref{prop: cor 5.5}]
Fix $\alpha,\eta > 0$. By Proposition \ref{propositionV1empty} and a compactness argument similar to the one used in the proof of Proposition \ref{prop: positivity}, there exists $\vre > 0$ such that for all $v\in V\sm\{0\}$,
\[
\beta(\{b\in B : \dist([v],[V^{< \max}_b]) \geq \vre \}) \geq 1 - \alpha/2.
\]
Now for each $b\in B$, let $N(b)$ be the smallest integer with the following property: for all $v\in V$ such that $\dist([v],[V^{< \max}_b]) \geq \vre$ and for all $n\geq N(b)$, we have $\|\Ad(g_{b_1^n}) v\| \geq \frac{1}{N(b)} \|\Ad(g_{b_1^n})\| \; \|v\|$ and $\dist([\Ad(g_{b_1^n})v],[W]) \leq \eta$. Then by Propositions \ref{prop: under assumptions} and \ref{prop: tending to W}, $N(b) < \infty$ for $\beta$-a.e. $b\in B$. Therefore there exists $N_0$ such that
\[
\beta(\{b\in B : N(b) \leq N_0\}) \geq 1 - \alpha/2.
\]
Now fix $v\in V\sm\{0\}$. For all $b\in B$ such that $\dist([v],[V^{< \max}_b]) \geq \vre$ and $N(b) \leq N_0$, and for all $n \geq N_0$, we have $\|\Ad(g_{b_1^n}) v\| \geq \frac{1}{N_0} \|\Ad(g_{b_1^n})\| \; \|v\|$ and $\dist([\Ad(g_{b_1^n})v],[W]) \leq \eta$. These facts demonstrate (a) and (b) respectively.
\ignore{

Follows from Propositions \ref{prop: under assumptions}, \ref{prop: tending to W} and \ref{propositionV1empty} by Egorov's theorem and a compactness argument. See \cite[\S 5]{BenoistQuint3} for details.
}
\end{proof}

\begin{proof}[Proof of Proposition \ref{prop: stationary algebraic}]
Let $\nu$ be a $\mu$-stationary probability measure on $V^{\wedge d}$ which is not
equal to the Dirac measure $\delta_0$, let $Z = B \times V^{\wedge
  d}$, let $\lambda = \beta \otimes \nu$, and let 
\[
Y = \{(b, v) \in Z: v \notin V^{\leq 0}_b\}.
\]
According to Proposition \ref{propositionV1empty},
$\lambda(Y) = 1$. Define $\hat{T}: Z \to Z$ by $\hat{T}(b,v) = (Tb,
\rho_d(g_{b_1})v)$. Since $\nu$ is $\mu$-stationary, $\lambda$ is
$\hat{T}$-invariant. By the definition of $Y$, for every $(b,v) \in Y$
we have $\|\rho_d(g_{b_1^n})v \| \to \infty$. Let $t > 0$ be large
enough so that $\lambda(Y_0)>0$, where 
\[Y_0 = \{(b,v) \in Y: \|v\| \leq t\}.\]
Then for all $(b,v) \in Y_0$, for all $n$ large enough we have
$\hat{T}^n(b,v) \notin Y_0$, and we get a contradiction to the
Poincar\'e recurrence theorem. 
\end{proof}

The following observation will also be useful. 
\begin{prop}\name{prop: subalgebra}
Under assumptions \textup{(I)} and \textup{(II)}, the subspace $W = W^{\wedge 1}
\subseteq V = \Lie(G)$ is abelian, and in particular is a
subalgebra. 
\end{prop}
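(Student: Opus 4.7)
The plan is to show $[u,v] = 0$ for all $u,v \in W$, which gives that $W$ is an abelian subalgebra. The argument combines $\Ad$-equivariance of the Lie bracket with the similarity structure from assumption (II) to produce an exponential decay, which is then contradicted by recurrence of random walks on compact groups.

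First, I would use the decomposition $\Ad(g)|_W = \lambda_g O_g$ with $\lambda_g > 0$ and $O_g$ orthogonal on $W$ (assumption (II)). By $\Ad$-equivariance and bilinearity of the bracket, iterating yields the key identity
\[
\Ad(g_{b_1^n})[u,v] = [\Ad(g_{b_1^n})u,\, \Ad(g_{b_1^n})v] = \Lambda_n^2 \, [\tilde O_n u,\, \tilde O_n v],
\]
where $\Lambda_n \df \lambda_{g_{b_1}}\cdots\lambda_{g_{b_n}}$ and $\tilde O_n$ is an orthogonal transformation of $W$. The square on $\Lambda_n$ is the crux: the bracket picks up a quadratic scaling where a linear map would see only a linear one. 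Now, assumption (I) for $d=1$ makes $W$ complementary to $V^{< \max}_b$, so Proposition \ref{prop: under assumptions} applied to a unit vector of $W$ yields $\|\Ad(g_{b_1^n})\| \asymp \Lambda_n$ (with implicit constants depending on $b$). Combined with the identity,
\[
\Lambda_n^2 \, \|[\tilde O_n u,\, \tilde O_n v]\| = \|\Ad(g_{b_1^n})[u,v]\| \leq \|\Ad(g_{b_1^n})\|\cdot\|[u,v]\| \lesssim_b \Lambda_n,
\]
so $\|[\tilde O_n u, \tilde O_n v]\| \lesssim_b \Lambda_n^{-1}$. By the strong law of large numbers applied to the iid random variables $\log\lambda_{g_{b_i}}$, $\Lambda_n$ grows exponentially at rate $\chi_1 = \int_G \log\lambda_g \,\dee\mu > 0$ (assumption (II)), so $\|[\tilde O_n u, \tilde O_n v]\| \to 0$ exponentially fast for $\beta$-a.e. $b$.

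Finally, I would exploit that $(\tilde O_n)$ is a right random walk on the compact orthogonal group of $W$. The closed subsemigroup of $\mathrm{O}(W)$ generated by $\{O_g : g \in E\}$ is automatically a closed subgroup $H$ (closed subsemigroups of compact groups are subgroups), and in particular $\id \in H$. By the Kawada--Ito theorem the step distributions converge in Cesaro average to the Haar measure on $H$, and the associated skew-product shift on $\B \times H$ is ergodic, so for $\beta$-a.e. $b$ the trajectory $(\tilde O_n)$ visits every neighborhood of $\id$ infinitely often. If $[u,v]$ were nonzero, continuity of the bracket would give a neighborhood $U$ of $\id$ on which $\|[\tilde O u,\tilde O v]\| \geq \|[u,v]\|/2 > 0$, contradicting the exponential decay of $\|[\tilde O_n u,\tilde O_n v]\|$ along the subsequence of return times $\tilde O_n \in U$. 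Hence $[u,v] = 0$ for all $u,v \in W$.

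The main obstacle is the recurrence step: while the exponential decay and its algebraic source (the quadratic-versus-linear growth asymmetry) are immediate from the key identity, closing the argument needs the classical fact that a random walk on a compact group returns to every neighborhood of the identity almost surely. Everything else reduces to the single equivariance identity and Lyapunov-exponent comparisons already available via Proposition \ref{prop: under assumptions}.
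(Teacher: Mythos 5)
Your argument is correct, and it reaches the conclusion by a different technical route than the paper, though both exploit the same tension. The paper's proof fixes a single large $n$, complexifies, and argues spectrally: by (I) and Oseledec, for large $n$ the induced operator on $V/W$ has norm strictly less than $\|\Ad(g_{b_1^n})|_W\|>1$, so every eigenvalue of $\Ad(g_{b_1^n})$ has modulus at most $\|\Ad(g_{b_1^n})|_W\|$; since $\Ad(g_{b_1^n})|_W$ is normal with all eigenvalues of modulus exactly $\|\Ad(g_{b_1^n})|_W\|$, the bracket of two eigenvectors in $W\otimes\C$ would be an eigenvector with eigenvalue of modulus $\|\Ad(g_{b_1^n})|_W\|^2$, which is too large, so it vanishes. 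Your version replaces this fixed-$n$ eigenvalue count by the asymptotic norm estimate $\Lambda_n^2\|[\tilde O_nu,\tilde O_nv]\|\lesssim_b\Lambda_n$, which is the same quadratic-versus-linear asymmetry expressed through Lyapunov growth rather than spectra; it avoids complexification and normal-operator diagonalization at the cost of a limiting argument at the end.

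Two remarks on that last step. First, the recurrence you identify as the ``main obstacle'' is not actually needed: since $\mathrm{O}(W)$ is compact, some subsequence $\tilde O_{n_k}$ converges to an orthogonal (hence invertible) $O_\infty$, and the decay forces $[O_\infty u,O_\infty v]=0$ for all $u,v\in W$; as $O_\infty(W)=W$, this already gives $[u',v']=0$ for all $u',v'\in W$. Second, if you do want the recurrence statement, your justification via ergodicity of the skew product on $\B\times H$ is an over-claim: that extension need not be ergodic (e.g.\ when the rotation parts all lie in a nontrivial coset of a proper closed normal subgroup of $H$). What is true, and suffices, is Poincar\'e recurrence for the measure-preserving map $(b,h)\mapsto(Tb,O_{g_{b_1}}h)$ on $(\B\times H,\beta\otimes m_H)$ applied to $\B\times V$ with $VV^{-1}\subset U$; this yields that for $\beta$-a.e.\ $b$ the sequence $\tilde O_n$ enters $U$ infinitely often. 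With either repair the proof is complete.
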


\begin{proof}
Let $b\in\B$ belong to the subset of full $\beta$-measure for which
the conclusion of Theorem \ref{theoremoseledets} holds. Denote by
$\bar{g}_{b_1^n}$ the induced action of $g_{b_1^n}$ on the quotient
space $V/W$. Then for all
large enough $n$, by assumption (I) we have 
\[
\|\bar{g}_{b_1^n}\| < \|g_{b_1^n}|_W\|
\]
and by assumption (II) we have
\[
\|g_{b_1^n}|_W\| > 1.
\]
It follows that the eigenvalues of $g_{b_1^n}$ all have modulus $\leq
\|g_{b_1^n}|_W\|$, and by assumption (II), $g_{b_1^n}|_W$ is normal
and its eigenvalues all have modulus equal to $\|g_{b_1^n}|_W\|$. Now
if $w_1,w_2\in W\otimes\C$ are eigenvectors corresponding to
eigenvalues $\lambda_1,\lambda_2$, then $[w_1,w_2]$ is either 0 or an
eigenvector with corresponding eigenvalue $\lambda_1\lambda_2$. But
since $|\lambda_1\lambda_2| = \|g_{b_1^n}|_W\|^2 > \|g_{b_1^n}|_W\|$,
the latter case is impossible, so $[w_1,w_2] = 0$. 
\end{proof}

\section{Modifying the arguments of Benoist--Quint}
In this section we will outline how to prove Theorem \ref{thm: new
  main} by adapting the arguments of Benoist and Quint. A crucial input
to the work of Benoist and Quint was some information on the action of
random matrices. We have already proved the analogous results required
in our setup in \S
\ref{sec: positivity}. The other arguments appearing in \cite{BenoistQuint3} can be easily
adapted to our new setup. There are many modifications but all of them
are minor. A self-contained treatment would have required many pages,
consisting largely of arguments due to Benoist and Quint, and hence we
will simply refer to \cite{BenoistQuint3} and take note of which parts
of \cite{BenoistQuint3} need to be modified to deal with our
setup. This will show that the conclusion of
\cite[Theorem~1.1]{BenoistQuint3} is valid in our setup, which, as we
will see, implies part (i) of our theorem. It will also show that
\cite[Lemma~6.3]{BenoistQuint3} is valid in our setup, a fact which we
will use in the proof of part (ii) of our theorem.

\begin{proof}[Proof of Theorem \ref{thm: new main}(i)]
We begin by comparing Theorem \ref{thm: new main}(i) with
\cite[Theorem~1.1]{BenoistQuint3}. The differences in the statements
of the theorems can be summarized as follows: 
\begin{itemize}
\item[1.] In \cite[Theorem~1.1]{BenoistQuint3}, it is assumed that the
  Zariski closure $\Hmu$ of $\Gamma$ is semisimple with no compact
  factors, while in Theorem \ref{thm: new main}(i), for each $d =
  1,\ldots,\dim(G) - 1$ we assume the existence of a subspace 
  $W^{\wedge d} \subset V^{\wedge d}$ satisfying (I)-(III). 
\item[2.] In \cite[Theorem~1.1]{BenoistQuint3}, it is assumed that $G$
  is connected and simple, while in Theorem \ref{thm: new main}(i), we
  assume only that the identity component of $G$ is simple and that
  $\Gamma$ acts transitively on the connected components of $X =
  G/\Lambda$. 
\item[3.] The conclusion of \cite[Theorem~1.1]{BenoistQuint3} states
  only that the only \emph{nonatomic} $\mu$-stationary probability measure is
  $m_X$, while the conclusion of Theorem \ref{thm: new main}(i) states
  that $m_X$ is the only $\mu$-stationary probability measure, meaning that there
  are no atomic $\mu$-stationary measures. However, in Theorem
  \ref{thm: new main}(i) we also assumed that $\Gamma$ is
not virtually contained in any lattice conjugate to $\Lambda$.
\end{itemize}

Regarding (3), in the context of Theorem \ref{thm: new main}(i), the assumption on $\Gamma$ implies that for all $x\in X$, the orbit $\Gamma x$ is infinite. This in turn implies that $X$ does not admit any atomic $\mu$-stationary measure. 

Regarding (2), the only place where the connectedness assumption is
used in \cite{BenoistQuint3} is in the proof of
\cite[Lemma~8.2]{BenoistQuint3}. There, it is claimed that
\cite[Proposition~6.7]{BenoistQuint3} implies (a) that $G_\alpha = G$,
but as stated, the conclusion of this proposition gives only (b) that
the Lie algebra of $G_\alpha$ is a (nontrivial) ideal in the Lie
algebra of $G$. However, under Benoist--Quint's assumption that $G$ is
connected and simple, (b) implies (a). 

Now suppose that the identity component of $G$ is simple, that
$\Gamma$ acts transitively on the connected components of $X =
G/\Lambda$, and that (b) holds. Then $G_\alpha$ contains $G_0$, the
identity component of $G$, and thus since $\alpha$ is fixed by
$G_\alpha$, it follows that $\alpha$ is a linear combination of the
$G_0$-invariant probability measures on the connected components of $X$. Now let
$\alpha'$ be the projection of $\alpha$ onto the set of connected
components of $X$. Then $\alpha'$ is $\mu$-stationary, so since a
stationary measure on a finite set is invariant, $\alpha'$ is
$\Gamma$-invariant. Since $\Gamma$ acts transitively on the connected
components of $X$, it follows that $\alpha'$ is the uniform measure
and thus that $\alpha = m_X$ and $G = G_\alpha$. Thus, the inference
from (b) to (a) is valid in our setting as well and we do not need to
assume that $G$ is connected. 

Regarding (1), the assumption that $\Hmu$ is semisimple with no
compact factors is used only in three places in \cite{BenoistQuint3}: 
\begin{itemize}
\item[1a.] Benoist and Quint refer to Furstenberg and Kesten
  \cite{FurstenbergKesten} for the proof of
  \cite[Proposition~5.2]{BenoistQuint3}. The reference
  \cite{FurstenbergKesten} assumes that $\Hmu$ is semisimple with no
  compact factors. 
\item[1b.] Benoist and Quint refer to Eskin and Margulis
  \cite{EskinMargulis} in two places in \cite[\S
  6]{BenoistQuint3}. The reference \cite{EskinMargulis} uses the
  Furstenberg--Kesten theorem on the positivity of the first Lyapunov
  exponent \cite[Lemma~4.1]{EskinMargulis}, which assumes that $\Hmu$
  is semisimple with no compact factors. \cite{EskinMargulis} also
  uses the assumption of semisimplicity directly in the proof of
  \cite[Proposition~2.7]{EskinMargulis}. 
\item[1c.] The proof of \cite[Lemma~6.8]{BenoistQuint3} refers to \cite{FurstenbergKesten} as well as using the assumption that $\Hmu$ is semisimple directly.
\end{itemize}

Regarding (1c), the only place where \cite[Lemma~6.8]{BenoistQuint3}
is needed is in the proof of \cite[Proposition~6.7]{BenoistQuint3},
where only the cases $V = V^{\wedge d}$ ($d = 1,\ldots,\dim(G)$) are
needed. So it suffices to show that the conclusion of
\cite[Lemma~6.8]{BenoistQuint3} holds for these spaces. Since $G$ is
unimodular, it is obvious that \cite[Lemma~6.8]{BenoistQuint3} holds
for the top-level space $V = V^{\wedge \dim G} \cong \R$, and for $d =
1,\ldots,\dim(G) - 1$, it is immediate from Proposition \ref{prop:
  stationary algebraic} that \cite[Lemma~6.8]{BenoistQuint3} holds for
the space $V = V^{\wedge d}$. 

Regarding (1b), we begin by observing that Proposition \ref{prop:
  positivity} implies that \cite[Lemma~4.1]{EskinMargulis} is valid in
our setting for the representations $(V,\rho) = (V^{\wedge d},\rho_d)$
($d = 1,\ldots,\dim(G) - 1$). Thus the same is true for
\cite[Lemma~4.2]{EskinMargulis}, which is proven directly from
\cite[Lemma~4.1]{EskinMargulis}. Note that in our context we have $H
\subset \SL^\pm(V)$ automatically, so there is no need to derive it from
semisimplicity as is done in the proof of
\cite[Lemma~4.2]{EskinMargulis}. 

Now, \cite[Lemma~4.2]{EskinMargulis} is used in two places in
\cite{BenoistQuint3}. First of all, it is used in the proof of
\cite[Proposition~6.1]{BenoistQuint3} as
\cite[Lemma~6.2]{BenoistQuint3}. There, the only case that is needed
is the case of the representation $(V,\rho) = (\Lie(G),\Ad) =
(V^{\wedge 1},\rho_1)$ (cf. \cite[\S 6.1]{BenoistQuint3}), which is
valid in our context as noted above. 

Secondly, \cite[Lemma~4.2]{EskinMargulis} is also used indirectly in
the proof of \cite[Lemma~6.3]{BenoistQuint3}, which refers to a
construction in \cite[\S 3.2]{EskinMargulis}, 
\ignore{
\footnote{We take this
  opportunity to record some mathematically relevant typos in \cite[\S
  6.2]{EskinMargulis}: 
\begin{itemize}
\item in the second paragraph $\alpha_1,\ldots,\alpha_r$ should be the basis of simple positive roots
\item in \cite[(26)]{EskinMargulis}, $\omega_k$ should have a coefficient of $c_k$
\item in the sentence after \cite[(30)]{EskinMargulis}, $\sum_j q_j \omega_j$ should be $\sum_j q_j \alpha_j$
\item in \cite[(36)]{EskinMargulis}, $q_k$ should be $c_k q_k$
\end{itemize}
We also mention that the paper \cite{BenoistQuint4} generalizes
\cite{EskinMargulis} and provides another proof of its main
result. \comdavid{Not sure if it is worth it having such a footnote,
  maybe we should just email Eskin--Margulis, since the paper still
  seems to be a preprint.}} }
which in turn depends on
\cite[Condition~A]{EskinMargulis} being satisfied. Now
\cite[Condition~A]{EskinMargulis} can be paraphrased as saying that
the conclusion of \cite[Lemma~4.2]{EskinMargulis} is valid for certain
representations denoted by \cite{EskinMargulis} as $(V_i,\rho_i)$ (not
to be confused with our representations $(V^{\wedge d},\rho_d)$),
whose defining property is that for each $i$ there exists $w_i\in V_i$
such that $\Stab(\R w_i) = P_i$, where $P_i$ is a predetermined
``standard'' parabolic subgroup. But in fact, if we let $d_i$ be the
dimension of the unipotent radical of $P_i$, then our representation
$(V^{\wedge d_i},\rho_{d_i})$ has this same property (taking $w_i$ to
be a volume form for the unipotent radical), and thus we may take
$(V_i,\rho_i) = (V^{\wedge d_i},\rho_{d_i})$. Thus, by Proposition
\ref{prop: positivity} we know that \cite[Lemma~4.2]{EskinMargulis} is
valid for these representations, i.e. that
\cite[Condition~A]{EskinMargulis} is satisfied in our setup. Note that
this proof circumvents the implicit use of semisimplicity in the proof
of \cite[Proposition~2.7]{EskinMargulis}, where it is assumed that any
$H$-invariant subspace of a representation has a complementary
invariant subspace. This argument was needed
in the original proof because of 
the hypothesis of \cite[Lemma~4.1]{EskinMargulis} that $V$ does not
have any $H$-invariant vectors, but since Proposition \ref{prop:
  positivity} does not have such a hypothesis, it is not necessary to
argue that we can reduce to this case as is done in the proof of
\cite[Proposition~2.7]{EskinMargulis}. 

Regarding (1a), we do not claim that
\cite[Proposition~5.2]{BenoistQuint3} is true in our setting, but we
claim instead that after redefining some notation appropriately,
Equation (5.3), Lemma~5.4, and Corollary~5.5 of 
\cite{BenoistQuint3} are all true
in our setting in the case $V = \Lie(G)$. Since these results are the
only results of \cite[\S 5]{BenoistQuint3} which are needed in
subsequent sections, this shows how to circumvent the use of
semisimplicity occurring in (1a). 

The notational changes we want to make to \cite[\S 5]{BenoistQuint3} are as follows:

\begin{itemize}
\item Instead of choosing $P$ to be a minimal parabolic subgroup of
  $G$, we let $P$ be the (not necessarily parabolic) group of $g\in G$
  such that $\Ad(g)$ preserves 
  $W$ and $\Ad(g)|_W$ is a similarity. Note that by assumptions (I)
  and (II), we have $\supp(\mu) \subset P$. 
\item Instead of letting $V$ be an arbitrary representation of $G$, we
  require $V = \Lie(G)$. 

\item Instead of letting $V_0$ be the weight space of the largest
  weight $\chi$, we simply let $V_0 = W$, and instead of letting the
  family $(V_b)_{b\in\B}$ be defined by 
  \cite[Proposition~5.2]{BenoistQuint3}, we let $V_b = W$ for
  all $b\in\B$. Note that by the $\supp(\mu)$-invariance of $W$, we
  have $V_b = b_0 V_{Tb}$ for all $b\in \supp(\beta)$. Also note that
by Proposition \ref{prop: subalgebra}, 
  $V_0=W$ is a Lie subalgebra, and this is necessary in
  order for the concept of a flow indexed by $V_0$ to make sense
  (cf. \cite[\S 6.5]{BenoistQuint3}) and in particular to guarantee
  the existence of conditional measures with respect to this flow
  (cf. \cite[\S 6.6]{BenoistQuint3}). In Benoist--Quint's setup, the
  fact that $V_0$ is a subalgebra follows immediately from the 
  definition of $V_0$. 
\item Since \cite[Proposition~5.2(a)]{BenoistQuint3} is not valid for
  arbitrary representations in our setting, the existence of a map
  $\xi:B \to G/P$ satisfying $\xi(b) = b_0 \xi(Tb)$ is not \emph{a
    priori} clear. In fact, if we had chosen $P$ to be a minimal
  parabolic subgroup of $G$, then it seems unlikely that such a $\xi$
  would exist in general. However, our choice of $P$ guarantees that
  $\supp(\mu) \subset P$ and thus that the constant function $\xi(b) =
  [P]$, where $[P]$ is the
 identity coset in $G/P$, satisfies $\xi(b) = b_0 \xi(Tb)$ for all
 $b\in 
  \supp(\beta)$. So we let $\xi \equiv [P]$. 
\item For convenience we choose the section $s:G/P \to G/U$ so that
  $s([P]) = [U]$, where $[U]$ is the identity coset in $G/U$,
  so that $s(\xi(b)) = [U]$ for all $b\in\B$. This 
  choice implies that $\sigma(zu,\xi(b)) = z$ for all $zu \in P = ZU$
  and $b\in\B$. In particular, we have $\theta(b) = \pi_Z(b_0)$ and
  thus $\theta_\R(b) = \log\|\Ad(b_1)|_W\|$ for all $b\in\B$. (Note
  that in \cite[(5.2)]{BenoistQuint3}, $\chi$ should be understood as
  a homomorphism from $Z$ to $\R$ defined by the formula $\chi(ma) =
  \chi(a)$, where $m\in M = K\cap Z$ and $a\in A$.) 
\end{itemize}
Using this notation, assumption (II) guarantees that
\cite[Lemma~5.4]{BenoistQuint3} holds in our setup. Combining
assumptions (I) and (II) guarantees that the formula
\cite[(5.3)]{BenoistQuint3} holds. Finally, Proposition \ref{prop: cor
  5.5} guarantees that \cite[Corollary~5.5]{BenoistQuint3} holds. 

To summarize, we have shown that the conclusion of 
\cite[Theorem~1.1]{BenoistQuint3} is valid in our setup, and have
shown that it implies part (i) of Theorem \ref{thm: new main}.
\end{proof}

\ignore{

Following \cite[\S 6.7]{BenoistQuint3} and \cite[\S
3.4]{BenoistQuint3}, for each $b \in \B$, we set 
\[
W_b = \{v \in V : \sup_n \left(e^{\theta(b)+\cdots +
\theta(T^{n-1} b)} \|\Ad((b_1\cdots b_n)^{-1}) v\| \right) < \infty\},
\]
\[
W_b(x) = \Big\{y\in X: \dist\left((b_1\cdots b_n)^{-1} x, (b_1\cdots b_n)^{-1} y \right ) \tendsto{n \to \infty} 0\Big\}.
\]
Then for all $b \in \B$, $W_b$ is a vector space such that $W \subset W_b$. Moreover, by assumption (II) we have $\int_G \theta(g) \;\dee\mu(g) > 0$, and thus for $\beta^X$-a.e. $(b,x)\in Z$ we have
\begin{equation}
\label{Wbcontainment}
\exp\big(W_b\big)x \subset W_b(x).
\end{equation}

\medskip

{\bf Step 1 (Limit measures are non-atomic).}
For $\beta^X$-a.e. $(b,x)\in Z$, we claim that $\nu_b(\exp(W_b)x)=0$.

\medskip

}

\ignore{

For $g \in \supp(\mu)$, let $\theta(g)$ such that $\Ad(g)|_W$ is expansion by $ e^{\theta(g)}$. By assumption \textup{(I)}, $\Ad(g_n \cdots g_1)|_W$ is a similarity with expansion $e^{\theta(g_n) + \cdots + \theta(g_1)}$. For any $b \in \B$, we set
\[
W_b = \left\{v \in V : \sup_n \left(e^{\theta(b_1)+\cdots + \theta(b_n)} \|b_n^{-1} \cdots b_1^{-1} v\| \right) < \infty \right\}.
\]

Using hypotheses \textup{(I)} and (V) \combarak{check this}, one can see that for any metric $d$ on $X$ which induces the topology, for $\beta$-a.e. $b\in\B$,
\[
x' \in \exp(W)x \ \implies \
d\left(b_n^{-1} \cdots
b_1^{-1}x, b_n^{-1} \cdots
b_1^{-1}x' \right ) \tendsto{n \to \infty} 0.
\]

\[
W_b(x) = \{x' \in X: \lim_{n \to \infty} d(b_n^{-1} \cdots
b_1^{-1}x, b_n^{-1} \cdots
b_1^{-1}x') \tendsto{n \to \infty} 0\}.
\]
Here $d$ is some metric on $X$ inducing the topology. Note that by \textup{(I)} and (V), $W \subset W_{b}(x)$ for $\beta^X$-a.e. $(b,x)\in Z$. \combarak{Explain this if important.}

\medskip
}

\ignore{
Let $\Delta \subset X \times X$ be the diagonal. According to
\cite[Proposition~3.9]{BenoistQuint3}, Step 0, and
\eqref{Wbcontainment}, to complete Step 1 it suffices to show that
there is a function $v: (X \times X) \sm \Delta \to [0, \infty)$ such
that for any compact set $K \subset X$, the restriction of $v$ to $(K
\times K) \sm \Delta$ is proper, and such that there are constants $a
\in (0,1)$ and $C>0$ such that for all $(x_1, x_2) \in (X \times X)
\sm \Delta$, we have 
\[
A_\mu v(x_1,x_2) \df \int_{G} v(gx_1, gx_2) \, \dee \mu(g) \leq a v(x_1, x_2)+C.
\]
To verify the existence of such a function $v$, we follow \cite[\S
6.2]{BenoistQuint3}, which in turn relies on
\cite{EskinMargulis}. Both of these references assume that the group
generated by $\supp(\mu)$ is Zariski dense in $G$ but only need this
in order to establish the conclusion of Proposition \ref{prop:
  positivity} for the spaces $V^{\wedge d}$ (where $d$ ranges over the
dimensions of unipotent radicals of certain parabolic subgroups of
$G$). So Step 1 is valid in our framework as well. 

\medskip

{\bf Step 2 (The exponential drift).} Let $\sigma(b,x)$ denote the
leaf-wise measure of $\beta^X$ at a point $(b,x) \in Z$ with respect
to the group $U = \exp(W)$. Then for $\beta^X$-a.e. $(b,x) \in Z$, we
claim that $\sigma(b,x)$ is preserved by a nontrivial Lie subalgebra
$V_{b,x} \subset W$. \comdavid{changed ``group'' to ``algebra'' here
  and in some other places} 

\medskip

This claim is a direct analogue of
\cite[Proposition~7.5]{BenoistQuint3}, whose proof uses most of the
tools developed in \cite{BenoistQuint3}. It is possible to prove the
claim by making the following modifications to the proof of
\cite[Proposition~7.5]{BenoistQuint3}: 

\begin{itemize}

\end{itemize}

With these modifications the arguments of \cite{BenoistQuint3} apply.

\medskip

{\bf Step 3 (Completion of the proof).}
Following \cite[\S7.3]{BenoistQuint3}, for each $(b,x)\in Z$ we let $V_{b,x}$ denote the intersection of $W$ and the Lie algebra stabilizer of $\sigma(b,x)$. Since $W$ is a nilpotent Lie algebra, so is each algebra $V_{b,x}$. For each $b\in B$, let $f_b(x) = V_{b,x}$, and let $\nu_b = \int_X \nu_{b,x} \, \dee \nu_b(x)$ be the disintegration of $\nu_b$ with respect to the $\sigma$-algebra $f_b^{-1}(\BB)$ obtained by pulling back the Borel $\sigma$-algebra $\BB$ on the space of closed subgroups of $U$ via the map $f_b$. By \cite[Proposition~7.6]{BenoistQuint3}, for $\beta^X$-a.e. $(b,x)\in Z$, the measure $\nu_{b,x}$ is $V_{b,x}$-invariant and satisfies $\nu_{b,x} = b_{1*} \nu_{Tb, b_1^{-1}x}.$

Let $\mathcal{F}$ denote the space of homogeneous probability measures on $X$, where we call a probability measure $m$ \emph{homogeneous} if there is a nontrivial connected Lie subgroup of $G$ preserving $m$ and acting transitively on $\supp(m)$. By Ratner's classification of measures invariant under unipotent groups, any ergodic invariant measure for the action of any group $U_{b,x}$ on $X$ belongs to $\mathcal{F}$, and $G$ acts on $\mathcal{F}$ with countably many orbits. The natural measure $m_X$ is contained in $\mathcal{F}$ and is a fixed point for the action of $G$. We can write the ergodic decomposition of $\nu_{b,x}$ with respect to the action of $U_{b,x}$ as
\[
\nu_{b,x} = \int_X \zeta(b,y) \, \dee \nu_{b,x}(y),
\]
where $\zeta: Z \to \mathcal{F}$ is a measurable map satisfying (see \cite[\S 8.1]{BenoistQuint3})
\[
\zeta(b,x) = b_{1*} \zeta (Tx, b^{-1}_1x) \ \text{ for }
\beta^X\text{-a.e. } (b,x) \in Z.
\]
Let $\eta = \zeta_* \beta^X$. Then $\eta$ is a $\mu$-stationary probability measure on $\mathcal{F}$.

\combarak{The next paragraph was sketchy and it is now sketchier, I made some modifications to allow groups with finitely many connected components. Suggestions for improvement welcome. } Let $L$ be a proper connected unimodular subgroup of $G$, and let $G_0$ be the identity component of $G$. The proof of \cite[Proposition~6.7]{BenoistQuint3}, using Proposition \ref{prop: stationary algebraic} instead of \cite[Lemma~6.8]{BenoistQuint3}, shows that any $\mu$-stationary measure on $G/L$ must be the image of a finitely supported $\mu$-stationary measure on the finite space $G/G_0$. The proof of \cite[Lemma~8.2]{BenoistQuint3} (using the fact that a stationary measure on a finite set is invariant, and the assumption that $\Gamma x_0$ intersects all of the connected components of $X$ nontrivially) now goes through to show that $\eta$ must be equal to the Dirac measure $\delta_{m_X}$ on $\FF$. This implies that for $\beta^X$-a.e. $(b,x)\in Z$, $\zeta(b,x)$ is equal to $m_X$ and hence so is $\nu$. This completes the proof.
}

\begin{proof}[Proof of Theorem \ref{thm: new main}(ii)]
Suppose first that $X$ is compact. According to Theorem \ref{thm: new main}(i), the only $\mu$-stationary probability measure on $X$ is the $G$-invariant probability measure $m_X$ induced by Haar measure. According to the so-called ``Breiman law of large numbers'' (see e.g. \cite[Chapter~2.2]{BenoistQuint_book}), for all $x \in X$, for $\beta$-a.e. $b \in \B$, the ``empirical measures'' $\frac{1}{N} \sum_{i=1}^N \delta_{g_{b_1^i}x} \; (N\in\N)$ converge to a $\mu$-stationary measure on $X$ as $N \to \infty$. Therefore these measures must converge to $m_X$ and we are done.

In the noncompact case we use results from \cite{BenoistQuint3,BenoistQuint7}. Denote by $\bar{X} = X \cup \{\infty\}$ the one-point compactification of $X$. By Theorem \ref{thm: new main}(i), any $\mu$-stationary probability measure on $\bar{X}$ is a convex combination of $m_X$ and the Dirac measure at the point at infinity. Using again the Breiman law of large numbers we know that for any $x \in X$, for $\beta$-a.e. $b\in\B$, $\frac{1}{N} \sum_{i=1}^N \delta_{g_{b_1^i}x}$ converges to a $\mu$-stationary measure $\nu$ on $\bar{X}$. So it suffices to rule out escape of mass, i.e. to show that $\nu(\{\infty\})=0$. To this end we need to show that for all $x \in X$ and $\vre>0$ there is a compact set $K \subset X$ such that
\[
\liminf_{N \to \infty}
\frac{\#\{i \leq N : g_{b_1^i} x
\in K \}}{N}
> 1-\vre.
\]
According to \cite[Proposition~3.9]{BenoistQuint7}, it suffices to prove the existence of a proper function $u: X \to [0, \infty)$ such that there exist $a \in (0,1)$ and $C>0$ such that for all $x \in X$, we have
\eq{eq: u satisfies}{
\int_G u(gx) \,
\dee \mu(g) \leq au(x)+ C.}
But this is exactly the conclusion of \cite[Lemma 6.3]{BenoistQuint3}, and as we have argued above, this conclusion is valid in our setup as well.
\ignore{
We use the function $u$ defined in \cite[\S 3.2]{EskinMargulis}, where the following was shown: there exist $w_j \in V_{d_j} \, (j=1, \ldots, r)$ such that if for all sufficiently small $\delta$ there exist $c<1$ and $n\geq 1$ such that for all $v \in \rho_{d_j}(L)w_j$ we have
\eq{eq: condn EM}{
\int_G \frac{1}{\|\rho_{d_j}(g)v \|^\delta} \, \dee \mu^{*n}(g) \leq \frac{c}{\|v\|^\delta},
}
then $u$ satisfies \equ{eq: u satisfies}.

It was also shown in \cite[\S 4]{EskinMargulis} that in order to
obtain \equ{eq: condn EM} it suffices to prove that there exist $N\geq
1$ and $c>0$ such that for all $n \geq N$ and $v \in V^{\wedge d_j}
\sm \{0\}$, we have 
\[
\frac{1}{n} \int_G \log \frac{\|\rho_{d_j}(g)v\|}{\|v\|} \, \dee
\mu^{*n}(g) > c.
\]
This in turn follows from Proposition \ref{prop: positivity}.
}
\end{proof}

\section{Fiber bundle extensions}\name{sec: fiber bundle extensions}
In this section we will prove Theorem \ref{thm: fiber bundle
  extension}. This will follow from some results valid in a more
general framework. Let $X$ be a locally compact second countable
space, $G$ a locally compact second countable group acting
continuously on $X$, $m$ a $G$-invariant and ergodic probability
measure on $X$, and $\mu$ a probability measure on $G$ with compact
support $E$. Let $B =E^{ \N},\, \bar{\B} = E^{\Z}$ and $\beta
=\mu^{\otimes \N}, \, \bar{\beta} = \mu^{\otimes \Z}$. We will use the
letter $T$ to denote the shift map on both $\B$ and $\bar{\B}$.

\begin{prop}
\name{propositionforwardindependent}
Fix $x_0\in X$, and suppose that for $\beta$-a.e. $b\in \B$, the
random path $(g_{b_1^n}x_0)_{n\in\N}$ is equidistributed with respect
to the measure $m$ on $X$. Then for $\beta$-a.e. $b\in \B$, the
sequence 
\[
\big(g_{b_1^n} x_0, T^n b\big)_{n\in\N}
\]
is equidistributed with respect to the measure $ m \otimes \beta$ on $X \times \B$.
\end{prop}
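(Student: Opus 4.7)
The plan is to test weak-* convergence against a countable dense family of product functions $F(x,b) = f(x)h(b)$, where $f \in C_c(X)$ and $h \in C(\B)$ depends only on the first $k$ coordinates for some $k \ge 1$. Since $E$ is compact, $\B = E^{\N}$ is compact, and cylinder functions separate points, so by Stone--Weierstrass such products span a dense subspace of $C_c(X \times \B)$. Hence it will suffice to prove that for each such $F$, for $\beta$-a.e.\ $b$,
\[
\frac{1}{N}\sum_{n=0}^{N-1} F(g_{b_1^n}x_0, T^n b) \ \tendsto{N \to \infty}\  \int F\, d(m\otimes\beta),
\]
and then intersect the countable collection of full-measure sets.

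Given such $F$, decompose $h = c + H$ where $c = \int h\, d\beta$ and $\int H\, d\beta = 0$. The contribution of $c$ is $\frac{c}{N}\sum_{n=0}^{N-1} f(g_{b_1^n}x_0)$, which converges $\beta$-a.s. to $c\int f\, dm = \int F\, d(m \otimes \beta)$ by the equidistribution hypothesis applied to $f$. The main task is to show that the remainder
\[
\frac{1}{N}\sum_{n=0}^{N-1} Y_n, \qquad Y_n(b) \df f(g_{b_1^n} x_0)\, H(b_{n+1}, \ldots, b_{n+k}),
\]
tends to $0$ for $\beta$-a.e.\ $b$. The crucial observation is that $f(g_{b_1^n}x_0)$ depends only on the coordinates $b_1,\ldots, b_n$, while $H(b_{n+1},\ldots,b_{n+k})$ depends on a disjoint block; by Bernoulli independence and $\int H\, d\beta = 0$, each $Y_n$ is centered. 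Moreover, whenever $m > n + k$, the block $(b_{m+1},\ldots,b_{m+k})$ is disjoint from all coordinates underlying $Y_n \cdot f(g_{b_1^m}x_0)$, so $\int Y_n Y_m\, d\beta = 0$. The sequence $(Y_n)$ is thus $k$-dependent with mean zero, and expanding the square gives
\[
\int_{\B}\Big(\sum_{n=0}^{N-1} Y_n\Big)^2\, d\beta \;\le\; (2k+1)\, \|f\|_\infty^2\, \|H\|_\infty^2\, N,
\]
so $\frac{1}{N}\sum_{n<N} Y_n \to 0$ in $L^2(\beta)$.

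To upgrade to almost sure convergence, I will apply Chebyshev's inequality along the subsequence $N_j = j^2$ and use Borel--Cantelli, then control the fluctuation between consecutive terms of the subsequence using the trivial bound $|Y_n| \le \|f\|_\infty \|H\|_\infty$: since $N_{j+1} - N_j = O(j)$ while $N_j = j^2$, the resulting gap is $O(1/j)$, which is sufficient. Finally, I fix a countable dense family of such $F$ (using separability of $C_c(X)$ together with separability of $C(E^k)$ for each $k$), intersect the resulting full-measure sets in $\B$, and conclude by density in $C_c(X \times \B)$. The only real subtlety is the $k$-dependence argument used to obtain the second-moment bound for the remainder; once that is in place, everything else is a standard approximation and second-moment computation.
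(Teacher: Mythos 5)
Your proof is correct, but it takes a genuinely different route from the paper's. The paper does not reduce to product functions: it works directly with a function $\varphi(x,b)$ from a countable dense family, each member depending on finitely many coordinates of $b$, subtracts the fiberwise average $\varphi_X(x)=\int_{\B}\varphi(x,\cdot)\,\dee\beta$, and controls the remainder $h=\varphi-\varphi_X$ by showing that the partial sums $\sum_{i<n}h(g_{b_1^i}x_0,T^ib)$ agree up to a bounded error with the martingale $M_n=\sum_{i}\E[h(g_{b_1^i}x_0,T^ib)\mid b_1^n]$, which has bounded increments, and then invoking the martingale law of large numbers. You instead first reduce by Stone--Weierstrass to products $f(x)h(b_1,\ldots,b_k)$, center $h$, and exploit exactly the same independence structure --- the block $(b_{n+1},\ldots,b_{n+k})$ is independent of the walk's position at time $n$, which depends only on $b_1,\ldots,b_n$ --- to obtain a $k$-dependent mean-zero sequence, whence an $L^2$ bound of order $N$, Chebyshev along $N_j=j^2$, Borel--Cantelli, and interpolation between consecutive $N_j$ using the uniform bound on $Y_n$. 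Your orthogonality claim $\int Y_nY_m\,\dee\beta=0$ for $m>n+k$ is correct, since then $H(b_{m+1},\ldots,b_{m+k})$ is independent of the remaining factors, which depend only on $b_1,\ldots,b_m$. In short, both arguments hinge on the same independence observation; yours trades martingale theory for an elementary second-moment computation, at the cost of the extra reduction to product cylinder functions and the subsequence trick, while the paper's martingale handles general finite-coordinate test functions in one stroke.
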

\begin{proof}
Let $C_c(X\times B)$ be the space of compactly supported continuous
functions on $X \times \B$. We need to show that for
$\beta$-a.e. $b\in\B$, for all $\varphi \in C_c(X\times B)$ we have 
\eq{eq: need to show}{
\frac{1}{n}\sum_{i = 0}^{n - 1} \varphi\left(g_{b_1^i} x_0,T^i b \right)
\tendsto{n\to\infty} \int_{X \times \B} \varphi\;\dee(m \otimes \beta).
}
It suffices to check that \eqref{eq: need to show} holds for functions
$\varphi$ from a countable dense collection of functions $\FF \subset
C_c(X \times \B)$; moreover, we can choose $\FF$ so that for each
$\varphi \in \FF$ and for each $(x,b)\in X\times \B$, $\varphi(x,b)$
depends on only finitely many coordinates of $b$. Since $\FF$ is
countable, we can switch the order of quantifiers, so in the remainder
of the proof we fix $\varphi\in \FF$ and we will show that \eqref{eq:
  need to show} holds for $\beta$-a.e. $b\in \B$. Let $N$ be a number
large enough so that $\varphi(x,b)$ depends only on the first $N$
coordinates of $b$. 

For each $x \in X$, let
\[
\varphi_X(x) = \int_{\B} \varphi(x, b) \, \dee \beta(b).
\]
Then $\varphi_X:X\to\R$ is continuous and compactly supported. Let
\[
h(x, b) = \varphi(x, b) - \varphi_X(x).
\]
By assumption, for $\beta$-a.e. $b\in\B$ the random walk
$(g_{b_1^n}x_0)_{n\in\N}$ is equidistributed with respect to $m$, and
thus 
\[
\frac{1}{n}\sum_{i = 0}^{n - 1}
\varphi_X(g_{b_1^i} x_0)
\tendsto{n\to\infty}
\int_X 
\varphi_X\;\dee m
=\int_{X \times \B} \varphi
\;\dee(m\otimes \beta),
\]
so to complete the proof we need to show that for $\beta$-a.e. $b\in\B$,
\begin{equation}
\label{ETShempirical}
\frac{1}{n}\sum_{i = 0}^{n - 1} h(g_{b_1^i} x_0,T^i b)
\tendsto{n\to\infty} 0.
\end{equation}
In what follows we treat
$b$ as a random variable with distribution $\beta$. Fix $n\geq 0$. If
$f(b)$ is a number depending on $b$, let $\E[f(b) | b^n_1]$ denote the
conditional expectation of $f(b)$ with respect to the first $n$
coordinates of $b$. Then for all $i\geq 0$ we have 
\[
\E[h(g_{b_1^i} x_0,T^i b)|
b^n_1] =
\left \{\begin{matrix}
\int_{\B} h(g_{\altb_1^{i-n}} g_{b_1^n} x_0,
T^{i-n}(\altb))\, \dee \beta(\altb) & \text {
if } i \geq n \\
\int_{\B} h(g_{b_1^i}x_0 , b_{i+1} \cdots b_n \altb) \, \dee \beta(\altb) & \text{ if } i < n
\end{matrix} \right .
\]
Now consider the random variable
\[
M_n \df \sum_{i = 0}^\infty \E[h(g_{b_1^i} x_0,T^i b)| b^n_1].
\]
The sum is actually finite since, by the definition of $\varphi_X$,
for all $i \geq n$, we have $\E[h(g_{b_1^i} x_0,T^i b)| b^n_1] =
0$. Also, by the definition of $N$, for all $i \leq n - N$ we have
$\E[h(g_{b_1^i} x_0,T^i b)| b_1^n] = h(g_{b_1^i} x_0,T^i
b)$. Therefore 
\begin{equation}
\label{Xnasymp}
M_n = \sum_{i = 0}^{n - 1} h(g_{b_1^i} x_0,T^i b) + O(1).
\end{equation}
Now by construction, the sequence $(M_n)_{n\in\N}$ is a martingale,
and it has bounded steps by \eqref{Xnasymp}. It follows that
$\frac{1}{n} M_n \tendsto{n \to \infty} 0$ almost surely (see
e.g. \cite[Corollary~1.8 of Appendix]{BenoistQuint_book}). Combining
with \eqref{Xnasymp} gives \eqref{ETShempirical}. 
\end{proof}

Using a bootstrapping argument we now obtain a stronger version of
Proposition \ref{propositionforwardindependent}.

\begin{prop}
\name{prop: bootstrap}
Let the notation and assumptions be as in Proposition
\ref{propositionforwardindependent}. Let $Y$ be a locally compact
metric space, let $f: \bar{\B} \to Y$ be a measurable map, and let
$m_Y = f_* \bar{\beta}$. Then for $\bar{\beta}$-a.e. $b\in \bar{\B}$,
the sequence 
\eq{eq: bootstrap}{
\big(g_{b_1^n} x_0, f(T^n b)\big)_{n\in\N}
}
is equidistributed with respect to the measure $m \otimes m_Y$ on $X \times Y$.
\end{prop}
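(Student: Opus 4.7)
The strategy is to reduce Proposition \ref{prop: bootstrap} to Proposition \ref{propositionforwardindependent} by approximating $f$ with cylinder functions on $\bar{\B}$ (functions depending on only finitely many coordinates), with the residual approximation error controlled via Birkhoff's ergodic theorem for the ergodic Bernoulli shift on $(\bar{\B}, \bar{\beta})$. By a standard density argument it suffices to fix a countable dense family of test functions $\varphi \in C_c(X \times Y)$, and for each such $\varphi$ and $\vre > 0$ Lusin's theorem (combined with the fact that cylinder sets generate the Borel $\sigma$-algebra of $\bar{\B}$) produces a cylinder function $\hat{f}: \bar{\B} \to Y$ depending only on the coordinates $b_{-N}, \ldots, b_N$ for some $N = N(\varphi, \vre)$, such that the exceptional set
\[
A_\vre = \{b \in \bar{\B} : \sup_{x \in X} |\varphi(x, f(b)) - \varphi(x, \hat{f}(b))| \geq \vre\}
\]
satisfies $\bar{\beta}(A_\vre) < \vre$.

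The heart of the argument is to establish equidistribution for the cylinder test function $\Psi(x, b) = \varphi(x, \hat{f}(b))$. For $i \geq N + 1$, the value $\Psi(g_{b_1^i} x_0, T^i b)$ depends only on the strictly positive coordinates of $b$, so the time average can be rewritten entirely in terms of $b^+ = (b_1, b_2, \ldots) \in \B$. The delicate point is the overlap: the coordinates $b_{i-N}, \ldots, b_i$ enter both $g_{b_1^i}$ and the argument of $\hat{f} \circ T^i$. To decouple them I would factor $g_{b_1^i} = (g_{b_i} \cdots g_{b_{i-N}}) \, g_{b_1^{i-N-1}}$ and absorb the recent $N+1$ steps into a continuous compactly supported cylinder test function $\chi'$ on $X \times \B$ defined by
\[
\chi'(x, c) = \varphi\bigl(g_{c_{N+1}} \cdots g_{c_1} x,\; \hat{f}(c_1, \ldots, c_{2N+1})\bigr),
\]
so that $\Psi(g_{b_1^i} x_0, T^i b) = \chi'(g_{b_1^{i-N-1}} x_0, T^{i-N-1} b^+)$ for all $i \geq N + 1$. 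Proposition \ref{propositionforwardindependent} applied to $\chi'$, after the trivial re-indexing $j = i - N - 1$ and absorbing the $O(N/n)$ contribution from the first $N+1$ terms, yields convergence for $\bar{\beta}$-a.e.\ $b$ of the time averages of $\Psi(g_{b_1^i} x_0, T^i b)$ to $\int \chi' \, \dee(m \otimes \beta)$. A change of variable using the $G$-invariance of $m$ (substituting $x' = g_{c_{N+1}} \cdots g_{c_1} x$) then identifies this integral with $\int \Psi \, \dee(m \otimes \bar{\beta})$, establishing the cylinder case.

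To pass from $\Psi$ back to $\varphi \circ (\id, f)$ I would bound the discrepancy between the two empirical averages by $\vre + 2\|\varphi\|_\infty \cdot \frac{1}{n}\#\{i < n : T^i b \in A_\vre\}$. By Birkhoff's ergodic theorem applied to the ergodic Bernoulli shift $(\bar{\B}, T, \bar{\beta})$, this upper bound tends to $\vre + 2\|\varphi\|_\infty \bar{\beta}(A_\vre) < \vre(1 + 2\|\varphi\|_\infty)$ for $\bar{\beta}$-a.e.\ $b$. Since $\int \Psi \, \dee(m \otimes \bar{\beta}) \to \int \varphi \, \dee(m \otimes m_Y)$ as $\vre \to 0$ (by dominated convergence, using $m_Y = f_* \bar{\beta}$ and boundedness of $\varphi$), taking a countable sequence $\vre_k \to 0$ and intersecting the corresponding full-measure sets completes the proof. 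The main obstacle throughout is precisely the overlap noted above between the coordinates driving the random walk and those feeding into $\hat{f} \circ T^i$; it is the splitting of the walk into a ``recent'' block of $N + 1$ steps absorbed into $\chi'$, together crucially with the $G$-invariance of $m$, that resolves it.
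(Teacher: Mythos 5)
Your proposal is correct in substance and reaches the result by a route that is organized differently from the paper's, though the two share the same underlying mechanism. The paper first upgrades Proposition \ref{propositionforwardindependent} to equidistribution of $(g_{b_1^n}x_0,T^n b)$ in $X\times\bar\B$ with respect to $m\otimes\bar\beta$, by testing against continuous functions depending on the coordinates $b_{-\ell}^\infty$ and absorbing the index shift; it then approximates $\varphi\circ(\id\times f)$ by genuinely continuous functions on $X\times\bar\B$ via Lusin and Tietze, agreeing with it off a compact set $K_\ell$ of measure $\geq 1-1/\ell$, and controls the error with the ergodic theorem. You instead approximate $f$ itself by a cylinder map $\hat f$ and prove the cylinder case directly by folding the last $N+1$ steps of the walk into the test function $\chi'$ and using the $G$-invariance of $m$ to identify $\int\chi'\,\dee(m\otimes\beta)$ with $\int\Psi\,\dee(m\otimes\bar\beta)$. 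This is precisely the decoupling that the paper's shift trick performs implicitly, and making it explicit is arguably clearer. Your Birkhoff control of the exceptional set $A_\vre$ plays the same role as the paper's control of the visit frequency to $\bar\B\sm K_\ell$.

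The one point that needs repair: the proposition allows $E=\supp(\mu)$ to be an arbitrary compact subset of $G$, and a ``cylinder function'' $\hat f$ obtained from Lusin's theorem together with approximation by cylinder sets is only \emph{measurable} in the coordinates $b_{-N},\ldots,b_N$, not continuous in them. Consequently $\chi'$ need not belong to $C_c(X\times\B)$, and Proposition \ref{propositionforwardindependent}, being a weak-* equidistribution statement, cannot be applied to it as written. This is harmless when $E$ is finite, and in general it is fixed by one further Lusin--Tietze step on the compact space $E^{2N+1}$, replacing $(x,b_{-N},\ldots,b_N)\mapsto\varphi(x,\hat f(b_{-N},\ldots,b_N))$ by a continuous function at the cost of slightly enlarging $A_\vre$ --- the same continuity repair the paper carries out on $\bar\B$. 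With that adjustment (and noting that your countable dense family of test functions presupposes second countability of $X\times Y$, which holds in all the applications), the argument is complete.
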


\begin{proof}
By Proposition \ref{propositionforwardindependent}, for $\beta$-a.e. $b \in B$ the random walk trajectory
\begin{equation}
\label{randomwalkprev}
\big(g_{b_1^n} x_0,T^n b\big)_{n\in\N}
\end{equation}
is equidistributed in $X\times \B$ with respect to $m\otimes
\beta$. Fix $\ell\in\N$, and let $B^{(\ell)} = \prod_{i=-\ell}^\infty
E$ and $\beta^{(\ell)} = \bigotimes_{i = -\ell}^\infty \mu$. We will
abuse notation slightly by letting $T$ denote the shift map on all
three of the spaces $\B$, $B^{(\ell)}$, and $\bar\B$. In addition we
let $T^\ell:B\to B^{(\ell)}$ be the isomorphism defined by the equation $T^\ell(b)_i = b_{i+\ell}$ ($i \geq -\ell$), which can be
thought of as an analogue of the $\ell$th power of the shift map, although it is not an endomorphism. With
these conventions, applying $T^\ell$ to the equidistributed sequence \eqref{randomwalkprev} (where $b\in B$ is a $\beta$-typical point) shows
that for $\mu^{(\ell)}$-a.e. $b\in B^{(\ell)}$, the random walk
trajectory \eqref{randomwalkprev} is equidistributed in $X\times
B^{(\ell)}$ with respect to $m\otimes \beta^{(\ell)}$. Thus if
$\varphi:X\times\bar\B \to \R$ is a bounded continuous function such
that $\varphi(x,b)$ depends only on $x$ and $b_{-\ell}^\infty \in
B^{(\ell)}$, then for $\bar{\beta}$-a.e. $b\in \bar{\B}$, the sequence
\eqref{randomwalkprev} is equidistributed for $\varphi$ with respect
to $m\otimes \bar{\beta}$. By choosing a countable dense sequence of such
functions $\varphi$, we can see that for $\bar{\beta}$-a.e. $b\in
\bar{\B}$, the random walk trajectory \eqref{randomwalkprev} is
equidistributed in $X\times \bar{\B}$ with respect to $m\otimes
\bar{\beta}$. 

Now by Lusin's theorem, for each $\ell\in\N$ there exists a compact
set $K_\ell \subset \bar{\B}$ of $\bar\beta$-measure at least $1 -
1/\ell$ such that $f|_{K_\ell}$ is continuous. By the ergodic theorem,
for $\bar{\beta}$-a.e. $b \in \bar{\B}$, for all $\ell\in\N$ we have 
\[
\frac{1}{n}\#\big\{i = 1,\ldots,n : T^i b \in K_\ell\big \} \tendsto{n \to \infty}
\bar{\beta} (K_\ell) \geq 1 - \frac 1\ell\cdot
\]
Fix $b \in \bar{\B}$ such that this is true, and such that
\eqref{randomwalkprev} is equidistributed. Let $\varphi:X\times
Y\to\R$ be a bounded continuous function, and for each $(x,b)\in
X\times \bar B$ let $F(x,b) = (x,f(b))$. Fix $\ell\in\N$. Then
$\varphi\circ F$ is continuous on $X\times K_\ell$ and bounded on
$X\times \bar B$. Using Tietze's extension theorem, let $\varphi_\ell$
be a continuous extension of $\varphi\circ F|_{X\times K_\ell}$ to
$X\times \bar\B$ such that $\|\varphi_\ell\|_\infty \leq
\|\varphi\|_\infty$. Then since we assumed that \eqref{randomwalkprev}
is equidistributed, we have 
\[
\frac{1}{n} \sum_{i = 1}^n \varphi_\ell(g_{b_1^i} x_0,T^i b)
\tendsto{n\to\infty} \int \varphi_\ell \;\dee(m\otimes\beta)
\]
and thus
\begin{align*}
&\limsup_{n\to\infty} \left|\frac{1}{n} \sum_{i = 1}^n
\varphi(g_{b_1^i} x_0,f(T^i b)) - \int \varphi\;\dee(m\otimes f_*\beta)\right|\\
\leq & \limsup_{n\to\infty} \frac{1}{n} \sum_{i = 1}^n
\Big|\varphi_\ell(g_{b_1^i} x_0,T^i b) - \varphi\circ F(g_{b_1^i}
x_0,T^i b)\Big| + \int |\varphi_\ell - \varphi\circ F| \;\dee(m\otimes \beta)\\
\leq & 2 \|\varphi_\ell - \varphi\circ F\|_\infty \, \beta(\B\sm K_\ell) \leq
4\|\varphi\|_\infty \, \beta(\B \sm K_\ell) \tendsto{\ell\to\infty} 0.
\end{align*}
Since $\varphi$ was arbitrary, this means that \eqref{eq: bootstrap} is equidistributed.
\end{proof}

\ignore{
\begin{lem}[Found in BenoistQuint?]
\label{lemmaMboundedsteps}
Let $(X_n)_1^\infty$ be a martingale with bounded steps, i.e. $|X_{n +
1} - X_n| \leq C$ almost surely for some constant $C$. Then
$\lim_{n\to\infty} (1/n) X_n = 0$ almost surely.
\end{lem}
\begin{proof}
Fix $\vre > 0$, and let $\gamma > 0$ be small enough so that
\[
\frac{e^{C\gamma} + e^{-C\gamma}}{2} \leq e^{\vre\gamma}.
\]
This is possible since the left-hand side behaves as $1 +
O(\gamma^2)$, whereas the right-hand side behaves as $1 +
\vre\gamma + O(\gamma^2)$. Now consider the sequence
\[
Y_n = e^{\gamma(X_n - n\vre)}.
\]
By the convexity of the exponential function we have
\begin{align*}
\E[Y_n| \FF_{n - 1}]
&= e^{\gamma X_{n - 1} - n\vre}\E[e^{\gamma(X_n - X_{n - 1})}|\FF_{n - 1}]\\
&\leq e^{\gamma X_{n - 1} - n\vre}\left(\frac{e^{C\gamma} +
e^{-C\gamma}}{2} + \frac{e^{C\gamma} - e^{-C\gamma}}{2}\E[X_n - X_{n
- 1}|\FF_{n - 1}]\right)\\
&= e^{\gamma X_{n - 1} - n\vre} \frac{e^{C\gamma} + e^{-C\gamma}}{2}\\
&\leq e^{\gamma X_{n - 1} - (n - 1)\vre} = Y_{n - 1}.
\end{align*}
So $(Y_n)_1^\infty$ is a nonnegative submartingale, and is therefore bounded almost surely. But if $(Y_n)_1^\infty$ is bounded, then $X_n \leq n\vre + C$ for some constant $C$ depending on $\vre$. Since $\vre$ was arbitrary, this shows that the positive part of $X_n$ grows sublinearly. Since $X_n$ can be replaced by $-X_n$, the same argument shows that $|X_n|$ grows sublinearly.
\end{proof}

\section{Equidistribution of $g_t u_{\pi(b)} x_0$}
Now let $G$ be a semisimple Lie group and let $AKU$ be an appropriately formed subgroup. Assume that $A = (g_t)_t$ is one-dimensional and that $E \subset AKU$. Let $X = G/\Gamma$ be a homogeneous space and fix a point $x_0\in X$. Suppose that for $\beta$-a.e. $b\in \B$, the random walk $(g_{b_1^n} x_0)_1^\infty$ equidistributes with respect to the measure $m$. Also suppose that for $\beta$-a.e. $b\in \B$, there exists $\pi(b)\in U$ such that the trajectory $(g_t u_{\pi(b)} x_0)_{t > 0}$ is within bounded distance of the random walk.

\begin{thm}
Under some additional assumptions, the trajectory $(g_t u_{\pi(b)} x_0)_{t > 0}$ also equidistributes.
\end{thm}
\begin{proof}
For each $e\in E$ write $g_e = a_e k_e u_e$. If we let
\[
g_{b_1^n} = a_n k_n u_n
\]
then
\[
a_n k_n u_n = a_{b_n} k_{b_n} u_{b_n} a_{n - 1} k_{n - 1} u_{n - 1}
\]
and thus
\begin{align*}
a_n &= a_{b_n} a_{n - 1},&
k_n &= k_{b_n} k_{n - 1},&
u_n &= (a_{n - 1} k_{n - 1})^{-1} u_{b_n} (a_{n - 1} k_{n - 1}) u_{n - 1}.
\end{align*}
Moreover, $u_n \to u_{\pi(b)}$ as $n\to\infty$. So the sequence
\[
(a_n k_n u_{\pi(b)} x_0)_{n\in\N}
\]
is also equidistributed.

Suppose that it is jointly equidistributed with the sequence $(k_n)_1^\infty$, i.e. the sequence
\[
(a_n k_n u_{\pi(b)} x_0,k_n)_{n\in\N}
\]
is equidistributed in $m\otimes\lambda_K$ for some Haar measure $\lambda_K$ on an algebraic subgroup of $K$. Then since the map $(x,k)\mapsto k^{-1} x$ is continuous, the sequence
\[
(a_n u_{\pi(b)} x_0)_{n\in\N}
\]
is also equidistributed. So by Proposition
\ref{propositionforwardindependent}, it is jointly equidistributed
with the forward orbit of the random walk: 
\[
(a_n u_{\pi(b)} x_0,T^n b)_{n\in\N}
\]
is equidistributed in $m\otimes\beta$. Consider the measure-valued function
\[
\Theta(x,b) = \lambda_{[e,a_{b_1}]}\ast \delta_x,
\]
where $[e,a_{b_1}]$ denotes the line segment in $A$ connecting $e$ and
$a_{b_1}$, and $\lambda$ denotes Lebesgue measure. Since $\Theta$ is
continuous, we get 
\[
\frac{1}{n} \sum_{i = 0}^{n - 1} \Theta(a_n u_{\pi(b)}
x_0,T^n b) \to \int \Theta \;\dee(m\otimes\beta).
\]
But
\[
\sum_{i = 0}^{n - 1} \Theta(a_n u_{\pi(b)} x_0,T^n b) =
\lambda_{[0,a_n]}\ast \delta_{u_{\pi(b)} x_0}
\]
so this gives us equidistribution of $(g_t u_{\pi(b)} x_0)_{t > 0}$.
\end{proof}

As before, we let $G$ be a semisimple Lie group, $\para = AKU$ a
parabolic subgroup, and $X = G/\Lambda$ a homogeneous space. Let $\mu$
be a probability measure such that $E \df \supp(\mu)
\subset \para$. Let $Q$ be the closure of the group generated by $E$,
and let $\pi:\para\to K$ be the factor map i.e. $\pi(aku) = k$. 
}

\begin{prop}
\name{propositionrotationalpart}
Let $G, \mu, X, m$ be as before and let $\Gamma$ be the subgroup of
$G$ generated by $\supp(\mu)$. Let $K$ be a compact group, $m_K$ Haar
measure on $K$, and $\kappa: \Gamma\to K$ a homomorphism. Let $Z = X
\times K$ and consider the left action of $\Gamma$ on $Z$ defined by
the formula $\gamma (x,k)= (\gamma x, \kappa(\gamma)k)$. Assume that
this $\Gamma$-action is ergodic with respect to $m\otimes m_K$. Let
$\pi_X : Z \to X$ be the projection map onto the first factor, and let
$\nu$ be a $\mu$-stationary measure on $Z$ such that $(\pi_X)_*\nu =
m$. Then $\nu = m\otimes m_K$. 
\end{prop}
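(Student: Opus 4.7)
The plan is to disintegrate $\nu$ over its projection $m$ and analyze the conditional fiber measures via Peter--Weyl / Fourier decomposition on the compact group $K$. Since $(\pi_X)_*\nu = m$, write $\nu = \int_X \delta_x \otimes \nu_x\,\dee m(x)$ for a measurable family $x \mapsto \nu_x$ of probability measures on $K$. A direct computation using $G$-invariance of $m$ shows $(g_*\nu)_y = \kappa(g)_*\nu_{g^{-1}y}$, so the $\mu$-stationarity of $\nu$ becomes the identity
\[
\nu_y = \int_G \kappa(g)_*\nu_{g^{-1}y}\,\dee\mu(g) \quad (m\text{-a.e. } y \in X).
\]
The goal reduces to showing $\nu_y = m_K$ for $m$-a.e.\ $y$.

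For each nontrivial irreducible unitary representation $\pi: K \to U(V_\pi)$ (necessarily finite-dimensional), I would introduce the \emph{Fourier coefficient} $\phi_\pi(y) \df \int_K \pi(k)\,\dee\nu_y(k) \in \End(V_\pi)$, noting that $\int_K \pi(k)\,\dee m_K(k) = 0$ for nontrivial $\pi$. Integrating the displayed identity yields
\[
\phi_\pi(y) = \int_G \pi(\kappa(g))\phi_\pi(g^{-1}y)\,\dee\mu(g).
\]
Apply Jensen's inequality with the strictly convex function $A \mapsto \|A\|_{HS}^2$, using unitarity of $\pi(\kappa(g))$, to obtain the pointwise bound $\|\phi_\pi(y)\|_{HS}^2 \leq \int_G \|\phi_\pi(g^{-1}y)\|_{HS}^2\,\dee\mu(g)$. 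Integrating against $m$ and using its $G$-invariance, the two sides coincide, so the pointwise inequality is in fact an equality $m$-a.e. Strict convexity then forces $g \mapsto \pi(\kappa(g))\phi_\pi(g^{-1}y)$ to be $\mu$-a.e.\ constant (and thus equal to $\phi_\pi(y)$). Substituting $y \to gy$, combining with the corresponding statement for $g^{-1}$, and using countability of $\Gamma$, one obtains the equivariance
\[
\phi_\pi(\gamma y) = \pi(\kappa(\gamma))\phi_\pi(y) \qquad (\gamma \in \Gamma)
\]
on a single $\Gamma$-invariant $m$-conull set.

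Finally, I would define $\tilde\phi_\pi(x,k) \df \pi(k)^{-1}\phi_\pi(x)$ on $X \times K$; the equivariance above makes $\tilde\phi_\pi$ invariant under the $\Gamma$-action $\gamma(x,k) = (\gamma x, \kappa(\gamma)k)$. By the ergodicity hypothesis, $\tilde\phi_\pi$ equals some constant $C \in \End(V_\pi)$ for $m \otimes m_K$-a.e.\ $(x,k)$. Fubini then gives, for $m$-a.e.\ $x$, the identity $\phi_\pi(x) = \pi(k)C$ for $m_K$-a.e.\ $k$; since $\{k \in K : \pi(k)C = C\}$ is a closed subgroup of $K$ of positive Haar measure, it must be all of $K$, and because $\pi$ is nontrivial and irreducible the only element of $\End(V_\pi) \cong V_\pi \otimes V_\pi^*$ fixed by $K$ (acting by left multiplication, i.e.\ by $\pi \otimes \id$) is $0$. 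Hence $C = 0$ and $\phi_\pi \equiv 0$. Since this holds for every nontrivial irreducible $\pi$, Peter--Weyl forces $\nu_y = m_K$ for $m$-a.e.\ $y$, proving $\nu = m \otimes m_K$. I expect the main technical irritation to be the bookkeeping required to pass from the $\mu$-almost-everywhere equivariance produced by the Jensen rigidity step to genuine $\Gamma$-equivariance on a common full-measure set (so that the ergodicity hypothesis can legitimately be invoked), but this is a routine consequence of the countability of $\Gamma$ and the $G$-invariance of $m$.
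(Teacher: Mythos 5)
Your proof is correct in substance and rests on exactly the same rigidity mechanism as the paper's: disintegrate $\nu$ over $m$, turn stationarity into the fiber identity, apply Jensen's inequality, integrate against the $G$-invariant measure $m$ to force equality almost everywhere, extract equivariance of the fibers from the equality case, and finish with the ergodicity hypothesis. The difference is purely in the linearization device. The paper first convolves $\nu$ with a smooth approximate identity on $K$ so that the fiber measures acquire continuous $L^2$ densities $f_x$, runs Jensen on $\|f_x\|_{L^2(K)}^2$, concludes that the smoothed measure is $\Gamma$-invariant and absolutely continuous (hence equal to $m\otimes m_K$ by ergodicity), and then removes the smoothing by a limit. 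You instead run Jensen frequency-by-frequency on the matrix Fourier coefficients $\phi_\pi$ and invoke Peter--Weyl at the end. Your version avoids both the approximate-identity limit and the absolute-continuity step, at the cost of a little representation theory; it is arguably the cleaner of the two, and the two arguments are really the same $L^2$ computation viewed in different bases.

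Two small repairs. First, $\Gamma$ need not be countable: $\supp(\mu)$ is only assumed compact, so you cannot appeal to ``countability of $\Gamma$'' to upgrade the $\mu\otimes m$-a.e.\ identity $\phi_\pi(gy)=\pi(\kappa(g))\phi_\pi(y)$ to genuine $\Gamma$-equivariance. What you actually get is that $\tilde\phi_\pi\circ\gamma=\tilde\phi_\pi$ ($m\otimes m_K$-a.e.) for $\mu$-a.e.\ $\gamma$, and one must argue that this suffices to conclude $\tilde\phi_\pi$ is essentially constant (e.g.\ by noting that the set of $\gamma$ fixing $\tilde\phi_\pi$ in $L^2$ is a subgroup containing $\mu^{*n}$-a.e.\ element for every $n$, and interpreting the ergodicity hypothesis accordingly); the paper elides the identical point, so this is a presentational rather than a mathematical gap. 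Second, in the last step the set $\{k:\pi(k)C=C\}$ being a closed subgroup of \emph{positive} Haar measure only makes it open of finite index, not all of $K$ (think of a disconnected $K$); you should instead use that $\pi(k)C=\phi_\pi(x)$ on a \emph{conull} set $S$, so that $S^{-1}S=K$ stabilizes $C$ --- or, more simply, average the identity $\pi(k)C=\phi_\pi(x)$ over $k\in K$ to get $\phi_\pi(x)=\bigl(\int_K\pi\,\dee m_K\bigr)C=0$ directly. With these adjustments the argument is complete.
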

\begin{proof}
There is a right-action of $K$ on $Z$ given by $(x,k')k = (x,k'k),$
and this action commutes with the left-action of $\Gamma$ on $Z$. For
any measure $\theta$ on $Z$ and any smooth positive function $\psi$ on
$K$ such that $\int_K \psi \;\dee m_K = 1$, we can smooth $\theta$ by
averaging with respect to the $K$-action: 
\begin{equation}\label{eq: convolve}
\theta^{(\psi)}(A) \df \int_K \theta(Ak^{-1})\psi(k) \, \dee m_K(k).
\end{equation}
Note that if $(\psi_j)_{j\in\N}$ is an approximate identity then
$\theta^{(\psi_j)} \to \theta$. Since the $\Gamma$ and $K$ actions
commute and $\nu$ is $\mu$-stationary, so is $\nu^{(\psi)}$ for any
$\psi$. Since $(\pi_X)_*\nu = m$ and the $K$-action preserves the
first coordinate, we have $(\pi_X)_*\nu^{(\psi)} = m$ for all $\psi$. 

Since $(\pi_X)_*\nu = m$, by the Rokhlin disintegration theorem we can write
\[
\nu = \int_X \delta_x\otimes m_x\;\dee m(x)
\]
for some measurable map $X\ni x\mapsto m_x \in \Prob(K)$. Here
$\delta_x$ denotes the Dirac point measure centered at $x$. For each
$\gamma \in \Gamma$, by the definition of the $\Gamma$-action on $Z$,
we have $\gamma_*(\delta_x \otimes m_x) = \delta_{\gamma x} \otimes
\kappa(\gamma)_* m_x$. Since $\nu$ is $\mu$-stationary and $m$ is
$\Gamma$-invariant, we have 
\begin{align*}
\nu &= \int_G \gamma_*\nu \;\dee \mu(\gamma)\\
&= \int_G \int_X \delta_{\gamma x}\otimes \kappa(\gamma)_\ast m_x
\;\dee m(x) \;\dee \mu(\gamma)\\
&= \int_G \int_X \delta_x \otimes \kappa(\gamma)_\ast m_{\gamma^{-1} x}
\;\dee m(x) \;\dee \mu(\gamma) 
\\
&= \int_X \delta_x \otimes \left(\int_G \kappa(\gamma)_\ast m_{\gamma^{-1} x}
\;\dee \mu(\gamma)\right) \;\dee m(x),
\end{align*}
so by the uniqueness of disintegrations we have
\eq{eq: will also satisfy}{
m_x = \int_G \kappa(\gamma)_\ast m_{\gamma^{-1} x} \;\dee \mu(\gamma)
\;\;\;\; \text{ 
for $m$-a.e. $x\in X$}.
}
Repeating the same considerations for $\nu^{(\psi)}$, by the
uniqueness of disintegrations, we find that we have a measure
disintegration $\nu^{(\psi)} = \int_X \delta_x \otimes m_x^{(\psi)} \;
\dee m(x)$ where the probability measures $m_x^{(\psi)} \; (x\in X)$ are defined
via \eqref{eq: convolve} and satisfy 
\[
m^{(\psi)}_x = \int_G \kappa(\gamma)_\ast m^{(\psi)}_{\gamma^{-1}
x} \;\dee \mu(\gamma) \;\;\;\; \text{
for $m$-a.e. $x\in X$}.
\]
It follows from \eqref{eq: convolve} that each of the measures
$m^{(\psi)}_x \; (x\in X)$ is absolutely continuous with respect to
$m_K$. Thus we can write $\dee m^{(\psi)}_x = f_x \, \dee m_K$, where
$f_x = f^{(\psi)}_x \; (x\in X)$ are nonnegative functions in $C(K) \subset L^2(K,
m_K)$ which satisfy 
\begin{equation}
\label{fxequals}
f_x(k) = \int_G f_{\gamma^{-1} x} (\kappa(\gamma)^{-1} k) \;\dee \mu(\gamma) \;\;\;\; \text{
for $m \otimes m_K$-a.e. $(x, k) \in X\times K$}.
\end{equation}
Now for fixed $\psi$, by Jensen's inequality, for $m$-a.e. $x\in X$ we have 
\begin{equation}
\label{jensen}
\begin{split}
\|f_x\|^2 &= \int_K |f_x(k)|^2 \;\dee m_K(k)\\
&\leq \int_K \int_G |f_{\gamma^{-1} x}(\kappa(\gamma)^{-1} k)|^2
\;\dee\mu(\gamma) \;\dee m_K(k)\\ 
&= \int_G \|f_{\gamma^{-1} x}\|^2 \;\dee \mu(\gamma),
\end{split}
\end{equation}
with equality if and only if $f_x(k) = f_{\gamma^{-1}
  x}(\kappa(\gamma)^{-1} k)$ for $\mu\otimes m_K$-a.e. $(\gamma,k)\in
\Gamma\times K$. Here $\|\cdot\|$ denotes the norm on $L^2(K,m_K)$. On
the other hand, since $m$ is $\Gamma$-invariant we have 
\begin{align*}
\int_X \int_G \|f_{\gamma^{-1} x}\|^2 \;\dee \mu(\gamma) \;\dee m(x) & = \int_G \int_X
\|f_{\gamma^{-1} x}\|^2
\;\dee m(x)
\;\dee \mu(\gamma)
\\
& = \int_G \int_X \|f_x\|^2 \;\dee m(x) \;\dee \mu(\gamma) \\
& = \int_X \|f_x\|^2 \;\dee m(x),
\end{align*}
so for $m$-a.e. $x\in X$, equality holds in \eqref{jensen}, that is,
we have $m_x^{(\psi)} = \kappa(\gamma)_* m_{\gamma^{-1}x}^{(\psi)}$
for $\mu \otimes m$-a.e. $(\gamma,x)\in \Gamma \times X$. This implies
that $\nu^{(\psi)}$ is $\Gamma$-invariant, and since it is absolutely
continuous with respect to $m \otimes m_K$, and $\Gamma$ acts
ergodically with respect to $m \otimes m_K$, we must have
$\nu^{(\psi)} = m \otimes m_K$. Taking the limit along an approximate
identity, we obtain that $\nu = m \otimes m_K$, as claimed. 
\end{proof}
\ignore{
can arrange that $m_x^{(\psi_j)} \tendsto{j \to \infty} m_x$, yielding
\combarak{New stuff. $\nu^{(\psi_j)} \tendsto{j \to \infty} \nu$
so $m \otimes m_K = \nu$. }

\begin{equation}
\label{invariantcomponents}
m_x = \kappa(\gamma)_\ast m_{\gamma^{-1} x} \text{ for $m$-a.e. $x\in
  X$ and $\mu$-a.e. $\gamma\in \Gamma$}; 
\end{equation}
in particular, $\nu$ is $\Gamma$-invariant, and this gives the
stronger assertion that for any $\gamma \in \Gamma$, for $m$-a.e. $x
\in X$, $m_x = \kappa(\gamma)_* m_{\gamma^{-1}x}.$ 
Restricting to $\Gamma_0$ we see that for all $\gamma \in \Gamma_0$,
for $m$-a.e. $x \in X$ we have $m_x = m_{\gamma^{-1} x}$. Since by
assumption $\Gamma_0$ acts ergodically on $X$, the function $x\mapsto
m_x$ is constant almost everywhere, say $m_x = m_0$ for $m$-a.e. $x\in
X$. Then \eqref{invariantcomponents} becomes 
\[
m_0 = \kappa(\gamma)_\ast m_0 \text{ for all } \gamma\in \Gamma.
\]
That is, $m_0$ is invariant under $\kappa(\Gamma)$, which is a dense
subgroup of $K$. Since the stabilizer of a measure is a closed
subgroup of $G$, $m_0$ is $K$-invariant, and hence $m_0 = m_K$. 
\end{proof}
}
\begin{remark}
See \cite[Proof of Theorem~3.4]{Furstenberg_stiffness} for a similar argument.
\end{remark}

\begin{cor}\name{cor: extend compact}
With the assumptions and notations of Proposition
\ref{propositionrotationalpart}, if almost every random walk
trajectory 
\begin{equation}
\label{extend compact 1}
(g_{b_1^n} x_0)_{n\in\N}
\end{equation}
is equidistributed with respect to $m$, then almost every random walk trajectory
\begin{equation}
\label{extend compact 2}
(g_{b_1^n} x_0,\kappa(g_{b_1^n}))_{n\in\N}
\end{equation}
is equidistributed with respect to $m \otimes m_K$.
\end{cor}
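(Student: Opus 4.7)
The plan is to view the sequence \eqref{extend compact 2} as the random walk on $Z = X \times K$ (with the $\Gamma$-action defined in Proposition \ref{propositionrotationalpart}) starting at the point $z_0 = (x_0, e_K)$, where $e_K$ is the identity of $K$. Indeed, by definition of the $\Gamma$-action, $g_{b_1^n} z_0 = (g_{b_1^n} x_0, \kappa(g_{b_1^n}))$, so it suffices to show that for $\beta$-a.e. $b \in \B$, the empirical measures
\[
\nu_N^b \df \frac{1}{N}\sum_{n=1}^N \delta_{g_{b_1^n} z_0}
\]
converge weakly to $m \otimes m_K$ on $Z$.

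I would fix $b$ in the full $\beta$-measure set on which \eqref{extend compact 1} is equidistributed with respect to $m$ and on which Breiman's law of large numbers (as invoked in the proof of Theorem \ref{thm: new main}(ii)) applies to the random walk on $Z$. By Breiman's law, any weak-$*$ subsequential limit $\nu$ of $(\nu_N^b)_N$, taken in the space of sub-probability measures on $Z$, is $\mu$-stationary. By hypothesis, $(\pi_X)_*\nu_N^b$ is the empirical measure of $(g_{b_1^n}x_0)_n$ and converges to the probability measure $m$; pushing forward the weak-$*$ limit gives $(\pi_X)_*\nu = m$. Since $K$ is compact, the mass of $\nu$ equals that of its $X$-projection, so $\nu$ is in fact a probability measure on $Z$. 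Proposition \ref{propositionrotationalpart} then forces $\nu = m \otimes m_K$.

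Since every weak-$*$ subsequential limit of $(\nu_N^b)_N$ equals $m \otimes m_K$, the entire sequence converges to $m \otimes m_K$, which is the desired equidistribution. The only place where difficulty might have arisen is ruling out escape of mass in $Z$, which in the non-compact case ordinarily requires real work such as constructing a proper function with drift properties (cf.\ the proof of Theorem \ref{thm: new main}(ii)). Here, however, tightness of $(\nu_N^b)_N$ on $Z$ is a free consequence of tightness of its $X$-projections combined with the compactness of $K$, so the main content of the corollary is really Proposition \ref{propositionrotationalpart}, with Breiman's law providing the standard bridge from stationary measure uniqueness to pointwise equidistribution.
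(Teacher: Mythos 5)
Your proposal is correct and is essentially the paper's own argument: pass to a weak-$*$ limit of the empirical measures on $X\times K$, use Breiman's law to get stationarity, use equidistribution of the $X$-projection plus compactness of $K$ to rule out escape of mass, and then invoke Proposition \ref{propositionrotationalpart}. The only cosmetic difference is that the paper phrases the no-escape-of-mass step via the one-point compactification $\hat X\times K$ rather than via sub-probability measures, which amounts to the same thing.
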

\begin{proof}
Let $\hat X$ denote the one-point compactification of $X$, and let
$\nu\in\Prob(\hat X\times K)$ be a weak-* limit of the empirical
measures of the sequence \eqref{extend compact 2}. By the Breiman law
of large numbers, $\nu$ is $\mu$-stationary, and since \eqref{extend
  compact 1} is equidistributed, the projection of $\nu$ to $\hat X$
is equal to $m$. So by Proposition \ref{propositionrotationalpart}, we
have $\nu = m\otimes m_K$. (Note that since $(\pi_X)_* \nu = m$, we
actually have $\nu\in\Prob(X\times K)$ rather than just
$\nu\in\Prob(\hat X\times K)$.) 
\end{proof}

\begin{proof}[Proof of Theorem \ref{thm: fiber bundle extension}]
First apply Corollary \ref{cor: extend compact} to $X$ and the homomorphism $\kappa$, and then apply Proposition \ref{prop: bootstrap} to $X \times K $ and the map $f$.
\end{proof}

\section{Examples}\name{sec: main example}
The purpose of this section is to introduce some situations in which the hypotheses of Theorem \ref{thm: new main} are satisfied. We will need some additional information about Lyapunov exponents in the case of reducible representations. Let $V$ be a finite-dimensional real vector space, $W \subset V$ a subspace, and $G$ a closed subgroup of $\SL^\pm(V)$ which leaves $W$ invariant, so that $G$ acts on $V$, on $W$ (via the restriction of the $G$-action on $V$) and on $V/W$ (via the induced quotient action):
\[
1 \longrightarrow W \longrightarrow V \longrightarrow V/W \longrightarrow 1.
\]
Let $\mu$ be a compactly supported probability measure on $G$. We
introduce the following notation for recording the Lyapunov exponents
and their multiplicities for an action on $V$: $\mathcal{L}_V =
\sum_{i=1}^k d_i \delta_{\chi_i}$, where $k, \, d_i, \, \chi_i$ are as
in Theorem \ref{theoremoseledets}, and $\delta_\chi$ is a formal
Kronecker symbol. Here we think of $\mathcal L_V$ as a formal sum, so
that expressions of the form $\mathcal L_W + \mathcal L_{V/W}$ make
sense.

\begin{lem}\name{lem: furstenberg kifer}
With the above notation, assume that
\eq{eq: strict}{
\inf \supp (\mathcal{L}_W) > \sup \supp (\mathcal{L}_{V/W}),
}
i.e. each of the (Lyapunov) exponents of (the action of $G$ on) $W$ is strictly
larger than each of the exponents of $V/W$. 
Then
\begin{equation}\label{eq: multiplicities mix}
\mathcal{L}_V = \mathcal{L}_W +\mathcal{L}_{V/W};
\end{equation}
i.e. each of the exponents of $W$ and of $V/W$ appears as an exponent
of $V$, with the same multiplicity. Furthermore: 
\begin{itemize}
\item[(a)]
For $\beta$-a.e. $b\in\B$, $W$ is complementary to $V^{< W} (b)$,
where $V^{<W}(b)$ denotes the Oseledec space corresponding to the
smallest exponent of $W$. 
\item[(b)]
If there is a basis for $V $ with respect to which the matrices
$\rho(g) \, (g \in E)$ are all in upper triangular block form, and the
$i$-th diagonal block is a similarity map with expansion factor
$e^{\alpha_i(g)}$, then (after re-indexing) the exponents of $V$ are
the same as the numbers $\int \alpha_i \; \dee \mu \, (i =
1,\ldots,k)$, with the same multiplicities. 
\end{itemize}
\end{lem}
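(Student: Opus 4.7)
The plan is to establish \eqref{eq: multiplicities mix} and (a) simultaneously via an explicit construction of a complement to $W$, and then to derive (b) by induction on the number of blocks using the unconditional form of the main equation.

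For \eqref{eq: multiplicities mix} and (a), I would fix an arbitrary linear (not $G$-equivariant) splitting $V = W \oplus W'$, in which $\rho(g)$ takes the upper triangular form $\begin{pmatrix} \rho_W(g) & B(g) \\ 0 & \rho_{V/W}(g) \end{pmatrix}$ for some $B(g) : W' \to W$. For each $w' \in W'$, define the \emph{twisted lift}
\[
v(b, w') := \left( -\sum_{k=1}^\infty \rho_W(g_{b_1^k})^{-1} B(g_{b_k}) \rho_{V/W}(g_{b_1^{k-1}}) w',\ w' \right) \in V.
\]
Oseledec's theorem gives $\|\rho_W(g_{b_1^k})^{-1}\| = e^{-k \chi_W^{\min} + o(k)}$ and $\|\rho_{V/W}(g_{b_1^{k-1}}) w'\| \leq e^{k \chi_{V/W}^{\max} + o(k)} \|w'\|$ for $\beta$-a.e. $b$, while $\|B(g_{b_k})\|$ is uniformly bounded by compactness of $\supp(\mu)$. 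The strict inequality \eqref{eq: strict} then forces absolute convergence of the series at an almost-sure uniform exponential rate $\chi_W^{\min} - \chi_{V/W}^{\max} > 0$. Iterating the upper triangular product formula writes the $W$-component of $\rho(g_{b_1^n}) v(b, w')$ as the shifted tail $-\sum_{k > n} \rho_W(g_{b_{n+1}^k})^{-1} B(g_{b_k}) \rho_{V/W}(g_{b_1^{k-1}}) w'$, whose norm is bounded by $e^{n \chi_{V/W}(w') + o(n)}$ by an analogous estimate, matching the growth rate of the $V/W$-component $\rho_{V/W}(g_{b_1^n}) w'$. Hence $\chi_V(v(b, w')) = \chi_{V/W}(w')$.

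Setting $V^{<W}(b) := \{v(b, w') : w' \in W'\}$ then gives a linear subspace of dimension $\dim V/W$ complementary to $W$ (proving (a)), on which the projection $\pi : V^{<W}(b) \to V/W$ is an isomorphism preserving Lyapunov exponents. Since $\mathcal{L}_W$ and $\mathcal{L}_{V/W}$ are disjoint by the strict inequality, summing multiplicities over the direct sum $V = W \oplus V^{<W}(b)$ yields \eqref{eq: multiplicities mix}. By the uniqueness of a complement with these Lyapunov properties, this subspace agrees with the Oseledec subspace named in the lemma.

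For (b), I would proceed by induction on the number of blocks $k$. The base case $k = 1$ reduces to the strong law of large numbers applied to the i.i.d. real variables $\alpha_1(g_{b_j})$, yielding the single exponent $\int \alpha_1 \, \dee\mu$ with multiplicity $\dim V$. For $k > 1$, the subspace $V^{\geq 2}$ (the sum of the last $k-1$ blocks) is $\rho$-invariant and has, by induction, the claimed exponents; the quotient $V/V^{\geq 2}$ is a similarity with exponent $\int \alpha_1 \, \dee\mu$. Applying the unconditional form of \eqref{eq: multiplicities mix} for an arbitrary $\rho$-invariant subspace (a classical consequence of Oseledec's theorem, obtained by restricting and projecting the filtration $V_j(b)$, and not requiring the strict inequality) completes the induction. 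The main obstacle will be controlling the $o(k)$ errors in Oseledec's theorem uniformly enough to justify the absolute convergence of the series defining $v(b, w')$ and the resulting estimate on $\chi_V(v(b, w'))$; this is handled by the uniform-on-compacta assertion in Theorem \ref{theoremoseledets} together with a Borel--Cantelli argument on the tails.
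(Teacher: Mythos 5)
Your overall strategy is sound but genuinely different from the paper's. The paper proves \eqref{eq: multiplicities mix} by lifting each Oseledec vector $u$ of $V/W$ individually: it characterizes $\chi^{(V/W)}(u)$ as the minimal growth rate in the fiber $\pi^{-1}(\spa(u))$ (equation \eqref{eq: what we want}), deduces existence and uniqueness of a lift $v_u$ with that exact growth rate from \eqref{eq: strict}, and observes that $u\mapsto v_u$ is linear; assertion (a) then falls out of \eqref{eq: multiplicities mix} by a dimension count. You instead build the complement all at once as the graph of the convergent series $w'\mapsto -\sum_k \rho_W(g_{b_1^k})^{-1}B(g_{b_k})\rho_{V/W}(g_{b_1^{k-1}})w'$ — the standard ``dominated splitting / graph transform'' construction. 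Your route is more constructive and makes the equivariance of the complement transparent, at the price of requiring quantitative control of the cocycle along the whole forward orbit rather than the soft per-vector argument.

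That quantitative control is where your sketch has a real gap. The claim that the shifted tail $\sum_{k>n}\rho_W(g_{b_{n+1}^k})^{-1}B(g_{b_k})\rho_{V/W}(g_{b_1^{k-1}})w'$ has norm $e^{n\chi_{V/W}(w')+o(n)}$ does not follow from the submultiplicative bound $\|\rho_W(g_{b_{n+1}^k})^{-1}\|\leq\|\rho_W(g_{b_1^n})\|\,\|\rho_W(g_{b_1^k})^{-1}\|$: when $W$ carries more than one exponent (as it does for $W^{\wedge d}$, $d>1$, in the paper's application) this yields only $e^{n(\chi_W^{\max}-\chi_W^{\min}+\chi_{V/W}^{\max})+o(n)}$, which need not even be smaller than $e^{n\chi_W^{\min}}$ under \eqref{eq: strict}, so neither (a) nor the exact exponent identity would follow. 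What you need is the Oseledec asymptotics for the blocks $\rho_W(g_{b_{n+1}^k})^{-1}$ and $\rho_{V/W}(g_{b_{n+1}^{k-1}})$ based at the shifted point $T^nb$, with an error constant $C(T^nb)$ that is tempered, i.e.\ $\tfrac1n\log C(T^nb)\to0$ a.e. The ``uniform on compact sets'' clause of Theorem \ref{theoremoseledets} is uniformity in the vector $v$ for fixed $b$ and gives no uniformity over the base points $T^nb$; the Borel--Cantelli argument you mention is the correct tool, but it is the actual content of the proof and must be carried out. Two smaller points on (b): for an \emph{upper} block triangular cocycle the invariant subspace is the span of the \emph{first} block(s), not the last $k-1$, so the induction should take $W=V_1$ and quotient by it; and your appeal to an unconditional version of \eqref{eq: multiplicities mix} (valid without \eqref{eq: strict}) is a classical but strictly stronger input than what the lemma establishes — the paper's induction instead orders the blocks so that \eqref{eq: strict} holds at each step, which is how the lemma is used later and which you could arrange as well.
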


\ignore{
For each of these actions we use the notation of Theorem
\ref{theoremoseledets}, adding a superscript to denote the space,
e.g. $\chi^{(V)}_i$ is the $i$th Lyapunov exponent on $i$, $d_i^{(V)}$
is its multiplicity, and $k^{(W)}$ is the number of Lyapunov exponents
of $W$.

Then $k^{(V)}=k^{(W)}+k^{(V/W)}$,
\eq{eq: exponents mix}{
\chi^{(V)}_i = \left \{ \begin{matrix} \chi^{(W)}_i & i=1, \ldots,
k(W) \\ \chi^{(V/W)}_{i-k(W)} & i= k(W)+1, \ldots,
k(V)\end{matrix} \right.
}
and
\begin{align}\label{eq: multiplicities mix}
d^{(V)}_i = d^{(W)}_i, \ & i=1, \ldots, k^{(W)} \\
d^{(V)}_{k^{(W)}+i} = d^{(V/W)}_i, \ & i=1, \ldots, k^{(V/W)}.
\end{align}
}

\begin{proof}
Note that assertion (a) is an immediate consequence of \eqref{eq:
  strict} and \eqref{eq: multiplicities mix}, which imply that the
growth rate of any nonzero vector in $W$ is greater than that of any
nonzero vector in $V^{< W}(b)$, and that $\dim W + \dim V^{<W}(b) =
\dim V$. Assertion (b) follows from \eqref{eq: multiplicities mix} by
a simple induction (its special case where the diagonal blocks are
1-dimensional was actually proven in the original paper
\cite{Oseledec} as part of the proof of Theorem
\ref{theoremoseledets}). 

In order to prove \eqref{eq: multiplicities mix}, choose $b$ to belong
to the full measure subset of $\B$ where the conclusions of Theorem
\ref{theoremoseledets} are satisfied on all three spaces $V, W,
V/W$. With the natural notations, fix $1 \leq i \leq \dim(V/W)$,
consider a vector $u$ in the set $(V/W)_{i-1}(b) \sm
(V/W)_i(b)$ corresponding to the exponent $\chi =
\chi^{(V/W)}_i$, and let $V_u = \pi^{-1}(\spa (u)),$ where $\pi: V
\to V/W$ is the projection map. We claim that $\chi$ is the minimal
exponential rate of growth of a vector in $V_u$; that is, 
\begin{equation}\label{eq: what we want}
\chi = \min \left\{\chi^{(V)}_j : 1\leq j \leq \dim(V), \; V_j(b) \cap V_u \neq \{0\} \right\}.
\end{equation}
Assume that \eqref{eq: what we want} holds for all $u\in (V/W)_{i-1}(b)
\sm (V/W)_i(b)$. Then each such $u$ has a lift $v = v_u \in
\pi^{-1}(u)$ with asymptotic exponential growth rate $\chi$; that is,
all Lyapunov exponents of $V/W$ are also Lyapunov exponents of $V$. It
follows from \eqref{eq: strict} that $v_u$ is unique, since if $v'_u$
and $v_u$ are two lifts with this property then the vector $v'_u - v_u
\in W$ has growth rate strictly greater than $\chi$. From the
uniqueness it follows that the map $u\mapsto v_u$ can be extended to a
linear map from $(V/W)_{i-1}(b)$ to $V$ such that $\pi(v_u) = u$. In
other words, for each Oseledec space $(V/W)_{i-1}(b)$ there 
is a lifted subspace in $V$ of the same dimension corresponding to the
same exponent $\chi_i$. This completes the proof assuming \eqref{eq: what we
  want}. 

It remains to prove \eqref{eq: what we want}. Let $\lambda$ denote the
quantity defined on the right-hand side of \eqref{eq: what we want},
and let $\bar{g}$ denote the action of a matrix $g\in G$ on
$V/W$. Choose an inner product on $V$ and use it to define norms on
$V, W, V/W$, where the latter space is identified with $W^\perp$. For
each $v \in V_u \sm W$, $\pi(v)$ is a nonzero multiple of $u$, so for
any $\vre>0$ and any $n$ large enough, we have 
\[
\|g_{b_1^n} v\| \geq \|\bar{g}_{b_1^n} \pi(v)\| \geq e^{(\chi - \vre)n}.
\]
Moreover for any $v \in W \sm \{0\}$, $\|g_{b_1^n} v\| \geq e^{(\chi -
  \vre)n}$ holds for large enough $n$ by \eqref{eq: strict}. This
proves $\chi \leq \lambda$. For the converse, for each $n$ fix $v_n \in V_u$ such that $\pi(v_n) = u$
and $ g_{b_1^n}v_n \in W^\perp$. The identity $\pi(v_n) = u$ implies
that the sequence $(v_n)_{n\in\N}$ is uniformly bounded away from
zero, and since the convergence in Theorem \ref{theoremoseledets} is
uniform on compact sets, it follows that for any $\vre>0$, for all
sufficiently large $n$, we have 
$
\|g_{b_1^n}v_n\| \geq e^{(\lambda - \vre)n}.
$
On the other hand, by the definition of the norms and of $\chi$, for
all sufficiently large $n$ we have 
\[
e^{(\chi+\vre)n} \geq \|\bar{g}_{b_1^n} u\| = \|g_{b_1^n} v_n \| \geq
e^{(\lambda - \vre)n}.
\]
This implies the inequality $\chi \geq \lambda$.
\end{proof}

\subsection{The main example}\label{subsec: main example}
We now present our main example. It will be used in Part \ref{part2} of this
paper to deduce Diophantine results. Let $\pdim, \qdim$ be positive
integers, let $\dimsum = \pdim+\qdim $, let $G = \PGL_\dimsum(\R)$ and
$\Lambda = \PGL_\dimsum(\Z)$ (we recall that these are our respective
notations for the quotients of $\SL^\pm_\dimsum(\R)$ and
$\SL^\pm_\dimsum(\Z)$ by their subgroups of scalar matrices), and let
$\mu$ be a compactly supported probability measure on $G$. 
At the risk of annoying the reader, in what follows we will refer to elements of $G$ as matrices, when in fact they are equivalence classes of matrices modulo multiplication by scalars. Fix inner products on $\R^\pdim$ and $\R^\qdim$, and let $\GO_\pdim$ and $\GO_\qdim$ respectively denote the groups of matrices preserving these inner products (not necessarily orientation preserving). Let $\MM$ denote the space of all $\pdim \times \qdim$ real matrices. For each $t \in \R$ and $\bfalpha \in \MM$, let
\begin{equation}\label{eq: at}
a_t = \left[\begin{array}{ll}
e^{t/\pdim}I_\pdim & \\
& e^{-t/\qdim}I_\qdim
\end{array}\right], \ \ \ u_{\bfalpha} = \left[\begin{array}{ll}
I_\pdim & - \bfalpha \\
& I_\qdim
\end{array}\right].
\end{equation}
For each $O_1\in \GO_\pdim$ and $O_2\in \GO_\qdim$, let $O_1 \oplus
O_2$ denote the direct sum of $O_1$ and $O_2$, i.e. 
\begin{equation}
O_1 \oplus O_2 = \left[\begin{array}{ll}
O_1 & \\
& O_2
\end{array}\right].
\end{equation}
Finally, let $A = \{a_t : t\in\R\}$, $K = \{O_1 \oplus O_2 : O_1\in
\GO_\pdim, \; O_2\in \GO_\qdim\}$, $U = \{u_{\bfalpha} : \bfalpha \in
\MM\}$, and $\para = AKU$. Note that $A$ and $K$ commute with each other and normalize $U$.

Let $V^+$ denote the Lie algebra of $U$, that is, $V^+$ consists of
those matrices whose $(i,j)$th entry vanishes if $i > \pdim$ or $j
\leq \pdim$. Let $\Hmu$ denote the Zariski closure (in $G$) of the
group generated by $\supp(\mu)$. 
\begin{dfn}\label{def: block form}
We say that $\mu$ is in {\em $(\pdim,\qdim)$-upper block form} if

\begin{itemize}
\item[(i)] $\supp(\mu) \subset \para$, i.e. for all $g \in \supp(\mu)$
  there exist $a_g = a_t \in A$, $k_g = O_1\oplus O_2 \in K$, and $u_g
  = u_\bfalpha \in U$ such that $g = a_g k_g u_g$. In what follows we
  will write $t = \theta_1(g)$ and $\bfalpha = \theta_2(g)$. 
\item[(ii)]
The function $\theta_1:\para\to\R$ implicitly defined by (i) satisfies
\begin{equation}
\label{c1def}
c_1 \df \int_G \theta_1(g) \, \dee \mu(g)>0.
\end{equation}
\item[(iii)]
The Lie algebra of $\Hmu$ contains $V^+$.
\end{itemize}

\end{dfn}

\begin{thm}\name{thm: assumptions satisfied}
Let $G, \Lambda, \mu$ be as above, where $\mu$ is in
$(\pdim,\qdim)$-upper block form. Then for each $d$ there is a proper
subspace $W^{\wedge d} \subset V^{\wedge d}$
such that the assumptions
of Theorem \ref{thm: new main} satisfied. 
\end{thm}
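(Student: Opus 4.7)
The plan is to exploit the three-step decomposition $V = V^+ \oplus V^0 \oplus V^-$ associated with the parabolic $P = AKU$, where $V^0$ is the Lie algebra of the Levi $AK$ and $V^-$ is its opposite unipotent. Under the adjoint action of $g \in P$, the filtration $V^+ \subset V^+ \oplus V^0 \subset V$ is preserved, and the induced actions on the successive graded pieces $V^+$, $V^0$, $V^-$ are respectively a similarity of ratio $e^{\theta_1(g) \dimsum/(\pdim \qdim)}$, an isometry, and a similarity of ratio $e^{-\theta_1(g) \dimsum/(\pdim \qdim)}$. (The middle claim uses that $\Ad(k)$ acts as a direct sum of orthogonal maps and that $\Ad(u)$ acts trivially on $V^0$ modulo $V^+$.) Set $W = V^+$, and more generally let $W^{\wedge d}$ be the top $\Ad(A)$-weight subspace of $V^{\wedge d}$. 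Writing $m = \dim V^0 = \pdim^2 + \qdim^2 - 1$ and letting $\omega, \Omega$ be generators of the one-dimensional spaces $\Lambda^{\pdim \qdim} V^+$ and $\Lambda^{\pdim \qdim + m}(V^+ \oplus V^0)$, this has the concrete description
\[
W^{\wedge d} = \begin{cases}
\Lambda^d V^+ & \text{if } 1 \leq d \leq \pdim \qdim,\\
\omega \wedge \Lambda^{d - \pdim \qdim}(V^+ \oplus V^0) & \text{if } \pdim \qdim \leq d \leq \pdim \qdim + m,\\
\Omega \wedge \Lambda^{d - \pdim \qdim - m}(V) & \text{if } \pdim \qdim + m \leq d \leq \dimsum^2 - 2.
\end{cases}
\]
Each $W^{\wedge d}$ is $\rho_d(P)$-invariant, since $\rho_d(U)$ strictly raises the $A$-weight and this is already the maximal $A$-weight present.

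Assumption (II) is immediate: $\Ad(g)|_W$ is a similarity of factor $e^{\theta_1(g) \dimsum/(\pdim \qdim)}$, so $\int_G \log \|\Ad(g)|_W\|\, \dee \mu(g) = (\dimsum/(\pdim \qdim))\, c_1 > 0$ by hypothesis (ii) of Definition \ref{def: block form}. For (I), two applications of Lemma \ref{lem: furstenberg kifer} to the filtration above identify the Lyapunov exponents of $\Ad$ as $\chi_+ := c_1 \dimsum/(\pdim \qdim) > 0$, $0$, and $-\chi_+$ with respective multiplicities $\pdim \qdim$, $m$, $\pdim \qdim$; part (a) of that lemma gives the required complementarity of $W = V^+$ with $V^{<\max}_b$ in the case $d = 1$. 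For $d > 1$, a direct computation from the formulas above shows that every nonzero vector in $W^{\wedge d}$ has the same positive Lyapunov exponent $\chi_+ \cdot [\min(d, \pdim \qdim) - \max(0, d - \pdim \qdim - m)]$, so $W^{\wedge d} \cap V^{\leq 0}_b = \{0\}$ for $\beta$-a.e. $b$.

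The hard part is (III). Suppose $L \subset V^{\wedge d}$ has a finite $\Gamma^+$-orbit. Its stabilizer in $\Gamma^+$ has finite index, so the Zariski closure of that stabilizer contains $\Hmu^0$; by hypothesis (iii) of Definition \ref{def: block form}, $\Hmu^0$ contains $U = \exp(V^+)$, so $L$ is $U$-invariant. Engel's theorem, applied to the unipotent action of $U$ on $L$, then produces a nonzero $\eta \in L$ with $d\rho_d(v)\eta = 0$ for every $v \in V^+$. It remains to prove the purely algebraic fact that the space $(V^{\wedge d})^{V^+}$ of $V^+$-invariants is contained in $W^{\wedge d}$. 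Since $A$ normalizes $U$, this space is $\Ad(A)$-stable and therefore decomposes into $A$-weight subspaces, so the claim reduces to showing that the raising operators $d\rho_d(v)$ for $v \in V^+$ are jointly injective on every non-top $A$-weight subspace of $V^{\wedge d}$. This is the technical heart of the argument; it can be verified by a direct calculation using the bracket relations $[V^+, V^+] = 0$, $[V^+, V^0] \subset V^+$, and $[V^+, V^-] \subset V^0$, together with the observation (essentially the $d = 1$ case) that the centralizer of $V^+$ in $V$ equals $V^+$ itself.
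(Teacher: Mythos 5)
Your grading $V = V^+\oplus V^0\oplus V^-$ and your verification of assumptions (I) and (II) via Lemma \ref{lem: furstenberg kifer} follow the paper. However, your definition of $W^{\wedge d}$ differs from the paper's in a way that is fatal to (III). The paper takes $W^{\wedge d}=\bigoplus_{\chi>0}V^{\wedge d}_\chi$, the sum of \emph{all} positive $\bfa$-weight spaces, whereas you take only the top weight space. With your choice, condition (III) is simply false once $\pdim\qdim\geq 2$: the Killing-form dual of the Lie bracket embeds the adjoint representation $G$-equivariantly as a direct summand $\subsp\subset V^{\wedge 2}$; this $\subsp$ is $G$-invariant (so certainly has a finite orbit under the semigroup), but its $\bfa$-weights lie in $\{\gamma,0,-\gamma\}$ while your $W^{\wedge 2}=\bigwedge^2 V^+$ sits in weight $2\gamma$, so $\subsp\cap W^{\wedge 2}=\{0\}$. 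The same example refutes the ``purely algebraic fact'' you defer to a direct calculation: the $V^+$-invariants of this summand form a copy of $V^+$ (as you note, $Z_V(V^+)=V^+$) sitting in the weight-$\gamma$ space $V^+\wedge V^0$, which is not the top weight space. So the raising operators are \emph{not} jointly injective on non-top weight spaces, and the Engel step cannot conclude. (For $d=3$ the $G$-invariant Cartan three-vector gives a $V^+$-fixed vector of weight $0$, which is even further from the top.)

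Even after replacing your $W^{\wedge d}$ by the paper's larger one, the Engel route still requires showing that a $U$-fixed vector inside an $\Hmu_0$-invariant subspace cannot have weight $\leq 0$; this is the real content of (III), and the paper argues it quite differently: it first upgrades $U$-invariance of $\subsp$ to $AU$-invariance (using the theory of maximal $\R$-split solvable subgroups and \cite[Prop.~9.3]{BT} to show $\Hmu_0\supset A$), deduces $\subsp\subset V^{\wedge d}_{\leq 0}$, uses Zariski density of $QU$ in $G_0$ (with $Q$ the opposite parabolic) to show that the $G_0$-module generated by $\subsp$ stays in $V^{\wedge d}_{\leq 0}$, and then obtains a contradiction from the Weyl-group invariance of the resulting weight set. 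You would need to supply an argument of this kind. You also omit the remaining hypotheses of Theorem \ref{thm: new main} --- that $\Gamma$ acts transitively on the components of $X$ and is not virtually contained in a conjugate of $\Lambda$ --- which the paper verifies using Lemma \ref{lem: verify iv}.
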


\begin{proof}
It follows by direct calculation that $X$ is connected and in
particular that $\Gamma$ acts transitively on the connected components
of $X$. It follows from (iii) that $\Gamma$ contains two elements
$g_1, g_2$ with $1\neq u_{g_2}$ and an easy computation (see
the proof of Lemma \ref{lem: verify iv} below) shows that the sequence
$(g_1^{-n} g_2 g_1^{n })$ has a convergent subsequence but is not
eventually constant. Thus $\Gamma$ is not discrete and in particular
is not virtually contained in a conjugate of $\Lambda$.

Now we construct a subspace $W^{\wedge d} \subsetneqq V^{\wedge d}$
such that assumptions (I), (II), and (III) hold. We first express the
adjoint action of $g = aku \in \para$ on $V = \Lie(G) = \{\Matrix \in
\MM_{\dimsum\times\dimsum} : \Tr[\Matrix] = 0\}$. For each $1 \leq i,j
\leq \dimsum$ let $E_{i,j}$ denote the matrix with 1 in the $(i,j)$th
entry and 0 elsewhere. Let $\block_1 = \{1,\ldots,\pdim\}$ and
$\block_2 = \{\pdim + 1,\ldots,\dimsum\}$. For each $j_1,j_2 \in
\{1,2\}$, let $V_{j_1,j_2} = \spa (E_{i_1, i_2} : i_1 \in
\block_{j_1}, i_2 \in \block_{j_2})$. Finally, let $V^+ = V_{1,2}$,
$V^0 = \{\Matrix\in V_{1,1} +  V_{2,2} : \Tr[\Matrix] = 0\}$, and $V^-
= V_{2,1}$.

By \eqref{eq: at}, each of the spaces $V^+, V^-, V^0$ is an eigenspace
for $\Ad(a_t)$ with respective eigenvalues $e^{t/\pdim + t/\qdim}, \,
e^{-(t/\pdim + t/\qdim)}, \, 1.$ The action of $\Ad(K)$ preserves
$V^+, V^-, V^0$, and we can equip $V$ with an inner product which is
preserved by the $\Ad(K)$-action. 
For each $u\in
U$ and $v \in V$, we have 
\eq{eq: action of u}{
\Ad(u)v - v \in
\left \{ \begin{array}{ll}
\{0\} & \text{if } v \in V^+ \\
V^+ & \text{if } v \in V^0 \\
V^+ + V^0 & \text{if } v \in V^-
\end{array}
\right.
}

Fix $d = 1,\ldots,\dim(G) - 1$, and we will define the space
$W^{\wedge d}$. Let $\bfa\in \Lie(A)$ be chosen so that $\exp(t\bfa) = a_t$ for all $t\in\R$. 
Then the space
$V^{\wedge d}$ can be decomposed as the sum of the eigenspaces of
$\bfa$: 
\begin{equation}\label{eq: eigenspaces V^d}
V^{\wedge d} = \bigoplus_{\chi \in \Psi_d} V^{\wedge d}_\chi,
\end{equation}
where $\Psi_d$ is the collection of eigenvalues of the action of
$\bfa$ on $V^{\wedge d}$, and for each $\chi \in \Psi_d$, $V^{\wedge
  d}_\chi$ is the eigenspace of $D\rho_d(\bfa)$ with eigenvalue
$\chi$ (here $D\rho_d : \Lie(G) \to \mathrm{End}(V)$ is the
derivative of $\rho_d$ at the identity). We endow the
expressions $V^{\wedge d}_{\geq \chi}$ and 
$V^{\wedge d}_{> \chi}$ with their obvious meanings. It follows from
the remarks of the previous paragraph that for all $\chi\in\Psi_d$, 
\begin{itemize}
\item[(A)] the spaces $V^{\wedge d}_{\geq \chi}$ and $V^{\wedge d}_{>
    \chi}$ are invariant under the action of $\para$; 
\item[(B)] each $g\in \para$ acts on the quotient space $V^{\wedge
    d}_{\geq \chi}/V^{\wedge d}_{> \chi}$ as a similarity with
  expansion coefficient $e^{\chi \theta_1(g)}$; 
\item[(C)] the action of $\para$ on $V^{\wedge d}_{\geq
    \chi}/V^{\wedge d}_{> \chi}$ has only one Lyapunov exponent,
  namely $c_1 \chi$, where $c_1$ is as in \eqref{c1def}. By assumption
  (ii), we have $c_1 > 0$. 
\end{itemize}
Indeed, letting $\gamma = \tfrac{1}{\pdim} + \tfrac{1}{\qdim}$ we have
$\Psi_1 = \{-\gamma,0,\gamma\}$, $V^{\wedge 1}_\gamma = V^+$,
$V^{\wedge 1}_0 = V^0$, and $V^{\wedge 1}_{-\gamma} = V^-$, and
combining with our previous observations demonstrates the case $d =
1$. The general case follows by induction. 

Now let
\begin{equation}\label{eq: multiplication by}
W^{\wedge d} = V^{\wedge d}_{> 0} = \bigoplus_{\chi > 0} V^{\wedge d}_\chi.
\end{equation}
By (A), $W^{\wedge d}$ is invariant under $\para$, and in particular
under $\Hmu$. Since $\det \rho_d (\exp(\bfa))=1$ but $\det
\rho_d(\exp(\bfa))|_{W^{\wedge d}} > 1$, $W^{\wedge d}$ is a proper subspace
of $V^{\wedge d}$. Since $W^{\wedge 1} = V^{\wedge d}_\gamma$,
\textup{(II)} follows from (B) and (C) above. 

We now prove \textup{(I)}. To this end we will apply Lemma \ref{lem:
  furstenberg kifer} with $V = V^{\wedge d}, \, W = W^{\wedge d}$, and
obtain that $W^{\wedge d}$ is complementary to $V^{<W}(b)$. Then we
will show that for $\beta$-a.e. $b\in\B$, if $d=1$, then $V^{<W}(b) =
V_b^{<\max}$ and if $d>1$, then $V^{<W}(b)=V_b^{\leq 0}$. 

We claim that for $\beta$-a.e. $b\in\B$, all the Lyapunov exponents of
$\rho_d|_{W^{\wedge d}}$ are positive. If $d=1$ this is immediate from
assumption (ii), while if $d>1$ this follows from combining (C) above
with (b) of Lemma \ref{lem: furstenberg kifer}. 

On the other hand, let $\bar{\rho}_d$ denote the quotient action on
$V^{\wedge d}/W^{\wedge d}$. Again combining (C) above with (b) of
Lemma \ref{lem: furstenberg kifer}, we see that all the Lyapunov
exponents of $\bar{\rho}_d$ are nonpositive. In particular \eqref{eq:
  strict} holds, and $W^{\wedge d}$ is complementary to
$V^{<W}(b)$. Moreover, since all Lyapunov exponents of $W^{\wedge d}$
(resp. on $V^{<W}(b)$) are positive (resp. nonpositive), $V^{<W}(b) =
V^{\leq 0}_b$ for $\beta$-a.e. $b\in\B$, and since, in case $d=1$,
there is only one Lyapunov exponent on $W^{\wedge 1}$, we have
$V^{\leq 0}_b = V^{<\max}_b$ for $d=1$. This completes the proof of
(I). 

We now prove \textup{(III)}. Suppose that $\{\subsp_1, \ldots,
\subsp_r\}$ is a finite collection of linear subspaces of $V^{\wedge
  d}$ which is permuted by the elements of $\supp(\mu)$. Then every
element of $\Gamma$ permutes the elements of
$\{\subsp_1,\ldots,\subsp_r\}$, and thus the same is true of the
Zariski closure $\Hmu$. It follows that the identity component
$\Hmu_0$ of $\Hmu$ preserves the subspaces $\subsp_1,\ldots,\subsp_r$
individually. By assumption (iii), $\Lie(\Hmu)$ contains $V^+$, and
hence $\Hmu_0$ contains $U$. We claim that $\Hmu_0$ also contains
$A$.  To see this, recall (see
\cite[\S 15]{Borel}) that any connected real algebraic
group has a maximal $\R$-split solvable subgroup which is unique up to
conjugation. Since  $AU$ is
a maximal $\R$-split solvable 
subgroup of $P$, and it is normal in $P$, any maximal $\R$-split
solvable subgroup of $\Hmu_0$ is contained in $AU$. Let $S \subset
\Hmu_0$ be a maximal $\R$-split solvable subgroup of $\Hmu_0$
containing $U$. If $\Hmu_0$ did
not contain $AU$ we would have $U \subseteq S \subsetneqq AU$ and
thus $\pi_A(S) \subsetneqq A$, where $\pi_A$ is the algebraic
homomorphism $g \mapsto a_g$. Since $\dim A =1$ this would imply that
$\pi_A(S)$ is trivial. By \cite[Prop. 9.3]{BT}, $S$ is cocompact in
$\Hmu_0$, and so we would get that $\pi_A(\Hmu_0)$ is compact. This
would contradict 
the fact 
that $\{a_\gamma: \gamma \in \Gamma\}$ is 
infinite, which follows from assumption (ii). Therefore $\Hmu_0
\supset S = AU$, as claimed. To complete the proof it suffices to show that
any nontrivial subspace of $V^{\wedge d}$ which is $AU$-invariant must
intersect $W^{\wedge d}$ nontrivially. 

Let $Q$ be the parabolic subgroup of $G$ with Lie algebra $V^0 +
V^-$, and let $V^{\wedge d}_{\leq 0} = \bigoplus_{\chi \leq 0}
V^{\wedge d}_\chi$ be the direct sum of the $\bfa$-eigenspaces with
nonpositive eigenvalues. It is easy to check that $V^{\wedge d}_{\leq
  0}$ is $Q$-invariant, i.e. that $\rho_d(Q) V^{\wedge d}_{\leq 0} =
V^{\wedge d}_{\leq 0}$. Moreover, since $\Lie(U) = V^+$ and $\Lie(Q)
= V^0 + V^-$, the product set $Q U$ contains a neighborhood of the
identity in $G$ and in particular is Zariski dense in $G_0$, the
identity component of $G$. 

Let $\subsp \subset V^{\wedge d}$ be a nontrivial $AU$-invariant
subspace, and assume by contradiction that $\subsp \cap W^{\wedge d} =
\{0\}$. Since $\subsp$ is $A$-invariant, it can be written as a sum of
$\bfa$-eigenspaces $\subsp = \bigoplus_\chi \subsp_\chi$, and since
$\subsp \cap W^{\wedge d} = \{0\}$, we have $\subsp_\chi = \{0\}$ for
all $\chi > 0$ and thus $\subsp \subset V^{\wedge d}_{\leq 0}$. Since
$V^{\wedge d}_{\leq 0}$ is $Q$-invariant and $\subsp$ is
$U$-invariant, we have $\rho_d(Q U) \subsp \subset V^{\wedge d}_{\leq
  0}$ and thus since $Q U$ is Zariski dense in $G_0$, we have
$\rho_d(G_0) \subsp \subset V^{\wedge d}_{\leq 0}$. 

Let $\subsp' = \spa(\rho_d(G_0) \subsp) \subset V^{\wedge d}_{\leq
  0}$, and let $T \subset G_0$ be a maximal torus containing $A$. Then
since $\subsp'$ is $G_0$-invariant, it can be written as a sum of
joint eigenspaces for the $\rho_d(T)$-action, i.e. $\subsp' =
\bigoplus_{\lambda \in \Psi'} \subsp'_\lambda$, where $\Psi'$ is the
set of weights for the action of $G_0$ on $\subsp'$. The normalizer of $T$ in $G_0$ acts on
$\Psi'$ by dual conjugation: if $g\in N_{G_0}(T)$ then
$g(\subsp'_\lambda) = \subsp'_{g_* \lambda}$, where $g_* \lambda$
denotes the weight defined by the formula $g_* \lambda(\mathbf t) =
\lambda(\Ad_g^{-1} \mathbf t)$ ($\mathbf t \in \Lie(T)$). Thus, $g_*
\Psi' = \Psi'$ for all $g\in N_{G_0}(T)$. In other words, $\Psi'$ is
invariant under the Weyl group of $G_0$. It can be checked by direct
computation that if $\lambda\in \Psi'$ is a nonzero weight, then the
convex hull of $\{g_* \lambda : g\in N_{G_0}(T)\}$ contains a
neighborhood of the origin. But this implies that there exists
$\lambda' \in \Psi'$ such that $\lambda'(\bfa) > 0$, contradicting
that $\subsp' \subset V^{\wedge d}_{\leq 0}$. It follows that $\Psi'$
does not contain any nonzero weights, i.e. $\Psi' = \{0\}$. In
particular $\rho_d(T)$ acts trivially on $\subsp'$, and thus the
action of $G$ on $\subsp'$ has a nontrivial kernel. Since $G$ is
simple this means that $G$ acts trivially on $\subsp'$, and hence
$\subsp'$ is trivial, and therefore so is $\subsp$. This is a
contradiction. 
\end{proof}

We will state a useful lemma for verifying condition (iii) of
Definition \ref{def: block form}. Let $\exp$ be the exponential map
from $\Lie(G)$ to $G$, and recall that $U = \{u_{\bfalpha} : \bfalpha
\in \MM\}$. Then $\exp$ restricts to a homeomorphism from $\Lie(U)$ to
$U$. We denote the inverse of this homeomorphism by $\log$, i.e. $\log
(u) = u- \id$. As before
we let $\Gamma$ denote the group generated by $\supp(\mu)$. 

\begin{lem}\name{lem: verify iv}
Retaining the notation of Definition \ref{def: block form}, suppose
that $\mu$ satisfies \textup{(i)}, and that there exists $g_0 \in
\Gamma$ with $u_{g_0} = \id$ and $a_{g_0} \neq \id$. Then for any $g
\in \Gamma$, if we write $g = a_g k_g u_g = u'_g a_g k_g$, then the
Lie algebra of the closure of $\Gamma$ contains both $\log(u_g) $ and
$\log (u'_g)$. 
\end{lem}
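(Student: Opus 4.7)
The plan is to first show that the ``$AK$-part'' $a_g k_g$ of $g$ lies in $\overline{\Gamma}$, which immediately yields $u_g = (a_g k_g)^{-1}g\in\overline{\Gamma}$ and $u'_g = g(a_g k_g)^{-1}\in\overline{\Gamma}$, and then to upgrade membership in $\overline{\Gamma}$ to membership in $\Lie(\overline{\Gamma})$ by running the dynamics generated by $g_0$ a second time on the cleanly isolated element $u_g$. Write $\alpha = \log u_g \in V^+$, $k_0 = k_{g_0}$, and $\gamma = \tfrac{1}{\pdim}+\tfrac{1}{\qdim}$. After possibly replacing $g_0$ by $g_0^{-1}$ we may assume $t_0 := \theta_1(g_0)>0$, so that $\Ad(g_0^{-1})$ contracts $V^+$ by the factor $e^{-t_0\gamma}$ composed with the isometry $\Ad(k_0^{-1})$. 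Using only that $A$ commutes with both $A$ and $K$, a routine rearrangement gives, for every $n\in\N$,
\[
g_0^{-n} g g_0^{n} = a_g\cdot (k_0^{-n} k_g k_0^{n})\cdot \exp\!\bigl(e^{-nt_0\gamma}\,\Ad(k_0^{-n})\alpha\bigr),
\]
and the last factor tends to $\id$ exponentially fast in $n$.

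Since $K$ is compact and the positive powers of $k_0$ are dense in the compact abelian subgroup $\overline{\langle k_0\rangle}\subseteq K$, we may extract a subsequence $n_j\to\infty$ with $k_0^{n_j}\to\id$. Along this subsequence $k_0^{-n_j}k_g k_0^{n_j}\to k_g$, so $g_0^{-n_j} g g_0^{n_j}\to a_g k_g$; as the sequence lies in $\Gamma$ and $\overline{\Gamma}$ is closed, we get $a_g k_g\in\overline{\Gamma}$, and hence $u_g = (a_g k_g)^{-1} g$ and $u'_g = g(a_g k_g)^{-1}$ both lie in $\overline{\Gamma}$.

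To upgrade $u_g\in\overline{\Gamma}$ to $\log u_g\in\Lie(\overline{\Gamma})$, note that for every $m\in\Z$,
\[
g_0^{m}u_g g_0^{-m} = \exp\!\bigl(e^{mt_0\gamma}\,\Ad(k_0^{m})\alpha\bigr)\in\overline{\Gamma}.
\]
Choose $m_k\to-\infty$ with $k_0^{m_k}\to\id$ and set $v_k = e^{m_k t_0\gamma}\Ad(k_0^{m_k})\alpha \in V^+$, $s_k = e^{-m_k t_0\gamma}\to\infty$. Then $v_k\to 0$, $\exp(v_k)\in\overline{\Gamma}$, and $s_k v_k = \Ad(k_0^{m_k})\alpha\to\alpha$. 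Since $U=\exp(V^+)$ is abelian, $\exp(\lfloor t s_k\rfloor v_k) = \exp(v_k)^{\lfloor t s_k\rfloor}\in\overline{\Gamma}$ for every $t\in\R$, and passing to the limit $k\to\infty$ gives $\exp(t\alpha)\in\overline{\Gamma}$ for all $t\in\R$, i.e.\ $\log u_g = \alpha\in\Lie(\overline{\Gamma})$. Since $a_g k_g\in\overline{\Gamma}$, the automorphism $\Ad(a_g k_g)$ preserves $\Lie(\overline{\Gamma})$, and the identity $\log u'_g = \Ad(a_g k_g)\alpha$ then gives $\log u'_g\in\Lie(\overline{\Gamma})$ as well.

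The main obstacle is that the $K$- and $U$-factors in the first displayed formula approach their limits at a priori incomparable rates, so one cannot read off $\alpha$ simply by taking a logarithm of the difference $g_0^{-n_j} g g_0^{n_j}\cdot(a_g k_g)^{-1}$ and rescaling. The remedy is precisely the two-stage extraction above: the first stage retrieves only the group element $a_g k_g$ (and hence $u_g$), and the second runs the contracting/expanding dynamics of $g_0$ on the $K$-free element $u_g$, where the standard criterion ``$\exp(v_k)\in H$ with $v_k\to 0$ and $s_k v_k\to Y$, $s_k\to\infty$, implies $Y\in\Lie(H)$'' applies without rotational interference from $K$.
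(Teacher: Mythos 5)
Your proof is correct and follows essentially the same route as the paper: conjugating $g$ by $g_0^{n}$ along a subsequence with $k_0^{n}\to\id$ to extract $a_gk_g$ (hence $u_g,u'_g\in\cl\Gamma$), then conjugating $u_g$ by powers of $g_0$ in the contracting direction and applying the closed-subgroup criterion to land $\log(u_g)$ in $\Lie(\cl\Gamma)$, and finally using $\log(u'_g)=\Ad(a_gk_g)\log(u_g)$. Your explicit rescaling argument in the second stage is a cleaner rendering of the paper's limit formula (whose stated normalization $n_it_0(\tfrac1\pdim+\tfrac1\qdim)$ appears to be a typo for $e^{-n_it_0(\frac{1}{\pdim}+\frac{1}{\qdim})}$), but the underlying idea is identical.
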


\begin{proof}
Write $g_0 = a_{t_0} k_0$ and let $n_i \to \infty$ be a sequence such
that $k_0^{n_i} \to 1$. Without loss of generality suppose that $t_0 >
0$. Then
\[
g_0^{-n_i} g g_0^{n_i} = a_g (k_0^{-n_i} k_g k_0^{n_i}) (k_0^{-n_i}
a_{-n_i t_0} u_g a_{n_i t_0} k_0^{n_i}) \tendsto{i\to\infty} a_g
k_g. 
\]
It follows that $a_g k_g \in \cl{\Gamma}$ and thus $u_g \in
\cl{\Gamma}$. Applying the same logic to $ u_g $ in place of $g$ shows that 
\[
k_0^{-n_i} a_{-n_i t_0} u_g a_{n_i t_0} k_0^{n_i} \in \cl{\Gamma}
\]
and thus
\[
\lim_{i\to\infty} \frac{k_0^{-n_i} a_{-n_i t_0} u_g a_{n_i t_0}
  k_0^{n_i} - \id}{n_i t_0 \big(\tfrac{1}{\pdim} +
  \tfrac{1}{\qdim}\big)} = \log(u_g) \in \Lie(\cl{\Gamma}). 
\]
%
Since $\Lie(\cl{\Gamma})$ is closed under $\Ad(a_g k_g)$ we obtain $\log(u'_g) = \Ad(a_g k_g)(\log (u_g))\in \Lie(\Hmu)$ as well.
\end{proof}

\begin{proof}[Proof of Theorem \ref{thm: illustrative measures}]
We will apply Theorem \ref{thm: new main}, and need to check that
assumptions (I)--(III) are satisfied. Let $h_i$ be as in the
statement, and write $h_i = u'_i a_i k_i$, where for $i=1, \ldots, t$
we have 
\[
a_i
= \left[\begin{matrix}
c_i I_d & 0 \\
0 & c_i^{-d}
\end{matrix} \right], \ k_i
= \left[\begin{matrix}
O_i & 0\\
0 & 1
\end{matrix} \right], \ u'_i
= \left[\begin{matrix}
I_d & c_i^d \mathbf{y}_i \\
0 & 1
\end{matrix} \right].
\]
Then (i) and (ii) of Definition \ref{def: block form} are clearly
satisfied, and we use Lemma \ref{lem: verify iv} and the assumptions
that $\mathbf{y}_1 =0$ and $\spa(\mathbf{y}_i:i = 1,\ldots,t) = \R^d$
to verify (iii). Now the argument of Theorem \ref{thm: assumptions
  satisfied} (replacing everywhere $\PGL_\dimsum(\R)$ with
$\SL_{d+1}(\R)$) goes through. 
\end{proof}

\ignore{
\begin{remark}\label{remark: PGL1}
Note that combining Theorem \ref{thm: assumptions satisfied} with
Theorem \ref{thm: new main} yields a statement which is slightly
stronger than Theorem \ref{thm: illustrative measures}, in that it
allows for {\em orientation-reversing} orthogonal transformations;
that is allows us to take $O_i \in \GO_d(\R)$ rather than $O_i \in
\SO_d(\R)$. This is our reason for working with $\PGL_\dimsum(\R)$ in
this section, rather than $\SL_{d+1}(\R)$. \combarak{There is some
  redundancy because I also added Remark \ref{remark: PGL2}. Shoud one
  of them be omitted?} 
\end{remark}
}

\subsection{Another example}
Theorem \ref{thm: assumptions satisfied} can be generalized to $k \geq
2$ blocks as follows. Let $s_1, \ldots, s_k$ be positive integers with
$\sum s_i = \dimsum$, and for each $j = 1,\ldots,\dimsum$, let $m_j =
s_1 + \cdots + s_j$ and $\block_j = \{m_{j - 1} + 1, \ldots, m_j\}$,
with the convention that $m_0 = 0$. Then $\{\block_j : j =
1,\ldots,k\}$ is a partition of $\{1, \ldots, \dimsum\}$ into blocks
of length $s_j$, $j=1, \ldots, k$. Let $L_j = \spa\{e_i : i \in
\block_j\}$ and $E_{i_1,i_2}$ as in the proof of Theorem \ref{thm:
  assumptions satisfied}, so that $\R^\dimsum = L_1 + \cdots +
L_k$. For $j_1, j_2 \in \{1, \ldots, k\}$ let $V_{j_1,j_2} = \spa
(E_{i_1, i_2} : i_1 \in \block_{j_1}, i_2 \in \block_{j_2}),$ and let
$V^+ = \bigoplus_{j_1<j_2} V_{j_1, j_2}$. 

We say that $\mu$ is in {\em upper block form with respect to
  $\block_1, \ldots, \block_k$} if for every $g \in \supp(\mu)$ we can
write $g = aku$ for elements $a, k, u \in G$ satisfying 
\begin{itemize} 
\item[(i)$'$]
$a$ is a diagonal matrix, $k$ belongs to the compact group $\GO_{i_1}
\oplus \cdots \oplus \GO_{i_k}$, and $u\in V^+$. Here $\oplus$ denotes
the direct sum of matrices. 
\item[(ii)$'$]
For each $j = 1,\ldots,k$, the restriction of $a$ to $L_j$ is the
scalar matrix which multiplies by $e^{\theta_j(g)}$, where
$\theta_j:\supp(\mu)\to\R$ is a function such that $\int \theta_i \;
\dee \mu > \int \theta_j \; \dee \mu$ whenever $i<j$. In particular,
$a_g$ commutes with $k_{g'}$ for all $g' \in \supp(\mu)$. 
\item[(iii)$'$]
The Lie algebra of the Zariski closure of the group generated by
$\{u_g : g \in \supp(\mu)\}$ is equal to $V^+$. 
\end{itemize}

The generalization of Theorem \ref{thm: assumptions satisfied} is that
if $\mu$ is in upper block form then assumptions \textup{(I)--(III)}
are satisfied. To see this one defines $ W^{\wedge 1} = V_{1,k}$ for
$d=1$ and for $d \geq 2$ one defines a diagonal matrix $\bfa = \log(a_g)$ for
some $g \in \supp(\mu)$, and $W^{\wedge d} = \bigoplus_{\chi(\bfa)>0}
V^{\wedge d}_\chi$ in the notation of \eqref{eq: multiplication
  by}. The case $d=1$ of condition (III) follows from the irreducibility
of the adjoint representation, and the rest of the arguments in the
proof of Theorem \ref{thm: assumptions satisfied} go through with
minor modifications. We will not be using this result and leave its
verification to the reader. 

\part{Diophantine approximation on fractals}
\label{part2}
\section{Background}\name{sectionintro2}
We first recall some standard notions from Diophantine approximation
(more definitions will appear further below). A point
$\bfalpha\in\R^d$ is called \emph{badly approximable} if there exists
$c > 0$ such that for all $\pp/q\in\Q^d$, we have $\|q\bfalpha - \pp\|
\geq c q^{-1/d}$, and \emph{very well approximable} if there exists
$\vre > 0$ and infinitely many $\pp/q\in\Q^d$ such that $\|q\bfalpha -
\pp\| \leq q^{-(1/d + \vre)}$. The sets of points with these
properties are denoted respectively by $\BA$ and $ \VWA$. A point is
called \emph{well approximable} if it is not badly approximable; all
very well approximable points are well approximable but not
vice-versa. It is notoriously difficult to determine whether specific
numbers such as $\pi$ or $2^{1/3}$ are badly approximable or very well
approximable, but the properties of points typical for Lebesgue
measure are well-understood. In particular, the sets $\BA$ and $\VWA$
are both Lebesgue nullsets which nevertheless have full Hausdorff
dimension (a fact which shows that the exponent $1/d$ appearing in
both definitions is a critical exponent at which a transition
occurs). Over the last several decades, much work has revolved around
determining what properties are typical with respect to measures other
than Lebesgue measure; e.g. measures supported on fractal sets. 

Questions about Diophantine approximation on fractals can be naturally
divided into two classes: those concerned with determining the
largeness (in some sense) of the set of points on a given fractal that
are difficult to approximate by rationals, and those concerned with
determining the largeness of the set of points that are easy to
approximate by rationals. Over the last decade there has been much
progress regarding the first type of question. Suppose that $\KK$ is a
sufficiently regular fractal, so that $\mu_{\KK} \df
\HH^{\delta}|_{\KK}$ is a positive and finite measure, where $\delta$
denotes the Hausdorff dimension of $\KK$ and $\HH^\delta$ denotes
$\delta$-dimensional Hausdorff measure. This holds for example if
$\KK$ is the middle-thirds Cantor set, and for this choice we have: 
\begin{itemize}
\item the set $\BA$ has full Hausdorff dimension in $\KK$
  \cite{KleinbockWeiss1, KTV}, and 
\item the set $\VWA$ has measure zero with respect to $\mu_{\KK}$ \cite{Weiss, KLW}.
\end{itemize}
Both of these results are proven using fairly robust and
straightforward geometric methods, and are true in much greater
generality (see in particular \cite{DFSU_GE1, DFSU_BA_conformal} for
some recent results). For example, they are both true if $\KK$ is any
Ahlfors regular subset of $\R$ (a set $A \subset \R$ is called {\em
  Ahlfors regular} if there is a measure $\mu$ with $\supp(\mu) = A$
and such that for some positive constants $\delta, c_1, c_2$, for all
$x \in A$ and $r \in (0,1)$, we have $c_1 r^\delta \leq \mu (B(x,r))
\leq c_2 r^\delta$). 

The second type of question is more difficult to answer. The only
relevant work of which we are aware is the paper of Einsiedler,
Fishman, and Shapira \cite{EFS}, whose main result implies that if
$\mathcal{C}$ is the standard middle-thirds Cantor set, then 
$\mu_{\mathcal{C}}(\BA) = 0$.
Regarding very well approximable points, even the Hausdorff dimension of $\VWA\cap \mathcal{C}$ is not known (for a nontrivial lower bound, see \cite{LSV}).

There is a good reason why the second type of question is harder to
answer than the first. For both types of questions, one might expect
that a sufficiently nice fractal ``inherits'' the properties of the
ambient space, and the above results imply that for a large class of
fractals, this is true with respect to the first type of
question. However, there is a class of very nice and simple fractals
whose points do \emph{not} have typical behavior with respect to the
second type of question. Namely, for each $N\geq 2$ consider the set
$F_N$ consisting of those points in $(0,1)$ whose continued fraction
expansion has partial quotients bounded above by $N$. It is well-known
that $F_N$ consists entirely of badly approximable points (in fact, we
have $\BA\cap (0,1) = \bigcup_N F_N$, see
e.g. \cite[Theorem~23]{Khinchin_book}).

On the other hand, the set $F_N$ can be expressed as the limit set
(cf. \S\ref{subsectionsimilarityIFS}) of the finite iterated function
system consisting of the conformal contractions 
\begin{align}
\label{FN}
\phi_n(\alpha) &= \frac{1}{n + \alpha},&
n &= 1,\ldots,N.
\end{align}
This implies that $F_N$ is Ahlfors regular
\cite[Lemma~3.14]{MauldinUrbanski1}. Since Ahlfors regularity is one
of the strongest geometric properties held by the Cantor set, this
means that it will be difficult to distinguish $F_N$ from the Cantor
set using geometric properties. In particular, taking $\KK =F_N$ shows
that there are Ahlfors regular sets $\KK$ for which the expected
formula $\mu_{\KK}(\BA)=0$ fails. 

It is thus natural to ask what kind of regularity hypotheses on a fractal $\KK$ might imply that $\mu_{\KK}(\BA)=0$. We partially answer this question via Theorem \ref{thm: illustrative application}, showing that $\mu_{\KK}(\BA)=0$ whenever $\KK$ is the limit set of an irreducible finite IFS of contracting similarities. Let us point out a few cases where Theorem \ref{thm: illustrative application} applies while the results of \cite{EFS} do not apply:
\begin{itemize}
\item $\KK = \mathcal{C}+x$ is a translate of $\mathcal{C}$;
\item $\KK$ is the middle-$\vre$ Cantor set constructed by starting with the closed interval $[0,1]$ and removing at each stage the open middle subinterval of relative length $\vre$ from each closed interval kept in the previous stage of the construction, for some $\vre \in (0,1)\sm \{1/3,2/4,3/5,\ldots\}$;\footnote{When $\vre \in \{1/3,2/4,3/5,\ldots\}$, the middle-$\vre$ Cantor set falls under the framework of \cite{EFS} because it is $\times b$ invariant for some $b\geq 3$.}
\item $\KK$ is the limit set of the the iterated function system
\begin{align}
\label{1314}
\phi_1(x) &= \frac{x}{3},&
\phi_2(x) &= \frac{3 + x}{4};
\end{align}
\item $\KK$ is a fractal in higher dimensions, such as $\KK = \mathcal{C}\times \mathcal{C}\subset \R^2$.
\end{itemize}
In fact, Theorem \ref{thm: illustrative application} shows more,
namely that almost every point on the fractals listed above is
\emph{of generic type}, a term which we will define in
\S\ref{subsectionrefined}. In particular, almost every point on a
one-dimensional fractal has a typical distribution of partial
quotients in its continued fraction expansion. In addition to these
results, in what follows we will also prove several other Diophantine
results about the measures supported on self-similar fractals, as well
as considering analogous questions regarding intrinsic Diophantine
approximation on spheres \cite{KleinbockMerrill, FKMS2} and on
Kleinian lattices (cf. \cite{FSU4} and the references therein).

\section{Main results -- Similarity IFSes}
\name{sectionsim}
We begin by introducing the class of sets that we will consider.

\subsection{Similarity IFSes and their limit sets}
\label{subsectionsimilarityIFS}
We start working in higher dimensions now and accordingly fix $d\geq
1$ and an inner product on $\R^d$. A {\em contracting similarity} is a
map $\R^d \to \R^d$ of the form $\mathbf{x} \mapsto
cO(\mathbf{x})+\mathbf{y}$ where $O$ is a $d\times d$ matrix
orthogonal with respect to the chosen inner product, $c \in (0,1)$,
and $\mathbf{y} \in \R^d$. A \emph{finite similarity IFS} on $\R^d$ is
a collection of contracting similarities $\Phi = (\phi_e :
\R^d\to\R^d)_{e\in E}$ indexed by a finite set $E$, called the
\emph{alphabet}. As in Part 1, let $\B = E^\N$. However, now we let
$b^1_n$ denote the reversal of the first $n$ coordinates of $b$,
i.e. $b^1_n = (b_n,\ldots,b_1)$, in contrast to $b_1^n =
(b_1,\ldots,b_n)$ which was defined earlier. The \emph{coding map} of
an IFS $\Phi$ is the map $\pi:\B \to \R^d$ defined by the formula 
\begin{equation}
\label{codingmap}
\pi(b) = \lim_{n\to\infty} \phi_{b^1_n}(\bfalpha_0),
\end{equation}
where $\bfalpha_0\in\R^d$ is an arbitrary but fixed point, and
\begin{equation}\label{codingmap1}
\phi_{b^1_n} =
\phi_{b_1}\circ\cdots\circ\phi_{b_n}.
\end{equation}
(Note that in both \eqref{codingmap1} and \eqref{randomwalkorder}, we use the convention that $\phi_{ab} = \phi_{(a,b)} = \phi_b \circ \phi_a$.) It is easy to show that the limit in \eqref{codingmap} exists and is independent of the choice of $\bfalpha_0$, and that the coding map is continuous. Thus the image of $\B$ under the coding map, called the \emph{limit set} of $\Phi$, is a compact subset of $\R^d$, which we denote by $\KK=\KK(\Phi)$.

A similarity IFS $\Phi$ is said to satisfy the \emph{open set condition} if there exists an open set $U \subset \R^d$ such that $(\phi_e(U))_{e\in E}$ is a disjoint collection of subsets of $U$, and is said to be \emph{irreducible} if there is no affine subspace $\LL \subsetneqq \R^d$ such that $\phi_e(\LL) = \LL$ for all $e\in E$. We remark that this assumption is equivalent to the apparently stronger assumption that there is no affine subspace with a finite orbit under the semigroup generated by $\Phi$, which follows from making minor modifications to the proof of \cite[Proposition~3.1]{BFS1}. It is well-known that with these assumptions, $\mu_{\KK} = \HH^\delta|_\KK$ is a finite nonzero measure.

Using this terminology, the first part of Theorem \ref{thm: illustrative application} can be stated as follows:

\begin{thm}
\label{theoremhutchinsonHdelta}
Let $\KK$ be the limit set of an irreducible finite similarity IFS satisfying the open set condition. Then $\mu_{\KK}(\BA) = 0$.
\end{thm}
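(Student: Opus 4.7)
The strategy is to apply Theorem \ref{thm: new main} to a random walk encoding the IFS and then translate the resulting equidistribution on $X = G/\Lambda$ into unboundedness of Dani trajectories. Set $G = \PGL_{d+1}(\R)$, $\Lambda = \PGL_{d+1}(\Z)$, and let
\[
a_t = \mathrm{diag}(e^{t/d}I_d,\ e^{-t}),\qquad u_\alpha = \begin{pmatrix} I_d & -\alpha \\ 0 & 1 \end{pmatrix}.
\]
By Dani's correspondence, $\alpha \in \BA$ if and only if the forward trajectory $\{a_t u_\alpha \Lambda : t \geq 0\}$ is bounded in $X$, so it suffices to show that for $\mu_\KK$-a.e.\ $\alpha$ this trajectory is unbounded.

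Encode each similarity $\phi_e(x) = c_e O_e x + y_e$ by $h_e = \begin{pmatrix} c_e O_e & -y_e \\ 0 & 1 \end{pmatrix} \in P$ and set $g_e = h_e^{-1}$. A direct calculation, using that $A$ and $K$ commute and that $a_t u_\alpha a_t^{-1} = u_{e^{t(d+1)/d}\alpha}$, gives $h_e u_\alpha = u_{\phi_e(\alpha)} a_{t_e} k_e$ (in $\PGL$) with $t_e = \tfrac{d}{d+1}\log c_e$ and $k_e = O_e \oplus 1$. Iterating yields the key identity
\[
g_{b_1^n}\Lambda = K_n^{-1}\, a_{-T_n}\, u_{-\beta_n}\,\Lambda,
\]
where $\beta_n = \phi_{b_1}\circ\cdots\circ\phi_{b_n}(0) \to \pi(b)$, $T_n = \sum_{i=1}^n t_{b_i} \to -\infty$, and $K_n \in K$. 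Let $\mu = \sum_e p_e \delta_{g_e}$ with $p_e = c_e^s$ and $s = \dim_H \KK$; by the open set condition and Hutchinson's theorem, $\sum_e p_e = 1$ and $\pi_*\beta$ is a constant multiple of $\mu_\KK$, where $\beta = \mu^{\otimes \N}$.

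Next I verify that $\mu$ is in $(d,1)$-upper block form in the sense of Definition \ref{def: block form}. Condition (i) is immediate; condition (ii) reads $\int\theta_1\,d\mu = -\tfrac{d}{d+1}\sum_e p_e \log c_e > 0$, which holds because each $c_e \in (0,1)$; condition (iii) follows from Lemma \ref{lem: verify iv} applied to two $g_e$'s whose Cartan parts differ, combined with the irreducibility hypothesis (which by \cite[Proposition~3.1]{BFS1} forbids any proper affine subspace from having a finite orbit under the semigroup and thus forces the translation parts of suitable products to span $\R^d$). Theorem \ref{thm: assumptions satisfied} then supplies the hypotheses of Theorem \ref{thm: new main}, whose conclusion (ii) applied to the base point $\Lambda \in X$ gives that for $\beta$-a.e.\ $b$, the orbit $(g_{b_1^n}\Lambda)_{n\in\N}$ equidistributes in $X$ with respect to the Haar probability measure $m_X$.

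Because $X$ is non-compact and $m_X$ is a probability measure, equidistribution forces the orbit to exit every compact subset and hence to be unbounded. Using any left-$G$-invariant right-$\Lambda$-invariant proper metric on $X$, the displayed identity together with $\beta_n \to \pi(b)$ and $K_n \in K$ (compact) shows that $g_{b_1^n}\Lambda$ lies within bounded distance of $a_{-T_n} u_{-\pi(b)}\Lambda$. Therefore $(a_{-T_n} u_{-\pi(b)}\Lambda)_{n\in\N}$ is unbounded; since $-T_n \to +\infty$ and the flow is continuous in $t$, the full forward trajectory $(a_t u_{-\pi(b)}\Lambda)_{t\geq 0}$ is unbounded, so $-\pi(b) \notin \BA$. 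The $\alpha\mapsto-\alpha$ symmetry of $\BA$ then gives $\pi(b)\notin\BA$ for $\beta$-a.e.\ $b$, and pushing forward yields $\mu_\KK(\BA) = 0$. The main technical obstacle in this plan is condition \textup{(III)} of the upper block form, i.e.\ establishing that the Lie algebra of the Zariski closure of $\Gamma$ contains $V^+$; this is where the irreducibility of the IFS does its essential work, and everything else is a combination of Theorem \ref{thm: new main}, a bookkeeping Cartan decomposition in $\PGL_{d+1}$, and the standard translation between $\BA$ and bounded orbits.
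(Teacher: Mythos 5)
Your argument is correct, and the key computations (the identity $h_e u_{\bfalpha} = u_{\phi_e(\bfalpha)}a_{t_e}k_e$, its iterated form $g_{b_1^n} = K_n^{-1}a_{-T_n}u_{-\beta_n}$, and the choice $p_e = c_e^s$ so that $\pi_*\beta \propto \mu_{\KK}$) all check out, but you take a genuinely shorter route than the paper. The paper deduces this theorem from the much stronger Theorem \ref{theoremgtequi} (almost every point is of \emph{generic type}, i.e.\ the full continuous-time trajectory $(a_t u_{\pi(b)}x_0)_{t\geq 0}$ equidistributes), whose proof needs the fiber-bundle bootstrap Theorem \ref{theoremequibootstrap} to pass from the discrete random walk to the flow jointly with the rotational parts $k_{b_1^n}$ and the tails $\pi(T^n b)$. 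You observe, correctly, that for the bare statement $\mu_{\KK}(\BA)=0$ one only needs \emph{unboundedness} of the Dani trajectory, which already follows from equidistribution of the discrete walk $(g_{b_1^n}x_0)_{n\in\N}$ (Theorem \ref{theorempart1}(ii)) plus the bounded-distance comparison; that comparison is precisely the paper's Lemma \ref{lemmaBAequivalent}. So your proof trades the stronger conclusion for a simpler argument. The paper also gives a second, independent proof via doubling measures and porosity (Theorem \ref{theoremdoubling}), which uses only the density statement of Theorem \ref{theorempart1}(i) but leans harder on the open set condition; your route and that one are both legitimate and different from each other.

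Two steps deserve tightening. First, in the bounded-distance claim, ``$\beta_n\to\pi(b)$'' alone does not suffice after conjugating by the expanding element $a_{-T_n}$: what you need is that $e^{-T_n(d+1)/d}\,\|\pi(b)-\beta_n\|$ equals $\|\pi(T^n b)\|$ up to a rotation, which is bounded because $\KK$ is compact --- i.e.\ the contraction ratio of $\phi_{b_n^1}$ exactly matches $e^{T_n(d+1)/d}$. This is the quantitative point that Lemma \ref{lemmaBAequivalent} is built on. Second, Lemma \ref{lem: verify iv} requires an element $g_0\in\Gamma$ with $u_{g_0}=\id$ and $a_{g_0}\neq\id$, which a general IFS need not supply; as in \S\ref{subsec: fundamental}, one first conjugates $\mu$ by a translation carrying the fixed point of a contracting element of the semigroup to the origin (harmless, since equidistribution is preserved under conjugating the measure and moving the basepoint), and only then applies the lemma together with the irreducibility argument to verify condition (iii) of Definition \ref{def: block form}.
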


It is readily verified that the examples of fractals given in \S \ref{sectionintro2} (i.e. translates of the Cantor set $\mathcal{C}$, middle-$\vre$ Cantor sets, the limit set of \eqref{1314}, and $\mathcal{C}\times \mathcal{C}$) all satisfy the hypotheses of this theorem. The same is true for the Koch snowflake and the Sierpi\'nski triangle. On the other hand, the sets $F_N$ ($N\in\N$) cannot be written as the limit sets of similarity IFSes. Note that since the inner product used to define the notion of a similarity can be chosen arbitrarily, the class of fractals $\KK$ to which our results apply is invariant under invertible affine transformations.

We also consider more general measures on a set $\KK$ than just the Hausdorff measure $\mu_{\KK}$. Namely, let $\Prob(E)$ denote the space of probability measures on $E$. For each $\mu\in\Prob(E)$ we can consider the measure $\pi_*\mu^{\otimes\N}$ on $\KK$, i.e. the pushforward of $\mu^{\otimes \N}$ under the coding map. A measure of the form $\pi_*\mu^{\otimes \N}$ is called a {\em Bernoulli} measure. If $\Phi$ satisfies the open set condition, then there exists $\mu\in\Prob(E)$ with $\mu(e) > 0$ for all $e\in E$ such that $\mu_{\KK} = c\pi_*\mu^{\otimes\N}$ for some constant $c > 0$ \cite[(3)(iv)]{Hutchinson}. So Theorem \ref{theoremhutchinsonHdelta} is a consequence of the following more general theorem:

\begin{thm}
\label{theoremhutchinsonbernoulli}
Let $\Phi$ be an irreducible finite similarity IFS on $\R^d$, and fix $\mu\in\Prob(E)$ such that $\mu(e) > 0$ for all $e\in E$. Then $\pi_*\beta(\BA) = 0$, where $\beta = \mu^{\otimes\N}$.
\end{thm}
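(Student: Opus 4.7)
The plan is to translate the problem into a dynamical question on $X=\PGL_{d+1}(\R)/\PGL_{d+1}(\Z)$ via Dani's correspondence, to apply the equidistribution result of Theorem \ref{thm: new main} to a random walk built from the IFS, and then to use Mahler's compactness criterion to force almost every orbit to enter arbitrarily deep cusp neighborhoods. For each $e\in E$ write $\phi_e(\mathbf{x}) = c_e O_e \mathbf{x} + \mathbf{y}_e$, and let $h_e\in G = \PGL_{d+1}(\R)$ be the matrix representing the inverse similarity $\psi_e = \phi_e^{-1}$ via the projective action of $G$ on $\R^d$. A direct calculation gives the factorization $h_e = a_{t_e} k_e u_e^{(s)}$ with $t_e = \tfrac{d}{d+1}\log(1/c_e) > 0$, $k_e = O_e^{-1}\oplus 1 \in K$, and $u_e^{(s)} \in U$; pushing $\mu$ forward by $e\mapsto h_e$ thus yields a compactly supported measure $\tilde\mu$ on $G$ whose support lies in $\para = AKU$.

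To apply Theorem \ref{thm: new main} I would verify that $\tilde\mu$ is in $(d,1)$-upper block form. Conditions (i) and (ii) of Definition \ref{def: block form} are immediate from the factorization and from the positivity of the $t_e$ together with that of the weights $\mu(e)$. The main algebraic obstacle is condition (iii), that $\Lie(\Hmu) \supset V^+$. I would obtain this by first conjugating the whole system by $u_{-\mathbf{p}}$, where $\mathbf{p}\in\R^d$ is the unique fixed point of some chosen $\phi_{e_0}$; since $U$ is abelian, $\Ad(u_{-\mathbf{p}})$ fixes $V^+$ pointwise, so $V^+\subset\Lie(\Hmu)$ is equivalent to the same containment for the conjugated Zariski closure $\tilde\Hmu$. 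In the conjugated system $\tilde h_{e_0}$ has trivial $U$-part and nontrivial $A$-part, so Lemma \ref{lem: verify iv} yields $\log u_{\tilde g}\in\Lie(\tilde\Hmu)\cap V^+$ for every $\tilde g$ in the conjugated group $\tilde\Gamma$. The $\R$-span of these vectors in $V^+\cong\R^d$ is invariant under the linear parts of $\tilde\Gamma$ and contains every translational part of the affine action of $\tilde\Gamma$; were it a proper subspace, it would define a proper invariant affine subspace for the conjugated IFS, contradicting the irreducibility of $\Phi$ (which is preserved under affine conjugation). Hence the span is all of $V^+$, giving (iii). Theorem \ref{thm: assumptions satisfied} now supplies hypotheses (I)--(III) of Theorem \ref{thm: new main}, and part (ii) of that theorem yields: for $\beta$-almost every $b\in\B$, the random walk orbit $(g_{b_1^n} x_0)_{n\in\N}$ equidistributes with respect to $m_X$, where $x_0 = e\Lambda$.

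Next comes the computation linking the random walk to the Dani trajectory $(a_t u_{\pi(b)} x_0)_{t\geq 0}$. By iterating the commutation identity $h_e u_{-\bfalpha} = u_{-\psi_e(\bfalpha)}\, a_{t_e} k_e$ (readily verified from the projective actions together with the fact that $A$ and $K$ commute), one finds, for every $b\in\B$ and every $\bfalpha\in\R^d$,
\[
g_{b_1^n} u_{-\bfalpha} = u_{-\psi_{b_n}\circ\cdots\circ\psi_{b_1}(\bfalpha)}\, a_{T_n} K_n,\qquad T_n = \sum_{i=1}^n t_{b_i},\quad K_n = k_{b_n}\cdots k_{b_1}.
\]
Specializing to $\bfalpha = \pi(b)$ and using the tautology $\psi_{b_n}\circ\cdots\circ\psi_{b_1}(\pi(b)) = \pi(T^n b)\in \KK$ (which follows by induction from $\psi_{b_1}(\pi(b)) = \pi(Tb)$) gives
\[
g_{b_1^n} = u_{-\pi(T^n b)}\, a_{T_n} K_n\, u_{\pi(b)}.
\]
The prefactor $u_{-\pi(T^n b)}$ lies in the bounded set $\{u_{-\bfgamma} : \bfgamma\in\KK\}$ and $K_n$ lies in a compact group, so multiplying a lattice by $u_{-\pi(T^n b)}K_n$ changes the length of its shortest nonzero vector by at most a bounded multiplicative factor. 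Writing $\lambda_1$ for the length of the shortest nonzero vector of a lattice, and using $A$-$K$ commutativity,
\[
\lambda_1(g_{b_1^n}\Z^{d+1}) \;\asymp\; \lambda_1(a_{T_n} u_{\pi(b)}\Z^{d+1})
\]
with constants depending only on $\KK$.

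To finish: full support of $m_X$ implies $m_X(\{\lambda_1 < \vre\}) > 0$ for every $\vre > 0$, so the equidistribution of $(g_{b_1^n} x_0)_{n\in\N}$ combined with the portmanteau theorem gives, for $\beta$-a.e. $b$, $\liminf_n \lambda_1(g_{b_1^n}\Z^{d+1}) = 0$. The comparison above then forces $\liminf_n \lambda_1(a_{T_n} u_{\pi(b)}\Z^{d+1}) = 0$; in particular the forward $A$-orbit of $u_{\pi(b)} x_0$ is unbounded in $X$, so Dani's correspondence yields $\pi(b)\notin\BA$. Since this holds for $\beta$-a.e. $b$, $\pi_*\beta(\BA)=0$. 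The principal obstacle is the verification of condition (iii) via the irreducibility of $\Phi$; the remaining steps combine the standard Dani--Mahler framework with the random walk equidistribution given by Theorem \ref{thm: new main}.
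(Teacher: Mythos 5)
Your proposal is correct, and the bulk of it coincides with the paper's own route: the same identification of the inverse similarities with elements of $\para=AKU\subset\PGL_{d+1}(\R)$, the same verification of $(d,1)$-upper block form (your conjugation-to-the-fixed-point argument plus Lemma \ref{lem: verify iv} plus irreducibility is essentially the argument given in \S\ref{subsec: fundamental}, and your key identity $g_{b_1^n}=u_{-\pi(T^nb)}a_{T_n}K_nu_{\pi(b)}$ is exactly the computation in Lemma \ref{lemmaBAequivalent}), and the same appeal to Theorems \ref{thm: assumptions satisfied} and \ref{thm: new main}(ii). Where you genuinely diverge is the endgame. The paper deduces the badly-approximable statement from the stronger Theorem \ref{theoremgtequi} (equidistribution of the full flow $(a_tu_{\pi(b)}x_0)_{t\geq 0}$, i.e.\ generic type), and for that it cannot discard the prefactor $u_{-\pi(T^nb)}K_n$: it must track it, which is precisely why the fiber-bundle bootstrap Theorem \ref{theoremequibootstrap} (joint equidistribution with $k_{b_1^n}$ and $f(T^nb)$) is invoked, followed by an interpolation in $t$. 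You observe instead that for the BA conclusion only \emph{unboundedness} of the $A$-orbit is needed, and unboundedness is insensitive to left multiplication by a compact set; so equidistribution of the discrete random walk alone, via the portmanteau theorem applied to the open sets where the shortest lattice vector is short, already suffices. This buys a genuinely shorter and more elementary proof of Theorem \ref{theoremhutchinsonbernoulli} — in the paper's own terms it amounts to combining Lemma \ref{lemmaBAequivalent} with Theorem \ref{theorempart1}(ii), a combination the paper never writes down because it aims for the stronger generic-type statement — at the cost of not yielding Theorem \ref{theoremgtequi} or the second half of Theorem \ref{thm: illustrative application}.
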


Note that in this theorem we do not require $\Phi$ to satisfy the open
set condition. The only reason we need the open set condition in
Theorem \ref{theoremhutchinsonHdelta} is to guarantee that $\mu_{\KK}$
is proportional to $\pi_*\beta$; if the open set condition is not
satisfied, then this equivalence does not hold, and the Hausdorff
dimension of $\KK$ does not necessarily reflect the dynamical
structure (see e.g. \cite{PRSS}). 

\subsection{More general measures}
Once we take the point of view that the Bernoulli measures associated
with an IFS are more important than the limit set of the IFS, it is
possible to relax the assumption that the IFS is finite, instead
assuming that it is compact. 
There is also no reason to restrict to uniformly contracting IFSes; it
is enough to have a ``contracting on average'' assumption. Let $E$ be
a compact set and let $\Phi = (\phi_e)_{e\in E}$ be a continuously
varying family of similarities of $\R^d$, called a \emph{compact
  similarity IFS}. We say that a measure $\mu\in\Prob(E)$ is
\emph{contracting on average} if 
\[
\int \log\|\phi_e'\| \;\dee \mu(e) < 0,
\]
where $\|\phi_e'\|$ denotes the scaling constant of the similarity
$\phi_e$ (equal to the norm of the derivative $\phi_e'$ at any point
of $\R^d$). If $\mu$ is contracting on average, then by the ergodic
theorem $\|\phi_{b^1_n}'\| \to 0$ exponentially fast for
$\beta$-a.e. $b\in \B$, and thus the limit \eqref{codingmap} converges
almost everywhere, thereby defining a measure-preserving map
$\pi:(\B,\beta) \to (\R^d,\pi_*\beta)$. In the case where all the
elements of a compact similarity IFS are strict contractions (and
thus, by compactness, contract by a uniform amount), it is easy to show that the coding map $\pi$ is continuous and thus the image of $\B$ under $\pi$ is compact. However,
in the case of contraction on average, $\pi$ is only measurable and not continuous, and the set $\pi(\B)$ need not be
compact. 

%
Now Theorem \ref{theoremhutchinsonbernoulli} is obviously a special
case of the following: 

\begin{thm}
\label{theoremCOAbernoulli}
Let $\Phi$ be an irreducible compact similarity IFS on $\R^d$, and fix
$\mu\in\Prob(E)$, contracting on average, such that $\supp(\mu) =
E$. Then $\pi_*\beta(\BA) = 0$, where $\beta = \mu^{\otimes\N}$. 
\end{thm}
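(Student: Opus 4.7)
The plan is to invoke Dani's correspondence, which identifies $\BA\subset\R^d$ with the set of points $\bfalpha$ for which $\{a_s u_\bfalpha\Lambda : s\geq 0\}$ is bounded in $X = \PGL_{d+1}(\R)/\PGL_{d+1}(\Z)$, and then to use Theorem \ref{thm: new main} to produce, for $\beta$-a.e.\ $b$, times $s_n\to\infty$ along which $a_{s_n} u_{\pi(b)}\Lambda$ escapes every compact subset of $X$. To this end, lift each similarity $\phi_e(\mathbf{x}) = c_e O_e \mathbf{x} + \mathbf{y}_e$ to
\[
h_e = \begin{pmatrix} c_e O_e & -\mathbf{y}_e \\ 0 & 1\end{pmatrix} \in \PGL_{d+1}(\R),
\]
chosen precisely so that $h_e\, u_\bfgamma = u_{\phi_e(\bfgamma)} D_e$ with $D_e = \begin{pmatrix} c_e O_e & 0 \\ 0 & 1\end{pmatrix}$, and let $\tilde\mu$ be the pushforward of $\mu$ under $e\mapsto h_e^{-1}$. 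A direct computation places $h_e^{-1}$ in $\para=AKU$ with $\theta_1(h_e^{-1}) = -\tfrac{d}{d+1}\log c_e$, so contraction on average gives $c_1 = -\tfrac{d}{d+1}\int \log c_e\,d\mu(e) > 0$, verifying conditions \textup{(i)} and \textup{(ii)} of Definition \ref{def: block form}. For condition \textup{(iii)}, following the strategy of Theorem \ref{thm: illustrative measures}, I would argue that irreducibility of $\Phi$ forces the projection of the Zariski closure of $\langle h_e^{-1}\rangle$ to $\Aff(\R^d) = \GL_d(\R)\ltimes\R^d$ to preserve no proper affine subspace, so that its translation part exhausts $\R^d$, and combining with Lemma \ref{lem: verify iv} applied to a pure-scaling element produced by conjugating suitable semigroup words yields $V^+ \subset \Lie(\Hmu)$. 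Theorem \ref{thm: assumptions satisfied} then supplies hypotheses \textup{(I)}--\textup{(III)}, and Theorem \ref{thm: new main}(ii) delivers equidistribution of $h_{b_n}^{-1}\cdots h_{b_1}^{-1}\cdot x$ towards $m_X$ for every fixed $x\in X$ and $\beta$-a.e.\ $b$.

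The bridge to the fractal is the algebraic identity
\[
h_{b_1} h_{b_2}\cdots h_{b_n}\, u_{\pi(T^n b)} \;=\; u_{\pi(b)}\, F_n, \qquad F_n = D_{b_1}\cdots D_{b_n},
\]
obtained by a one-line induction from $h_e u_\bfgamma = u_{\phi_e(\bfgamma)} D_e$ together with $\phi_{b_1}(\pi(Tb))=\pi(b)$. In $\PGL_{d+1}(\R)$ one computes $F_n^{-1} = a_{s_n} k^{(n),-1}$ with $s_n = -\tfrac{d}{d+1}\log(c_{b_1}\cdots c_{b_n}) \to +\infty$ almost surely (by the law of large numbers) and $k^{(n)}\in K$. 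Inverting the identity, applying it to $\Lambda$, and repeatedly using the commutations $a_t u_\bfbeta a_{-t} = u_{e^{t(d+1)/d}\bfbeta}$ and $k u_\bfgamma k^{-1} = u_{O_k\bfgamma}$ to shuffle factors past one another, one extracts the exact formula
\[
h_{b_n}^{-1}\cdots h_{b_1}^{-1}\,\Lambda \;=\; a_{s_n}\, k^{(n),-1}\, u_{-\tau_n}\,\Lambda,
\]
where $\tau_n\in\R^d$ is the translation part of $\phi_{b_1}\circ\cdots\circ\phi_{b_n}$. Since $\tau_n-\pi(b) = -(c_{b_1}\cdots c_{b_n})\,O_{b_1}\cdots O_{b_n}\,\pi(T^n b)$, and $c_{b_1}\cdots c_{b_n}$ decays exponentially while $\|\pi(T^n b)\|$ grows at most subexponentially $\beta$-almost surely (by tightness of $\pi_*\beta$ combined with Borel--Cantelli), we have $\tau_n\to\pi(b)$, hence $u_{-\tau_n}\Lambda\to u_{-\pi(b)}\Lambda$ in $X$ for $\beta$-a.e.\ $b$.

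Combining the two ingredients, the sequence $a_{s_n} k^{(n),-1} u_{-\pi(b)}\Lambda$ is asymptotic in $X$ (with respect to the $G$-invariant metric) to the equidistributed walk $h_{b_n}^{-1}\cdots h_{b_1}^{-1}\Lambda$, and is therefore itself equidistributed with respect to $m_X$ for $\beta$-a.e.\ $b$. Since $a_{s_n}$ and $k^{(n),-1}$ commute and $k^{(n),-1}$ lies in the compact group $K$, this forces the $K$-orbit of $a_{s_n} u_{-\pi(b)}\Lambda$ to equidistribute in $K\backslash X$ and hence to leave every compact subset of $X$ infinitely often; compactness of $K$ then promotes the same conclusion to the sequence $a_{s_n} u_{-\pi(b)}\Lambda$ itself, so that the continuous orbit $\{a_s u_{-\pi(b)}\Lambda : s\geq 0\}$ is unbounded. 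Dani's correspondence, together with the invariance of $\BA$ under negation, yields $\pi(b)\notin\BA$ for $\beta$-a.e.\ $b$, proving the theorem. The main obstacle I anticipate is the derivation of condition \textup{(iii)} purely from the geometric irreducibility of $\Phi$, since the unipotent parts of the $h_e^{-1}$ are intertwined with the large scaling factors $c_e^{-1}$ and must be disentangled by a careful semigroup argument to exhibit enough pure translations inside $\Lie(\Hmu)$; a secondary difficulty is the potential unboundedness of $\pi(T^n b)$ in the contraction-on-average setting, which enters the almost-sure convergence $\tau_n\to\pi(b)$ and likely requires an explicit moment or tightness estimate for $\pi_*\beta$.
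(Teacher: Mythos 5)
Your route up to the final step is essentially the paper's: lift the similarities into $\para=AKU\subset\PGL_{d+1}(\R)$, verify the $(d,1)$-upper block form conditions, invoke Theorems \ref{thm: assumptions satisfied} and \ref{thm: new main}(ii) to equidistribute the random walk, and relate the walk to the diagonal flow through the exact identity $h_{b_1}\cdots h_{b_n}u_{\pi(T^nb)}=u_{\pi(b)}F_n$. (The verification of condition (iii) that you flag as the main obstacle is carried out in \S\ref{subsec: fundamental}: one conjugates so that a contracting word of the semigroup has its fixed point at the origin, applies Lemma \ref{lem: verify iv}, and observes that $\{\bfalpha : \log(u_\bfalpha)\in\Lie(\Hmu)\}$ is a $\Gamma$-invariant affine subspace, hence all of $\MM$ by irreducibility.) The genuine gap is in the transfer step. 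Your identity gives the walk as $a_{s_n}k^{(n),-1}u_{-\tau_n}x_0$ exactly, and indeed $\tau_n\to\pi(b)$; but the points $a_{s_n}k^{(n),-1}u_{-\tau_n}x_0$ and $a_{s_n}k^{(n),-1}u_{-\pi(b)}x_0$ are \emph{not} asymptotic. Commuting $u_{\pi(b)-\tau_n}$ to the left of $a_{s_n}k^{(n),-1}$ expands it by exactly $e^{s_n(d+1)/d}=(c_{b_1}\cdots c_{b_n})^{-1}$, which precisely cancels the decay of $\pi(b)-\tau_n$: the walk equals $u_{\bfgamma_n}\,a_{s_n}k^{(n),-1}u_{-\pi(b)}x_0$ with $\|\bfgamma_n\|=\|\pi(T^nb)\|$, a quantity that does not tend to $0$. (There is also no $G$-invariant metric on the noncompact quotient $X$.) A perturbation by group elements ranging over a fixed compact set does not preserve equidistribution, so ``is therefore itself equidistributed'' does not follow.

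Fortunately the conclusion you actually need from this step is only \emph{unboundedness} of $(a_su_{-\pi(b)}x_0)_{s\geq0}$, and that survives: since the walk equidistributes with respect to $m_X$ and $m_X(X\sm C)>0$ for every compact $C$, the walk leaves $C$ with positive asymptotic frequency, while $\|\pi(T^nb)\|\leq M$ with frequency at least $1-\vre$ for $M$ large (Birkhoff for the shift applied to $\mathbf 1_{\{\|\pi\|\leq M\}}$, which is where your tightness remark belongs); along the infinitely many $n$ where both occur, $a_{s_n}k^{(n),-1}u_{-\pi(b)}x_0=u_{-\pi(T^nb)}\cdot(\text{walk})_n$ escapes every compact set, and boundedness of the gaps $s_{n+1}-s_n$ plus compactness of $K$ then finish as you indicate. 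You should replace the ``asymptotic, hence equidistributed'' claim with this frequency argument. The paper instead proves the stronger Theorem \ref{theoremgtequi} (genuine equidistribution of the flow orbit), and the non-vanishing correction $u_{\pi(T^nb)}k^{(n)}$ is exactly what Theorem \ref{theoremequibootstrap} is built to absorb: one equidistributes the triple $(g_{b_1^n}x_0,\,k_{b_1^n},\,\pi_+(T^nb))$ jointly and pushes forward under the continuous, $m_X$-preserving map $(x,k,\bfalpha)\mapsto k^{-1}u_{\bfalpha}x$. If you want equidistribution rather than mere unboundedness, you must use that bootstrap.
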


\subsection{Other types of measures}
\label{subsectiondoubling} 
A completely different direction in which to generalize Theorem
\ref{theoremhutchinsonHdelta} is to consider measures on the limit set
$\KK$ other than Bernoulli measures. We will need an assumption that
ties the measure to the set $\KK$, i.e. that its topological support
is equal to $\KK$. We will also need a fairly weak geometric
assumption. A measure $\nu$ on $\R^d$ is called \emph{doubling} if for
all (equiv. for some) $\lambda > 1$, there exists a constant
$C_\lambda \geq 1$ such that for all $x\in \supp(\nu)$ and $r \in (
0,1)$, we have 
\begin{equation}
\label{doubling}
\nu(B(x,\lambda r)) \leq C_\lambda \nu(B(x,r)).
\end{equation}
\begin{thm}
\label{theoremdoubling}
Let $\KK$ be the limit set of an irreducible finite similarity IFS satisfying the open set condition. If $\nu$ is a doubling measure such that $\supp(\nu) = \KK$, then $\nu(\BA) = 0$.
\end{thm}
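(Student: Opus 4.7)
The plan is to bootstrap from Theorem \ref{theoremhutchinsonHdelta}, which gives $\mu_{\KK}(\BA) = 0$, to an arbitrary doubling measure $\nu$ via a tangent (blow-up) argument built on the self-similar structure of $\KK$. First, write $\BA = \bigcup_{c > 0} \BA_c$, where $\BA_c \df \{\bfalpha : \|q\bfalpha - \pp\| \geq c q^{-1/d} \text{ for all } \pp \in \Z^d, q \in \N\}$ is closed, and reduce to proving $\nu(\BA_c) = 0$ for each fixed $c > 0$. The Lebesgue differentiation theorem is valid for doubling measures, so it suffices to rule out the existence of $\nu$-density points of $\BA_c$ inside $\KK$.

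Suppose for contradiction $\bfalpha^* \in \BA_c \cap \KK$ is a $\nu$-density point of $\BA_c$. Choose a coding $b^* \in \B$ with $\pi(b^*) = \bfalpha^*$, and for each $n$ let $r_n = \|\phi_{b^{*1}_n}'\|$. Define the rescaled probability measures
\[
\nu_n \df \frac{1}{\nu\big(\phi_{b^{*1}_n}(\KK)\big)}\, \big(\phi_{b^{*1}_n}^{-1}\big)_* \big(\nu\big|_{\phi_{b^{*1}_n}(\KK)}\big)
\]
on $\KK$. Since $\phi_{b^{*1}_n}^{-1}$ is a similarity and $\nu$ is doubling, the family $(\nu_n)$ is uniformly doubling with $\supp(\nu_n) = \KK$; by Prokhorov a subsequence converges weak-* to a doubling measure $\nu_\infty$ on $\KK$. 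By the OSC and doubling, the cylinders $\phi_{b^{*1}_n}(\KK)$ are sandwiched between balls around $\bfalpha^*$ of comparable radii, so the density-point assumption translates into $\nu_\infty(A_\infty) = 1$, where $A_\infty$ is the limit (in the Hausdorff topology on closed subsets of $\KK$) of the pullback sets $\phi_{b^{*1}_n}^{-1}(\BA_c) \cap \KK$.

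The crux is to analyze $A_\infty$. If $\phi_{b^{*1}_n}(\bfbeta) = c_n O_n \bfbeta + \mathbf{y}_n$, then $\bfbeta \in \phi_{b^{*1}_n}^{-1}(\BA_c)$ unfolds into an inhomogeneous Diophantine condition
\[
\|q\, O_n \bfbeta - c_n^{-1}(\pp - q\mathbf{y}_n)\| \geq \frac{c}{c_n}\, q^{-1/d} \quad (\forall\, \pp \in \Z^d,\, q \in \N),
\]
i.e.\ simultaneous approximation of $O_n \bfbeta$ by the shifted lattices $c_n^{-1}(\Z^d - q\mathbf{y}_n)$. The plan is to rule out $\nu_\infty(A_\infty) = 1$ by extending the random walk machinery of Part~\ref{part1}: the rotations $O_n$ and the shifts $c_n^{-1}\mathbf{y}_n\pmod 1$ are controlled by a cocycle over the compact group $K \times \mathbb{T}^d$, and Theorem \ref{thm: fiber bundle extension} applied to the bundle $X \times K \times \mathbb{T}^d$ (with the homomorphism $\kappa$ recording the rotational part of the IFS maps and the additional fiber $\mathbb{T}^d$ recording the accumulated translations) should yield equidistribution of the associated trajectories in this enlarged space. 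Equidistribution then forces, by the usual Dani-type correspondence, that $\nu_\infty$-a.e.\ point of $\KK$ fails the above inhomogeneous condition, contradicting $\nu_\infty(A_\infty) = 1$.

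The main obstacle is step three: because the IFS similarities need not have rational coefficients, $\phi_{b^{*1}_n}^{-1}(\BA_c)$ is not $\BA_c$ but a twisted inhomogeneous analogue whose parameters $(O_n, c_n, \mathbf{y}_n \bmod 1)$ depend on $n$ and need not stabilize. Executing the plan therefore requires (i) checking that the doubling tangent $\nu_\infty$ is not too singular along the directions picked out by the cocycle $\kappa$, so that the fiber-bundle equidistribution can be applied, and (ii) proving a uniform (in the shift parameters) inhomogeneous version of Theorem \ref{theoremhutchinsonbernoulli}. Both ingredients should follow from the random walk framework of Part~\ref{part1} with the choice of $G = \PGL_{d+1}(\R)$, $\Lambda = \PGL_{d+1}(\Z)$, and $\mu$ in $(1,d)$-upper block form as in Theorem \ref{thm: assumptions satisfied}, enlarged by the compact cocycle recording the rotational and translational data of the IFS iterates.
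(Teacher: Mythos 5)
There is a genuine gap, and it sits exactly at the step you flag as ``the main obstacle'': the argument that rules out $\nu_\infty(A_\infty)=1$ is not only unexecuted, the proposed route cannot work as stated. All of the random-walk equidistribution results of Part \ref{part1} (including Theorem \ref{thm: fiber bundle extension}) are statements about $\beta$-a.e.\ coding sequence $b$. Your blow-up is performed along the specific sequence $b^*$ coding the badly approximable density point $\bfalpha^*$; by Theorem \ref{theoremhutchinsonbernoulli} itself, such sequences form a $\beta$-null set, so no a.e.\ equidistribution statement applies to the orbit $(O_n,c_n,\mathbf y_n)$ you are tracking. Worse, even a uniform inhomogeneous version of Theorem \ref{theoremhutchinsonbernoulli} would again be a statement about Bernoulli measures, whereas the tangent measure $\nu_\infty$ is an arbitrary doubling measure on $\KK$ with no Bernoulli structure, so an ``a.e.\ with respect to $\pi_*\beta$'' conclusion says nothing about $\nu_\infty(A_\infty)$. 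To defeat an arbitrary doubling measure you need a statement that holds \emph{everywhere}, not almost everywhere. This is precisely how the paper proceeds: it uses the Dani-type correspondence (Lemma \ref{lemmaBAequivalent}) to write $\BA\cap\KK$ as $\bigcup_j \pi(S_j)$ with $S_j$ the codings whose random-walk orbit stays in a fixed compact set, shows each $\pi(S_j)$ is \emph{porous} in $\KK$ using the topological fact that $\Gamma^+x$ is dense in $X$ for \emph{every} $x$ (Theorem \ref{theorempart1}(i)), and then invokes the fact that porous sets are null for every doubling measure of full support (Lemma \ref{lem: porous measure}). No tangent measures and no inhomogeneous Diophantine analysis are needed.

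Two secondary points would also need repair even if the main step were fixed. First, the restriction of a doubling measure to a cylinder $\phi_{b^{*1}_n}(\KK)$ need not be doubling (near the relative boundary of the cylinder most of the mass of a ball may escape the cylinder), so the claimed uniform doubling of the rescaled measures $\nu_n$, and hence the existence of a doubling tangent $\nu_\infty$ with full support, does not follow as stated. Second, $\bfalpha^*$ being a $\nu$-density point of $\BA_c$ with respect to balls does not directly give density one along the cylinder filtration, since the generation-$n$ cylinder containing $\bfalpha^*$ is contained in a ball of radius $\asymp r_n$ but need not contain a ball around $\bfalpha^*$ of comparable radius. I would recommend abandoning the blow-up scheme and adopting the porosity argument.
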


Since the measure $\mu_{\KK}$ is doubling and has full topological support (e.g. this follows from \cite[(3)(iii)]{Hutchinson}), Theorem \ref{theoremdoubling} provides another proof of Theorem \ref{theoremhutchinsonHdelta}. Note that we need the open set condition in Theorem \ref{theoremdoubling} in order to relate the doubling condition, which describes geometry in $\R^d$, to information about the space $\B$.

\subsection{Approximation of matrices}
The preceding theorems can be generalized to the framework of Diophantine approximation of matrices. In what follows, we fix $\pdim,\qdim\in\N$ and let $\MM$ denote the space of $\pdim\times \qdim$ matrices. Recall that a matrix $\bfalpha\in\MM$ is called \emph{badly approximable} if there exists $c> 0$ such that for all $\qq\in\Z^\qdim\sm\{0\}$ and $\pp\in\Z^\pdim$, $\|\bfalpha\qq - \pp\| \geq c\|\qq\|^{-\qdim/\pdim}$. As before, we denote the set of badly approximable matrices by $\BA$.

Rather than considering an arbitrary compact similarity IFS acting on $\MM$, we will need to be somewhat restrictive about which similarities we allow: they will need to be somewhat compatible with the structure of $\MM$ as a space of matrices. We define an \emph{algebraic similarity} of $\MM$ to be a map of the form $\bfalpha \mapsto \lambda \bfbeta \bfalpha\bfgamma + \bfdelta$, where $\lambda > 0$, $\bfbeta\in\GO_\pdim$, $\bfgamma\in\GO_\qdim$, and $\bfdelta\in\MM$. Here $\GO_\pdim$ denotes the group of $\pdim \times \pdim$ real matrices which preserve some fixed inner product on $\R^\pdim$. Thus an algebraic similarity is a composition of a translation and pre- and post-composition of $\bfalpha$ with similarity mappings on its domain and range. Note that if $\pdim = 1$ or $\qdim = 1$, then every similarity is algebraic. A similarity IFS will be called \emph{algebraic} if it consists of algebraic similarities. It will be called \emph{irreducible} if it does not leave invariant any proper affine subspace of $\MM \cong \R^{\pdim \cdot \qdim}$. For convenience we make the following definition:

\begin{dfn}\label{def: self-similar measure}
Let $\Phi$ be an irreducible compact algebraic similarity IFS on $\MM$, and fix $\mu\in\Prob(E)$, contracting on average, such that $\supp(\mu) = E$. Then the Bernoulli measure $\pi_*\beta$ is called a \emph{general algebraic self-similar measure}, where $\beta = \mu^{\otimes\N}$.
\end{dfn}

As explained in \S \ref{subsectionsimilarityIFS}, we are free to
specify our inner product structures on $\R^\pdim, \R^\qdim$ in
advance, and the groups $\GO_\pdim, \GO_\qdim$ appearing above should
be understood as the groups preserving these inner products. This
implies that the pushforward of a general algebraic self-similar
measure under a map of the form $\bfalpha \mapsto \bfbeta \bfalpha
\bfgamma + \bfdelta$, where $\bfbeta\in \GL_\pdim(\R)$, $\bfgamma\in
\GL_\qdim(\R)$, and $\bfdelta\in \MM$, is also a general algebraic
self-similar measure. 

We can now state generalizations of Theorems \ref{theoremCOAbernoulli} and \ref{theoremdoubling}, respectively:

\begin{thm}
\label{theoremBAmatrix}
If $\nu$ is a general algebraic self-similar measure on $\MM$, then $\nu(\BA) = 0$.
\end{thm}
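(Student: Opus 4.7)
The plan is to invoke Dani's correspondence and then apply Theorem \ref{thm: new main}, via Theorem \ref{thm: assumptions satisfied}, to a random walk on $X = G/\Lambda$ naturally associated to the IFS, where $G = \PGL_\dimsum(\R)$ and $\Lambda = \PGL_\dimsum(\Z)$. Writing $\gamma = \tfrac{1}{\pdim} + \tfrac{1}{\qdim}$, I would associate to each algebraic similarity $\phi_e(\bfalpha) = \lambda_e \bfbeta_e \bfalpha \bfgamma_e + \bfdelta_e$ the element $g_e = u_{\bfdelta'_e}\, a_{-t_e}\, k_e^{-1} \in P$ with $t_e = \gamma^{-1}\log\lambda_e$, $k_e = \bfbeta_e \oplus \bfgamma_e^{-1}$, and $\bfdelta'_e$ chosen so that the key identity
\[
g_e u_{\bfalpha} \;=\; u_{\phi_e^{-1}(\bfalpha)}\, \tilde h_e, \qquad \tilde h_e \df a_{-t_e} k_e^{-1},
\]
holds for every $\bfalpha \in \MM$. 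Pushing $\mu$ forward via $e \mapsto g_e$ then yields a compactly supported probability measure $\mu_G$ on $G$, generating a random walk $g_{b_1^n}$ whose equidistribution on $X$ I aim to establish.

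The main step is to verify that $\mu_G$ is in $(\pdim,\qdim)$-upper block form per Definition \ref{def: block form}. Condition (i) is immediate from the construction, while (ii) amounts to $\int\theta_1\,d\mu_G = -\gamma^{-1}\int\log\lambda_e\,d\mu > 0$, which is exactly the contracting-on-average assumption. Condition (iii) --- that the Lie algebra of the Zariski closure $\Hmu$ of $\Gamma$ contains $V^+$ --- is the main technical point, and will use the irreducibility of $\Phi$. The plan is: by composing two elements of $\Gamma$ whose unipotent parts are arranged to cancel, produce some $g_0 \in \Gamma$ with trivial unipotent part and nontrivial $a$-part; Lemma \ref{lem: verify iv} then places $\log u_g \in \Lie(\Hmu) \cap V^+$ for every $g \in \Gamma$, and the irreducibility of $\Phi$ --- in its equivalent no-finite-orbit-of-proper-affine-subspace form --- forces the $\Gamma$-orbit of these logarithms to span all of $V^+ \cong \MM$. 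The remaining hypotheses of Theorem \ref{thm: new main} (simplicity of the identity component of $G$, transitivity of $\Gamma$ on connected components of $X$, and non-virtual-containment of $\Gamma$ in a conjugate of $\Lambda$) are easily verified, so Theorem \ref{thm: assumptions satisfied} followed by Theorem \ref{thm: new main}(ii) will conclude that for every $x_0 \in X$, the trajectory $g_{b_1^n} x_0$ equidistributes under $m_X$ for $\beta$-a.e.\ $b \in \B$, and in particular is unbounded in $X$.

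The final step is to translate unboundedness of $g_{b_1^n}\Lambda$ into the Diophantine statement $\pi(b) \notin \BA$. Iterating the key identity produces $g_{b_1^n} u_{\pi(b)} = u_{\pi(T^n b)}\, a_{-T_n}\, \tilde k_n$ with $T_n = \sum_{i=1}^n t_{b_i}$ and $\tilde k_n \in K$, and a direct computation on $v = (v_1,v_2) \in \Z^\pdim \times \Z^\qdim$ then gives
\[
\bigl\|g_{b_1^n} v\bigr\| \;=\; \bigl\|a_{-T_n} u_{-\pi(b)} v\bigr\| \;+\; O\bigl(e^{T_n/\qdim}\, \|\pi(T^n b)\|\, \|v_2\|\bigr).
\]
By Mahler's compactness criterion the cusp excursions of $g_{b_1^n}\Lambda$ essentially match those of the Dani trajectory $a_{-T_n} u_{-\pi(b)}\Lambda$ (whenever $\|\pi(T^n b)\|$ is controlled), and since $-T_n \to +\infty$ with bounded gaps, $\pi(b) \in \BA$ would force $\{g_{b_1^n}\Lambda\}_{n\in\N}$ to lie in a fixed compact subset of $X$, contradicting equidistribution; hence $\nu(\BA) = 0$ (using the symmetry $\bfalpha \in \BA \Leftrightarrow -\bfalpha \in \BA$). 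The main obstacle I anticipate will be verifying condition (iii) from the irreducibility of $\Phi$, since that is the only place where the irreducibility hypothesis enters and it requires a careful inductive argument to deduce the $V^+$-spanning statement. A secondary technical point is that $\|\pi(T^n b)\|$ need not be uniformly bounded in the general contracting-on-average setting, but this can be handled by restricting to $b$'s for which $\pi(T^n b)$ stays in a fixed compact set for a positive density of $n$ (via Birkhoff) and intersecting with the positive density of cusp visits produced by equidistribution.
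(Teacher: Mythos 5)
Your reduction to the random walk on $X=\PGL_\dimsum(\R)/\PGL_\dimsum(\Z)$ is the right one, and your verification of Definition \ref{def: block form} tracks the paper's \S\ref{subsec: fundamental} closely; but two steps do not close as written. The minor one: you cannot in general manufacture $g_0\in\Gamma$ with trivial unipotent part by ``composing two elements of $\Gamma$ whose unipotent parts are arranged to cancel'' --- the required cancellation $u_{g_2}=(a_{g_2}k_{g_2})^{-1}u_{g_1}^{-1}(a_{g_2}k_{g_2})$ is a nontrivial constraint that the given IFS need not satisfy. The correct move is to conjugate $\mu$ by a translation carrying the fixed point of some contracting element of the semigroup to the origin; this is harmless because conjugating $\mu$ does not affect the equidistribution conclusion, and afterwards $\supp(\mu)$ genuinely contains an element of $AK$, so Lemma \ref{lem: verify iv} applies.

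The substantive gap is your last step. Since $\Phi$ is only contracting on average, $\pi(\B)$ need not be bounded, and the proposed fix --- Birkhoff for the times with $\|\pi(T^nb)\|\le R$, ``intersected with'' the positive-density set of cusp excursions --- does not work: two subsets of $\N$ of positive density need not intersect, and the choice of constants is circular. Concretely, if $\pi(b)\in\BA$ with Dani orbit trapped in a compact set $K_1$, then (by the identity $g_{b_1^n}=u_{-\pi(T^nb)}a_{t_n}k_nu_{\pi(b)}$ of Lemma \ref{lemmaBAequivalent}) at every time with $\|\pi(T^nb)\|\le R$ the walk lies in a compact set $K_2(R,K_1)$ that grows with $R$; comparing the two marginal frequencies only yields $m_X(X\sm K_2(R,K_1))\le 1-\bar\beta(\|\pi_+\|\le R)$ for each $R$, and both sides tend to $0$, so no contradiction results. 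What is needed is \emph{joint} equidistribution of $\big(g_{b_1^n}x_0,\pi(T^nb)\big)_{n}$ with respect to $m_X\otimes(\pi_+)_*\bar\beta$: then one fixes $R$ with $(\pi_+)_*\bar\beta(\{\|\cdot\|\le R\})>0$ and an open set $U\subset X$ of positive measure disjoint from $K_2(R,K_1)$, and the joint frequency of $\{g_{b_1^n}x_0\in U\}\cap\{\|\pi(T^nb)\|\le R\}$ is positive although the event is impossible. That joint statement is exactly Proposition \ref{propositionforwardindependent}/Theorem \ref{theoremequibootstrap}, proved by a martingale argument; the paper instead pushes the bootstrap all the way to Theorem \ref{theoremgtequi} (equidistribution of the continuous flow $(a_tu_{\bfalpha}x_0)_{t\ge0}$) and deduces $\nu(\BA)=0$ from the resulting unboundedness via \cite[Theorem~2.20]{Dani4}. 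Either way, the bootstrap is not optional in your route, so you should state and use it rather than the Birkhoff intersection.
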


\begin{thm}
\label{theoremdoublingmatrix}
Let $\KK$ be the limit set of an irreducible finite algebraic similarity IFS on $\MM$ satisfying the open set condition. If $\nu$ is a doubling measure such that $\supp(\nu) = \KK$, then $\nu(\BA) = 0$.
\end{thm}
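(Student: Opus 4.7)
The plan is to reduce Theorem \ref{theoremdoublingmatrix} to Theorem \ref{theoremBAmatrix}, by using the doubling hypothesis and the open set condition to transfer the Bernoulli-measure conclusion to an arbitrary doubling measure supported on $\KK$. The matrix Dani correspondence recasts the Diophantine condition dynamically: $\bfalpha\in\BA$ iff the forward orbit $\{a_t u_\bfalpha\Lambda : t \geq 0\}$ is precompact in $X = G/\Lambda$ for $G = \PGL_\dimsum(\R)$, $\Lambda = \PGL_\dimsum(\Z)$. Writing $\mathcal{B}_0\subseteq\KK$ for the set of $\bfalpha\in\KK$ with bounded forward orbit, the goal becomes $\nu(\mathcal{B}_0) = 0$. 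As a first step, take the Hutchinson weights $p(e) = \lambda_e^s$ on $E$ (with $s = \dim_H(\KK)$), and set $\beta = p^{\otimes \N}$: then $\pi_*\beta$ is proportional to $\mu_{\KK}$ and is Ahlfors $s$-regular, while Theorem \ref{theoremBAmatrix} yields $\pi_*\beta(\mathcal{B}_0) = 0$.

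Second, I would verify that $\mathcal{B}_0$ is \emph{IFS-invariant}, meaning $\phi_e^{-1}(\mathcal{B}_0\cap\phi_e(\KK)) = \mathcal{B}_0$ for every $e\in E$. Since $\phi_e$ is algebraic, $\phi_e(\bfalpha) = \lambda_e\bfbeta_e\bfalpha\bfgamma_e + \bfdelta_e$, a direct matrix computation in $G$ gives $u_{\phi_e(\bfalpha)}\Lambda = h_e\, a_{-t_e}\, u_\bfalpha\Lambda$ for some compact $h_e\in KU$ depending only on $e$ and $t_e = -(\log\lambda_e)/(\tfrac{1}{\pdim} + \tfrac{1}{\qdim}) > 0$; hence boundedness of the $a_t$-forward orbit transfers under $\phi_e$ in both directions. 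Iterating, $\mathcal{B}_0\cap\phi_{b^1_n}(\KK) = \phi_{b^1_n}(\mathcal{B}_0)$ for every $b\in\B$ and $n\in\N$. The third and crucial step is a \emph{doubling-transfer lemma}: \emph{if $A\subseteq\KK$ is Borel, IFS-invariant in the sense just described, and satisfies $\pi_*\beta(A) = 0$, then $\nu(A) = 0$ for every doubling $\nu$ with $\supp(\nu) = \KK$.} If $\nu(A) > 0$, the Lebesgue density theorem for doubling measures yields a $\nu$-density point $\bfalpha_0\in A$; picking $b\in\pi^{-1}(\bfalpha_0)$, the cylinders $\phi_{b^1_n}(\KK)$ are quasi-balls of diameter $\asymp\prod_{i=1}^n\lambda_{b_i}$ by the open set condition, and doubling gives $\nu(A\cap\phi_{b^1_n}(\KK))/\nu(\phi_{b^1_n}(\KK))\to 1$. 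Pulling back via $\phi_{b^1_n}^{-1}$, the rescaled probability measures $\nu_n$ on $\KK$ are uniformly doubling and, using the IFS-invariance of $A$, concentrate on $A$; a weak-$*$ subsequential limit $\nu_\infty$ is then a doubling measure on $\KK$ supported on $\bar A$, and a covering comparison with $\pi_*\beta$ on the Ahlfors regular set $\KK$ produces the contradiction. Applied to $A = \mathcal{B}_0$ this completes the proof.

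The main obstacle is the doubling-transfer lemma, specifically the comparison step between the limit measure $\nu_\infty$ and $\pi_*\beta$. Two doubling measures of full support on an Ahlfors regular set need not be mutually absolutely continuous, so the argument must genuinely exploit the IFS-invariance of $A$ rather than relying on blanket absolute-continuity estimates. The open set condition is essential throughout: it provides the bounded-overlap and Ahlfors regularity needed to make cylinders interchangeable with balls in the density argument, and to control the doubling constants of the rescaled measures $\nu_n$ uniformly in $n$.
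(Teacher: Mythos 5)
Your reduction cannot be completed as proposed: the ``doubling-transfer lemma'' that you isolate as the main obstacle is in fact false, not merely difficult. IFS-invariance of a Borel set $A\subset\KK$ together with $\pi_*\beta(A)=0$ does not force $\nu(A)=0$ for every doubling $\nu$ of full support. For a concrete obstruction, let $\KK=\mathcal{C}$ be the middle-thirds Cantor set with its two defining maps, let $A$ be the set of points whose coding sequence has asymptotic digit frequencies $(1/3,2/3)$, and let $\nu=\pi_*\bigl((1/3,2/3)^{\otimes\N}\bigr)$. Then $A$ is invariant in exactly your sense (prepending or deleting one symbol does not change frequencies), $\pi_*\beta(A)=0$ for the Hutchinson weights $\beta=(1/2,1/2)^{\otimes\N}$ by the law of large numbers, $\nu$ is doubling with $\supp(\nu)=\mathcal{C}$ (adjacent cylinders at comparable distance share all but boundedly many initial symbols, so their $\nu$-masses are comparable), and yet $\nu(A)=1$. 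Hence no density-point, rescaling, or weak-$*$ limit argument can salvage the lemma: any correct proof must use more about the bounded-orbit set than its invariance and its $\pi_*\beta$-nullity.

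The extra structure that is actually needed is topological rather than measure-theoretic. The paper writes the bounded-orbit set as $\bigcup_j\pi(S_j)$, where $S_j$ is the set of codings whose random-walk orbit stays inside the $j$th compact set of an exhaustion of $X$, and shows that each $\pi(S_j)$ is \emph{porous} in $\KK$; porous sets are null for every doubling measure of full support (Lemma \ref{lem: porous measure}), and this is the only place the doubling hypothesis enters. Porosity of $\pi(S_j)$ is proved by contradiction using the strong open set condition together with Theorem \ref{theorempart1}(i): \emph{every} orbit $\Gamma^+x$ is dense in $X$, hence unbounded. This is a statement about all initial points, strictly stronger than the almost-everywhere equidistribution underlying Theorem \ref{theoremBAmatrix}, and it is precisely this pointwise input that your approach lacks. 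Your first two steps --- the Dani-correspondence reformulation and the computation showing that boundedness of the forward $a_t$-orbit is preserved under each $\phi_e$ --- are correct and do appear in the paper, in the proof of Lemma \ref{lemmaBAequivalent}.
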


Theorem \ref{theoremdoublingmatrix} will be proven in Section \ref{sectiondoubling}, while Theorem \ref{theoremBAmatrix} follows from Theorem \ref{theoremgtequi} below.

\subsection{More refined Diophantine properties}
\label{subsectionrefined}
Beyond showing that a typical point of a measure is well approximable, one can also ask about finer Diophantine properties of that point. Recall that a matrix $\bfalpha\in\MM$ is called \emph{Dirichlet improvable} if there exists $\lambda \in (0,1)$ such that for all sufficiently large $Q\geq 1$, there exist $\qq\in\Z^\qdim\sm\{0\}$ and $\pp\in\Z^\pdim$ such that $\|\qq\|_\infty \leq Q$ and $\|\bfalpha\qq - \pp\|_\infty \leq \lambda Q^{-\qdim/\pdim}$. Here $\|\cdot\|_\infty$ denotes the max norm, in contrast to the notation $\|\cdot\|$ which we use when it is irrelevant what norm we are using. Dirichlet's theorem states that this condition holds for all $\bfalpha\in\MM$ when $\lambda = 1$, so a matrix is Dirichlet improvable if and only if Dirichlet's theorem can be improved by a constant factor strictly less than 1. The concept of Dirichlet improvable matrices was introduced by Davenport and Schmidt, who showed that Lebesgue-a.e. matrix is not Dirichlet improvable, and that every badly approximable matrix is Dirichlet improvable \cite{DavenportSchmidt3}.
\ignore{
\footnote{It is possible to give a short
conceptual proof of their theorem using modern technology. If
$\bfalpha\in\MM$ is not Dirichlet improvable, then by the Dani
correspondence principle \cite[Proposition~2.1]{KleinbockWeiss5} the
orbit $(a_t u_\bfalpha x_0)_{t\geq 0}$ (see \eqref{atubfalpha} for the
notation) accumulates along the set of points in $X$ corresponding
to lattices whose intersection with $(-1,1)^{\pdim + \qdim}$ is
trivial. By Haj\'os' theorem \cite{Hajos} \comdavid{is there a more
modern reference?}\internal, every lattice in this set contains a basis
vector. Since the set of accumulation points of $(a_t u_\bfalpha
x_0)_{t\geq 0}$ is invariant under $(a_t)_{t\in\R}$, it contains the
orbit of a basis vector under $(a_t)_{t\in\R}$, and thus $(a_t
u_\bfalpha x_0)_{t\geq 0}$ accumulates at the origin. So by Dani
correspondence \cite[Theorem~2.20]{Dani4}, $\bfalpha$ is well
approximable.}
}
The converse to the last assertion is false except when $\pdim = \qdim =
1$.
Thus the following theorem gives strictly more information than Theorem \ref{theoremBAmatrix}:

\begin{thm}
\label{theoremDImatrix}
If $\nu$ is a general algebraic self-similar measure on $\MM$, then $\nu(\DI) = 0$, where $\DI$ is the set of Dirichlet improvable matrices.
\end{thm}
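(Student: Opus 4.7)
The plan is to deduce Dirichlet non-improvability from the same dynamical input that yields Theorem \ref{theoremBAmatrix}, namely the equidistribution statement of Theorem \ref{theoremgtequi}, via the Dani correspondence for Dirichlet improvability. By Theorem \ref{theoremgtequi}, applied to the general algebraic self-similar measure $\nu$, for $\nu$-a.e. $\bfalpha \in \MM$ the orbit $(a_t u_\bfalpha x_0)_{t \geq 0}$ in $X = G/\Lambda$ equidistributes with respect to $m_X$, where $x_0 = [\Lambda] \in X$ corresponds to the standard lattice $\Z^\dimsum$ and $a_t$ is as in \eqref{eq: at}. It therefore suffices to show that any $\bfalpha \in \DI$ has an $(a_t u_\bfalpha x_0)$-orbit that fails to equidistribute toward $m_X$.

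First, I would recall the Dani correspondence for Dirichlet improvability (going back to Davenport--Schmidt and made dynamical by Kleinbock--Weiss). For each $\lambda \in (0,1)$ let $B_\lambda \subset \R^\dimsum$ denote the centrally symmetric box of volume $(2\lambda)^\dimsum$ adapted to the weights $(1/\pdim, \ldots, 1/\pdim, -1/\qdim, \ldots, -1/\qdim)$ defining $a_t$ (i.e., $B_\lambda$ is the preimage of $[-\lambda,\lambda]^\dimsum$ under the $\|\cdot\|_\infty$-normalizing diagonal rescaling). Set
\[
C_\lambda \df \{x \in X : x \text{ contains a nonzero vector lying in } B_\lambda\}.
\]
Then $C_\lambda$ is a closed subset of $X$, and a direct unpacking of the definitions of $a_t$, $u_\bfalpha$, and $\DI$ shows that $\bfalpha \in \DI$ if and only if there exist $\lambda \in (0,1)$ and $T > 0$ such that $a_t u_\bfalpha x_0 \in C_\lambda$ for all $t \geq T$.

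Second, I would verify that $m_X(X \sm C_\lambda) > 0$ for every $\lambda < 1$. Since $B_\lambda$ has volume strictly less than $2^\dimsum$, Minkowski's theorem does not force every unimodular lattice to meet $B_\lambda$ in a nonzero vector; in fact, a standard Siegel-type integration argument (or direct inspection of the Haar measure of lattices whose first successive minimum exceeds $\lambda$) shows that the open set $X \sm C_\lambda$ has positive $m_X$-measure. Now combine this with equidistribution: for $\nu$-a.e. $\bfalpha$ and every $\lambda \in (0,1)$,
\[
\liminf_{T \to \infty} \frac{1}{T} \int_0^T \mathbf{1}_{X \sm C_\lambda}(a_t u_\bfalpha x_0)\,\dee t \geq m_X(X \sm C_\lambda) > 0
\]
(using lower semicontinuity of $\mathbf{1}_{X \sm C_\lambda}$, which is the indicator of an open set), so the orbit $(a_t u_\bfalpha x_0)$ is not eventually contained in any $C_\lambda$ with $\lambda < 1$. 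Hence $\bfalpha \notin \DI$.

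The only genuine obstacle is the bookkeeping in the Dani correspondence: one must correctly identify the shape of $B_\lambda$ and the transformation rule under $a_t$ so that the improvability constant $\lambda$ matches the geometry of $C_\lambda$. This is routine but requires some care when $\pdim \neq \qdim$. Once this translation is in place, the equidistribution input from Theorem \ref{theoremgtequi} and the positivity of $m_X(X \sm C_\lambda)$ combine to give the desired conclusion $\nu(\DI) = 0$.
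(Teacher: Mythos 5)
Your proposal is correct and follows essentially the same route as the paper, which disposes of Theorem \ref{theoremDImatrix} in one line by combining Theorem \ref{theoremgtequi} with the dynamical characterization of Dirichlet improvability in \cite[Proposition~2.1]{KleinbockWeiss5} (the paper only needs density of the orbit to see that it cannot eventually avoid the nonempty open set $X \sm C_\lambda$, whereas you invoke the full equidistribution via the portmanteau inequality — both work). The one bookkeeping slip, which you yourself flag, is the shape of the box: with the paper's definition of $\DI$ the factor $\lambda$ should appear only on the $\pp$-block, i.e. $B_\lambda = [-\lambda,\lambda]^\pdim\times[-1,1]^\qdim$, but this does not affect the argument.
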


The properties of being well approximable and not Dirichlet improvable
both indicate that a point is ``typical'' in some sense. Another way
of indicating that a point is typical is to show that its orbit under
an appropriate dynamical system equidistributes in an appropriate
space. In dimension 1 (i.e. $\pdim = \qdim = 1$), an appropriate
dynamical system from the point of view of Diophantine approximation
is the \emph{Gauss map} 
\begin{align*}
\mathcal{G}:& (0,1)\to (0,1),&
\mathcal{G}(\alpha) &= \frac{1}{\alpha} - \left\lfloor \frac{1}{\alpha}\right\rfloor,
\end{align*}
which is invariant and ergodic with respect to the \emph{Gauss
  measure} $\dee \mu_{\mathcal{G}}(\alpha) = \frac{1}{\log(2)}\frac{\,
  \dee \alpha}{1 + \alpha}$ (see e.g. \cite[Theorems~9.7 and
9.11]{Karpenkov}). The Gauss map acts as the shift map on the
continued fraction expansion of a number, so if $\alpha\in (0,1)$,
then the forward orbit of $\alpha$ is equidistributed with respect to
the Gauss measure if and only if the continued fraction expansion of
$\alpha$ contains each possible pattern with exactly the expected
frequency. 

\begin{thm}
\label{theoremCFequi}
If $\nu$ is a general algebraic self-similar measure on $\R$, then for $\nu$-a.e. $\alpha\in\R$, the forward orbit of the point $\alpha - \lfloor \alpha\rfloor$ under the Gauss map is equidistributed with respect to the Gauss measure.
\end{thm}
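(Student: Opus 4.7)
The plan is a two-step reduction: (i) to equidistribution of $a_t$-orbits on $X = \PGL_2(\R)/\PGL_2(\Z)$ for $\nu$-typical starting points $u_\alpha x_0$, and (ii) from there, to Gauss map equidistribution via the classical cross-section description of continued fractions. Set $G = \PGL_2(\R)$, $\Lambda = \PGL_2(\Z)$, $x_0 = [\Lambda] \in X$, and let $a_t, u_\alpha$ be the one-parameter subgroups from \eqref{eq: at} in the case $\pdim = \qdim = 1$. Since the Gauss map depends only on $\alpha \bmod \Z$, I may assume $\alpha \in (0,1)$. The reduction is to the statement
\[ (\star)\quad \text{for $\nu$-a.e.\ } \alpha \in (0,1),\ (a_t u_\alpha x_0)_{t \geq 0}\ \text{is equidistributed in}\ (X, m_X). \]

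Granting $(\star)$, the conclusion is standard. Let $\Sigma \subset X$ be the Series/Adler--Flatto cross-section for the $a_t$-action, with first-return map $R : \Sigma \to \Sigma$, and let $m_\Sigma$ be the probability measure on $\Sigma$ obtained by disintegrating $m_X$ along the flow direction. There is an explicit measurable conjugacy $\Psi : ((0,1), \mu_\mathcal{G}) \to (\Sigma, m_\Sigma)$ between $\mathcal{G}$ and $R$; moreover, $\Psi(\alpha)$ sits on the forward $a_t$-orbit of $u_\alpha x_0$, so the returns of this orbit to $\Sigma$ are $\{\Psi(\mathcal{G}^n(\alpha))\}_{n \geq 0}$. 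Since the return-time function is bounded below by a positive constant, equidistribution of the continuous orbit in $(X, m_X)$ passes to equidistribution of its discrete trace in $(\Sigma, m_\Sigma)$, and pulling back through $\Psi$ yields equidistribution of $(\mathcal{G}^n(\alpha))_n$ with respect to $\mu_\mathcal{G}$.

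To establish $(\star)$, I would lift the IFS $\Phi$ to a compactly supported probability measure $\hat\mu$ on $G$ supported in $\para = AKU$: each contracting similarity $\phi_e(\alpha) = c_e \varepsilon_e \alpha + y_e$ is lifted, after adjusting the $A$-coordinate to respect the positivity convention of condition (ii) below, to an element $\hat g_e$ such that left multiplication by $\hat g_e$ reproduces the IFS transformation on the horospherical coordinate $u_\alpha$. Condition (i) of Definition \ref{def: block form} is automatic; (ii) follows from the contraction-on-average hypothesis $\int \log\|\phi_e'\|\, \dee\mu(e) < 0$; (iii) follows from the irreducibility of $\Phi$ together with Lemma \ref{lem: verify iv}. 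Theorem \ref{thm: assumptions satisfied} then makes Theorem \ref{thm: new main} available, giving equidistribution of the forward random walk $(\hat g_{b_1^n} y)_n$ in $(X, m_X)$ for every $y \in X$ and $\beta$-a.e.\ $b$. Passing to the two-sided shift $(\bar\B, \bar\beta, T)$ with a coding $\pi^- : \bar\B \to \R$ depending only on the past of $b$ (so that $(\pi^-)_* \bar\beta = \nu$ and $\pi^-(Tb) = \phi_{b_0}(\pi^-(b))$), a direct induction gives $\hat g_{b_1^n} u_{\pi^-(b)} = u_{\pi^-(T^n b)}\, a^{(n)}\, k^{(n)}$; combining with Theorem \ref{thm: fiber bundle extension}, applied with a $K$-valued cocycle and the function $\pi^-$ on $\bar\B$, supplies the required joint equidistribution in $X \times K \times \R$.

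The main obstacle will be upgrading this discrete, random-walk-based equidistribution to the continuous-time statement $(\star)$. The difficulty is that in $\hat g_{b_1^n} u_\alpha x_0 = u_{\alpha_n}\, a^{(n)}\, k^{(n)}\, x_0$ both the $u$- and $K$-components vary with $n$, whereas the flow $(a_t u_\alpha x_0)_t$ pins the $u$-coordinate and varies only along $A$. I would resolve this by conditioning $b$ on the event $\pi^-(b) = \alpha$ (valid for $\nu$-a.e.\ $\alpha$ by the disintegration of $\bar\beta$), using the $K$-part of the joint equidistribution together with the left-invariance of $m_X$ under $K$ to average out the rotational component, and finally interpolating over the bounded time intervals $[\tau_n, \tau_{n+1}]$ (where $\tau_n$ is the $A$-coordinate of $a^{(n)}$, growing linearly by the contraction-on-average hypothesis) to promote the discrete sequence $(a_{\tau_n})_n$ to the continuous flow $(a_t)_t$, and hence the discrete equidistribution of $\hat g_{b_1^n} u_\alpha x_0$ to the continuous equidistribution of $a_t u_\alpha x_0$ in $X$.
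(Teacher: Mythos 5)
Your two-step reduction is exactly the paper's: step (i) is Theorem \ref{theoremgtequi} in the case $\pdim=\qdim=1$, proved essentially as you outline (lift to $(\pdim,\qdim)$-upper block form, apply Theorems \ref{thm: assumptions satisfied} and \ref{thm: new main}, then Theorem \ref{thm: fiber bundle extension} to carry the $K$-component and the backward coding, and finally interpolate from the discrete times $t_n$ to the flow); step (ii) is the content of Theorem \ref{theoremgtCFrelation}. The gap is in step (ii), in the sentence ``Since the return-time function is bounded below by a positive constant, equidistribution of the continuous orbit \dots passes to equidistribution of its discrete trace.'' For the continued-fraction cross-section the return time is \emph{not} bounded below: the $n$-th return time is comparable to $\log(q_{n+1}/q_n)$, where $(q_n)$ are the denominators of the convergents, and $q_{n+1}/q_n = a_{n+1} + q_{n-1}/q_n$ can be arbitrarily close to $1$ (take $a_{n+1}=1$ and $a_n$ large). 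Without a lower bound on return times one cannot immediately exclude that the particular orbit in question clusters many returns into short time windows, which would skew the discrete Birkhoff averages relative to the continuous ones; this clustering issue is the entire technical content of the transfer step, and your justification assumes it away.

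The repair is the bounded-multiplicity property rather than a return-time lower bound: since $q_{n+2} = a_{n+2}q_{n+1} + q_n \geq 2q_n$, any unit window of logarithmic time contains a uniformly bounded number of returns (at most $3$). This is exactly how the paper's proof of Theorem \ref{theoremgtCFrelation} proceeds: the auxiliary function $F'$ there sums the values of $F$ over the returns falling in a unit time window, and the ``$\leq 3$ summands'' bound coming from the recursion $y_{n+1}=a_ny_n+y_{n-1}$ is what makes $F'\circ f_1$ bounded and lets the time average of $F'\circ f_1$ control the discrete sum. A second point you should make explicit: applying equidistribution of a \emph{single} orbit to the (discontinuous) return and coding functions requires that their discontinuity sets be $m_X$-null so that the Portmanteau theorem applies; the paper checks this for its maps $f_1$, $h=F'\circ f_1$, and $f_2$. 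With these two repairs your cross-section argument becomes essentially the argument of Section \ref{sectionCFequi}; the only residual difference is that you import the identification of the cross-section measure with Gauss measure from the Series/Adler--Flatto conjugacy, whereas the paper deduces it indirectly from Lebesgue-typicality at the end. Both are legitimate.
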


In higher dimensions, there is no direct analogue of the Gauss map but
there is another dynamical system for which the orbits of points
describe their Diophantine properties: the one given by the Dani
correspondence principle \cite{Dani4, KleinbockMargulis}. Let $\dimsum
= \pdim + \qdim$, $G = \PGL_\dimsum(\R)$, $\Lambda =
\PGL_\dimsum(\Z)$, and $X = G/\Lambda$, and let $x_0$ be the element
of $X$ corresponding to the coset $\Lambda$.\footnote{As in Part 1,
  $\SL^\pm_\dimsum(\R)$ and $\SL^\pm_\dimsum(\Z)$ denote respectively
  the groups of $\dimsum \times \dimsum$ real (integer) matrices of
  determinant $\pm 1$, and $\PGL_\dimsum(\R), \, \PGL_\dimsum(\Z)$ are
  their factor groups obtained by identifying matrices which differ by
  multiplications by scalars.} As in Part 1, for each $t\in\R$ and
$\bfalpha\in \MM$, let 
\begin{align}
\label{atualpha}
a_t &= \left[\begin{array}{ll}
e^{t/\pdim}I_\pdim &\\
& e^{-t/\qdim}I_\qdim
\end{array}\right],&
u_\bfalpha &= \left[\begin{array}{ll}
I_\pdim & -\bfalpha\\
& I_\qdim
\end{array}\right],
\end{align}
which we consider as elements of $\PGL_\dimsum(\R)$ by identifying a
matrix with its equivalence class. Then the Dani correspondence
principle says that the forward orbit $(a_t u_\bfalpha x_0)_{t\geq 0}$
encodes the Diophantine properties of the matrix $\bfalpha$. We will
say that $\bfalpha$ is {\em of generic type} if the orbit $(a_t
u_\bfalpha x_0)_{t\geq 0}$ is equidistributed in $X$ with respect to
the $G$-invariant probability measure on $X$. 

\begin{remark}\label{remark: PGL2}
Note that in \cite{Dani4} (and most subsequent papers) the space $X' =
\SL_{\dimsum}(\R)/ \SL_\dimsum(\Z)$ was used instead of $X$. But the
natural map $X' \to X$ (induced by the homomorphism $\SL_\dimsum(\R)
\to \PGL_\dimsum(\R)$) is an equivariant isomorphism of homogeneous
spaces and hence does not affect the definition of generic type. Using
$\PGL_\dimsum(\R)$ will make it possible to encode more general maps
coming from orthogonal transformations that are not
orientation-preserving. 
\end{remark}

\begin{thm}
\label{theoremgtequi}
If $\nu$ is a general algebraic self-similar measure on $\MM$, then
$\nu$-a.e. $\bfalpha\in\MM$ is of generic type. 
\end{thm}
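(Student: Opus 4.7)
The plan is to encode the IFS $\Phi = (\phi_e)_{e\in E}$ as a probability measure $\mu$ on $G = \PGL_\dimsum(\R)$ whose support lies in $\para = AKU$, and then apply the dynamical results of Part \ref{part1}. Writing each algebraic similarity as $\phi_e(\bfalpha) = \lambda_e \bfbeta_e \bfalpha \bfgamma_e + \bfdelta_e$, I would set
\[
g_e := a_{t_e}\, k_e\, u_{\bfdelta_e}, \qquad t_e := -\tfrac{\pdim\qdim}{\dimsum}\log\lambda_e, \qquad k_e := \bfbeta_e^{-1}\oplus \bfgamma_e \in K := \GO_\pdim\oplus\GO_\qdim,
\]
and take $\mu$ to be the pushforward to $G$ of the given probability measure on $E$. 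Condition (i) of Definition \ref{def: block form} is automatic, and condition (ii), $\int\theta_1\,\dee\mu > 0$, is precisely the contracting-on-average hypothesis. For condition (iii) ($V^+\subseteq \Lie(\Hmu)$), I would combine the irreducibility of $\Phi$ with Lemma \ref{lem: verify iv}: producing an element $g_0 \in \Gamma\cap AK$ by cancelling $A$-parts in a word in the $g_e$'s (possible thanks to (ii)), Lemma \ref{lem: verify iv} puts $\log(u_g) \in \Lie(\overline{\Gamma})$ for every $g \in \Gamma$, and irreducibility then guarantees that these translation vectors span $V^+\cong\MM$. Theorem \ref{thm: assumptions satisfied} now supplies the hypotheses of Theorem \ref{thm: new main}, whose part (ii) yields equidistribution of the random walk $(g_{b_1^n} x_0)_{n\in\N}$ with respect to $m_X$ for any $x_0 \in X$ and $\beta$-a.e.\ $b$.

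A direct induction using $(a_t k)\, u_\bfalpha\, (a_t k)^{-1} = u_{e^{(1/\pdim+1/\qdim)t}\, k\cdot\bfalpha}$ produces the Iwasawa-type decomposition
\[
g_{b_1^n} = a_{T_n}\, k_{(n)}\, u_{\bfgamma_n}, \qquad T_n := \sum_{i=1}^n t_{b_i}, \quad k_{(n)} := k_{b_n}\cdots k_{b_1},
\]
and a term-by-term comparison with the defining series for $\pi(b)$ shows that $\bfgamma_n$ is exactly its $n$-th partial sum. The tail of that series, suitably rescaled, satisfies the clean identity
\[
a_{T_n}\cdot\bigl(\pi(b) - \bfgamma_n\bigr) = k_{(n)}^{-1}\cdot \pi\bigl(T^n b\bigr),
\]
where $\pi$ is extended to $\bar{\B}$ as a function of the positive coordinates only. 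Rearranging gives the crucial identity
\[
a_{T_n}\, u_{\pi(b)}\, x_0 = u_{k_{(n)}^{-1}\cdot \pi(T^n b)}\; k_{(n)}^{-1}\; g_{b_1^n}\, x_0,
\]
which exhibits the Dani orbit sampled at time $T_n$ as a translation- and rotation-twist of the random walk orbit.

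To extract equidistribution of the discrete orbit $(a_{T_n} u_{\pi(b)} x_0)_n$, I would apply Theorem \ref{thm: fiber bundle extension} with the homomorphism $\kappa:\Gamma\to K$, $\kappa(g) := k_g$, and with the measurable map $f := \pi : \bar{\B}\to\MM$. Ergodicity of the $\Gamma$-action on $X\times K$ with respect to $m_X\otimes m_K$ follows from the $\Gamma$-ergodicity of $m_X$ together with density of $\kappa(\Gamma)$ in $K$ (both consequences of irreducibility of $\Phi$). The theorem then yields joint equidistribution of $(g_{b_1^n} x_0,\, k_{(n)},\, \pi(T^n b))$ with respect to $m_X\otimes m_K\otimes \nu$, where $\nu = \pi_*\beta$ is the target measure. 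Pushing this product measure forward under the continuous map $(x, k, y) \mapsto u_{k^{-1}\cdot y}\, k^{-1}x$ returns $m_X$ itself (by $G$-invariance of $m_X$, the fiber integral over $x$ is independent of $(k,y)$), so combining with the crucial identity gives that $(a_{T_n} u_{\pi(b)} x_0)_n$ equidistributes for $m_X$.

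It remains to fill in between discrete times. For $t\in[T_n,T_{n+1}]$, $a_t u_{\pi(b)} x_0 = a_{t-T_n}\, y_n$ with $y_n := a_{T_n} u_{\pi(b)} x_0$, so for bounded continuous $\varphi:X\to\R$,
\[
\frac{1}{T}\int_0^T \varphi(a_t u_{\pi(b)} x_0)\,\dee t = \frac{1}{T}\sum_{n < N(T)} \int_0^{t_{b_{n+1}}} \varphi(a_s y_n)\,\dee s + O(T^{-1}),
\]
where $N(T)$ satisfies $T_{N(T)}\approx T$. A further application of Theorem \ref{thm: fiber bundle extension} furnishes joint equidistribution of $(y_n, b_{n+1})$ in $X\times E$ for $m_X\otimes \mu$, and combining this with the $a_s$-invariance of $m_X$ and the ergodic-theorem relation $N(T)/T \to 1/\!\int t_e\,\dee\mu$ produces the limit $\int_X\varphi\,\dee m_X$, completing the proof that $\nu$-a.e.\ $\bfalpha$ is of generic type. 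The main obstacle is the third paragraph: although $\pi(T^n b)$ is independent of the past by the i.i.d.\ structure, the rescaled error $a_{T_n}(\pi(b)-\bfgamma_n)$ remains bounded but does not tend to zero, so a naive continuity argument fails and the full product joint equidistribution of Theorem \ref{thm: fiber bundle extension} is essential to absorb both the compact twist $k_{(n)}$ and the non-vanishing translation perturbation via $G$-invariance of $m_X$.
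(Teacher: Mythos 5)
Your architecture is essentially the paper's: put $\mu$ in $(\pdim,\qdim)$-upper block form and invoke Theorems \ref{thm: assumptions satisfied} and \ref{thm: new main} (i.e.\ Theorem \ref{theorempart1}) for equidistribution of the random walk; derive the identity $a_{T_n}u_{\pi(b)}x_0=u_{k_{(n)}^{-1}\cdot\pi(T^nb)}\,k_{(n)}^{-1}\,g_{b_1^n}x_0$ exhibiting the sampled Dani orbit as a compact-plus-bounded-translation twist of the random walk; absorb the twist via Theorem \ref{thm: fiber bundle extension} with $\kappa=\pi_K$ and an $f$ recording $\pi_+(T^nb)$ together with the symbol governing the time increment; and interpolate between the times $T_n$. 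Your closing diagnosis of why full joint equidistribution is indispensable (the rescaled tail stays bounded but does not vanish) is exactly the paper's motivation for Theorem \ref{thm: fiber bundle extension}.

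Two of your justifications do not hold as written. First, ergodicity of the $\Gamma$-action on the product is \emph{not} a consequence of ergodicity on $X$ together with density of $\kappa(\Gamma)$ in the compact factor: a product of two ergodic actions need not be ergodic (two copies of the same irrational rotation already fail). The paper instead observes that $\Gamma$ is unbounded (by contraction on average), so by Howe--Moore its action on $(X,m_X)$ is mixing, hence weakly mixing, and a weakly mixing action times an ergodic action is ergodic. Moreover $\kappa(\Gamma)$ need not be dense in $K=\GO_\pdim\oplus\GO_\qdim$ (take an IFS with trivial rotation parts, e.g.\ a product of Cantor sets); one must replace $K$ by the closure $\bar K$ of $\kappa(\Gamma)$, as in the statement of Theorem \ref{theoremequibootstrap}. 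Second, the element $g_0\in\Gamma\cap AK$ needed for Lemma \ref{lem: verify iv} cannot in general be produced by ``cancelling'' parts of words inside $\Gamma$; the paper conjugates $\mu$ by a translation carrying the fixed point of a contracting element of the semigroup to the origin, which is harmless because conjugation does not affect the equidistribution conclusion. Both repairs are routine, and with them your argument coincides with the paper's proof.
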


Since an equidistributed orbit is dense, \cite[Theorem~2.20]{Dani4}
and \cite[Proposition~2.1]{KleinbockWeiss5} show that Theorem
\ref{theoremgtequi} implies Theorems \ref{theoremBAmatrix} and
\ref{theoremDImatrix}, respectively. When $\pdim = \qdim = 1$, the
equidistribution of the 
orbit $(a_t u_\alpha x_0)_{t\geq 0}$ implies the equidistribution of
$(\mathcal{G}^n(\alpha))_{n\in\N}$, in other words Theorem
\ref{theoremCFequi} follows from Theorem \ref{theoremgtequi}. The
converse however is false, see 
Section \ref{sectionCFequi} for details. Theorem
\ref{theoremgtequi} will be proven in Section \ref{sectionbernoulli}. 

\begin{remark}
Einsiedler, Fishman, and Shapira actually proved more than just $\mu_{\KK}(\BA)=0$: they showed that if $\nu$ is any measure on $\R/\Z$ invariant under the $\times k$ map for some $k\geq 2$, then for $\nu$-a.e. $\alpha\in\R$, the orbit $(a_t u_\alpha x_0)_{t\geq 0}$ is dense in $X$, and $\alpha$ has all finite patterns in its continued fraction expansion. Theorem \ref{theoremgtequi} improves density to equidistribution. See \cite{Shi} for another result in this direction.
\end{remark}

\section{Main results -- M\"obius IFSes}
\label{sectionconf}
Theorems regarding similarity IFSes can often be extended to the realm of \emph{conformal IFSes}, whose definition is somewhat technical (see e.g. \cite[p.6]{MauldinUrbanski1}), or to the subclass of \emph{M\"obius IFSes}, which can be defined more succinctly (see \S\ref{subsectionmobiusIFS} below). However, we know that the results of the previous section cannot be extended directly, because the sets $F_N$ can be written as the limit sets of M\"obius IFSes, even though they contain only badly approximable points. The reason for this appears to be a very special coincidence, namely the fact that the defining transformations of the IFS defining $F_N$ are all represented by elements of the integer lattice $\Lambda = \PGL_2(\Z) \subset G = \PGL_2(\R)$ (cf. \eqref{FN}). In fact, it turns out that the limit set of any M\"obius IFS with this property consists entirely of badly approximable numbers; see Theorem \ref{theoremmobiusIFS}(i) below. Thus, an additional restriction will be needed in order to rule out this case and similar cases.

It is also natural to ask about higher dimensions, but here the situation is less clear. The reason for this is that the Diophantine structure of $\R^d$ is naturally related to the group $G = \PGL_d(\R)$ of projective transformations on $\R^d$, and this group is the same as the group of M\"obius transformations if $d = 1$ but not in higher dimensions. On the other hand, a Diophantine setting that is naturally related to the group of M\"obius transformations is the setting of \emph{intrinsic approximation on spheres}, which has been studied by Kleinbock and Merrill \cite{KleinbockMerrill} and related to hyperbolic geometry by Fishman, Kleinbock, Merrill, and the first-named author \cite[\S3.5]{FKMS2}. In this setting, points on the unit sphere $S^d \subset \R^{d + 1}$ are approximated by rational points of $S^d$. When $d = 1$, there is a conformal isomorphism between $S^1$ and $\R^1$ that preserves Diophantine properties, given by stereographic projection; in higher dimensions stereographic projection still provides a conformal isomorphism between $S^d$ and $\R^d$, but this isomorphism does not preserve Diophantine properties. Moving the Diophantine structure from $S^d$ to $\R^d$ yields a structure on $\R^d$ that is naturally related to the group of M\"obius transformations.

In what follows, we will show that if $\KK$ is the image under stereographic projection of the limit set of a conformal iterated function system on $\R^d$, then almost every point of $\KK$ is not badly approximable with respect to intrinsic approximation on $S^d$.

The proofs in this section use the results of Benoist and Quint directly, without appealing to Part 1.

\subsection{M\"obius IFSes}
\label{subsectionmobiusIFS}
A \emph{M\"obius transformation} of $\wbar{\R^d} = \R^d\cup\{\infty\}$
is a finite composition of spherical inversions and reflections in
hyperplanes. See e.g. \cite{Hertrich} for an introduction to the
geometry of M\"obius transformations. A \emph{(finite) M\"obius IFS}
on $\wbar{\R^d}$ is a finite collection of M\"obius transformations
$\Phi = (\phi_e : \wbar{\R^d}\to\wbar{\R^d})_{e\in E}$ such that for
some nonempty compact set $\FF \subset \wbar{\R^d}$, for all $e\in E$,
we have $\phi_e(\FF) \subset \FF$, and $\phi_e\given_\FF$ is a strict
contraction relative to some Riemannian metric independent of
$e$.\footnote{Any M\"obius IFS according to this definition that satisfies the open set condition is (after possibly passing to an iterate) a conformal IFS according to the definition given in \cite[p.6]{MauldinUrbanski1}. To see this, let $U$ be the set coming from the open set condition, and let $X$ be the intersection of $\cl U$ with a closed neighborhood of $\FF$ small enough so that $\Phi$ is still strictly contracting on $X$, and smooth enough so that the cone condition holds. Then let $V$ be a slightly larger open neighborhood. It is obvious that \cite[(2.6)-(2.8)]{MauldinUrbanski1} hold, and \cite[(2.9)]{MauldinUrbanski1} follows from \cite[Remark~2.3]{MauldinUrbanski1}.} As in the case of similarity
IFSes the \emph{coding map} $\pi:\B \to \FF, \ \B = E^\N$ is defined
by the formula \eqref{codingmap}, with the additional restriction that
$\bfalpha_0\in \FF$ (otherwise the limit may not exist). Similarly, a
M\"obius IFS $\Phi$ is said to satisfy the \emph{open set condition}
if there exists a nonempty open set $U \subset \wbar{\R^d}$ such that
$(\phi_e(U))_{e\in E}$ is a disjoint collection of subsets of
$U$. Finally, $\Phi$ is \emph{irreducible} if there is no generalized
sphere $\LL \subsetneqq \wbar{\R^d}$ such that $\phi_e(\LL) = \LL$ for
all $e\in E$. Here a \emph{generalized sphere} in $\wbar{\R^d}$ is
either an affine subspace of $\wbar{\R^d}$ (including the point at
infinity) or a sphere inside of a (not necessarily proper) affine
subspace of $\wbar{\R^d}$. Note that in dimension 1, a nonempty proper
generalized sphere is just a point. For the purposes of this paper, we
consider $\{\infty\}$ to be a generalized sphere. Since $\{\infty\}$ is invariant under all similarities, this means that the classes of similarity IFSes and irreducible M\"obius IFSes are disjoint. 

The group of M\"obius transformations on $\R$ is isomorphic to $G = \PGL_2(\R)$, where each matrix $\left[\begin{smallmatrix} a & b \\ c & d \end{smallmatrix}\right] \in \PGL_2(\R)$ represents the M\"obius transformation $x \mapsto \frac{ax + b}{cx + d}$. In what follows we implicitly identify these two groups via this isomorphism.

\begin{thm}
\label{theoremmobiusIFS}
Let $\Phi = (\phi_e)_{e\in E}$ be an irreducible finite M\"obius IFS on $\R$ satisfying the open set condition, and let $\KK$ be its limit set. Let $\Gamma$ denote the group generated by $\Phi$.
\begin{itemize}
\item[(i)] If $\Gamma$ is virtually contained in $\Lambda \df \PGL_2(\Z)$, then $\KK \subset \BA$.
\item[(ii)] Suppose that $\Gamma$ is not virtually contained in any group of the form $g\Lambda g^{-1}$ $(g\in G)$. Then $\mu_{\KK}(\BA) = 0$, and more generally, if $\nu$ is a doubling measure on $\KK$ such that $\supp(\nu) = \KK$, then $\nu(\BA) = 0$.
\end{itemize}
\end{thm}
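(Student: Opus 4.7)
The argument splits naturally along the hypotheses of the two parts, and in both cases is built on the Dani correspondence: $\alpha \in \BA$ if and only if the orbit $(a_t u_\alpha x_0)_{t\geq 0}$ is bounded in $X = G/\Lambda$, where $G = \PGL_2(\R)$, $\Lambda = \PGL_2(\Z)$, and $x_0$ is the identity coset. For part (i), after possibly replacing $\Phi$ by a suitable iterate (in order to restrict attention to $\Gamma \cap \Lambda$), we may assume $\Gamma \subset \Lambda$. The open set condition together with the strict contraction of $\Phi$ makes $\Gamma$ a convex cocompact Schottky-type subgroup of $G$ acting on $\HHH^2 = G/\PO_2(\R)$, and every coding point $\alpha = \pi(b) \in \KK$ is a conical limit point of $\Gamma$. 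Hence the upward vertical geodesic ray in $\HHH^2$ ending at $\alpha$ projects to a bounded trajectory in the convex-compact quotient $\HHH^2/\Gamma$; via the covering $\HHH^2/\Gamma \to \HHH^2/\Lambda$, which makes sense because $\Gamma \subset \Lambda$, this trajectory remains in a compact subset of $\HHH^2/\Lambda$. Translating back through Dani gives $\alpha \in \BA$, so $\KK \subset \BA$.

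For part (ii), the plan is to apply Theorem~\ref{theoremBQ} directly to the probability measure $\hat\mu$ on $G$ obtained by pushing forward some fully supported probability vector on $E$ under $e \mapsto \phi_e$. Its hypotheses require (a) that the Zariski closure $\Hmu$ of $\Gamma$ in $G$ equals $G$, and (b) that $\Gamma$ is not virtually contained in any conjugate of $\Lambda$---which is precisely the standing assumption. For (a), every proper connected algebraic subgroup of $\PGL_2(\R)$ either is contained in the stabilizer of a point of $\HHH^2$, impossible because $\Phi$ contains loxodromic elements, or fixes a point of $\R \cup \{\infty\}$, i.e. a generalized sphere in dimension one; the latter is ruled out by irreducibility together with its standard strengthening that $\Phi$ admits no finite collection of invariant generalized spheres, even after passing to a finite-index extension. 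Hence $\Hmu = G$, and Theorem~\ref{theoremBQ} yields that for $\beta$-a.e. $b\in\B$ the trajectory $(g_{b_1^n} x)_{n\in\N} = (\phi_{b_n}\circ\cdots\circ\phi_{b_1}\, x)_{n\in\N}$ equidistributes in $X$ with respect to $m_X$ for any starting point $x\in X$.

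The key remaining step is to convert this discrete equidistribution into equidistribution of the Dani orbit $(a_t u_{\pi(b)} x_0)_{t\geq 0}$. The bridge is an Iwasawa-style decomposition $g_{b_1^n} = u_{\alpha_n} a_{\tau_n} k_n$ with $\tau_n$ the accumulated hyperbolic derivative scaling along the random walk and $k_n \in \PO_2(\R)$. Passing to the two-sided shift $\bar\B$ and invoking Theorem~\ref{thm: fiber bundle extension} with the measurable map $f : \bar\B \to \R$ recording the one-sided coding of the past, one obtains joint equidistribution of $(g_{b_1^n} x_0,\, k_n,\, f(T^n b))$; a law-of-large-numbers argument for the integrable cocycle $\log\|\phi_{b_n}'\|$ controls the spacing $\tau_{n+1}-\tau_n$, and combined with the uniform continuity of the $A$-action on compact subsets of $X$ this upgrades the discrete equidistribution to its continuous counterpart, so that $(a_t u_{\pi(b)} x_0)_{t\geq 0}$ is equidistributed for $\beta$-a.e. $b$. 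By Dani this gives $\pi_*\beta(\BA) = 0$, and because the open set condition together with bounded distortion of conformal maps makes $\mu_\KK$ absolutely continuous with respect to the pushforward of a suitable Bernoulli measure, one obtains $\mu_\KK(\BA) = 0$. For an arbitrary doubling measure $\nu$ with $\supp(\nu) = \KK$, a Lebesgue-density argument in the style of Section~\ref{sectiondoubling} reduces $\nu(\BA) = 0$ to the Bernoulli result: a density point of $\BA\cap\KK$ would, upon pulling back along a cylinder by $\phi_w^{-1}$ using bounded distortion and doubling, contradict the just-established Bernoulli-null conclusion. The principal technical obstacle is the Iwasawa bookkeeping in this paragraph---verifying that the $U$-coordinate $\alpha_n$ of the random walk really converges to a point with the same distribution as $\pi(b)$, so that equidistribution on $X$ can be rewritten as equidistribution of the Dani orbit of the IFS coding.
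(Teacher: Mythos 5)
Your proof of part (i) rests on the assertion that $\Gamma$ is a convex cocompact (Schottky-type) group, and this is false in precisely the motivating examples. For the IFS \eqref{FN} defining $F_N$, the group $\Gamma$ contains parabolic elements (e.g.\ $\phi_1\circ\phi_2^{-1}$ corresponds to $\left[\begin{smallmatrix}-1&0\\1&-1\end{smallmatrix}\right]$), so $\Hh^2/\Gamma$ has cusps, its convex core is not compact, and a conical limit point's geodesic ray need not stay in a compact subset of the quotient. The correct mechanism, which is what Lemma \ref{lemmaURequivalent} supplies, is that the ray $[\zero,\pi(b)]$ stays within bounded distance of the \emph{semigroup} orbit $(\phi_{b_n^1}(\zero))_n$ because $\Phi$ is uniformly contracting on a compact set containing $\KK$ and $\zero$ lies outside an invariant neighborhood of that set; boundedness in $\Hh^2/\Lambda$ then follows because $\Gamma(\zero)$ lies within bounded distance of $\Lambda(\zero)$ when $\Gamma$ is virtually contained in $\Lambda$. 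So (i) is repairable, but not by the route you give.

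Part (ii) has two genuine gaps. First, the ``Iwasawa bookkeeping'' you defer is not bookkeeping: for a M\"obius IFS the generators do not lie in $\para=AKU$, so the exact identity $g_{b_1^n}=u_{-\bfbeta_n}a_{t_n}k_nu_{\pi(b)}$ that powers the similarity-IFS argument (Lemma \ref{lemmaBAequivalent} and the proof of Theorem \ref{theoremgtequi}) is unavailable; the genuine Iwasawa $K$-factor does not normalize $U$, and the $U$-coordinate of $g_{b_1^n}$ does not converge to $\pi(b)$. The paper avoids this entirely: in the M\"obius setting it proves equidistribution of the geodesic ray in $K\backslash G/\Lambda$ (Theorem \ref{theoremkleinian2}), never of the Dani orbit $(a_tu_{\pi(b)}x_0)$ in $G/\Lambda$. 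Second, and fatally for the statement as given, the endgame ``Bernoulli-null plus a density-point argument implies doubling-null'' cannot work: a doubling measure of full support on $\KK$ need not be absolutely continuous with respect to any Bernoulli pushforward (already on the Cantor set a biased Bernoulli measure is doubling, fully supported, and singular to $\mu_{\KK}$), and for a M\"obius IFS even $\mu_{\KK}=\HH^\delta|_{\KK}$ is a Gibbs rather than Bernoulli measure. The paper's proof is structurally different and much softer: it shows that for each compact $K_j\subset X$ the set of codes whose orbit stays in $K_j$ projects to a \emph{porous} subset of $\KK$ (using the strong open set condition and bounded distortion), porous sets being null for every fully supported doubling measure (Lemma \ref{lem: porous measure}); the only dynamical input is the topological fact that every bounded $\Gamma^+$-orbit in $X$ forces $\Gamma$ to be virtually contained in a conjugate of $\Lambda$ (Theorem \ref{theoremBQ}(i) and Remark \ref{remarkBQ}) --- no equidistribution is needed. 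Your Zariski-density verification essentially matches the paper's, but without the porosity argument the doubling conclusion does not follow.
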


Recall that a subgroup $\Gamma$ of a group $G$ is \emph{virtually contained} in another subgroup $\Lambda \subset G$ if some finite index subgroup of $\Gamma$ is contained in $\Lambda$.

\begin{example}\label{ex: Fn}
The system of M\"obius transformations \eqref{FN} is an irreducible M\"obius IFS. So the set $F_N$, and all of its translations, are the limit sets of irreducible M\"obius IFSes. Thus Theorem \ref{theoremmobiusIFS} says that for all $\alpha\in\Q$, we have $F_N + \alpha \subset \BA$ (this also follows directly). However, Theorem \ref{theoremmobiusIFS} does not say anything about the sets $F_N + \alpha$ where $\alpha$ is irrational, because then the corresponding IFS $\Phi$ falls into neither case (i) nor case (ii).

It follows from Theorem \ref{theoremkleinian2} below that if $\alpha$
is irrational, then any Bernoulli measure on $F_N + \alpha$ gives zero
measure to the set of badly approximable points. However, the natural
measure $\mu_{F_N + \alpha} = \HH^\delta\given_{F_N + \alpha}$ (where
$\delta = \dim_H(F_N)$) is not a Bernoulli measure, and our results
say nothing about this measure. 
\end{example}

\begin{example}
If the IFS $\Phi = (\phi_a)_{a\in E}$ contains at least two
similarities with distinct fixed points, but is not entirely composed
of similarities, then we are in case (ii). This is because it follows
from applying Lemma \ref{lem: verify iv} to the subgroup of $\Gamma$
generated by these two similarities (thinking of it as a subgroup of
the Lie group of all similarities) that the closure of $\Gamma$ contains a
positive-dimensional unipotent subgroup. Therefore it cannot have a finite index
subgroup contained in $g \Lambda g^{-1}$ for any $g\in G$. 
\end{example}

\subsection{Intrinsic approximation on spheres}
Fix $d\geq 1$, and let $S^d$ be the unit sphere in $\R^{d + 1}$. We
recall that a point $\bfalpha\in S^d$ is \emph{badly approximable with
  respect to intrinsic approximation on $S^d$}, or just \emph{badly
  intrinsically approximable}, if there exists $c> 0$ such that for
all $\pp/q\in \Q^{d + 1}\cap S^d$, we have $\|q\bfalpha - \pp\| \geq
c$. The set of badly intrinsically approximable points is similar in
many ways to the set of badly approximable points; for example, it has
full Hausdorff dimension but zero Lebesgue measure
\cite{KleinbockMerrill}. We denote the set of badly intrinsically
approximable points by $\BA_{S^d}$. 

We define a \emph{M\"obius IFS on $S^d$} to be a M\"obius IFS on $\R^{d + 1}$ that preserves $S^d$. Such an IFS is said to be \emph{irreducible (relative to $S^d$)} if it does not preserve any generalized sphere $\LL \subsetneqq S^d$. Let $G$ (resp. $\Lambda$) denote the group $\PO(d + 1,1;\R)$ (resp. $\PO(d+1,1; \Z)$) of $(d+2) \times (d+2)$ real (resp. integer) matrices preserving the quadratic form $Q(x_0,x_1,\ldots,x_{d+1}) = -x_0^2 + x_1^2 + \cdots + x_{d+1}^2$, where matrices which are scalar multiples of each other are identified. Note that the group of M\"obius transformations that preserve $S^d$ is isomorphic to $G$ via the following isomorphism: each element $g\in G$ acts conformally on $S^d$ via the restriction of a projective transformation of $\mathbb{P}^{d + 1}(\R) \supset \R^{d + 1}$, and this conformal isomorphism of $S^d$ extends uniquely to a M\"obius transformation of $\wbar{\R^{d + 1}}$. (The resulting M\"obius transformation is not the same as the projective action of $g$ on $\R^{d + 1}$, unless $g$ preserves the origin of $\R^{d + 1}$.) Using this identification, we can now state the following theorem:

\begin{thm}
\label{theoremBAintrinsic}
Let $G, \, \Lambda$ be as above, let $\Phi = (\phi_e)_{e\in E}$ be an irreducible finite M\"obius IFS on $S^d$ satisfying the open set condition, and let $\KK$ be its limit set. Let $\Gamma \subset G$ denote the group generated by $\Phi$.
\begin{itemize}
\item[(i)] If $\Gamma$ is virtually contained in $\Lambda$, then $\KK \subset \BA_{S^d}$.
\item[(ii)] Suppose that there is no $g \in G$ for which $\Gamma$ is virtually contained in $g\Lambda g^{-1}$. Then $\mu_{\KK}(\BA_{S^d}) = 0$, and more generally, if $\nu$ is a doubling measure on $\KK$ such that $\supp(\nu) = \KK$, then $\nu(\BA_{S^d}) = 0$.
\end{itemize}
\end{thm}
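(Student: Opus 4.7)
For part (i), the plan is to exploit the identification of $G = \PO(d+1,1;\R)$ with the isometry group of the hyperbolic space $\HHH^{d+1}$, whose conformal boundary is $S^d$, under which $\Lambda$ becomes an arithmetic lattice and $M \df \HHH^{d+1}/\Lambda$ a finite-volume hyperbolic orbifold. By the correspondence of \cite[\S3.5]{FKMS2}, a point $\alpha \in S^d$ lies in $\BA_{S^d}$ if and only if the geodesic ray from any fixed basepoint $o \in \HHH^{d+1}$ toward $\alpha$ projects to a bounded subset of $M$. If a finite-index subgroup $\Gamma' \subset \Gamma$ is contained in $\Lambda$, then for any $\alpha = \pi(b) \in \KK$ the approximating sequence $(\phi_{b^1_n}(y_0))_{n\in\N}$ lies in a finite union of $\Gamma'$-orbits of $y_0$, each of which projects to a single point in $M$; since the geodesic from $o$ to $\alpha$ stays within bounded hyperbolic distance of this sequence, its projection to $M$ is bounded, whence $\alpha \in \BA_{S^d}$.

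For part (ii), I would first verify the hypotheses of Theorem~\ref{theoremBQ} for the probability measure $\mu \in \Prob(G)$ defined by the IFS generators. The irreducibility hypothesis on $\Phi$ says that $\Gamma$ stabilizes no proper generalized sphere of $S^d$; using the classification of proper connected algebraic subgroups of the identity component $G_0$---each of which stabilizes either a point of $\HHH^{d+1}$ or a proper totally geodesic subspace, and hence a generalized sphere of $S^d$---one concludes that the Zariski closure of $\Gamma$ contains $G_0$, which is simple with no compact factors. The hypothesis that $\Gamma$ is not virtually contained in any $g\Lambda g^{-1}$ precludes the existence of atomic $\mu$-stationary measures on $X = G/\Lambda$, just as in the proof of Theorem~\ref{thm: new main}(i). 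Invoking Benoist--Quint's theorem then gives, for any $x \in X$ and $\beta$-a.e. $b \in \B$, equidistribution of the random walk $(g_{b_1^n} x)_{n\in\N}$ with respect to $m_X$.

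It remains to translate equidistribution on $X$ into the Diophantine statement. For $\beta$-a.e. $b \in \B$, set $\alpha = \pi(b) \in \KK$; equidistribution implies that $(g_{b_1^n} x_0)$ is unbounded in $X$, and hence the corresponding trajectory in $M$ makes arbitrarily deep cusp excursions. By the composition order $\phi_{b^1_n} = \phi_{b_1}\circ\cdots\circ\phi_{b_n}$, the IFS contracts the basepoint toward $\alpha$, and an argument parallel to Dani's correspondence shows that these cusp excursions translate into excursions of the geodesic ray in $M$ heading toward $\alpha$. By \cite[\S3.5]{FKMS2} we then obtain $\alpha \notin \BA_{S^d}$, proving $\pi_*\beta(\BA_{S^d}) = 0$. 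When the open set condition holds, $\mu_{\KK}$ is proportional to a Bernoulli measure \cite{Hutchinson}, yielding the result for $\mu_\KK$. To extend to doubling measures, I would argue as in Section~\ref{sectiondoubling}: a doubling measure with full support on $\KK$ assigns comparable mass to cylinder sets at each scale, and a standard Vitali-type covering argument transfers the Bernoulli-measure-zero statement to a $\nu$-measure-zero statement.

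The main obstacle I anticipate is the equidistribution-to-Diophantine translation. The IFS composition places the coarse contraction $\phi_{b_1}$ outermost, so the IFS iterates naturally encode approach toward $\alpha$ along geodesics emanating from a basepoint, whereas the random walk $g_{b_1^n} = g_{b_n}\cdots g_{b_1}$ uses the reverse order and records positions in $X$ from the left-action viewpoint; matching these up so that unboundedness of the walk orbit is provably equivalent to cusp excursions along the geodesic to $\alpha$ requires careful bookkeeping of left and right translations and a precise invocation of the correspondence principle of \cite{FKMS2}. This is the most delicate step, and I would expect it to consume the bulk of the argument.
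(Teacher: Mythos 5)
Your part (i) is essentially the paper's argument (Theorem \ref{theoremkleinian1}(i) via Lemma \ref{lemmaURequivalent}): the orbit $\Gamma^+ x_0$ is finite when $\Gamma$ is virtually contained in $\Lambda$, so the random walk stays bounded and every point of $\KK$ is uniformly radial, i.e.\ badly intrinsically approximable. The Zariski density verification and the translation ``unbounded orbit in $X$ $\Leftrightarrow$ geodesic ray to $\pi(b)$ makes unbounded cusp excursions $\Leftrightarrow$ $\pi(b)\notin\BA_{S^d}$'' are also carried out in the paper (in \S\ref{subsec: fundamental} using Greenberg, and in Lemma \ref{lemmaURequivalent}), so the Bernoulli-measure portion of your part (ii) is a sound route; it is in effect the paper's Theorem \ref{theoremkleinian2}.

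However, the last step of your part (ii) has a genuine gap. Knowing $\pi_*\beta(\BA_{S^d})=0$ for a Bernoulli measure does not yield $\nu(\BA_{S^d})=0$ for an arbitrary doubling measure $\nu$ with $\supp(\nu)=\KK$: two doubling measures of full support on the same set can be mutually singular, so a $\pi_*\beta$-null set may carry full $\nu$-measure, and no Vitali-type covering argument can bridge measures that are not comparable. The paper avoids any measure comparison by proving a \emph{metric} statement: the set of codes whose walk stays in a fixed compact $K_j\subset X$ projects to a porous subset of $\KK$ (shown by contradiction using only the topological density part of Benoist--Quint, Theorem \ref{theoremBQ}(i), together with Remark \ref{remarkBQ} to rule out finite $\Gamma$-orbits), and porous sets are null for \emph{every} doubling measure of full support (Lemma \ref{lem: porous measure}). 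A second, related problem is your appeal to Hutchinson: for a M\"obius IFS the measure $\mu_\KK=\HH^\delta|_\KK$ is in general \emph{not} proportional to a Bernoulli measure --- Hutchinson's result concerns similarity IFSes, and the paper explicitly notes in Example \ref{ex: Fn} that the Hausdorff measure on a translated $F_N$ is not Bernoulli --- so even the $\mu_\KK$ case of (ii) is not reached by your route. Both the $\mu_\KK$ and the doubling cases require the porosity argument (or some other argument that is uniform over all doubling measures), not an upgrade from the Bernoulli case.
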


\subsection{Kleinian lattices}\label{subsec: kleinian}
We conclude this section by considering an approximation problem in
hyperbolic geometry that generalizes both of the setups considered
above. Let $\Hh^{d + 1}$ denote $(d + 1)$-dimensional hyperbolic
space, let $G = \Isom(\Hh^{d + 1})$, and let $\Lambda \subset G$ be a
lattice. A point $\bfalpha\in \partial\Hh^{d + 1}$ is said to be
\emph{uniformly radial} with respect to $\Lambda$ if any geodesic ray
with endpoint $\bfalpha$ stays within a bounded distance of the orbit
$\Lambda(\zero)$, where $\zero\in \Hh^{d + 1}$ is arbitrary but
fixed. We denote the set of uniformly radial points of $\Lambda$ by
$\UR_\Lambda$. Uniformly radial points can also be thought of as
``badly approximable with respect to the parabolic points of
$\Lambda$''; see \cite[Proposition~1.21]{FSU4}. In particular, 
\begin{itemize}
\item If $\Hh^2$ is the upper half-plane model of hyperbolic geometry,
  then $\partial\Hh^2 = \wbar\R$, and the parabolic points of the
  lattice $\Lambda \df \PGL_2(\Z) \subset G \df \PGL_2(\R)$ are
  exactly the rational points of $\wbar\R$ (including $\infty$). The
  heights of these rational points correspond to the diameters of an
  invariant collection of horoballs centered at these points, which
  implies that $\UR_\Lambda = \BA$ \cite[Obs. 1.15 and 1.16 and
  Proposition~1.21]{FSU4}. 
\item If $\Hh^{d + 1}$ is the Poincar\'e ball model of hyperbolic geometry, then $\partial\Hh^{d + 1} = S^d$, and the parabolic points of the lattice $\Lambda = \PO(d + 1,1;\Z) \subset G = \PO(d + 1,1;\R)$ are exactly the rational points of $S^d$. Again the heights of these rational points correspond to the diameters of horoballs, so $\UR_\Lambda = \BA_{S^d}$ \cite[\S3.5]{FKMS2}.
\end{itemize}
These facts show that the following theorem generalizes both Theorem \ref{theoremmobiusIFS} and Theorem \ref{theoremBAintrinsic}:

\begin{thm}
\label{theoremkleinian1}
Let $\Phi = (\phi_e)_{e\in E}$ be an irreducible finite M\"obius IFS on $\partial\Hh^{d + 1}$ satisfying the open set condition, and let $\KK$ be its limit set. Let $\Gamma$ denote the group generated by $\Phi$, and let $\Lambda \subset G = \Isom(\Hh^{d + 1})$ be a lattice.
\begin{itemize}
\item[(i)] If $\Gamma$ is virtually contained in $\Lambda$, then $\KK \subset \UR_\Lambda$.
\item[(ii)] Suppose that there is no $g \in G$ for which $\Gamma$ is virtually contained in $g\Lambda g^{-1}$. Then $\mu_{\KK}(\UR_\Lambda ) = 0$, and more generally, if $\nu$ is a doubling measure on $\KK$ such that $\supp(\nu) = \KK$, then $\nu(\UR_\Lambda) = 0$.
\end{itemize}
\end{thm}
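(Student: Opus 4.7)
The plan is to prove (i) by a direct hyperbolic shadowing argument, and to prove (ii) by transferring the conclusion to a random walk on $G/\Lambda$ and invoking the Benoist--Quint equidistribution theorem (Theorem \ref{theoremBQ}). The argument closely follows, and generalizes, the proofs of Theorems \ref{theoremmobiusIFS} and \ref{theoremBAintrinsic}.

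For part (i), I would first pass to a finite-index normal subgroup $\Gamma_0 \triangleleft \Gamma$ contained in $\Gamma \cap \Lambda$, constructed as the core of $\Gamma \cap \Lambda$ in $\Gamma$, with coset representatives $\gamma_1, \ldots, \gamma_k$. Fix $b \in \B$, set $\alpha = \pi(b)$, and consider the sequence $y_n = \phi_{b^1_n}(\zero) \in \Hh^{d+1}$, which converges to $\alpha$. Since each $\phi_e$ acts isometrically on $\Hh^{d+1}$, one has $d(y_n,y_{n+1}) = d(\zero,\phi_{b_{n+1}}(\zero)) \leq C_0 \df \max_e d(\zero,\phi_e(\zero))$, and the strict contraction of $\phi_e$ on $\FF$ forces $d(\zero,y_n) \to \infty$ linearly; thus $\{y_n\}$ is a discrete quasi-geodesic converging to $\alpha$, and by the Morse lemma for $\CAT(-1)$ spaces it stays within bounded hyperbolic distance of the geodesic ray from $\zero$ to $\alpha$. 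The normality of $\Gamma_0$ yields a decomposition $\phi_{b^1_n} = \lambda_n \gamma_{i(n)}$ with $\lambda_n \in \Lambda$, giving $d(y_n,\lambda_n \zero) = d(\gamma_{i(n)}(\zero),\zero) \leq \max_i d(\zero,\gamma_i \zero)$. Hence $\{y_n\}$, and therefore the geodesic from $\zero$ to $\alpha$, stays within bounded distance of $\Lambda \zero$, i.e.\ $\alpha \in \UR_\Lambda$.

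For part (ii), I would choose a fully supported probability measure $\mu$ on $E$ (for the natural measure case, the Hutchinson weights). Taking $g_e = \phi_e^{-1}$ so that the BQ-convention random walk $g_{b_1^n} = g_{b_n}\cdots g_{b_1}$ equals $\phi_{b^1_n}^{-1}$, the orbit $g_{b_1^n} \Lambda$ in $G/\Lambda$ corresponds, after projecting by the left action of $K = \Stab(\zero)$ and identifying $K\backslash G/\Lambda \cong \Hh^{d+1}/\Lambda$, to $\phi_{b^1_n}(\zero) \bmod \Lambda$. I would then verify the two hypotheses of Theorem \ref{theoremBQ}: the non-virtual-containment condition is given, while for the Zariski closure $\Hmu$ of $\Gamma = \langle \phi_e \rangle$ in the simple Lie group $G = \PO(d+1,1)$ I argue as follows. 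Any proper Zariski-closed connected subgroup of $G$ is, up to conjugation, either compact (the stabilizer of a point of $\Hh^{d+1}$), parabolic (the stabilizer of a point of $\partial\Hh^{d+1}$), or the stabilizer of a proper totally geodesic subspace of $\Hh^{d+1}$. A compact Zariski closure is incompatible with the existence of a strict contraction of $\FF$, since compact groups consist of elliptic isometries; the other two cases each furnish a proper $\Gamma$-invariant generalized sphere in $\partial\Hh^{d+1}$, contradicting the irreducibility of $\Phi$. Hence $\Hmu$ contains the identity component $G_0$, which is simple, so Theorem \ref{theoremBQ} applies and yields equidistribution of $(g_{b_1^n} x_0)_{n\in\N}$ in $G/\Lambda$. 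Combined with the shadowing estimate from part (i), this forces the geodesic ray from $\zero$ to $\pi(b)$ to exit every compact subset of $\Hh^{d+1}/\Lambda$, so $\pi(b) \notin \UR_\Lambda$ for $\beta$-a.e.\ $b$.

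To extend to arbitrary doubling measures $\nu$ with $\supp(\nu) = \KK$, note that $\UR_\Lambda \cap \KK = \bigcup_{R > 0} X_R$ where $X_R = \{\alpha \in \KK : d(\gamma_\alpha(t),\Lambda\zero) \leq R \text{ for all } t \geq 0\}$ is closed. The open set condition and the fact that each $\phi_e$ is an isometry of $\Hh^{d+1}$ let me identify, for each cylinder $[e_1,\ldots,e_n] \subset \B$, the intersection $\phi_{e_1 \cdots e_n}(\KK) \cap X_R$ with $\phi_{e_1 \cdots e_n}(X_{R'})$ for some $R' = R'(R)$ bounded uniformly in $n$. Combined with the Bernoulli-case conclusion $\pi_*\beta(X_{R'}) = 0$ and the doubling inequality \eqref{doubling}, a Vitali-type density argument at $\nu$-typical points $\alpha$ then yields $\nu(X_R) = 0$. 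The main obstacle I anticipate is the verification that $\Hmu$ is semisimple with no compact factors under only the irreducibility hypothesis, since this requires a careful algebraic classification of closed subgroups of $\PO(d+1,1)$ together with a clean translation between invariant generalized spheres on $\partial\Hh^{d+1}$ and invariant algebraic subvarieties of proper Lie subgroups. The secondary obstacle is the uniform $n$-independent estimate $R' = R'(R)$ required for the pull-back density argument, which relies sensitively on the open set condition.
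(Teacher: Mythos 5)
Your part (i) is essentially the paper's argument: one shows the sequence $(\phi_{b_n^1}(\zero))_{n}$ stays within bounded distance of $\Lambda(\zero)$ (immediate from virtual containment) and that it fellow-travels the ray $[\zero,\pi(b)]$. The paper packages the second step as Lemma \ref{lemmaURequivalent}; your Morse-lemma route additionally needs the lower bound $d(y_m,y_n)\gtrsim n-m$ making $(y_n)$ a genuine quasi-geodesic, which you assert but do not justify, whereas the paper instead bounds $\dist\big(\zero,[\phi_{b_n^1}^{-1}(\zero),\pi(T^nb)]\big)$ by separating the two endpoints ($\partial\Hh^{d+1}\sm V$ versus $\FF$). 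The Bernoulli half of your part (ii) is also sound in outline (modulo noting that ruling out a discrete $\HBQ$ uses the non-virtual-containment hypothesis via Remark \ref{remarkBQ}, not a hypothesis of Theorem \ref{theoremBQ}); this is in fact the route the paper takes for Theorem \ref{theoremkleinian2}.

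The genuine gap is in your upgrade from the Bernoulli case to arbitrary doubling measures. First, the identification $\phi_{e_1\cdots e_n}(\KK)\cap X_R=\phi_{e_1\cdots e_n}(X_{R'})$ with $R'$ bounded uniformly in $n$ is false: writing $w=e_1\cdots e_n$, the point $\phi_w(\beta)$ lies in $X_R$ iff the ray $[\zero,\beta]$ eventually stays within $R+O(1)$ of $\phi_w^{-1}\Lambda(\zero)$, which is the orbit of $\phi_w^{-1}(\zero)$ under the \emph{conjugate} lattice $\phi_w^{-1}\Lambda\phi_w$; since $\phi_w$ does not normalize or commensurate $\Lambda$, this set is not $X_{R'}$ for any $R'$, and the basepoint discrepancy $d(\zero,\phi_w^{-1}(\zero))\asymp n$ is unbounded. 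Second, even granting such an identification, the inference ``$\pi_*\beta(X_{R'})=0$ plus doubling plus a Vitali argument implies $\nu(X_R)=0$'' is not valid: a doubling measure with $\supp(\nu)=\KK$ need not be comparable to $\pi_*\beta$ on cylinders, so a $\pi_*\beta$-null set can carry positive $\nu$-mass. The density theorem for doubling measures requires a \emph{geometric} hole of definite relative size in every ball centered on the bad set, i.e.\ porosity (Lemma \ref{lem: porous measure}). This is exactly how the paper proceeds: it proves that each set $\pi(S_j)$, where $S_j$ is the set of codes whose walk $(g_{b_1^n}x_0)_n$ stays in a fixed compact $K_j\subset G/\Lambda$, is porous in $\KK$, by a compactness/contradiction argument that produces a point $y$ with bounded orbit $\Gamma^+y$ and then contradicts the density statement of Theorem \ref{theoremBQ}(i) combined with Remark \ref{remarkBQ} and the non-virtual-containment hypothesis; note this uses only the topological part of Benoist--Quint, not equidistribution. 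To repair your proof you would need to replace the self-similarity identification by such a porosity argument carried out at the level of the symbolic space, tracking the moving basepoint $g_{b_1^n}x_0$ rather than attempting to conjugate $\Lambda$ back.
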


In this theorem, $\Hh^{d + 1}$ can be interpreted as either the Poincar\'e ball model of hyperbolic geometry (in which case $\partial\Hh^{d + 1} = S^d$), or as the upper half-space model (in which case $\partial\Hh^{d + 1} = \wbar{\R^d}$). Either way, the group of M\"obius transformations on $\partial\Hh^{d + 1}$ is isomorphic to $\Isom(\Hh^{d + 1})$, which explains how the M\"obius transformations $(\phi_e)_{e\in E}$ can be identified with elements of $G$. In what follows we will not distinguish between a M\"obius transformation and its corresponding isometry of $\Hh^{d + 1}$, but it should be observed that the M\"obius transformation is not itself an isometry of the space $\partial\Hh^{d + 1}$, but only a conformal map. If we interpret $\Hh^{d + 1}$ as the upper half-space model, then we should assume that $\infty\notin \KK$, so that $\KK$ inherits a metric from $\R^d$ with respect to which the notion of a doubling measure can be interpreted. Theorem \ref{theoremkleinian1} will be proven in Section \ref{sectiondoubling}.

We can relax the assumptions that $\Phi$ is finite, contracting on
some set $\FF$, and satisfies the open set condition if we consider a
more restricted class of measures, namely the class of Bernoulli
measures. This restriction will also allow us to improve the
conclusion of Theorem \ref{theoremkleinian1}(ii), and to bypass the
obstruction that occurs when $\Gamma$ is virtually contained in some
$g\Lambda g^{-1} \neq \Lambda$ (the obstruction that occurs when
$\Gamma$ is virtually contained in $\Lambda$ remains). We define a
\emph{compact M\"obius IFS} on $\partial\Hh^{d + 1}$ to be a
continuously varying family of M\"obius transformations $\Phi =
(\phi_e \in \Isom(\Hh^{d + 1}))_{e\in E}$, where $E$ is a compact
set. Note that in this definition, we do not assume that the family
$\Phi$ is contracting in any sense. We call $\Phi$ \emph{irreducible}
if it does not preserve any generalized sphere $\LL \subsetneqq S^d$,
nor any point of $\Hh^{d + 1}$. Given an irreducible compact M\"obius
IFS $\Phi$ and a measure $\mu\in \Prob(E)$ such that $\supp(\mu) = E$,
for $\beta$-a.e. $b\in \B $, the limit 
\begin{equation}
\label{codingmap2}
\pi(b) = \lim_{n\to\infty} \phi_{b^1_n}(\zero)
\end{equation}
exists in $\partial\Hh^{d + 1}$, where $\zero\in \Hh^{d + 1}$ is a distinguished point and $\phi_{b^1_n}$ is as in \eqref{codingmap1} (see \cite{MaherTiozzo}). Thus we can define the measure $\pi_*\beta$ on $\partial\Hh^{d + 1}$.

\begin{thm}
\label{theoremkleinian2}
Let $\Phi = (\phi_e)_{e\in E}$ be an irreducible compact M\"obius IFS
on $\partial\Hh^{d + 1}$. Let $\Gamma$ be the group generated by
$\Phi$, and let $\Lambda \subset G = \Isom(\Hh^{d + 1})$ be a
lattice. Suppose that $\Gamma$ is not virtually contained in
$\Lambda$. Then for all $\mu\in\Prob(E)$ such that $\supp(\mu) = E$,
we have $\pi_*\beta(\UR_\Lambda) = 0$, where $\beta =
\mu^{\otimes\N}$. Moreover, for $\beta$-a.e. $b\in \B$, any geodesic
ray ending at $\pi(b)$ is equidistributed in the unit tangent bundle
$T^1\Hh^{d + 1}/\Lambda \cong K \backslash G / \Lambda$ (where $K$ is
the maximal compact subgroup of $G$ fixing a distinguished tangent
vector at $\zero$). 
\end{thm}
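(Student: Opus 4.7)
The proof naturally splits into (a) applying the Benoist--Quint equidistribution theorem to a random walk on $G/\Lambda$ driven by the IFS, and (b) transferring that equidistribution to geodesic rays in $T^1\Hh^{d+1}/\Lambda$. First I would push $\mu$ forward along $e \mapsto \phi_e$ to a compactly supported probability measure $\hat\mu$ on $G = \Isom(\Hh^{d+1})$. Because the IFS convention $\phi_{b^1_n} = \phi_{b_1}\circ\cdots\circ\phi_{b_n}$ reverses the usual random-walk order, I would reinterpret the trajectory $(\phi_{b^1_n} x_0)_{n\in\N}$ either as a right-multiplication random walk on $\Lambda\backslash G$ or as the standard Benoist--Quint trajectory for the inverse measure $\check\mu$; up to these cosmetic identifications the problem becomes one about a genuine Benoist--Quint random walk.

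To apply Theorem \ref{theoremBQ} I next verify its hypotheses. The irreducibility of $\Phi$ forbids any invariant generalized sphere in $\partial\Hh^{d+1}$ or fixed point in $\Hh^{d+1}$, and by the classification of Zariski-closed subgroups of the real algebraic group $\PO(d+1,1)$, this forces the Zariski closure of $\Gamma$ to be semisimple with no compact factors: any proper Zariski-closed subgroup either preserves a proper totally geodesic subspace or has a compact factor fixing a point in $\Hh^{d+1}$. Combined with the hypothesis that $\Gamma$ is not virtually contained in $\Lambda$, Benoist--Quint then yields that for $\beta$-a.e.\ $b \in \B$ the sequence $(\phi_{b^1_n} x_0)_{n \in \N}$ equidistributes in $G/\Lambda$ with respect to Haar measure.

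Transferring to equidistribution of geodesic rays will use hyperbolic geometry. For $\beta$-a.e.\ $b$, the sequence $(\phi_{b^1_n}\zero)_{n}$ is a quasi-geodesic in $\Hh^{d+1}$ converging to $\pi(b)$, so by the Morse lemma stays within uniform bounded hyperbolic distance of the geodesic ray $\gamma_b$ from $\zero$ to $\pi(b)$. Writing $\phi_{b^1_n} = k_1^{(n)} a_{t_n} k_2^{(n)}$ in a KAK decomposition adapted to $(\zero, v_0)$, the displacement $t_n = d_{\Hh}(\zero, \phi_{b^1_n}\zero)$ grows linearly in $n$ by random-walk drift (Furstenberg--Kesten), and $k_1^{(n)} v_0$ converges to the initial tangent vector of $\gamma_b$, call it $k_\infty(b) v_0$. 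Projecting the random-walk equidistribution from $G/\Lambda$ to $T^1\Hh^{d+1}/\Lambda = K\backslash G/\Lambda$ then gives an equidistribution statement for a discrete sampling of $\gamma_b$ at the times $t_n$; upgrading to continuous equidistribution of the whole ray uses the linear regularity of $t_n$ and the continuity of the geodesic flow. The $\UR_\Lambda$ conclusion then follows since an equidistributed ray visits cusp regions with positive density when $\Lambda$ is non-uniform, contradicting uniform radiality.

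The main obstacle will be the transfer step. Even once discrete equidistribution in $G/\Lambda$ is in hand, the right-rotational factor $k_2^{(n)}$ in the KAK decomposition means that the image of $\phi_{b^1_n}\Lambda$ in $K\backslash G/\Lambda$ is not literally the sample $K k_\infty(b) a_{t_n}\Lambda$ of the geodesic ray, but a rotated version of it. Handling this cleanly will likely require invoking Theorem \ref{thm: fiber bundle extension} with an auxiliary compact group to absorb the rotational degree of freedom, or exploiting dynamical features of the random walk to show that $k_2^{(n)}$ equidistributes compatibly with the projection.
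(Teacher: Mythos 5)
Your first half — pushing $\mu$ forward to $G=\Isom(\Hh^{d+1})$, reversing the order via $g_e=\phi_e^{-1}$, using irreducibility to get Zariski density of $\Gamma^+$ (the paper does this via Greenberg's proposition), and then invoking Theorem \ref{theoremBQ} together with the hypothesis that $\Gamma$ is not virtually contained in $\Lambda$ to get equidistribution of $(g_{b_1^n}x_0)_n$ in $G/\Lambda$ — matches the paper. The genuine gap is in the transfer step, which you correctly flag as "the main obstacle" but do not resolve. Two problems with your sketch there. First, the sample points $\phi_{b_n^1}(\zero)$ are only within \emph{bounded distance} of the geodesic ray, with random offsets and random gaps; equidistribution of these samples in $K\backslash G/\Lambda$ does not yield equidistribution of the ray unless you also control the joint distribution of (position, offset to the geodesic, gap to the next sample). "Linear regularity of $t_n$" is not enough. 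Second, your fallback of invoking Theorem \ref{thm: fiber bundle extension} "with an auxiliary compact group to absorb the rotational degree of freedom" cannot work as stated: that theorem requires $\kappa:\Gamma\to K$ to be a \emph{homomorphism}, and the KAK rotational factor $g\mapsto k_2(g)$ is not one.

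The paper's resolution is a specific construction you are missing. Work on the two-sided shift $\bar\B$, define $\pi_\pm(b)$ as the backward/forward limit points, let $\gamma(b)$ be the bi-infinite geodesic from $\pi_-(b)$ to $\pi_+(b)$, and let $v_n(b)\in T^1\Hh^{d+1}$ be the \emph{orthogonal projection} of $\phi_{b_n^1}(\zero)$ onto $\gamma(b)$ (oriented toward $\pi_+(b)$). These satisfy the cocycle identity $v_n(b)=v_{n-1}(Tb)\,g_{b_1}$, so the segment of $\gamma(b)$ from $v_0(b)$ to $v_n(b)$ decomposes as the translates by $g_{b_1^i}$ of the segments from $v_0(T^ib)$ to $v_1(T^ib)$. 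One then applies Theorem \ref{thm: fiber bundle extension} with \emph{trivial} $K$ but with the measurable, genuinely two-sided function $f(b)=(v_0(b),v_1(b))$; the joint equidistribution of $\big(g_{b_1^n}x_0, f(T^nb)\big)$ turns the geodesic integral $\frac1n\int_{v_0(b)}^{v_n(b)}F(wx_0)\,\dee w$ into a convergent Birkhoff average equal to $c\int F\,\dee m_X$, with $c>0$ supplied by the positive drift theorem of Maher--Tiozzo (Furstenberg--Kesten alone gives existence of the limit, not positivity). Dividing by the case $F\equiv 1$ gives equidistribution of the ray, and $\pi_*\beta(\UR_\Lambda)=0$ follows. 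Without this (or an equivalent) device for handling the offsets and gaps, your argument does not close.
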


\medskip

{\bf Summary.}
The theorems of \S \ref{sectionsim} and Theorem \ref{thm: illustrative application} all reduce to three theorems: \ref{theoremdoublingmatrix}, \ref{theoremCFequi}, and \ref{theoremgtequi}. The theorems of this section all reduce to two theorems: \ref{theoremkleinian1} and \ref{theoremkleinian2}. We will then prove these theorems in Sections \ref{sectiondoubling}-\ref{sectionCFequi}.

\section{Relation to the random walk setup}
\label{sectionprelim}
In this section we restate the results we will use from Part 1 of this paper, and from \cite{BenoistQuint7}. We use the following notation for all of the theorems below:

\begin{itemize}
\item $G$ is a semisimple real algebraic group with no compact factors, $\Lambda$ is a lattice in $G$, $X = G/\Lambda$, and $m_X$ is the $G$-invariant probability measure on $X$ obtained from Haar measure on $G$ (in some cases below $G$ and $\Lambda$ will be made more specific). The point $x_0 \in X$ corresponds to the coset $\Lambda$.
\item $E$ is a compact set, $e \mapsto g_e $ is a continuous map from $E$ to $G$, and $\mu\in\Prob(E)$ is a measure such that $\supp(\mu) = E$.
\item $\Gamma^+$ (resp. $\Gamma$) is the semigroup (resp. group) generated by $\{g_e : e\in E\}$.
\item For $b = (e_1, e_2, \ldots) \in \B$, and $n \in \N$, $g_{b_1^n}$ denotes the product $g_{e_n} \cdots g_{e_1}$.
\end{itemize}

By combining  Theorems \ref{thm: new main} and \ref{thm: assumptions satisfied} of Part 1, we immediately obtain the following:

\begin{thm}
\label{theorempart1}
Let $\pdim, \qdim$ be positive integers, let $\dimsum = \pdim + \qdim$, and let $G = \PGL_\dimsum(\R), \, \Lambda = \PGL_\dimsum(\Z), \, X = G/\Lambda$. Let $\mu$ be a probability measure with compact support $E \subset G$ which is in $(\pdim, \qdim)$-upper block form (see Definition \ref{def: block form}). Then for all $x\in X$,
\begin{itemize}
\item[(i)] $\Gamma^+ x$ is dense in $X$.
\item[(ii)] For $\beta$-a.e. $b\in \B$, the random walk trajectory
\begin{equation}
\label{randomwalk}
\big(g_{b_1^n} x\big)_{n\in\N}
\end{equation}
is equidistributed in $X$ with respect to $m_X$.
\end{itemize}
\end{thm}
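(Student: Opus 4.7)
The plan is essentially to cite: this theorem is advertised as an immediate consequence of combining Theorem \ref{thm: new main} with Theorem \ref{thm: assumptions satisfied}, so the proof should be short and consist of verifying the hypotheses of Theorem \ref{thm: new main} and then invoking it.

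First I would set up the hypotheses. The identity component of $G = \PGL_\dimsum(\R)$ is $\PSL_\dimsum(\R)$, which is simple, so the first structural hypothesis of Theorem \ref{thm: new main} is immediate. Since $\mu$ is in $(\pdim,\qdim)$-upper block form, Theorem \ref{thm: assumptions satisfied} produces, for each $d = 1, \ldots, \dim(G) - 1$, a proper subspace $W^{\wedge d} \subsetneqq V^{\wedge d}$ satisfying assumptions (I)--(III) of Theorem \ref{thm: new main}, and the same proof shows that $\Gamma$ acts transitively on the connected components of $X$ and that $\Gamma$ is not virtually contained in any conjugate of $\Lambda$ (via the argument using Lemma \ref{lem: verify iv} that $\Gamma$ is not discrete).

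With all hypotheses verified, Theorem \ref{thm: new main}(ii) applies and yields part (ii) of the present theorem directly: for any $x \in X$, for $\beta$-almost every $b \in \B$, the random walk trajectory $(g_{b_1^n} x)_{n \in \N}$ is equidistributed with respect to $m_X$.

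Part (i) is then a formal consequence of part (ii). For any $x \in X$, choose any $b \in \B$ in the full-measure set provided by (ii). Then the sequence $(g_{b_1^n} x)_{n \in \N}$ is equidistributed with respect to $m_X$, and since $m_X$ has full support on $X$, the sequence is dense in $X$. But this sequence is contained in $\Gamma^+ x$, so $\Gamma^+ x$ is dense in $X$. There is no real obstacle; the substantive content has already been packaged into Theorems \ref{thm: new main} and \ref{thm: assumptions satisfied}, and the only thing to be careful about is noting that their combined hypotheses match the situation of $\PGL_\dimsum(\R)$ with a measure in upper block form, which is exactly what Theorem \ref{thm: assumptions satisfied} is designed to handle.
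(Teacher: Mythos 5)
Your proposal is correct and matches the paper's approach: the paper derives this theorem by simply combining Theorem \ref{thm: new main} with Theorem \ref{thm: assumptions satisfied}, exactly as you do. Your explicit deduction of part (i) from part (ii) via the full support of $m_X$ is a detail the paper leaves implicit, but it is the intended argument.
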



We will also use:
\begin{thm}[Benoist-Quint, see {\cite[Theorems~1.1 and 1.3]{BenoistQuint7}}]
\name{theoremBQ}
Suppose that $\Gamma^+$ is Zariski dense in $G$. Then for all $x\in X$, there exist a closed group $\HBQ \subset G$ containing $\Gamma^+$ and an $\HBQ$-invariant probability measure $\nu_x$ such that $\supp(\nu_x) = \HBQ x$ and:
\begin{itemize}
\item[(i)] $\Gamma^+ x$ is dense in $\HBQ x$.
\item[(ii)] For $\beta$-a.e. $b\in \B$, the random walk trajectory \eqref{randomwalk} is equidistributed in $\HBQ x$ with respect to $\nu_x$.
\end{itemize}
\end{thm}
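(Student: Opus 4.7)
The plan is to invoke two deep inputs from the Benoist--Quint program and combine them with a standard nondivergence argument. The two ingredients are: (a) the classification of $\mu$-stationary probability measures on $X$ under Zariski density of $\Gamma^+$, which states that every ergodic such measure is homogeneous, i.e., is the $\HBQ$-invariant probability measure on a closed orbit $\HBQ y$ for some closed subgroup $\HBQ \supset \Gamma^+$; and (b) the Breiman law of large numbers, which for any $x\in X$ ensures that for $\beta$-a.e.\ $b \in \B$, every weak-$*$ limit of the empirical measures $\frac{1}{N}\sum_{n=0}^{N-1}\delta_{g_{b_1^n}x}$ on the one-point compactification of $X$ is $\mu$-stationary.

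For (a), the key step is an exponential drift argument. Zariski density of $\Gamma^+$ combined with the Guivarc'h--Raugi theorem gives a representation on which the top Lyapunov exponent is simple, yielding a $\Gamma^+$-equivariant measurable field $b \mapsto \xi_b$ of ``attracting directions'' in the Furstenberg boundary $G/P$. One then compares the random walk orbits of two points $x$ and $x'$ whose displacement in $G$-coordinates lies close to the stable direction of $\Ad$ dual to $\xi_b$: the image in $X$ of this displacement contracts at a controlled exponential rate along a unipotent one-parameter subgroup $U$, and combining this with Poincar\'e recurrence in the skew-product $\B\times X$ forces any ergodic $\mu$-stationary $\nu$ to be $U$-invariant. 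Ratner's measure classification theorem then upgrades this unipotent invariance to homogeneity.

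Given (a), to establish (ii) it remains to rule out escape of mass and to show that the limit in (b) is a single ergodic homogeneous measure $\nu_x$ depending only on $x$ and not on $b$. For the first, I would use the Eskin--Margulis nondivergence machinery: positivity of Lyapunov exponents on the exterior-power representations attached to unipotent radicals of standard parabolics (guaranteed by Zariski density) produces, exactly as in \cite[Lemma~6.3]{BenoistQuint3}, a proper function $u:X \to [0,\infty)$ satisfying $\int u(gy)\,\dee\mu(g) \le au(y)+C$ for some $a\in(0,1)$ and $C>0$; then \cite[Proposition~3.9]{BenoistQuint7} yields tightness of the empirical measures. For the second, countability of the candidate homogeneous measures (via Ratner) together with shift-ergodicity of $(\B,\beta,T)$ forces any weak-$*$ limit to be a single ergodic component $\nu_x$. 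Taking $\HBQ$ to be its stabilizer in $G$ gives (ii), and (i) follows since equidistribution in $\HBQ x = \supp(\nu_x)$ forces $\Gamma^+ x$ to be dense in $\HBQ x$. The main obstacle is unquestionably the exponential drift step in (a): it requires delicate quantitative control over how perturbations in unstable directions of the adjoint representation propagate through the random walk on $X$, and represents the technical heart of \cite{BenoistQuint6, BenoistQuint7}.
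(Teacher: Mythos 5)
This statement is not proved in the paper at all: it is quoted as prior work, with the proof deferred entirely to \cite[Theorems~1.1 and 1.3]{BenoistQuint7} (the introduction explicitly lists it among the ``three dynamical results'' the reader is asked to accept). So there is no in-paper argument to compare yours against; what you have written is a pr\'ecis of the Benoist--Quint program itself, and as such it is broadly faithful -- the classification of ergodic stationary measures via the exponential drift plus Ratner, the Breiman law of large numbers, and the Eskin--Margulis recurrence functions to preclude escape of mass are indeed the ingredients, and they are the same ones the authors adapt in Part 1 to prove their own Theorem \ref{thm: new main}.

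One caveat on the last step of your sketch: deducing (ii) from ``countability of the candidate homogeneous measures together with shift-ergodicity'' understates the difficulty. The passage from the stationary-measure classification (Theorem 1.1 of \cite{BenoistQuint7}) to equidistribution of individual trajectories starting from a \emph{fixed} $x$ with a limit $\nu_x$ independent of $b$ (their Theorem 1.3) is a substantial separate argument -- one must show the empirical measures converge at all, identify the limit as the measure on the \emph{minimal} invariant homogeneous subset containing $\overline{\Gamma^+ x}$, and handle the fact that different weak-$*$ limits along different subsequences or different $b$ could a priori live on different homogeneous pieces. A soft countability argument does not by itself rule this out. Since you are in any case outsourcing the exponential drift to the cited papers, the honest framing is that your entire proposal is a citation in expanded form rather than a proof, which happens to be exactly what the paper does in compressed form.
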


\begin{remark}
\label{remarkBQ}
If the identity component of $G$ is simple in Theorem \ref{theoremBQ}, then the group $\HBQ$ is either discrete or of finite index in $G$. This is because the adjoint action of $\Gamma^+$ on $\Lie(G)$ normalizes $\Lie(\HBQ)$, so since $\Gamma^+$ is Zariski dense, the adjoint action of $G$ normalizes $\Lie(\HBQ)$ as well, and thus either $\Lie(\HBQ) = \{0\}$ or $\Lie(\HBQ) = \Lie(G)$.

If $\HBQ$ is discrete, then $\nu_x$ is atomic and gives the same measure to every atom, and thus $\HBQ x$ is finite. In this case $\HBQ$ acts by permutations on $\HBQ x$, so a finite index subgroup of $\HBQ$ is contained in $\Stab_G(x) = g\Lambda g^{-1}$, where $x$ is the coset $g\Lambda$.

If $\HBQ$ is of finite index, then $\nu_x$ is the (renormalized) restriction of the natural measure $m_X$ on $X$ to one or more connected components of $X$. In particular, if $X$ is connected (which is true in the examples we consider), then $\nu_x=m_X$.
\end{remark}

The following is an immediate consequence of Theorem \ref{thm: fiber bundle
extension}, also proven in Part \ref{part1}.

\begin{thm}
\label{theoremequibootstrap}
Fix $x\in X$, and suppose that for $\beta$-a.e. $b\in \B$, the random walk trajectory $(g_{b_1^n}x)_{n \in \N}$ is equidistributed in $X$ with respect to $m_X$. Let $K$ be a compact group, let $\kappa:\Gamma\to K$ be a homomorphism, and for each $e\in E$ let $k_e = \kappa(g_e)$. Let $\bar K$ denote the closure of $\kappa(\Gamma)$ and let $m_{\bar K}$ denote Haar measure on $\bar K$, and assume that $\Gamma$ acts ergodically on $(X \times \bar K, m_X \otimes m_{\bar K})$. Finally, let $\bar \B = E^{\Z}$, $\bar \beta = \mu^{\otimes\Z}$, let $Y$ be a locally compact topological space, and let $f:\bar{\B} \to Y$ be a measurable transformation. Then for $\bar \beta$-a.e. $b\in \bar{\B}$, the sequence
\begin{equation}
\label{randomwalkandmore}
\big(g_{b_1^n} x,k_{b_1^n} ,f(T^n b)\big)_{n\in\N}
\end{equation}
is equidistributed in $X\times \bar K\times Y$ with respect to $m_X\otimes m_{\bar K}\otimes f_*\bar\beta$.
\end{thm}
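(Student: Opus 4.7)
The plan is to deduce this theorem as an essentially immediate corollary of Theorem \ref{thm: fiber bundle extension}, the only adjustment being that the compact group $K$ in that theorem should be replaced by the closed subgroup $\bar K \subseteq K$ in the target of $\kappa$. First I would factor the homomorphism $\kappa : \Gamma \to K$ through its image, viewing it as a homomorphism $\kappa : \Gamma \to \bar K$ with dense image. Since $\bar K$ is a closed subgroup of the compact group $K$, it is itself a compact topological group, with Haar measure $m_{\bar K}$, and the sequence $k_{b_1^n} = \kappa(g_{b_1^n})$ takes values in $\kappa(\Gamma) \subset \bar K$, so $\bar K$ is the natural target for equidistribution on the second coordinate.

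Next I would check that all hypotheses of Theorem \ref{thm: fiber bundle extension} (with $\bar K$ in the role of $K$) are satisfied. The equidistribution hypothesis on $(g_{b_1^n}x)_{n\in\N}$ is stated identically in both theorems, the measurability of $f$ and the locally compact metric structure on $Y$ are given, and the ergodicity hypothesis required by Theorem \ref{thm: fiber bundle extension}, namely that the $\Gamma$-action $\gamma\cdot(x,k) = (\gamma x, \kappa(\gamma) k)$ is ergodic on $(X\times \bar K, m_X \otimes m_{\bar K})$, is exactly the ergodicity assumption being made here. Invoking Theorem \ref{thm: fiber bundle extension} then yields, for $\bar\beta$-a.e. $b\in\bar\B$, the equidistribution of
\[
\big(g_{b_1^n} x,\, \kappa(g_{b_1^n}),\, f(T^n b)\big)_{n\in\N}
\]
with respect to $m_X \otimes m_{\bar K} \otimes f_*\bar\beta$ on $X\times \bar K\times Y$, which is the stated conclusion (and $f_*\bar\beta$ coincides with $m_Y$ of the earlier theorem).

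There is no real obstacle to this reduction; the one technical point worth verifying is that Theorem \ref{thm: fiber bundle extension} is stated for $G$ connected, whereas the semisimple algebraic framework of Section \ref{sectionprelim} allows finitely many connected components. Since the proof of Theorem \ref{thm: fiber bundle extension} goes through Corollary \ref{cor: extend compact} and Proposition \ref{prop: bootstrap}, neither of which genuinely uses connectedness (only the uniqueness of the $G$-invariant probability on $X$ and the Breiman law of large numbers), the reduction remains valid in the broader setting used here.
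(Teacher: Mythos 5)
Your proposal is correct and takes essentially the same route as the paper, which states Theorem \ref{theoremequibootstrap} as an immediate consequence of Theorem \ref{thm: fiber bundle extension} applied with $\bar K$ in the role of the compact group $K$. The hypothesis-checking and the remark on connectedness that you add are sound but are not spelled out in the paper.
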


\subsection{Relation to the setups considered in Sections \ref{sectionsim} and \ref{sectionconf}}\name{subsec: fundamental}
Now we show that the hypotheses of the above theorems are satisfied in the setups considered in \S\ref{sectionsim}-\S\ref{sectionconf}, which we summarize as follows:
\begin{itemize}
\item[Setup 1.] In \S\ref{sectionsim}, the fundamental objects are an irreducible compact algebraic similarity IFS $\Phi = (\phi_e)_{e\in E}$ on the space $\MM$ of $\pdim \times \qdim$ matrices, a contracting-on-average measure $\mu\in \Prob(E)$ such that $\supp(\mu) = E$, the groups $G = \PGL_\dimsum(\R), \, \Lambda = \PGL_\dimsum(\Z)$, and the homogeneous space $X = G/\Lambda$.
\item[Setup 2.] In \S\ref{sectionconf}, the fundamental objects are an irreducible compact M\"obius IFS $\Phi = (\phi_e)_{e\in E}$ on $\partial\Hh^{d + 1}$, a measure $\mu\in\Prob(E)$ such that $\supp(\mu) = E$, and a lattice $\Lambda \subset G = \Isom(\Hh^{d + 1})$.
\end{itemize}
We will explain how to connect Setups 1 and 2 with the homogeneous space random walks setup introduced in this section. In both setups the objects $G$, $\Lambda$, $E$, and $\mu$ are already defined, so it remains to define the family $(g_e)_{e\in E}$. In Setup 2 we notice that the M\"obius transformations $\phi_e \; (e\in E)$ are already members of $G$, so they define a family $(g_e)_{e\in E}$ via the formula $g_e = \phi_e^{-1}$. Note that taking the inverse in this definition ensures that the expressions $g_{b_1^n}$ and $\phi_{b_n^1}$ appearing respectively in the definitions of the random walk and the coding map (see \eqref{randomwalk} and \eqref{codingmap}) are related by the formula $g_{b_1^n} = (\phi_{b_n^1})^{-1} \; (b\in \B, \, n\in\N)$.

\ignore{
The reason for taking inverses is essentially that the ordering
$g_{b_n}\cdots g_{b_1}$ appearing in \eqref{randomwalk} and the
ordering $\phi_{b_1}\cdots\phi_{b_n}$ appearing in \eqref{codingmap}
are exactly opposite to each other. Another way to think of it is that
the family $(\phi_e)_{e\in E}$ can be thought of as acting on the
space $\Hh^{d + 1} = G/K$, where $K =\GO(d + 1)$ is a maximal compact
subgroup of $G$, while the elements $g_e$ are thought of as acting on
$X = G/\Lambda$. These two actions are in some sense ``inconsistent''
with each other, but can be made consistent by turning one of them
into a right action: $K \backslash G / \Lambda$. Thus $(\phi_e)_{e\in
  E}$ can be thought of as acting ``on the right'' while $(g_e)_{e\in
  E}$ can be thought of as acting ``on the left''. } 

In Setup 1, we will also define the family $(g_e)_{e\in E}$ via the
formula $g_e = \phi_e^{-1}$, but it takes a little more work to
describe how to view the algebraic similarities $\phi_e \; (e\in E)$
as elements of $G = \PGL_{\dimsum}(\R)$. We recall that in \S
\ref{subsec: main example} we defined subgroups $A,K,U \subset G$ by: 
\begin{align}
\label{KAUH}
A &= \{a_t : t\in\R\},&
K &= \GO_\pdim\oplus \GO_\qdim,&
U &= \{u_\bfalpha : \bfalpha\in \MM\}
\end{align}
(where as before matrices are identified with their images in $G$), and we let
\begin{equation}\label{eq: and set}
\para=AKU.
\end{equation}
Note that $A$ and $K$ commute with each other and normalize $U$, and thus the natural projections
\[\pi_A:\para\to A \text{ and } \pi_K:\para\to K\]
are homomorphisms. Let $\iota:\MM\to \para/AK$ be defined by the
formula $\iota(\bfalpha) = u_\bfalpha AK$. Then $\iota$ is a
homeomorphism, and $\iota(\mathbf 0)$ is the identity
coset $AK\in \para/AK$. Now consider the action $\rho$ of $\para$ on
$\MM$ that results from conjugating the action of $\para$ on
$\para/AK$ by left multiplication by the isomorphism $\iota$. It is
readily checked that $\rho(u_\bfalpha)(\bfbeta) = \bfbeta + \bfalpha$,
$\rho(a_t)(\bfbeta) = e^{t/\pdim+t/\qdim} \bfbeta$, and
$\rho(O_1\oplus O_2)(\bfbeta) = O_1 \bfbeta O_2^{-1}$. In particular
$\rho$ is faithful (since $\para\subset \PGL_{\dimsum}(\R)$ and thus
multiplication by $-1$ is considered trivial), and $\rho(\para)$ is
the group of algebraic similarities of $\MM$. So $\rho$ is an
isomorphism between $\para$ and the group of algebraic similarities of
$\MM$. By identifying each element of $\para$ with its image under
$\rho$, we can think of the algebraic similarities $\phi_e \; (e\in
E)$ as elements of $\para \subset G$, and from there define the family
$(g_e)_{e\in E}$ by the formula $g_e = \phi_e^{-1}$. Note that this
paragraph is the reason we needed to consider algebraic similarities,
rather than all similarities, in Theorems
\ref{theoremBAmatrix}---\ref{theoremgtequi}. 

We now show that we can apply Theorems \ref{theorempart1} and
\ref{theoremequibootstrap} in Setup 1, and Theorem \ref{theoremBQ}
in Setup 2.

\begin{itemize}
\item Let $\Phi = (\phi_e)_{e\in E}$ be an irreducible compact algebraic similarity IFS, where $E$ is a compact indexing set, and let $\mu \in \Prob(E)$ be a contracting-on-average measure such that $\supp(\mu) = E$. By replacing $E$ and $\mu$ with their images under the map $e \mapsto g_e = \phi_e^{-1}$, we can without loss of generality assume that $E$ is a subset of $G$ and that $g_e = e$ for all $e\in E$. We want to apply Theorem \ref{theorempart1} to show that for any $x \in X$, for $\beta$-a.e. $b\in\B$, the associated random walk trajectory \eqref{randomwalk} is equidistributed in $X$.

Note that replacing $\mu$ by its pushforward under a conjugation in $G$ does not affect the validity of this conclusion; indeed, if \eqref{randomwalk} is equidistributed then so is $(g_0 g_{b_1^n} x)_{n \in \N} = (g_0 g_{b_1^n} g_0^{-1} g_0 x)_{n \in \N}$, which is the random walk corresponding to the pushforward of $\mu$ under conjugation by $g_0$ and the initial point $g_0 x$. Taking an element of the semigroup generated by $\Phi$ which acts on $\MM$ as a contraction and translating the fixed point to the origin, we can assume with no loss of generality that $\supp(\mu)$ contains an element $h_0 \in AK$ with $\pi_A(h_0) = a_t, \; t > 0$. After this conjugation, let us show that the measure $\mu$ satisfies conditions (i)--(iii) of Definition \ref{def: block form}, where
\begin{align*}
a_g &= \pi_A(g),&
k_g &= \pi_K(g),&
u_g &= k_g^{-1} a_g^{-1} g.
\end{align*}
Clearly, these elements are of the form described in Definition \ref{def: block form}, and the growth assumption in (ii) follows from the contraction-on-average assumption. We will use the irreducibility assumption to verify (iii). Let $\Hmu \subset \para$ be the Zariski closure of $\Gamma$, and we will show that $\Lie(\Hmu) \supset \Lie(U)$. Let $Q$ be the identity component of $\Hmu \cap U$. Clearly, $\Hmu$ normalizes $Q$, and by Lemma \ref{lem: verify iv}, for all $g\in \Hmu$ we have $\log(u_g) \in \Lie(Q)$ and thus $u_g\in Q$. Now let $\LL = \{\bfalpha\in \MM : \log(u_\bfalpha) \in \Lie(\Hmu)\} = \{\bfalpha \in \MM : u_\bfalpha \in Q\}$. We claim that $\LL$ is invariant under the action of $\Gamma$ on $\MM$. Indeed, if $g\in \Gamma$ and $\bfalpha\in\LL$, then $g \cdot \iota(\bfalpha) = a_g k_g u_g u_\bfalpha/AK = g (u_\bfalpha u_g) g^{-1}/AK \in Q/AK$ and thus $\rho(g)(\bfalpha) \in \LL$. Thus by the irreducibility assumption, $\LL = \MM$ and thus $\Lie(\Hmu) \supset \Lie(U)$, as required.

\item In Setup 1 we will also need to know that the assumptions of Theorem \ref{theoremequibootstrap} are satisfied for the map $\kappa = \pi_K$. That is, we need to show that $\Gamma$ acts ergodically on $(X \times \bar{K}, m_X \otimes m_{\bar{K}})$, where $\bar{K}$ is the closure of $\pi_K(\Gamma)$ and $m_{\bar{K}}$ is Haar measure on $\bar{K}$. To see this, note that the ``contracting on average'' assumption on $\mu$ implies that $\Gamma$ is an unbounded subgroup of $G$. Thus by the Howe--Moore theorem (see e.g. \cite{Zimmer}), the action of $\Gamma$ on $X$ is mixing, and hence also weakly mixing. Moreover, the action of $\Gamma$ on $(\bar{K}, m_{\bar{K}})$ (via $\kappa$) is ergodic since $\kappa(\Gamma)$ is dense in $\bar{K}$. This implies (see \cite[Proposition~2.2]{Schmidt6}) that the product action of $\Gamma$ on $X \times \bar{K}$ is ergodic.

\item In Setup 2, we need to show that $\Gamma^+$ is Zariski dense, naturally using the assumption that the IFS $\Phi$ is irreducible. First of all, by \cite[Lemma~5.15]{BenoistQuint_book}, the Zariski closure of $\Gamma^+$, which we denote by $\Hmu$, is a group. It is clear that the limit set of $\Hmu$ in the sense of Kleinian groups contains the limit set of $\Phi$ in the sense of \S\ref{sectionconf}, which by assumption is not contained in any generalized sphere $\LL \subsetneqq \Hh^{d + 1}$ (or else the smallest such sphere would be invariant under $\Phi$). Thus $\Hmu$ is a Lie subgroup of $\Isom(\Hh^{d + 1})$ with no global fixed point whose limit set (in the sense of Kleinian groups) is not contained in any nonempty generalized sphere which is properly contained in $\partial\Hh^{d + 1}$. So by \cite[Proposition~16]{Greenberg}, either $\Hmu$ is discrete or $\Hmu = \Isom(\Hh^{d + 1})$. The former case is ruled out because Zariski closed discrete sets are finite, and $\Hmu$ is infinite (e.g. because its limit set is nonempty). Thus $\Gamma^+$ is Zariski dense.
\end{itemize}

\section{Doubling measures}
\label{sectiondoubling}
In this section, we prove Theorems \ref{theoremdoublingmatrix} and
Theorem \ref{theoremkleinian1}, using results from
Part \ref{part1} and \cite{BenoistQuint7} respectively. The proofs
are very similar. They rely on the notion of a porous set:

\begin{dfn}
\label{definitionporous}
Let $Z$ be a metric space. A subset $S\subset Z$ is called
\emph{porous} if there exists $c > 0$ such that for all $0 < r \leq
1$ and for all $z\in Z$, there exists $w\in Z$ such that $B(w,c
r)\subset B(z,r)\sm S$.
\end{dfn}
\begin{lem}[{\cite[Proposition~3.4]{JJKRRS}}]
\name{lem: porous measure}
If $S\subset Z$ is porous, then $S$ has measure zero with respect to
any doubling measure $\nu$ such that $\supp(\nu) = Z$.
\end{lem}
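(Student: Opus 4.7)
The plan is to upgrade porosity to a uniform density-deficit estimate on balls centered at points of $\supp(\nu)$, and then combine it with the Lebesgue differentiation theorem for doubling measures to rule out positive $\nu$-measure for $S$.

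First I would show that there is a constant $c' = c'(c,\nu) \in (0,1]$ such that for every $z \in \supp(\nu)$ and every $r \in (0,1]$,
\[
\nu(S \cap B(z,r)) \leq (1 - c')\, \nu(B(z,r)).
\]
For this, porosity supplies $w \in Z = \supp(\nu)$ with $B(w,cr) \subset B(z,r) \setminus S$. Since $w \in B(w,cr) \subset B(z,r)$, we have $d(w,z) < r$, so by the triangle inequality $B(z,r) \subset B(w,2r)$. Writing $2r = (2/c) \cdot cr$ and applying the doubling inequality at $w \in \supp(\nu)$ with $\lambda = 2/c$ yields $\nu(B(w,2r)) \leq C_{2/c}\,\nu(B(w,cr))$, whence
\[
\nu(B(z,r)) \leq \nu(B(w,2r)) \leq C_{2/c}\,\nu(B(w,cr)) \leq C_{2/c}\,\nu(B(z,r) \setminus S).
\]
Rearranging gives the claim with $c' = 1/C_{2/c}$.

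Next I would invoke the Lebesgue differentiation theorem for doubling measures on (separable) metric spaces: for $\nu$-a.e. $z \in S$,
\[
\lim_{r \to 0^+} \frac{\nu(S \cap B(z,r))}{\nu(B(z,r))} = 1.
\]
If $\nu(S) > 0$, such a density point $z$ exists, and since $S \subset Z = \supp(\nu)$, the estimate from the first step applies at $z$ and forces the limit to be at most $1 - c' < 1$, a contradiction. Hence $\nu(S) = 0$.

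The only potentially delicate point is the appeal to the Lebesgue differentiation theorem at this level of generality. For doubling measures on separable metric spaces it is standard and follows from the Vitali covering property enjoyed by such measures; if one prefers to avoid invoking it, the density-deficit estimate of the first step can be iterated directly by covering $S \cap B(z_0,r_0)$ at successively finer scales with balls centered at points of $S$, losing a factor of $1 - c'$ in $\nu$-measure at each scale, which again yields $\nu(S) = 0$.
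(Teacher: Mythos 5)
Your argument is correct. Note that the paper does not prove this lemma at all: it simply imports it as \cite[Proposition~3.4]{JJKRRS}, so your proof supplies what the paper outsources, and it is the standard argument for this fact (a uniform density deficit on balls centered in $\supp(\nu)$, obtained from porosity plus one application of the doubling inequality at the ``hole'' center $w$, followed by the Lebesgue differentiation theorem for doubling measures). Your fallback of iterating the deficit estimate across scales is also the route usually taken when one wants to avoid quoting differentiation theory. The only point worth making explicit is measurability of $S$: porosity does not force $S$ to be Borel, but the closure $\cl S$ is again porous (with constant $c/2$, since any point of $B(w,cr/2)\cap\cl S$ would be approximated by points of $S$ inside $B(w,cr)$), so one may replace $S$ by $\cl S$ and then your density-point argument applies verbatim.
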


Before beginning the proofs of Theorems \ref{theoremdoublingmatrix} and \ref{theoremkleinian1}, we will provide equivalent characterizations of when a point is badly approximable (resp. uniformly radial) in the context of Theorem \ref{theoremdoublingmatrix} (resp. Theorem \ref{theoremkleinian1}).

\begin{lem}
\label{lemmaBAequivalent}
Let the notation be as in Setup 1, and assume that $\Phi$ is strictly contracting (i.e. that $\sup_{e\in E} |\phi_e'| < 1$). Then for each $b\in \B$, we have $\pi(b)\in\BA$ if and only if the sequence $(g_{b_1^n} x_0)_{n \in \N}$ is bounded in $X$.
\end{lem}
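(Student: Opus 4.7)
The plan is to use the Dani correspondence principle together with the explicit decomposition of $g_{b_1^n}$ into ``$KAU$-coordinates'' that arises from Setup~1. Recall that $\alpha\in\BA$ if and only if the forward orbit $(a_t u_\alpha x_0)_{t\geq 0}$ is bounded in $X$ (this is the content of \cite[Theorem~2.20]{Dani4}). So it suffices to show that the discrete orbit $(g_{b_1^n}x_0)_{n\in\N}$ captures the boundedness behavior of this one-parameter trajectory at $\alpha=\pi(b)$.

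First I would unpack $g_{b_1^n} = \phi_{b_n^1}^{-1}$. Since each $\phi_e$ lies in $\para = AKU$, so does $g_{b_1^n}$, and we can write $g_{b_1^n} = u_{\bfdelta_n}a_{t_n}k_n$ with $a_{t_n}\in A$, $k_n\in K$, $u_{\bfdelta_n}\in U$. Writing $\phi_{b_n^1}(\bfalpha) = \lambda_n O_n\bfalpha + \eta_n$ with $\lambda_n\in(0,1)$ its scaling factor, $O_n$ its orthogonal part, and $f_n$ its unique fixed point, an elementary computation gives the identities $t_n = -\log\lambda_n / (1/\pdim+1/\qdim)$, $\rho(k_n) = O_n^{-1}$, and $\bfdelta_n = f_n - \lambda_n^{-1}O_n^{-1}f_n$. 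The strict contraction assumption forces $\lambda_n \leq c^n$ for a uniform $c < 1$, so $t_n\to\infty$, and moreover $\lambda_n/\lambda_{n+1} = 1/\lambda_{b_{n+1}}$ lies in a fixed compact subinterval of $(1,\infty)$, whence the gaps $t_{n+1}-t_n$ are bounded. Also $f_n\to\pi(b)$ by definition of the coding map.

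Next I would compare $g_{b_1^n}x_0$ with $a_{t_n}u_{\pi(b)}x_0$. Using the commutation $a_{t_n}u_{\pi(b)} = u_{\lambda_n^{-1}\pi(b)}a_{t_n}$, one computes
\[
(a_{t_n}u_{\pi(b)})^{-1}g_{b_1^n} \;=\; a_{-t_n}u_{\bfdelta_n-\lambda_n^{-1}\pi(b)}a_{t_n}k_n \;=\; u_{\lambda_n\bfdelta_n-\pi(b)}\,k_n.
\]
Substituting the formula for $\bfdelta_n$ gives $\lambda_n\bfdelta_n-\pi(b) = \lambda_n f_n - O_n^{-1}f_n - \pi(b)$, which is bounded in norm uniformly in $n$ (because $f_n$ is bounded and $\lambda_n\to 0$). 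Since $k_n\in K$ is compact-valued, the element $(a_{t_n}u_{\pi(b)})^{-1}g_{b_1^n}$ stays in a fixed compact subset of $G$; hence $g_{b_1^n}x_0$ and $a_{t_n}u_{\pi(b)}x_0$ stay within bounded distance in $X$ for all $n$.

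Finally I would combine: $(g_{b_1^n}x_0)_{n\in\N}$ is bounded in $X$ if and only if $(a_{t_n}u_{\pi(b)}x_0)_{n\in\N}$ is, and because $t_n\to\infty$ with bounded gaps and $t\mapsto a_tu_{\pi(b)}x_0$ is continuous, this in turn is equivalent to boundedness of the full forward orbit $(a_tu_{\pi(b)}x_0)_{t\geq 0}$, and thus to $\pi(b)\in\BA$ by Dani correspondence. The main obstacle is the bookkeeping in the third step — getting the commutation right so that the ``translation part'' $\lambda_n\bfdelta_n-\pi(b)$ cancels the leading $\lambda_n^{-1}$ factor — but once the identity $\bfdelta_n = f_n-\lambda_n^{-1}O_n^{-1}f_n$ is in hand, the algebra collapses and the remaining steps are routine.
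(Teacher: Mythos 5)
Your proof is correct and follows essentially the same route as the paper: Dani correspondence, the $AKU$-decomposition of $g_{b_1^n}$, the observation that $(a_{t_n}u_{\pi(b)})^{-1}g_{b_1^n}$ stays in a compact set, and the bounded-gaps property of $(t_n)$. The only difference is cosmetic — the paper identifies the compact correction term as $u_{-\pi(T^nb)}k_n$ (placed on the left of $a_{t_n}u_{\pi(b)}$) by evaluating the action of both sides on the point $\pi(T^nb)$, whereas you compute it explicitly via the fixed point of $\phi_{b_n^1}$ and place it on the right.
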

\begin{proof}
By the Dani correspondence principle, we have $\pi(b)\in\BA$ if and only if the orbit
\[
\big(a_t u_{\pi(b)} x_0\big)_{t\geq 0}
\]
is bounded in $X$ \cite[Theorem~2.20]{Dani4}. Write $g_n = g_{b_1^n} = a_{t_n} k_n u_{\bfalpha_n}$ for some $t_n\in\R$, $k_n\in K$, and $\bfalpha_n\in\MM$. Also write $\bfbeta_n = \pi(T^n b) \in \KK$, where $T: \B \to \B$ is the shift map, and let $h_n = u_{-\bfbeta_n} a_{t_n} k_n u_{\pi(b)}$. Obviously $h_n$ and $g_n$ agree in their projections to $AK$, and on the other hand, letting them act on $\mathcal{M}$ via the isomorphism $\iota:\mathcal{M} \to \para/AK$ (and recalling the minus sign in \equ{atualpha}), we have
\[
h_n^{-1}(\bfbeta_n) = u_{\pi(b)}^{-1} k_n^{-1} a_{t_n}^{-1}(0) =
u_{\pi(b)}^{-1}(0)
= \pi(b) = \phi_{b^1_n}(\bfbeta_n) = g_n^{-1}(\bfbeta_n).
\]
So $h_n = g_n$, and thus $h_n x_0 = g_n x_0$. Since $\Phi$ is strictly contracting, the limit set $\KK$ is compact, so the sequence $(\bfbeta_n)_{n\in\N}$ is bounded. Since $K$ is also compact, this shows that the distance from $h_n x_0$ to $a_{t_n} u_{\pi(b)} x_0$ is bounded by a number independent of $n$. So since the sequence $(a_{t_n})_{n\in\N}$ has bounded gaps in $\left(a_t\right)_{t \geq 0}$, we have
\[
\begin{split}
\text{$\big(g_n x_0\big)_{n\in\N}$ is bounded} &
\;\;\Leftrightarrow\;\; \text{$\big(a_{t_n} u_{\pi(b)} x_0 \big)_{n\in
\N}$ is bounded} \\ &
\;\;\Leftrightarrow\;\; \text{$\big(a_t u_{\pi(b)} x_0 \big)_{t\geq 0}$ \hspace{0.04 in} is bounded}.
\qedhere
\end{split}
\]
\end{proof}

\begin{lem}
\label{lemmaURequivalent}
Let the notation be as in Setup 2, and assume that $\Phi$ is strictly contracting on some compact set $\FF \subset \partial\Hh^{d + 1}$. Given $b\in \B$, we have $\pi(b)\in \UR_\Lambda$ if and only if the sequence $(g_{b_1^n} x_0)_{n \in N}$ is bounded in $X$.
\end{lem}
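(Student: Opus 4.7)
The plan is to follow the strategy of Lemma \ref{lemmaBAequivalent}, adapted to the hyperbolic/Kleinian setting. I will show that both sides of the desired equivalence are equivalent to the single intermediate condition
\[
\sup_{n\in\N} d_{\Hh}(p_n, \Lambda\zero) < \infty, \qquad \text{where } p_n \df \phi_{b_n^1}(\zero) = g_{b_1^n}^{-1}(\zero).
\]
The equivalence of this with boundedness of $(g_{b_1^n} x_0)_{n\in\N}$ in $X = G/\Lambda$ is formal. Since $G$ acts transitively on $\Hh^{d+1}$ with maximal compact stabilizer $K = \Stab_G(\zero)$, a subset of $G$ is bounded iff its orbit of $\zero$ is bounded in $\Hh^{d+1}$. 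Hence $(g_{b_1^n}x_0)_{n\in\N}$ is bounded in $X$ iff there exist $\gamma_n \in \Lambda$ with $(g_{b_1^n}\gamma_n^{-1})_{n\in\N}$ bounded in $G$, iff $d_{\Hh}(\zero, g_{b_1^n}\gamma_n^{-1}\zero) = d_{\Hh}(\gamma_n\zero, p_n)$ is bounded for a suitable sequence $\gamma_n$, which upon minimizing over $\gamma_n$ is exactly the displayed condition.

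The equivalence with $\pi(b)\in\UR_\Lambda$ carries the geometric content. The key claim is that the sequence $(p_n)_{n\in\N}$ converges to $\pi(b)\in\partial\Hh^{d+1}$ and fellow-travels, within bounded Hausdorff distance, any geodesic ray $\xi$ from $\zero$ to $\pi(b)$. Granting this, $\sup_n d_{\Hh}(p_n,\Lambda\zero)$ is finite iff $\sup_{t\geq 0}d_{\Hh}(\xi(t),\Lambda\zero)$ is, which by definition is the uniform radiality of $\pi(b)$. To establish the geometric claim I would combine three ingredients: (i) the assumption that each $\phi_e|_\FF$ is a strict contraction with a uniform rate $c<1$, forcing $\diam(\phi_{b_n^1}(\FF)) = O(c^n)$, so that $\phi_{b_n^1}(\FF)$ shrinks down to $\{\pi(b)\}$; (ii) the standard dictionary relating the boundary Lipschitz constant of a M\"obius transformation to its hyperbolic displacement of an interior basepoint, which yields $d_{\Hh}(\zero,p_n)\to\infty$ and $p_n\to\pi(b)$ in the compactified hyperbolic space; and (iii) compactness of $E$, giving the uniform bound $d_{\Hh}(p_n,p_{n+1}) = d_{\Hh}(\zero,\phi_{b_{n+1}}(\zero))\leq M \df \sup_{e\in E}d_{\Hh}(\zero,\phi_e(\zero))$ on consecutive jumps. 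A standard $\delta$-thin-triangle argument in $\Hh^{d+1}$ then shows that any bounded-step sequence converging to a boundary endpoint $\pi(b)$ fellow-travels, and is coarsely $M$-dense along, any geodesic ray from $\zero$ to $\pi(b)$.

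The main obstacle will be ingredient (ii): transferring the boundary-metric contraction on $\FF$ to an interior-metric displacement of the basepoint $\zero$, particularly when $\zero$ is not near $\FF$ in $\wbar{\Hh^{d+1}}$. The estimate one needs is that if a M\"obius transformation has small boundary Lipschitz constant $\lambda$ on a round ball, then its hyperbolic displacement of any fixed interior point ``above'' that ball is at least $\log(1/\lambda)$ up to a bounded additive error; this is a routine calculation in the upper half-space model, but it requires pinning down the relationship between the attracting fixed-point set of $\phi_{b_n^1}$ and the shrinking region $\phi_{b_n^1}(\FF)$. Once (ii) is in place, the rest of the proof is essentially formal.
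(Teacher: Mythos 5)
Your reduction of boundedness in $X$ to the condition $\sup_n \dist(\phi_{b_n^1}(\zero),\Lambda(\zero))<\infty$ is correct and is exactly the first half of the paper's proof (phrased there via the double coset space $K\backslash G/\Lambda$). The problem is in the second half. The lemma you invoke to conclude --- that \emph{any} bounded-step sequence in $\Hh^{d+1}$ converging to a boundary point $\pi(b)$ fellow-travels a geodesic ray from $\zero$ to $\pi(b)$ --- is false. A sequence marching along a horocycle based at $\pi(b)$ has uniformly bounded steps and converges to $\pi(b)$ in $\wbar{\Hh^{d+1}}$, yet its distance to every geodesic ending at $\pi(b)$ tends to infinity. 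Convergence to the endpoint in the compactification is strictly weaker than the \emph{radial} (conical) convergence you need, and your ingredients (i)--(iii) only deliver the former: knowing $\dist(\zero,p_n)\to\infty$, $p_n\to\pi(b)$, and $\dist(p_n,p_{n+1})\le M$ does not rule out horocyclic escape. No thin-triangle argument closes this gap without an additional input controlling the angle subtended at $p_n$ between $\zero$ and $\pi(b)$.

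The paper supplies that missing input by a different device: it writes $\dist(\phi_{b_n^1}(\zero),[\zero,\pi(b)])=\dist(\zero,[\phi_{b_n^1}^{-1}(\zero),\pi(T^n b)])$ using the isometry $\phi_{b_n^1}^{-1}$, and then observes that the two endpoints of the latter geodesic lie in disjoint compact sets: $\pi(T^n b)\in\FF$, while $\phi_{b_n^1}^{-1}(\zero)$ stays in the complement of a forward-invariant neighborhood $V$ of $\FF$ with $\zero\notin V$ and $\phi_e(V)\subset V$ (so that $\phi_e^{-1}$ preserves the complement of $V$). A geodesic whose endpoints are uniformly separated in $\partial\Hh^{d+1}$ passes within uniformly bounded distance of $\zero$, which is precisely the required radial estimate. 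This also sidesteps entirely your ``main obstacle'' (ii), the boundary-Lipschitz-to-displacement dictionary, which the paper never needs. If you want to salvage your outline, the fix is to replace the false fellow-traveling lemma with a shadow-type statement --- e.g.\ that $\pi(b)\in\phi_{b_n^1}(\FF)$ lies in the shadow of a uniformly bounded ball about $p_n=\phi_{b_n^1}(\zero)$ as seen from $\zero$ --- but proving that shadow statement amounts to the paper's isometry argument.
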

\begin{proof}
Let $K$ be the subgroup of $G$ fixing a distinguished tangent vector at the basepoint $\zero$, so that $T^1\HHH^{d+1} \cong K \backslash G/\Lambda$. Since $K$ is compact,
\begin{align*}
&\text{$\big(g_{b_1^n}x_0\big)_{n\in\N}$ is bounded in
$X$}\\
\Leftrightarrow\;\; & \text{the image of $\big(g_{b_1^n}
\big)_{n\in\N}$ is bounded in
$K\backslash G / \Lambda$}\\
\Leftrightarrow\;\; & \text{the image of $\big(\phi_{b_n^1}\big)_{n\in\N}$ is bounded in
$\Lambda \backslash G / K$}\\
\Leftrightarrow\;\; & \text{$\big(\phi_{b_n^1}(\zero) \big)_{n\in\N}$
remains within a bounded distance of
$\Lambda(\zero)$}.
\end{align*}
So to complete the proof, we need to show that the Hausdorff distance between the sequence $\big(\phi_{b_n^1}(\zero) \big)_{n\in\N}$ and the geodesic ray $[\zero,\pi(b)]$ from $\zero$ to $\pi(b)$ is finite. Since the sequence of successive distances $\big(\dist(\phi_{b_n^1}(\zero),\phi_{b_{n + 1}^1}(\zero))\big)_{n\in\N}$ is bounded, it suffices to show that the sequence of distances $\big(\dist(\phi_{b_n^1}(\zero),[\zero, \pi(b)])\big)_{n\in\N}$ is uniformly bounded. Now for each $n$,
\begin{align*}
\dist\big(\phi_{b_n^1}(\zero),[\zero,\pi(b) ]\big)
&=
\dist\big(\zero,[\phi_{b_n^1}^{-1}(\zero),\phi_{b_n^1}^{-1}(\pi(b))]\big)
= \dist\big(\zero,[\phi_{b_n^1}^{-1}(\zero), \pi(T^n b)]\big),
\end{align*}
so we just need to show that, after taking any subsequence along which both limits exist, we have
\begin{equation}
\label{limits}
\lim_{n\to\infty} \phi_{b_n^1}^{-1}(\zero) \neq \lim_{n\to\infty}
\pi(T^n b).
\end{equation}
But the left-hand side of \eqref{limits} belongs to $\partial\Hh^{d + 1}\sm V$, where $V \subset \Hh^{d + 1}\cup \partial\Hh^{d + 1}$ is a neighborhood of $\FF$ small and regular enough so that $\zero\notin V$ and $\phi_e(V) \subset V$ for all $e\in E$. On the other hand, since $\Phi$ is strictly contracting on $\FF$, the right-hand side of \eqref{limits} is a member of $\FF$. So the two cannot be equal, which completes the proof.
\end{proof}

We are now ready to prove Theorems \ref{theoremdoublingmatrix} and
\ref{theoremkleinian1}.

\begin{proof}[Proof of Theorem \ref{theoremkleinian1}(i)]
By Lemma \ref{lemmaURequivalent}, it suffices to show that for all $b\in \B$, the sequence $(g_{b_1^n} x_0)_{n \in \N}$ is bounded in $X = G/\Lambda$. But this sequence is contained in the orbit $\Gamma^+ x_0$, which by hypothesis is finite.
\end{proof}

\begin{proof}[Proof of Theorems \ref{theoremdoublingmatrix} and
\ref{theoremkleinian1}(ii)]
Let $K_j\nearrow X$ be an exhaustion of $X$ by compact sets, and for each $j$ let
\[
S_j = \{ b\in \B : (g_{b_1^n} x_0)_{n \in \N} \subset K_j
\}.
\]
Then by Lemma \ref{lemmaBAequivalent} (resp. Lemma \ref{lemmaURequivalent}), the set of badly approximable points (resp. uniformly radial points) can be written as $\bigcup_{j\in\N} \pi(S_j)$. By Lemma \ref{lem: porous measure}, in order to complete the proof, it suffices to show that for all $j$, the set $\pi(S_j)$ is porous in $\KK$.

By contradiction, suppose that there exists $j$ such that $\pi(S_j)$ is not porous in $\KK$. Then for all $m\in\N$, there exist $z_m\in \KK$ and $r_m \in (0,1)$ such that for all $w\in \KK$ such that $B(w,r_m/m) \subset B(z_m,r_m)$, we have $B(w,r_m/m)\cap \pi(S_j) \neq \emptyset$. Write $z_m = \pi(b)$ for some $b\in \B$. Let $n$ be the smallest integer such that $\phi_{b_n^1}(\KK) \subset B(z_m,r_m/2)$. Now since $\Phi$ satisfies the open set condition, by \cite{PRSS} it also satisfies the strong open set condition, i.e. there exists an open set $U$ such that $(\phi_e(U))_{e\in E}$ is a disjoint collection of subsets of $U$, and $U\cap \KK \neq \emptyset$. Fix $z_0\in U\cap \KK$, and let
\[
\lambda = \min_{e\in E} \inf |\phi_e'| > 0.
\]
We claim that there exists $c > 0$ such that for all $k\in\N$ and $\altb\in E^k$, we have
\begin{equation}\label{eq: can ensure that}
B(\phi_{\altb_k^1 b_n^1}
(z_0),c\lambda^k r_m)
\subset \phi_{\altb_k^1 b_n^1}
(U), \ \ \text{ where }
\phi_{\altb_k^1 b_n^1} = \phi_{b_n^1} \circ \phi_{\altb_k^1}.
\end{equation}
Indeed, an easy induction argument shows that
\[
B(\phi_{\altb_k^1}(z_0), \lambda^k \dist(z_0,\partial U)) \subset \phi_{\altb_k^1}(U),
\]
and the choice of $n$ ensures that the contraction rate of the map
$\phi_{b^1_n}$ is on the order of $r_m$. Combining these facts with
the bounded distortion property demonstrates \eqref{eq: can ensure
  that}. 

It follows that if $c\lambda^{k} \geq 1/m$, then for all $\altb \in
E^k$, we have $\phi_{\altb_k^1 b_n^1}(U)\cap \pi(S_j) \neq
\emptyset$. Thus there exists $b'\in S_j$ such that $\pi(b') \in
\phi_{\altb_k^1 b_n^1}(U)$. The defining property of $U$ implies that
$b_1^n \altb$ is an initial segment of $b'$, i.e. that $b' = b_1^n
\altb \altb' $ for some $\altb'\in \B$. In particular, we have 
\begin{equation}
\label{foralltau}
g_{\altb_1^i} x_m \in K_j \text{ for all $\altb \in E^k$ and $i = 0,\ldots,k$},
\end{equation}
where $x_m = g_{b_1^n} x_0$. In particular $x_m\in K_j$ for all $m$, so we can pass to a subsequence along which we have $x_m \dashrightarrow y \in K_j$. Taking the limit of \eqref{foralltau} along this subsequence shows that for all $\altb\in E^*$, we have $g_\altb y \in K_j$. In particular, the orbit $\Gamma^+ y$ is bounded. In Setup 1 this gives a contradiction to Theorem \ref{theorempart1}(i). In Setup 2, in view of Theorem \ref{theoremBQ}(i) and Remark \ref{remarkBQ}, it follows that the set $\Gamma y$ is finite. But then the finite index subgroup $\Stab_\Gamma(y) \leq \Gamma$ is entirely contained in $g \Lambda g^{-1}$, where $y = g x_0$. This contradicts the hypothesis of Theorem \ref{theoremkleinian1}(ii).
\end{proof}

\section{Bernoulli measures}
\label{sectionbernoulli}
In this section we prove Theorems \ref{theoremgtequi} and
\ref{theoremkleinian2}, using Theorems \ref{theorempart1},
\ref{theoremBQ}, respectively, as well as Theorem \ref{theoremequibootstrap}.

\begin{proof}[Proof of Theorem \ref{theoremgtequi}]
Recall that $\bar{\B} = E^\Z$, and define $\pi_+: \bar{\B} \to \MM$ by $\pi_+(b) =\pi(b_1^\infty)$. By the definition of a general algebraic self-similar measure, it suffices to show that for $\bar{\beta}$-a.e. $b \in \bar{\B}$, the trajectory $\{a_t u_{\pi_+(b)}x_0 : t\geq 0\}$ is equidistributed in $X$ with respect to $m_X$. By Theorem \ref{theorempart1}(ii), for $\beta$-a.e. $b\in \B$ the orbit $(g_{b_1^n} x_0)_{n \in \N}$ is equidistributed. We will apply Theorem \ref{theoremequibootstrap}. Let $\kappa = \pi_K, \, k_e = \kappa(e)$ be as in \S \ref{subsec: fundamental}, let $Y = E\times \MM$, and define $f: \bar{\B} \to Y$ by $f(b) = (b_0,\pi_+(b))$. Then for $\bar{\beta}$-a.e. $b \in \bar{\B}$, the sequence
\eq{eq: rwandmore}{
\big(
g_{b_1^n} x_0,k_{b_1^n} ,f(T^n b)\big)_{n\in\N}
}
is equidistributed with respect to the measure $m_X\otimes m_{\bar K}\otimes f_*\bar \beta$, where $m_{\bar K}$ is the Haar measure on $\bar K$, the closure of $\kappa(\Gamma)$. Note that $f_*\bar{\beta} = \mu\otimes\nu$, where $\nu = \pi_*\beta$. Now consider the map $f_2:X\times K\times Y \to X\times E$ defined by the formula
\[
f_2(x,k,(e,\bfalpha)) = (k^{-1} u_\bfalpha x,e).
\]
Since $f_2$ is continuous, the image of \eqref{eq: rwandmore} under $f_2$, i.e. the sequence
\begin{equation}
\label{imagef}
(x_n,b_n)_{n \in \N}, \ \ \ \text{ where } \ x_n= k_{b_1^n}^{-1} u_{\pi_+(T^n b)} g_{b_1^n} x_0,
\end{equation}
is equidistributed in $X\times E$ with respect to the measure $(f_2)_*[m_X\otimes m_{\bar K}\otimes f_*\bar \beta] = m_X\otimes\mu$.

Write $g_n = g_{b_1^n} = k_na_{t_n}u_{\bfalpha_n}$. As in the proof of Lemma \ref{lemmaBAequivalent}, we find that $g_n = u_{-\pi_+(T^n b)} a_{t_n} k_n u_{\pi_+(b)}$ and thus
\begin{equation}\label{eq: and thus}
x_n = k_n^{-1} u_{\pi_+(T^n b)} g_n x_0 = a_{t_n} u_{\pi_+(b)} x_0
\end{equation}
for all $n\in\N$.

For each $e\in E$, let $t_e\in\R$ be chosen so that $\pi_A(g_e) =
a_{t_e}$. Since $\pi_A$ is a homomorphism, we have $t_n = t_{n - 1} +
t_{b_n}$ for all $n\in\N$. Now let $F:X\to\R$ be a bounded continuous
function. Then the function $F':X\times E\to\R$ defined by the formula
\[
F'(x,e) = \int_{-t_e}^0 F(a_t x) \;\dee t
\]
is also a bounded continuous function. Here we use the convention that
if $b < a$, then $\int_a^b F(a_t x) \;\dee t = -\int_b^a F(a_t
x)\;\dee t$. Since \eqref{imagef} is equidistributed, plugging in
\eqref{eq: and thus} we find that
\begin{align*}
\int F'\;\dee(m_X\otimes\mu) &= \lim_{n\to\infty} \frac{1}{n}
\sum_{i = 1}^n F'(a_{t_i}
u_{\pi_+(b)} x_0, b_i)\\
&= \lim_{n\to\infty} \frac{1}{n} \sum_{i = 1}^n \int_{t_{i - 1}}^{t_i}
F(a_t u_{\pi_+(b)} x_0) \;\dee t\\
&= \left(\int t_e\;\dee \mu(e)\right) \lim_{n\to\infty} \frac{1}{t_n}
\int_0^{t_n} F(a_t u_{\pi_+(b)} x_0) \;\dee t
\end{align*}
(where in passing to the last line we used the special case of the
first two lines where $F \equiv 1$ and $F'(x,e) = t_e$). On the other hand,
\[
\int F'\;\dee(m_X\otimes\mu) = \int \left(t_e \int
F\;\dee m_X\right)\;\dee \mu(e) = \left(\int
t_e\;\dee \mu(e)\right)\left(\int F\;\dee m_X\right).
\]
Since $t_n \to \infty$ and the gaps $t_{n+1}-t_n \; (n\in\N)$ are bounded, it
follows that $\frac{1}{T}
\int_0^T F(a_t u_{\pi_+(b)} x_0) \;\dee t \to \int F\;\dee m_X$,
i.e. that $(a_t u_{\pi_+(b)} x_0)_{t \geq 0}$ is equidistributed with respect
to $m_X$.
\end{proof}

\begin{proof}[Proof of Theorem \ref{theoremkleinian2}]
Let $x = x_0$, and let $\HBQ \subset G$ and $\nu_x$ be as in Theorem \ref{theoremBQ}. Since by assumption $\Gamma$ is not virtually contained in $\Lambda = \Stab_G(x_0)$, Remark \ref{remarkBQ} shows that $\nu_x = m_X$. So by Theorem \ref{theoremBQ}(ii), for $\beta$-a.e. $b\in \B$ the orbit \eqref{randomwalk} is equidistributed. As in the previous proof, we want to apply Theorem \ref{theoremequibootstrap}. Let $\pi_+,\pi_-: \bar{\B} \to \partial\Hh^{d + 1}$ be defined by the formulas
\begin{align*}
\pi_+(b) &= \lim_{n\to\infty} \phi_{b_n^1}(\zero)\\
\pi_-(b) &= \lim_{n\to -\infty} \phi_{b_n^1}(\zero),
\end{align*}
with the convention that $\phi_{b_n^1} = \phi_{b_0^{n + 1}}^{-1}$ whenever $n \leq 0$.

Let $b\in \bar\B$ be a random variable with distribution $\bar\beta$. Then $\pi_+(b)$ and $\pi_-(b)$ are independent random variables with atom-free distributions, and thus $\pi_+(b) \neq \pi_-(b)$ almost surely. Let $\gamma(b)$ denote the bi-infinite geodesic from $\pi_-(b)$ to $\pi_+(b)$, and for each $n\in\Z$ let $v_n(b) \in T^1 \Hh^{d + 1} \cong K \backslash G$ be the unit tangent vector whose basepoint is the projection of $\phi_{b_n^1}(\zero)$ to $\gamma(b)$ and which is parallel to $\gamma(b)$, pointing in the direction of $\pi_+(b)$. Note that $v_n(b) = \phi_{b_1}(v_{n - 1}(T b))$. Equivalently, $v_n(b) = v_{n - 1}(T b) g_{b_1}$, where now we are thinking of $v_n(b)$ and $v_{n - 1}(T b)$ as elements of $K \backslash G$. Let $\kappa:G\to K = \{e\}$ be the trivial homomorphism, let $Y = T^1 \Hh^{d + 1} \times T^1 \Hh^{d + 1}$, and let $f(b) = (v_0(b),v_1(b))$. Then by Theorem \ref{theoremequibootstrap}, the sequence
\eq{eq: rwandmore1}{
\big(g_{b_1^n}x, v_0(T^n b), v_1(T^n b) \big)_{n
\in \N},
}
is almost surely equidistributed with respect to $m_X \otimes f_* \bar{\beta}$. Let $F:K\backslash G / \Lambda \to \R$ be a bounded continuous function, and let $T^+\gamma(b)$ be the space of unit vectors tangent to $\gamma(b)$ and pointing in the direction of $\pi_+(b)$. We need to show that
\begin{equation}
\label{NTSkleinian2}
\frac{1}{v_1 - v_0} \int_{v_0}^{v_1} F(w x_0) \;\dee w \tendsto{T^+\gamma(b) \ni v_1 \to \pi_+(b)} \int F \;\dee m_X \;\text{ for all $v_0\in T^+\gamma(b)$},
\end{equation}
where the left-hand integral is taken over all $w\in T^+\gamma(b)$ between $v_0$ and $v_1$, with respect to the pushfoward of Lebesgue measure on $\R$ under the differential of any unit speed parameterization of $\gamma(b)$. The expression $v_1 - v_0$ is interpreted as the distance between the basepoints of $v_1$ and $v_0$. In what follows, it may happen that $v_1 < v_0$ in the sense that the basepoint of $v_0$ is closer to $\pi_+(b)$ than $v_1$ is, in which case we think of $v_1 - v_0$ as a negative number and we use the convention $\int_{v_0}^{v_1} h(w) \;\dee w \df -\int_{v_1}^{v_0} h(w) \;\dee w$ for any function $h$.

To demonstrate \eqref{NTSkleinian2}, first observe that
\begin{align*}
& \frac{1}{n}\int_{v_0(b)}^{v_n(b)} F(w x_0) \;\dee w \\
= & \frac{1}{n}\sum_{i = 0}^{n - 1} \int_{v_0(T^i b)
g_{b_1^i}}^{v_1(T^i b) g_{b_1^i}} F(w x_0) \;\dee
w\\
= & \frac{1}{n}\sum_{i = 0}^{n - 1} \int_{v_0(T^i
b)}^{v_1(T^i b)} F(w g_{b_1^i} x_0) \;\dee w\\
\tendsto{n \to \infty} & \iint \int_{v_0}^{v_1} F(w x) \;\dee w \;\dee m_X(x)
\;\dee f_*\bar{\beta}(v_0,v_1)\\
= & \left(\int (v_1 - v_0) \;\dee f_*\bar{\beta}(v_0,v_1)\right)
\left(\int F \;\dee m_X \right).
\end{align*}
Note that the last two lines make sense because for all $(v_0,v_1)\in \supp(f_*\bar{\beta})$, the tangent vectors $v_0$ and $v_1$ span the same geodesic. To summarize, we have
\begin{equation}
\label{summarywithF}
\frac{1}{n}\int_{v_0(b)}^{v_n(b)} F(w x_0) \;\dee w \tendsto{n\to\infty} c \int F \;\dee m_X,
\end{equation}
where $c \in \R$ is a constant independent of $F$.

By \cite[Theorems~1.2 and 1.3]{MaherTiozzo}, if $F \equiv 1$ then the left-hand side of \eqref{summarywithF} converges to a positive number almost surely. This implies that $c > 0$ and thus we can divide \eqref{summarywithF} by its special case that occurs when $F \equiv 1$, yielding the limit
\[
\frac{1}{v_n(b) - v_0(b)}\int_{v_0(b)}^{v_n(b)}
F(w x_0) \;\dee w \tendsto{n\to\infty} \int F \;\dee
m_X.
\]
Since $v_n(b) \to \pi_+(b)$ and $(v_{n+1}(b)-v_n(b))_{n\in\N}$ is
bounded, this implies that \eqref{NTSkleinian2} holds, i.e. that the
directed segment $[v_0(b),\pi_+(b)]$ of the bi-infinite geodesic
$\gamma(b)$ is equidistributed in $K \backslash G / \Lambda$. Since
any two geodesic rays ending at the same point have the same
equidistribution properties, this completes the proof. 
\end{proof}

\section{Equidistribution under the Gauss map}
\label{sectionCFequi}
In this section we prove the following result. The result may be
well-known but we were unable to find a suitable reference. Combining
it with 
Theorem \ref{theoremgtequi} yields Theorem \ref{theoremCFequi} as an
immediate corollary.

\begin{thm}
\label{theoremgtCFrelation}
Fix $\alpha\in (0,1)$, and suppose that the orbit $(a_t u_\alpha
x_0)_{t \geq 0}$ is equidistributed in $X = G/\Lambda =
\PGL_2(\R)/\PGL_2(\Z)$ with respect to Haar measure. Then the orbit
$(\mathcal{G}^n \alpha)_{n\in\N}$ is equidistributed with respect to
Gauss measure, where $\mathcal{G}$ is the Gauss map. Equivalently, if $b=(b_1, b_2,
\ldots)$ is the sequence of continued fraction coefficients of $\alpha
= [0;b_1,b_2,\ldots]$, then the sequence $(T^n b)_{n\in\N}$ is equidistributed in
$\N^\N$ with respect to Gauss measure, where $T$ is the shift map. 
\end{thm}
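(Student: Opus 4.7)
The plan is to realize the Gauss map as a factor of the first-return map of the $(a_t)$-flow on $X = \PGL_2(\R)/\PGL_2(\Z)$ to an explicit measurable cross-section, and then extract equidistribution of $(\mathcal{G}^n\alpha)_{n\in\N}$ from the hypothesized equidistribution of $(a_t u_\alpha x_0)_{t\ge 0}$ via an Ambrose--Kakutani computation. By classical arguments going back to Series, there is a measurable cross-section $Z\subseteq X$ for the flow $(a_t)$, a first-return time $r:Z\to(0,\infty)$, first-return map $T_Z:Z\to Z$, and a measurable surjection $\phi:Z\to(0,1)$ such that $\phi\circ T_Z = \mathcal{G}\circ\phi$. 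Concretely, $Z$ can be described in terms of unimodular lattices in $\R^2$ whose successive minima lie in a prescribed cone, and $\phi$ extracts the continued-fraction tail. Letting $\mu_Z$ denote the measure on $Z$ obtained from the Ambrose--Kakutani disintegration $\int_X f\,\dee m_X = \int_Z\int_0^{r(z)} f(a_t z)\,\dee t\,\dee\mu_Z(z)$, the pushforward $\phi_\ast\mu_Z$ is a $\mathcal{G}$-invariant absolutely continuous measure and hence proportional to $\mu_{\mathcal{G}}$. For our particular point, I would verify that there exist $0\le t_0<t_1<\cdots$ with $a_{t_n}u_\alpha x_0\in Z$, $t_{n+1}-t_n=r(a_{t_n}u_\alpha x_0)$, and
\[
\phi(a_{t_n}u_\alpha x_0)=\mathcal{G}^n\alpha \quad \text{for all } n,
\]
up to a bounded initial index shift (irrelevant after Ces\`aro averaging).

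For a continuous $g:(0,1)\to\R$ with support bounded away from $0$ and $1$, define the bounded measurable $f_g:X\to\R$ by $f_g(a_t z):=g(\phi(z))/r(z)$ for $z\in Z$ and $0\le t<r(z)$. Summing over return intervals gives the exact identity
\[
\int_0^{t_N} f_g(a_t u_\alpha x_0)\,\dee t = \sum_{n=0}^{N-1} g(\mathcal{G}^n\alpha),
\]
while the Ambrose--Kakutani formula gives
\[
\int_X f_g\,\dee m_X = \int_Z g\circ\phi\,\dee\mu_Z = \mu_Z(Z)\int_0^1 g\,\dee\mu_{\mathcal{G}}.
\]
Since $\partial Z$ and the discontinuities of $f_g$ have $m_X$-measure zero and $f_g$ is uniformly bounded, the hypothesized equidistribution extends, by sandwiching between continuous approximants, to
\[
\frac{1}{T}\int_0^T f_g(a_t u_\alpha x_0)\,\dee t \;\longrightarrow\; \mu_Z(Z)\int_0^1 g\,\dee\mu_{\mathcal{G}}.
\]
Applied with $g\equiv 1$ (suitably truncated) this yields $N(T)/T\to\mu_Z(Z)$, where $N(T)=\max\{n:t_n\le T\}$; dividing gives $\frac{1}{N}\sum_{n=0}^{N-1} g(\mathcal{G}^n\alpha)\to\int g\,\dee\mu_{\mathcal{G}}$, and a density argument over test functions $g$ completes the proof.

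The main obstacle is to pass from continuous test functions on $X$ (to which the given equidistribution applies) to the non-continuous $f_g$, and to handle the unbounded return-time function $r$, which blows up as $\phi\to 0$, corresponding to excursions into the cusp, i.e., to large partial quotients. These are standard technical issues in suspension theory, resolved by using that $\partial Z$ and the discontinuity set of $f_g$ are $m_X$-null, together with the fact that $\int r\,\dee\mu_{\mathcal{G}}<\infty$ so that cusp excursions contribute negligibly to the time-average after a uniform truncation on the range of $\phi$.
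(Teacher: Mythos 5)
Your overall strategy --- realizing the Gauss map as (a factor of) the first-return map of the $(a_t)$-flow to a cross-section, transferring equidistribution through an Ambrose--Kakutani identity, and using a Portmanteau argument to handle discontinuous observables --- is in spirit the same as the paper's, which works with the sequence of best approximations of the lattice $L_x$ in place of an explicit cross-section and identifies the limit measure by a genericity argument rather than by absolute continuity. Most of your steps are sound modulo the classical inputs you cite.

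There is, however, a genuine gap at the normalization step, i.e.\ the claim that ``applied with $g\equiv 1$ (suitably truncated) this yields $N(T)/T\to\mu_Z(Z)$.'' Truncation only gives the lower bound $\liminf_T N(T)/T\geq \mu_Z(Z)$. For the upper bound you must show that the returns with $\phi(z_n)=\mathcal{G}^n\alpha$ outside $[\vre,1-\vre]$ form an asymptotically negligible fraction of all returns, uniformly as $\vre\to 0$, and this requires two separate arguments, neither of which is supplied. Near $\phi=0$ the return times are long, and the count is controlled not by ``$\int r\,\dee\mu_{\mathcal{G}}<\infty$'' (a property of the invariant measure, which says nothing about one orbit) but by the non-escape of mass implicit in the hypothesis: the fraction of \emph{time} the orbit spends in the cusp region is small, and dividing by the large return time bounds the \emph{number} of such returns. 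Near $\phi=1$ the return times tend to $0$, so a priori a positive proportion of time could produce an unbounded proportion of returns; here one needs the arithmetic input that denominators of convergents satisfy $q_{n+1}=a_{n+1}q_n+q_{n-1}\geq q_n+q_{n-1}$, hence at least double every two steps, so that a short return is always followed by a long one and the number of returns in any unit time window is uniformly bounded. Without the upper bound on $N(T)/T$, the empirical measures $\frac1N\sum_{n<N}\delta_{\mathcal{G}^n\alpha}$ could lose mass to the endpoints of $(0,1)$ and your final ``dividing'' step fails. The paper's device of summing $F$ over the (at most three) best approximations $y_i$ with $1\leq y_i<e$ is precisely engineered to make the normalizing observable bounded and the $F\equiv 1$ case legitimate, which is what your truncation does not achieve.

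A smaller point: the boundedness of $f_g=g\circ\phi/r$ and the absolute continuity of $\phi_*\mu_Z$ both depend on the explicit form of the cross-section and of $r$, which you invoke but do not pin down; the paper avoids the latter entirely by noting that Lebesgue-a.e.\ $\alpha$ is simultaneously generic for the flow and for the Gauss map, which forces the limit measure to be $\mu_{\mathcal{G}}$.
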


The converse to Theorem \ref{theoremgtCFrelation} is not true:

\begin{example}
Let $b\in\N^\N$ be chosen so that the sequence $(T^n b)_{n\in\N}$ is
equidistributed with respect to Gauss measure, and let $S \subset \N$
be an infinite set of density zero. Then if $\altb\in\N^\N$ is chosen
so that $\altb_n = b_n$ for all $n\in\N\sm S$, then the sequence $(T^n
b)_{n\in\N}$ is also equidistributed with respect to Gauss
measure. However, by choosing the integers $\altb_n$ ($n\in S$) large
enough, it is possible to guarantee an arbitrary degree of
approximability for the encoded point $\alpha =
[0;\altb_1,\altb_2,\ldots]$. In particular, $\altb$ may be chosen so
that $\alpha$ is very well approximable, in which case it is not hard
to show that the orbit $(a_t u_\alpha x_0)_{t \geq 0}$ cannot be
equidistributed in $X$ with respect to any measure (due to escape of
mass). 
\end{example}

The idea of the proof of Theorem \ref{theoremgtCFrelation} is to
define a map $f:X\to\N^\N$ which is continuous outside a set of
measure zero, such that the image of the orbit $(a_t u_\alpha x_0)_{t
  \geq 0}$ is the orbit $(T^n b)_{n\in\N}$. To define this set, we use
the fact that elements of $X$ can be interpreted as lattices in $\R^2$
via the map $g x_0 \mapsto g(\Z^2)$. In what follows we let $L_x$
denote the lattice corresponding to a point $x\in X$. 

We define a \emph{best approximation} in a lattice $L\subset \R^2$ to
be a point $(\xi_1,\xi_2)\in L \sm\{0\}$ with the following property:
there is no point $(\gamma_1,\gamma_2)\in L\sm\{0,\pm (\xi_1,\xi_2)\}$
such that $|\gamma_1| \leq |\xi_1|$ and $|\gamma_2| \leq |\xi_2|$. It
is well-known that if $\alpha\in\R$, then the set of best
approximations $(\xi_1,\xi_2)$ in the lattice $u_\alpha \Z^2$ that
satisfy $\xi_2>1$ is precisely the set $\{u_\alpha(p_n,q_n) :
n\in\N\}$, where $(p_n/q_n)_{n\in\N}$ is the sequence of convergents
of $\alpha$ \cite[Theorems~16 and 17]{Khinchin_book}. Also, it is easy
to see using Minkowski's convex body theorem that the set of best
approximations in $L$ with second coordinate $\geq 1$  is
infinite unless $L$ has a nontrivial 
intersection with $\{0\} \times \R$. Accordingly we let $X'$ denote
the set of  points $x\in X$ such that $L_x \cap (\{0\} \times \R) =
\{0\}$. Let $Y$ denote the set of increasing sequences in
$[1,\infty)$ which begin with 1 and have no finite accumulation
points, equipped with the Tychonoff topology. Define a function $f_1:X' \to Y$ by
letting $f_1(x)$ denote the sequence of
numbers consisting of the elements of the set 
\[
\{\xi_2 \geq 1 : (\xi_1,\xi_2)\in L_x \text{ is a best approximation}\}
\]
listed in ascending order and rescaled by a homothety so that they
begin with 1. Using continued fractions
(see e.g. \cite[Chapter 10]{Karpenkov}),
it is not hard to show that for each $x\in X'$, the sequence $f_1(x) = (y_1,y_2,\ldots)$ satisfies a recursive equation of the form $y_{n+1} = a_n
y_n + y_{n-1}$ with $a_n \in \N$. Note
that $X'$ is an $\{a_t\}$-invariant 
set of full $m_X$-measure, and for all $t \geq 0$ and $x\in X$, there
exists $n\geq 0$ such that $f_1(a_t x) = T^n \circ f_1(x)$, where
$T:Y\to Y$ is the shift map. (More precisely, $n$ is the smallest
number such that the $n$th coordinate of $f_1(x)$ is at least
$e^t$.) Also note that the set of discontinuities of $f_1$ is 
contained in the set $\{x\in X': L_x\cap (\R\times\{0,1\}) \neq
\{0\}\}$, which is a set of $m_X$-measure zero.

\begin{lem}
\label{lemmaexistsmeasure}
For all $x\in X'$ such that the trajectory $(a_t x)_{t\geq 0}$ is
equidistributed in $X$ with respect to the measure $m_X$, the orbit 
\begin{equation}
\label{trajectory3}
\big(T^n f_1(x)\big)_{n\in\N}
\end{equation}
is equidistributed in $Y$, with respect to some probability measure
$\mu$ which is independent of $x$. 
\end{lem}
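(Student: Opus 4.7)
The plan is to realize $(Y,T)$ as the base of a suspension flow
representation of the $(a_t)$-action on $X'$, with roof function
$R:Y\to(0,\infty)$ given by $R(y)=\log y_2$. The discussion immediately
preceding the lemma shows that $f_1(a_s x)=T^n f_1(x)$ for
$s\in[t_n,t_{n+1})$, where $t_n=t_n(x)=\sum_{k=0}^{n-1}R(T^k f_1(x))$;
equivalently, $s\mapsto f_1(a_s x)$ is a step function whose
$n$-th plateau equals $T^n f_1(x)$ and has length $R(T^n f_1(x))$. Thus
for any bounded measurable $\varphi$ on $Y$,
\[
\int_0^{t_N}\varphi(f_1(a_s x))\,\dee s
=\sum_{k=0}^{N-1}R(T^k f_1(x))\,\varphi(T^k f_1(x)).
\]

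To construct the candidate measure $\mu$ intrinsically (thereby making
independence of $x$ manifest), first set $\mu_0=(f_1)_* m_X$, a Borel
probability measure on $Y$. Since the $(a_t)$-flow spends time $R(y)$
above each point $y$, $\mu_0$ gives an $R$-weighted version of the
``right'' base measure, so I would set
\[
\mu=\bar R\cdot R^{-1}\mu_0,\qquad
\bar R^{-1}=\int_Y R^{-1}\,\dee\mu_0.
\]
Two facts need to be verified: that $\int R^{-1}\,\dee\mu_0<\infty$, so
that $\bar R\in(0,\infty)$ and $\mu$ is a probability measure; and that
$\mu$ is $T$-invariant. Both are essentially formal consequences of the
$(a_t)$-invariance of $m_X$ once one identifies $X'$ with the
suspension $Y^R=\{(y,s):0\le s<R(y)\}$ via a measurable section.

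With $\mu$ so defined, I would apply the hypothesized equidistribution
of $(a_t x)$ to the function
$\tilde\varphi=(\varphi\circ f_1)/(R\circ f_1)$ on $X$, for each
compactly supported continuous $\varphi:Y\to\R$. Substituting
$\tilde\varphi$ for $\varphi$ in the identity above collapses the $R$
factors to give
$\int_0^{t_N}\tilde\varphi(a_s x)\,\dee s=\sum_{k=0}^{N-1}\varphi(T^k f_1(x))$.
Setting $T=t_N\to\infty$, equidistribution yields
\[
\frac{1}{t_N}\sum_{k=0}^{N-1}\varphi(T^k f_1(x))
\tendsto{N\to\infty}
\int_X\tilde\varphi\,\dee m_X
=\bar R^{-1}\int_Y\varphi\,\dee\mu.
\]
The special case $\varphi\equiv1$ gives $t_N/N\to\bar R$, and combining
the two limits yields the desired convergence
$\frac{1}{N}\sum_{k=0}^{N-1}\varphi(T^k f_1(x))\to\int_Y\varphi\,\dee\mu$.

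The main obstacle is that $\tilde\varphi$ is neither continuous, since
$f_1$ has discontinuities on the $m_X$-null set of lattices meeting
$\R\times\{1\}$, nor bounded, since $R\circ f_1$ can be arbitrarily
small on lattices having two best approximations of nearly equal
heights. Thus equidistribution does not apply to $\tilde\varphi$
directly. I would handle this by a truncation argument, replacing
$\tilde\varphi$ by its product with a continuous cutoff of
$\{R\circ f_1\ge\varepsilon\}$, which is bounded and continuous
$m_X$-almost everywhere, and then controlling the resulting error using
a Siegel--Mahler-type estimate for the $m_X$-measure of
$\{x:R(f_1(x))<\varepsilon\}$ as $\varepsilon\to 0$.
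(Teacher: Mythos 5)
Your suspension--flow framework and your identification of the target measure $\mu=\bar R\cdot R^{-1}\mu_0$ are both correct (and consistent with the measure the paper ultimately constructs), but the last step contains a genuine gap. The error term you must control after truncating is
\[
\frac{1}{t_N}\sum_{k<N,\ R(T^kf_1(x))<\vre}\varphi\big(T^kf_1(x)\big),
\]
i.e.\ a \emph{count} of short-roof plateaus, whereas equidistribution of $(a_tx)$ applied to (a continuous majorant of) the indicator of $\{R\circ f_1<\vre\}$ only controls the \emph{time} the flow spends in short-roof plateaus, namely $\sum_{k:R_k<\vre}R_k$. These differ by the very factor $R^{-1}$ you were trying to eliminate: each short plateau contributes $1$ to the count but less than $\vre$ to the time, so a bound $m_X(\{R\circ f_1<\vre\})\to 0$ gives, naively, only $\#\{k<N:R_k<\vre\}\lesssim \vre^{-1}m_X(\{R\circ f_1<\vre\})\,t_N$, which need not be small. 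A Siegel--Mahler-type estimate would have to be quantitatively better than linear in $\vre$ (it is in fact $\asymp\vre^2$ here, but that must be proved), and even then summing over scales runs into a tail that is not controlled by measure estimates alone. The same issue already appears in your ``essentially formal'' verification that $\int R^{-1}\,\dee\mu_0<\infty$: this is equivalent to finiteness of the induced measure on the cross-section, i.e.\ to a bound on the number of returns per unit time, which is a geometric input, not a formal consequence of $(a_t)$-invariance.

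The missing ingredient is the recursion $y_{n+1}=a_ny_n+y_{n-1}$ with $a_n\ge 1$ for best approximations, which forces $y_{n+2}\ge 2y_n$, i.e.\ any two consecutive roofs sum to at least $\log 2$; equivalently, the orbit returns to the cross-section a bounded number of times (at most $3$) in any unit time interval. With this in hand your argument can be repaired — every short plateau is preceded by a plateau of length $\ge\tfrac12$ during which the flow sits in the set $\{x':R(Tf_1(x'))<\vre\}$, whose $m_X$-measure tends to $0$, and this converts the count into a time integral of a \emph{bounded} function. The paper's proof builds this observation in from the start and thereby never divides by the roof function at all: for bounded continuous $F$ it sets $F'(y)=\sum_{i:\,1\le y_i<e}F(y_i,y_{i+1},\ldots)$, which is bounded (at most $3$ summands) precisely by the recursion, verifies $\sum_{i\le n}F\circ T^{i-1}f_1(x)=\int_0^{\log y_n}F'\circ f_1(a_tx)\,\dee t+O(1)$, and then applies the Portmanteau theorem to the bounded, $m_X$-a.e.\ continuous function $F'\circ f_1$. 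You should either adopt that window-summation device or supply the two-consecutive-roofs lower bound explicitly; without one of them the truncation step does not close.
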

\begin{proof}
Indeed, let $F:Y\to\R$ be a bounded continuous function, and define
$F':Y\to\R$ and $h : X' \to \R$ by the formulas 
\[
F'(y_1,y_2,\ldots) = \sum_{\substack{i\in\N \\ 1\leq y_i < e}} F(y_i,y_{i +
  1},\ldots), \ \ h = F' \circ f_1.
\] 
(Here $\log(e) = 1$.) When $(y_1,y_2,\ldots) \in F_1(X')$, the
recursive equation $y_{n+1} = a_n 
y_n + y_{n-1}$ ($a_n \geq 1$) guarantees that the number of summands
in this series is uniformly bounded (in fact $\leq 3$), and therefore
$h$ is bounded.  

Write $f_1(x) = (y_1,y_2,\ldots)$. Then for all $i\in\N$ and $t\geq
0$, $F\circ T^{i-1} f_1(x) = F(y_i,y_{i+1},\ldots)$ is a term in $F'\circ f_1(a_t x)$ if and
only if $\log(y_i) - 1 < t \leq \log(y_i)$. For all $n\geq 0$, we have 
\begin{align*}
\sum_{i = 1}^n F\circ T^{i-1} f_1(x)
&= \sum_{i = 1}^n \int_{\log(y_i) - 1}^{\log(y_i)} F\circ T^{i-1} f_1(x) \;\dee t\\
&= \int_0^{\log(y_n)} F'\circ f_1(a_t x) \;\dee t + O(1),
\end{align*}
so
\begin{equation}
\label{equalsT}
\lim_{n\to\infty} \frac{1}{\log(y_n)} \sum_{i = 1}^n F\circ T^{i-1} f_1(x)
= \lim_{T\to\infty} \frac{1}{T} \int_0^T F'\circ f_1(a_t x) \;\dee t
\end{equation}
assuming the right-hand side exists.

The set of discontinuities of $h$ is
contained in the set $\{x\in X': L_x\cap
(\{0,1,e\}\times\R)\neq\{0\}\}$, which is of $m_X$-measure zero. 
Thus by the Portmanteau theorem, if $\nu_n\to \nu$ with respect to the
weak-* topology, then $\int h\;\dee\nu_n \to \int
h\;\dee\nu$. Thus,
letting $\nu_n = \frac 1n\int_0^n \delta_{a_t x} \;\dee t$ in the
Portmanteau theorem and using the equidistribution assumption shows that the right-hand side of \eqref{equalsT}
converges to $\int F'\circ f_1 \;\dee m_X$. Rearranging yields  
\begin{equation}
\label{nearlyequidistributed}
\begin{split}
\lim_{n\to\infty} \frac{1}{n}&\sum_{i = 1}^n F\circ T^{i-1} f_1(x)
= \left(\lim_{n\to\infty} \frac{\log(y_n)}{n}\right)
\left(\int F'\circ f_1 \;\dee m_X\right)\\
&\text{for all $x$ such that $(a_t x)_{t\geq 0}$ is equidistributed.}
\end{split}
\end{equation}
As of yet, we do not claim that the limits exist, but only that the left-hand limit exists if and only if the right-hand limit does.

Setting $F \equiv 1$ in \eqref{nearlyequidistributed}, we see that the limit $\lim_{n\to\infty} \frac{\log(y_n)}{n}$ exists and is independent of $x$. Write $\lim_{n\to\infty} \frac{\log(y_n)}{n} = c$ for some constant $c > 0$. Then we have
\[
\lim_{n\to\infty} \frac{1}{n}\sum_{i = 1}^n F\circ T^{i-1} f_1(x)
= \int F \;\dee \mu \df c \int F'\circ f_1 \;\dee m_X
\]
for all $x$ such that $(a_t x)_{t\geq 0}$ is equidistributed. This shows that the sequence $(T^n f_1(x))_{n\geq 0}$ is equidistribuited with respect to $\mu$, completing the proof.
\end{proof}

\begin{proof}[Proof of Theorem \ref{theoremgtCFrelation}]
Define $f_2:Y\to\N^\N$ by letting
\[
f_2(y_1,y_2,\ldots) = \left(\lfloor y_{n + 1}/y_n\rfloor\right)_{n\in\N}
\]
Then the set of discontinuities of $f_2$ is contained in the set $\{(y_1,y_2,\ldots) : y_{n + 1}/y_n\in\N \text{ for some $n$}\}$, which is of measure zero with respect to the probability measure $\mu$ defined in Lemma \ref{lemmaexistsmeasure}. Thus by the Portmanteau theorem, the image of every equidistributed sequence in $Y$ under $f_2$ is equidistributed in $\N^\N$ with respect to the measure $\nu = (f_2)_* \mu$. On the other hand, if $\alpha\in (0,1)$, then the sequence $f_2\circ f_1(u_\alpha x_0)$ is precisely the sequence of partial quotients of the continued fraction expansion of $\alpha$, except that the first partial quotient is omitted. Thus
\begin{equation}
\label{equinu}
\begin{gathered}
\text{the sequence $(T^n(b))_{n\in\N}$ is equidistributed with respect to $\nu$}\\
\text{for all $\alpha = [0;b_1,b_2,\ldots]$ such that $(a_t u_\alpha x_0)_{t\geq 0}$ is equidistributed}\\
\text{with respect to $m_X$}.
\end{gathered}
\end{equation}
A standard computation shows that whenever $x_1,x_2\in X$ satisfy $x_2
= g x_1$ for some lower triangular matrix $g\in G$, then the
trajectory $(a_t x_1)_{t \geq 0}$ is equidistributed with respect to
$m_X$ if and only if $(a_t x_2)_{t \geq 0}$ is equidistributed with
respect to $m_X$. Now if $S \subset \R$ is any set of positive
Lebesgue measure, then the set $\{g u_\alpha x_0 : \alpha \in S, g
\text{ lower triangular} \}$ has positive $m_X$-measure. Thus, for
Lebesgue-a.e. $\alpha\in\R$, the trajectory $(a_t u_\alpha x_0)_{t
  \geq 0}$ is equidistributed with respect to $m_X$. On the other
hand, for Lebesgue-a.e. $\alpha = [0;b_1,b_2,\ldots]\in\R$, the orbit
$(T^n(b))_{n\in\N}$ is equidistributed with respect to the Gauss
measure. Thus \eqref{equinu} implies that $\nu$ is equal to Gauss
measure. Plugging this equality into \eqref{equinu} completes the
proof of Theorem \ref{theoremgtCFrelation}. 
\end{proof}

\bibliographystyle{amsplain}

\bibliography{bibliography}

\end{document}